\newtheorem{theorem}{Theorem}[section]
\newtheorem{corollary}[theorem]{Corollary}
\newtheorem{assumption}[theorem]{Assumption}
\newtheorem{lemma}[theorem]{Lemma}
\newtheorem{proposition}[theorem]{Proposition}
\newtheorem{definition}{Definition}[section]
\theoremstyle{remark}
\newtheorem{remark}[theorem]{Remark}
\newtheorem*{ack}{Acknowledgments}
\numberwithin{equation}{section}
\begin{document}
\title[Almost rigidity of the Talenti-type comparison theorem]{Almost rigidity of the Talenti-type comparison theorem on $\mathrm{RCD}(0,N)$ spaces}
\author[Wenjing Wu]{Wenjing Wu}
\address{School of Mathematical Sciences, University of Science and Technology of China, Hefei 230026, P.R. China}
\email{\href{mailto:wuwenjing@ustc.edu.cn}{wuwenjing@ustc.edu.cn}}

\begin{abstract}
In this paper, we prove a Talenti-type comparison theorem for the $p$-Laplacian with Dirichlet boundary conditions on open subsets of a $\mathrm{RCD}(0,N)$ space with $N\in (1,\infty)$. We also obtain an almost rigidity result of the Talenti-type comparison theorem, whose proof relies on a compactness on varying spaces.
\end{abstract}

\date{\today}
\keywords{Talenti comparison, Compactness, $p$-Laplacian, $\mathrm{RCD}(0,N)$ spaces}
\subjclass[2020]{53C23, 35J92}

\maketitle
\tableofcontents

\section{Introduction}
Let $\Omega\subset \mathbb{R}^n$ be a bounded open set and $f\in {L}^2(\Omega)$. Let $u$ be the solution to 
\begin{equation*}
	\begin{cases}
		-\Delta u=f, &\text{ in }\Omega,\\
		u=0, &\text{ on }\partial\Omega,
	\end{cases}
\end{equation*}
and let $v$ be the solution to 
\begin{equation*}
	\begin{cases}
		-\Delta v=f^\sharp, &\text{ in }\Omega^\sharp,\\
		v=0, &\text{ on }\partial\Omega^\sharp,
	\end{cases}
\end{equation*}
where $\Omega^\sharp$ denotes the unique ball centered at the origin satisfying $\mathcal{L}^n(\Omega^\sharp)=\mathcal{L}^n(\Omega)$ and $f^\sharp$ is the Schwarz symmetrization of $f$. In 1976, Talenti \cite{Ta76} proved that
$$
u^\sharp (x)\leq v(x),\quad \mathcal{L}^n\text{-a.e. }x\in \Omega^\sharp.
$$
Moreover, if $u^\sharp=v$ $\mathcal{L}^n$-a.e. in $\Omega^\sharp$, then $\Omega$ is itself was already a ball. Talenti's comparison results were generalized to Riemannian manifolds with non-negative Ricci curvature in \cite{CL23}.

\vspace{0.4cm}

The aim of the present work is to generalize such a comparison result to a curved, possibly non-smooth, setting and prove an almost rigidity.

The framework of the paper is the one of metric measure spaces with non-negative Ricci curvature and dimension bounded above by $N\in (1,\infty)$ in a synthetic sense, via optimal transport.

Let $(\mathrm{X},\mathrm{d},\mathfrak{m})$ be a $\mathrm{RCD}(0,N)$ space for some $N\in (1,\infty)$, and having Euclidean volume growth, i.e.
$$
\mathrm{AVR}_{\mathfrak{m}}:=\lim_{r\rightarrow \infty}\frac{\mathfrak{m}(B_r(x_0))}{\omega_N r^N}>0,\quad \text{ for some (and thus any) }x_0\in \mathrm{supp}[\mathfrak{m}],
$$
where $\omega_N=\frac{\pi^{N/2}}{\int_0^\infty t^{N/2}\mathrm{e}^{-t}\mathrm{d}t}$.

In order to state the main theorems, let us introduce some notations about the $1$-dimensional model spaces and the corresponding Schwarz symmetrization. For any $N\in [1,\infty)$, we define the one-dimensional model space $([0,\infty),\mathrm{d}_{eu},\mathfrak{m}_{N}:=N\omega_N t^{N-1}\mathcal{L}^1\llcorner_{[0,\infty)})$, where $\mathrm{d}_{eu}$ is the restriction to $[0,\infty)$ of the canonical Euclidean distance over the real line and $\mathcal{L}^1$ is the standard Lebesgue measure.

We now introduce the corresponding Schwarz symmetrization. To this aim, given an open set $\Omega\subset \mathrm{X}$ with $\mathfrak{m}(\Omega):=a\in (0,\infty)$ and a Borel function $u:\Omega\rightarrow \mathbb{R}$, we define its distribution function $\mu:[0,\infty)\rightarrow [0,\mathfrak{m}(\Omega)]$ by 
$$
\mu(t):=\mathfrak{m}(\{|u|>t\}).
$$
We will let $u^\sharp:[0,\infty)\rightarrow [0,\infty]$ be the generalized inverse of $\mu$, defined in the following way:
$$
u^\sharp(s)=\begin{cases}
	\mathrm{ess}\text{-}\mathrm{sup} |u|,&\text{ if }s=0,\\
	\inf\{t\geq 0:\mu(t)<s\},&\text{ if }s>0.
\end{cases}
$$
Consider $\Omega^\star:=[0,r_a)$ such that $\mathfrak{m}_{N}([0,r_a])=\mathfrak{m}(\Omega)=a$, the Schwarz symmetrization $u^\star:\Omega^\star\rightarrow [0,\infty]$ is defined by
$$
u^\star(x):=u^\sharp(\mathfrak{m}_{N}([0,x])).
$$

Let $p, q \in(1, \infty)$ be conjugate exponents and let $f \in L^q(\Omega,\mathfrak{m})$ be a non-zero function, we consider the Poisson problem
\begin{equation}\label{A}
	\begin{cases}
		-\mathscr{L}_p u=f, & \text { in } \Omega, \\ 
		u=0, & \text { on } \partial \Omega.\end{cases}
\end{equation}
A function $u \in W^{1, p}(\Omega,\mathfrak{m})$ is a solution to $(\ref{A})$ if $u \in W_0^{1, p}(\Omega,\mathfrak{m})$ and
$$
\int_{\Omega} \langle \nabla u,\nabla \psi\rangle |\nabla u|^{p-2} \mathrm{d}\mathfrak{m}=\int_{\Omega} f \psi \mathrm{d}\mathfrak{m}, \quad \text{ for every } \psi \in W_0^{1, p}(\Omega,\mathfrak{m}),
$$

We will establish a comparison between the Schwarz symmetrization of the solution to $(\ref{A})$ and the solution to the problem
\begin{equation}\label{B}
	\begin{cases}
		-\mathscr{L}_p v=f^{\star}, &\text { in }\Omega^\star, \\
		v=0,& \text { on } \partial \Omega^\star.
	\end{cases}
\end{equation}
More precisely, we will prove the following result:

\vspace{0.3cm}

If $u \in W^{1, p}(\Omega,\mathfrak{m})$ is a solution to $(\ref{A})$ and $v \in W^{1, p}([0, r_a), \mathfrak{m}_{N})$ is the solution to $(\ref{B})$, then
\begin{equation}\label{Eq}
	\mathrm{AVR}_{\mathfrak{m}}^{\frac{q}{N}}u^{\star}(x) \leq v(x),\quad \mathfrak{m}_{N}\text{-a.e. }x\in \Omega^\star.
\end{equation}  
If $\mathrm{AVR}_{\mathfrak{m}}^{\frac{q}{N}}u^{\star} = v$ $\mathfrak{m}_{N}$-a.e. in $\Omega^\star$, then $(\mathrm{X},\mathrm{d},\mathfrak{m})$ is a Euclidean metric measure cone.

\vspace{0.3cm}

Now, we can ask an almost rigidity: is a $\mathrm{RCD}(0,N)$ with $N\in (1,\infty)$ and $\mathrm{AVR}_{\mathfrak{m}}>0$ that admits a solution satisfying almost equality (in the sense of $L^p$-norm) in $(\ref{Eq})$ closed to a Euclidean metric measure cone? The answer is yes. In fact, our main result in this note is the following:
\begin{theorem}
	For every $\epsilon>0$, $p>1$, $N>1$, $0<a_1<a_2<\infty$, $b,d \in(0,\infty)$, $0<c_l \leq c_u<\infty$, there exists $\delta=\delta(\epsilon, p, N, a_1,a_2,b, c_l / c_u,d)>0$ such that the following statement holds.
	Assume that
	\begin{itemize}
		\item $(\mathrm{X}, \mathrm{d}, \mathfrak{m},\bar{x})$ is a $\operatorname{RCD}(0, N)$ space with $\mathrm{AVR}_{\mathfrak{m}}\geq b$ and $\Omega=B_R(x) \subset \mathrm{X}$ is an open ball with $\mathfrak{m}(\Omega):=a\in [a_1,a_2]$ and $\mathrm{d}(\bar{x},x)\leq d;$
		\item $f \in W_0^{1, q}(\Omega, \mathfrak{m})$ with $c_l \leq\|f\|_{L^q(\Omega, \mathfrak{m})} \leq\|f\|_{W^{1, q}(\Omega, \mathfrak{m})} \leq c_u;$
		\item $u \in W^{1, p}(\Omega, \mathfrak{m})$ is a solution to $(\ref{A});$
		\item $v \in W^{1, p}([0, r_a), \mathfrak{m}_{N})$ is a solution to $(\ref{B})$, where $r_a=H_N^{-1}(a)$.
	\end{itemize}
	If $\|\mathrm{AVR}_{\mathfrak{m}}^{\frac{q}{N}} u^{\star}-v\|_{L^p([0, r_a), \mathfrak{m}_{N})}<\delta$, then there exists a Euclidean metric measure cone $(\mathrm{Z}, \mathrm{d}_{\mathrm{Z}}, \mathfrak{m}_{\mathrm{Z}},\bar{z})$ such that
	$$
	\mathrm{d}_{\mathrm{pmGH}}((\mathrm{X}, \mathrm{d}, \mathfrak{m},\bar{x}),(\mathrm{Z}, \mathrm{d}_{\mathrm{Z}}, \mathfrak{m}_{\mathrm{Z}},\bar{z}))<\epsilon .
	$$
\end{theorem}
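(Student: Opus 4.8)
The plan is to argue by contradiction via a compactness argument on varying spaces, following the standard "rigidity implies almost-rigidity" scheme. Suppose the statement fails for some fixed $\epsilon>0$ and fixed parameters $p,N,a_1,a_2,b,c_l/c_u,d$. Then there is a sequence of pointed $\mathrm{RCD}(0,N)$ spaces $(\mathrm{X}_n,\mathrm{d}_n,\mathfrak{m}_n,\bar{x}_n)$ with $\mathrm{AVR}_{\mathfrak{m}_n}\geq b$, open balls $\Omega_n=B_{R_n}(x_n)$ with $\mathfrak{m}_n(\Omega_n)=a_n\in[a_1,a_2]$ and $\mathrm{d}_n(\bar{x}_n,x_n)\leq d$, right-hand sides $f_n\in W_0^{1,q}(\Omega_n,\mathfrak{m}_n)$ with $c_l\leq\|f_n\|_{L^q}\leq\|f_n\|_{W^{1,q}}\leq c_u$, solutions $u_n$ to $(\ref{A})$ on $\Omega_n$ and $v_n$ to $(\ref{B})$ on $[0,r_{a_n})$, such that $\|\mathrm{AVR}_{\mathfrak{m}_n}^{q/N}u_n^{\star}-v_n\|_{L^p([0,r_{a_n}),\mathfrak{m}_N)}\to 0$ but $\mathrm{d}_{\mathrm{pmGH}}((\mathrm{X}_n,\mathrm{d}_n,\mathfrak{m}_n,\bar{x}_n),(\mathrm{Z},\mathrm{d}_{\mathrm{Z}},\mathfrak{m}_{\mathrm{Z}},\bar{z}))\geq\epsilon$ for every Euclidean metric measure cone $\mathrm{Z}$.

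Next I would extract convergent subsequences. By the compactness of $\mathrm{RCD}(0,N)$ spaces, up to a subsequence $(\mathrm{X}_n,\mathrm{d}_n,\mathfrak{m}_n,\bar{x}_n)\to(\mathrm{X}_\infty,\mathrm{d}_\infty,\mathfrak{m}_\infty,\bar{x}_\infty)$ in the pmGH sense, with the limit again $\mathrm{RCD}(0,N)$; moreover $\mathrm{AVR}_{\mathfrak{m}_\infty}\geq b>0$ since $\mathrm{AVR}$ is continuous (indeed monotone-limit) under this convergence with the uniform lower bound. Passing to a further subsequence, $a_n\to a_\infty\in[a_1,a_2]$, $x_n\to x_\infty$ with $\mathrm{d}_\infty(\bar{x}_\infty,x_\infty)\leq d$, and $R_n\to R_\infty$; I would check that $\Omega_n=B_{R_n}(x_n)$ converges (in the appropriate sense, e.g. Mosco/Kuratowski convergence of the associated Sobolev spaces together with measure convergence) to $\Omega_\infty=B_{R_\infty}(x_\infty)$ with $\mathfrak{m}_\infty(\Omega_\infty)=a_\infty$ — here one uses that the parameters are chosen so no measure is lost to the boundary sphere in the limit, after possibly perturbing $R_n$ slightly within the same ball. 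The uniform $W^{1,q}$ bound on $f_n$ gives, up to a subsequence, an $L^q$-strong and $W^{1,q}$-weak limit $f_\infty$ on $\Omega_\infty$ with $c_l\leq\|f_\infty\|_{L^q}\leq\|f_\infty\|_{W^{1,q}}\leq c_u$; in particular $f_\infty$ is non-zero. Stability of the $p$-Laplace Poisson problem under pmGH/Mosco convergence then yields that $u_n\to u_\infty$ (strongly in $L^p$, and with convergence of energies) where $u_\infty\in W^{1,p}(\Omega_\infty,\mathfrak{m}_\infty)$ solves $(\ref{A})$ with data $f_\infty$. Correspondingly $f_n^{\star}\to f_\infty^{\star}$ in $L^q([0,r_{a_\infty}),\mathfrak{m}_N)$ (continuity of the Schwarz symmetrization under $L^q$ convergence and convergence of masses), and by stability of the one-dimensional problem $(\ref{B})$ on the model spaces $([0,r_{a_n}),\mathfrak{m}_N)$, $v_n\to v_\infty$ in $L^p$, where $v_\infty$ solves $(\ref{B})$ with data $f_\infty^{\star}$ on $[0,r_{a_\infty})$. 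Finally, continuity of $u\mapsto u^{\star}$ under $L^p$-convergence together with $\mathfrak{m}_n\to\mathfrak{m}_\infty$ gives $u_n^{\star}\to u_\infty^{\star}$ in $L^p([0,r_{a_\infty}),\mathfrak{m}_N)$, and $\mathrm{AVR}_{\mathfrak{m}_n}^{q/N}\to\mathrm{AVR}_{\mathfrak{m}_\infty}^{q/N}$, so passing to the limit in the hypothesis forces
$$
\mathrm{AVR}_{\mathfrak{m}_\infty}^{\frac{q}{N}}u_\infty^{\star}=v_\infty\quad\mathfrak{m}_N\text{-a.e. in }\Omega_\infty^{\star}.
$$
By the rigidity statement (the sharp Talenti-type comparison with equality), $(\mathrm{X}_\infty,\mathrm{d}_\infty,\mathfrak{m}_\infty)$ is a Euclidean metric measure cone. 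But then taking $(\mathrm{Z},\mathrm{d}_{\mathrm{Z}},\mathfrak{m}_{\mathrm{Z}},\bar{z})=(\mathrm{X}_\infty,\mathrm{d}_\infty,\mathfrak{m}_\infty,\bar{x}_\infty)$ contradicts $\mathrm{d}_{\mathrm{pmGH}}((\mathrm{X}_n,\mathrm{d}_n,\mathfrak{m}_n,\bar{x}_n),(\mathrm{X}_\infty,\mathrm{d}_\infty,\mathfrak{m}_\infty,\bar{x}_\infty))\to 0$, completing the proof.

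The main obstacle is the stability of the Dirichlet $p$-Laplace problem under pmGH convergence of the ambient spaces together with convergence of the domains $\Omega_n=B_{R_n}(x_n)$: one must ensure Mosco convergence of $W_0^{1,p}(\Omega_n,\mathfrak{m}_n)$ to $W_0^{1,p}(\Omega_\infty,\mathfrak{m}_\infty)$ (so that no spurious capacity is created or lost at the boundary spheres), control the nonlinearity $|\nabla u|^{p-2}\nabla u$ in the weak formulation, and upgrade weak convergence of $u_n$ to the strong $L^p$ convergence needed to pass to the limit in $u_n^{\star}$ — this last point uses a uniform a priori estimate $\|u_n\|_{W^{1,p}}\lesssim\|f_n\|_{L^q}^{1/(p-1)}$ (hence uniform in $n$) plus the compact embedding on varying spaces. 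A secondary technical point is handling the boundary behaviour when $\mathfrak{m}_\infty(\partial B_{R_\infty}(x_\infty))>0$; this is dealt with by choosing, for each $n$, a radius $R_n'$ close to $R_n$ with $\mathfrak{m}_n(\partial B_{R_n'}(x_n))=0$ and $B_{R_n'}(x_n)=B_{R_n}(x_n)$ as sets, so the hypotheses are unaffected while the limit domain is a genuine ball of full measure $a_\infty$. All of these are by now standard in the $\mathrm{RCD}$ literature, and the remaining continuity statements (for $\mathrm{AVR}$, for Schwarz symmetrization, and for the one-dimensional model problem) are elementary.
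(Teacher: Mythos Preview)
Your contradiction-and-compactness skeleton matches the paper's, but two steps you treat as routine are precisely where the work lies, and the paper handles them differently.

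First, $\mathrm{AVR}$ is only \emph{upper} semi-continuous under pmGH convergence, not continuous: one has $\limsup_n\mathrm{AVR}_{\mathfrak{m}_n}\leq\mathrm{AVR}_{\mathfrak{m}_\infty}$, so along a subsequence $\mathrm{AVR}_{\mathfrak{m}_n}\to A$ with a priori only $A\in[b,\mathrm{AVR}_{\mathfrak{m}_\infty}]$. Passing to the limit thus gives $A^{q/N}u_\infty^{\star}=v_\infty$, not the equality with $\mathrm{AVR}_{\mathfrak{m}_\infty}^{q/N}$ you wrote. The paper closes this by combining with the Talenti inequality on the limit (which gives $\mathrm{AVR}_{\mathfrak{m}_\infty}^{q/N}u_\infty^\star\leq v_\infty$) to force $A=\mathrm{AVR}_{\mathfrak{m}_\infty}$.

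Second, and more substantially, you plan to invoke the Talenti rigidity theorem directly, which requires $u_\infty\in W_0^{1,p}(\Omega_\infty,\mathfrak{m}_\infty)$ to be a \emph{solution} of the limit Poisson problem. The Mosco/stability step you call ``standard'' is not: the paper's own continuity result (Theorem~\ref{conPoi}) needs both the extra assumption $W_0^{1,p}(B_{R_\infty}(x_\infty))=\hat W_0^{1,p}(B_{R_\infty}(x_\infty))$ and $p\geq 2$, neither of which is assumed in the theorem, and the remark following it gives a counterexample showing the first condition is genuinely necessary. What the compactness theorem yields is only $u_\infty\in\hat W_0^{1,p}$. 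The paper bypasses this entirely: instead of proving $u_\infty$ solves anything, it passes the pre-rigidity differential inequality
\[
\frac{1}{h}\int_{\{t<|u_n|<t+h\}}|\nabla u_n|\,\mathrm{d}\mathfrak{m}_n\ \leq\ \Big(-\frac{\mu_{u_n}(t+h)-\mu_{u_n}(t)}{h}\Big)^{1/q}\Big(\cdots\Big)^{1/p},
\]
valid for each $n$ because $u_n$ is a solution, to the limit in $n$ (using $L^p$/$L^q$-strong convergence of $u_n,f_n$, lower semi-continuity of the gradient term from Proposition~\ref{alrigprop9}(iii), and convergence of distribution functions) and then lets $h\to 0$. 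This yields the key inequality for $u_\infty$ directly; combined with $\mathrm{AVR}_{\mathfrak{m}_\infty}^{q/N}u_\infty^\star=v_\infty$ it forces equality in the \emph{isoperimetric} inequality for the superlevel sets $\{|u_\infty|>t\}$, and the cone structure follows from Theorem~\ref{isorig}, not from Talenti rigidity.
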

The proof of this theorem relies on a compactness result:  
\begin{theorem}\label{thmB}
	Let $p,N\in (1,\infty)$, let $\{\mathscr{X}_n:=[\mathrm{X}_n, \mathrm{d}_n, \mathfrak{m}_n,\bar{x}_n]\}_{n\in \overline{\mathbb{N}}_+}$ be a sequence of $\mathrm{RCD}(0,N)$ spaces with  $$\mathrm{d}_{\mathrm{pmGH}}(\mathscr{X}_n,\mathscr{X}_\infty)\rightarrow 0,$$  
	and let $\{R_n\}_{n\in \overline{\mathbb{N}}_+}\subset (0,\infty)$ be such that $R_n\rightarrow R_\infty$.

	If the sequence $\mathbb{N}_+\ni n\mapsto f_n\in W_0^{1, p}(B_{R_n}(x_n),\mathfrak{m}_n)$ is such that $\sup_n\|f_n\|_{W^{1, p}(\mathrm{Y}, \mathfrak{m}_n)}<\infty$, then $\{f_n\}_n$ has a subsequence $\{f_{n_k}\}_k$ that $L^p$-strongly converges to some function $f_\infty\in W^{1, p}(\mathrm{Y},\mathfrak{m}_\infty)$.
\end{theorem}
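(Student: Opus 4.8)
The plan is to reduce the statement to a \emph{uniform} Rellich--Kondrachov compactness on the varying spaces, and then to combine weak $L^p$-compactness with the stability of the $p$-Sobolev spaces under pointed measured Gromov--Hausdorff convergence. First I would extend each $f_n$ by $0$ outside the ball $B_{R_n}(x_n)$: since $f_n\in W_0^{1,p}(B_{R_n}(x_n),\mathfrak{m}_n)$, the extension (still written $f_n$) belongs to $W^{1,p}(\mathrm{X}_n,\mathfrak{m}_n)$ with unchanged norm, so that $M:=\sup_n\|f_n\|_{W^{1,p}(\mathrm{X}_n,\mathfrak{m}_n)}<\infty$ and each $f_n$ is supported in $\overline{B_{R_n}(x_n)}$. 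After realizing the pmGH convergence through isometric embeddings of all the $\mathrm{X}_n$ into a common proper space $(\mathrm{Y},\mathrm{d}_{\mathrm{Y}})$ with $\bar x_n\to\bar x_\infty$, $x_n\to x_\infty$ and $\mathfrak{m}_n\rightharpoonup\mathfrak{m}_\infty$ weakly, all these supports lie, for $n$ large, in the fixed ball $B_{R_\infty+1}(\bar x_\infty)\subset\mathrm{Y}$. It then suffices to extract an $L^p$-strongly convergent subsequence whose limit is supported in $\overline{B_{R_\infty}(x_\infty)}$ and lies in $W^{1,p}(\mathrm{X}_\infty,\mathfrak{m}_\infty)$.

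Next I would apply the weak $L^p$-compactness of uniformly $L^p$-bounded sequences with uniformly bounded supports along pmGH-converging spaces to get, up to a subsequence, $f_n\rightharpoonup f_\infty$ weakly in $L^p$ for some $f_\infty\in L^p(\mathrm{X}_\infty,\mathfrak{m}_\infty)$; since $R_n\to R_\infty$, testing against functions supported away from $\overline{B_{R_\infty}(x_\infty)}$ shows $f_\infty$ is supported there. By the $\liminf$ inequality in the $\Gamma$-convergence of the $p$-Cheeger energies along pmGH-converging $\mathrm{RCD}(0,N)$ spaces, one gets $f_\infty\in W^{1,p}(\mathrm{X}_\infty,\mathfrak{m}_\infty)$ with $\|f_\infty\|_{W^{1,p}(\mathrm{X}_\infty,\mathfrak{m}_\infty)}\le\liminf_n\|f_n\|_{W^{1,p}(\mathrm{X}_n,\mathfrak{m}_n)}$. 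It only remains to promote the weak $L^p$-convergence to strong $L^p$-convergence.

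For that promotion I would run a uniform Rellich--Kondrachov scheme. Fix $\eta\in(0,1)$. Being $\mathrm{RCD}(0,N)$, all the $\mathrm{X}_n$ satisfy a doubling estimate and a Poincaré inequality on balls of radius $\le R_\infty+2$ with constants depending only on $N$ and $R_\infty$. Using the pmGH approximations I would choose, for each $n$, a cover $\{B_\eta(y^n_i)\}_{i=1}^{K}$ of $\overline{B_{R_n}(x_n)}$ with $K=K(\eta,N,R_\infty)$ independent of $n$, with $y^n_i\to y^\infty_i$ and uniformly bounded overlap, together with a Borel partition $\{E^n_i\}_{i=1}^K$ subordinate to it. Setting $c^n_i:=\mathfrak{m}_n(B_\eta(y^n_i))^{-1}\int_{B_\eta(y^n_i)}f_n\,\mathrm{d}\mathfrak{m}_n$ and $P^n_\eta f_n:=\sum_{i=1}^K c^n_i\,\mathbf{1}_{E^n_i}$, the Poincaré inequality together with $\||\nabla f_n|\|_{L^p(\mathfrak{m}_n)}\le M$ gives $\|f_n-P^n_\eta f_n\|_{L^p(\mathfrak{m}_n)}\le C(N,R_\infty)\,\eta\,M$, uniformly in $n$. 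Along a further subsequence, and for a.e.\ choice of $\eta$, the finitely many scalars $c^n_i$ converge as $n\to\infty$ --- this uses $f_n\rightharpoonup f_\infty$ weakly in $L^p$, $\mathfrak{m}_n\rightharpoonup\mathfrak{m}_\infty$, and the uniform positivity of $\mathfrak{m}_n(B_\eta(y^n_i))$ coming from Bishop--Gromov --- so the piecewise constant approximations $P^n_\eta f_n$ converge strongly in $L^p$. A diagonal argument over $\eta=1/j$ then makes $\{f_n\}$ strongly $L^p$-precompact, and any strong limit must coincide with the weak limit $f_\infty\in W^{1,p}(\mathrm{X}_\infty,\mathfrak{m}_\infty)$, which finishes the proof.

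The hard part will be the third step: making the compactness uniform in $n$. This needs Poincaré and doubling constants depending only on $N$ and the radius (available on $\mathrm{RCD}(0,N)$), a careful transport of the finite covers and of the subordinate partitions across the varying spaces through the pmGH $\varepsilon$-isometries while keeping the cardinality $K$ fixed, and the convergence of the finitely many averaged values $c^n_i$, which is where the compatibility between weak $L^p$-convergence of the $f_n$ and weak convergence of the measures $\mathfrak{m}_n$ (together with uniform lower bounds for $\mathfrak{m}_n$ on small balls) enters. One should also verify that the $\Gamma$-convergence stability of the $p$-Cheeger energy and the weak $L^p$-compactness on varying spaces are available in the form needed for general $p\in(1,\infty)$, not only for $p=2$.
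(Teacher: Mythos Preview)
Your plan is essentially the same as the paper's: extract a weak $L^p$-limit, then upgrade to strong convergence via a covering by small balls of bounded multiplicity together with the uniform $(1,p)$-Poincar\'e inequality available on $\mathrm{RCD}(0,N)$ spaces. The paper's implementation differs from yours in two details. First, to place the weak limit $f_\infty$ in $W^{1,p}$ the paper does \emph{not} invoke a $\Gamma$-$\liminf$ under weak convergence (its version of the Ambrosio--Honda result is stated only under $L^p$-strong convergence); instead it proves a separate lemma showing, via the $1$--$1$ Poincar\'e inequality passed to the limit at Lebesgue points, that any $L^p$-weak limit lies in the Haj\l asz--Sobolev space and hence in $W^{1,p}$. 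Your ordering can be repaired without this: your Rellich scheme does not actually need $f_\infty\in W^{1,p}$ as input, so you may first establish strong convergence and only then apply the $\Gamma$-$\liminf$ (under strong convergence) to conclude $f_\infty\in W^{1,p}$. Second, for the strong-convergence step the paper compares $f_n$ not to a piecewise-constant projector $P^n_\eta f_n$ but to a \emph{fixed} Lipschitz approximant $f_{\infty,m}\in\mathrm{Lip}_c(\mathrm{Y})$ of $f_\infty$ (viewed simultaneously in all the $L^p(\mathfrak{m}_n)$); the three-term split on each cover ball then reads $|f_n-(f_n)_B|+|(f_n)_B-(f_{\infty,m})_B|+|(f_{\infty,m})_B-f_{\infty,m}|$, with the first and third controlled by Poincar\'e and the middle by weak convergence. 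This avoids the issue you flag of transporting the Borel partitions $\{E_i^n\}$ and proving $L^p$-strong convergence of the indicators across varying measures, which is the most delicate point of your version. Both routes rest on the same idea and the same uniform structural constants.
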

We mention that Theorem $\ref{thmB}$ was already proved in the case $p=2$ in \cite{GMS}. Our proof will use the sequences of covering families and weak local Poincar\'{e} inequalities, inspired by the works in \cite{BK22}.

\section{Preliminaries}
$\quad$ Throughout this note, a metric measure space is a triple $(\mathrm{X},\mathrm{d},\mathfrak{m})$ where $(\mathrm{X},\mathrm{d})$ is a complete and separable metric space, and $\mathfrak{m}\neq 0$ is a non-negative Borel measure that is finite on balls. Let $p\in (1,\infty)$ and let $q$ be the conjugate exponent by the relation
$p^{-1}+q^{-1}=1$.
\subsection{Sobolev functions}

The Cheeger energy (see \cite{AGS13}, \cite{AGS14}, \cite{Ch99}, \cite{Sh00}) associated to a metric measure space $(\mathrm{X},\mathrm{d},\mathfrak{m})$ is the convex and lower semi-continuous functional defined on $L^p(\mathrm{X},\mathfrak{m})$ as
$$
\operatorname{Ch}_p(f):=\frac{1}{p} \inf \bigg\{\liminf_{k\rightarrow \infty} \int \operatorname{lip}^p(f_k) \mathrm{d}\mathfrak{m}:\{f_k\}_k \subset \operatorname{Lip}_{\mathrm{bs}}(\mathrm{X}), f_k \rightarrow f \text { in } L^p(\mathrm{X},\mathfrak{m})\bigg\},
$$
where $\operatorname{lip}(f)$ is the so called local Lipschitz constant 
$$
\operatorname{lip}(f)(x):=\limsup _{y \rightarrow x} \frac{|f(y)-f(x)|}{\mathrm{d}(y, x)},
$$
which has to be understood to be $0$ if $x$ is an isolated point. The finiteness domain of the Cheeger energy is denoted by $W^{1,p}(\mathrm{X},\mathfrak{m})$ and is endowed with the complete norm $\|f\|_{W^{1,p}(\mathrm{X},\mathfrak{m})}:=\big[\|f\|_{L^p}^p+p \mathrm{Ch}_p(f)\big]^{1/p}$. It is possible to identify a canonical object $|D f|_{*,p} \in L^p(\mathrm{X},\mathfrak{m})$, called minimal $p$-relaxed slope, providing the integral representation
$$
\operatorname{Ch}_p(f)=\frac{1}{p} \int|D f|_{*,p}^p \mathrm{d}\mathfrak{m},\quad \text { for any } f \in W^{1,p}(\mathrm{X},\mathfrak{m}).
$$
Any metric measure space on which $\operatorname{Ch}_2$ is a quadratic form is said to be infinitesimally Hilbertian (\cite{Gi15}). 

Given an open subset $\Omega\subset \mathrm{X}$, the space $W_0^{1,p}(\Omega,\mathfrak{m})$ is defined by the closure of $\mathrm{Lip}_{\mathrm{c}}(\Omega)$ with respect to the norm of $W^{1,p}(\mathrm{X},\mathfrak{m})$ and the space $\hat{W}_0^{1, p}(\Omega,\mathfrak{m})$ is defined by the set of all $f \in W^{1, p}(\mathrm{X},\mathfrak{m})$ such that $f=0$ $\mathfrak{m}$-a.e. in $\Omega^c$.

\subsection{Differential calculus} 
Let $L^0(\mathrm{X},\mathfrak{m})$ be the vector space of all equivalence classes up to $\mathfrak{m}$-negligible set of Borel functions on $\mathrm{X}$ equipped with the topology of local convergence in measure. We recall the algebraic notion of a normed module over $(\mathrm{X},\mathrm{d},\mathfrak{m})$ and discuss a non-smooth differential calculus on metric measure spaces as in \cite{Gi18}. 

\begin{definition}[$L^0(\mathrm{X},\mathfrak{m})$-normed module]
	We call an $L^0(\mathrm{X},\mathfrak{m})$-normed module the quadruple $(\mathscr{M}^0, \tau, \cdot,|\cdot|)$, where\\
	$\mathrm{i})$ $(\mathscr{M}^0, \tau)$ is a topological vector space.\\
	$\mathrm{ii})$ The multiplication by $L^0$-functions $\cdot:L^0(\mathrm{X},\mathfrak{m}) \times \mathscr{M}^0 \rightarrow \mathscr{M}^0$ is a bilinear map satisfying
	\begin{align*}
		f \cdot(g \cdot v)=(f g) \cdot v ,\quad \hat{1} \cdot v=v,\quad \text { for any } f, g \in L^{0}(\mathrm{X},\mathfrak{m}) \text { and } v \in \mathscr{M}^0, 
	\end{align*}
	where $\hat{1}$ denotes the equivalence class up to $\mathfrak{m}$-negligible set of the function identically equal to $1$.\\
	$\mathrm{iii})$ The pointwise norm $|\cdot|: \mathscr{M}^0 \rightarrow L^0(\mathrm{X},\mathfrak{m})$ satisfies
	\begin{align*}
		|v| \geq 0 \quad \mathfrak{m} \text {-a.e., }   \quad
		|f \cdot v|=|f||v| , 
	\end{align*}
	for any $f \in L^0(\mathrm{X},\mathfrak{m})$ and $v \in \mathscr{M}^0$.\\
	$\mathrm{iv})$ The function $\mathrm{d}_{\mathscr{M}^0}:\mathscr{M}^0 \times \mathscr{M}^0 \rightarrow[0,\infty)$, defined by
	$$\mathrm{d}_{\mathscr{M}^0}(v, w):=\int|v-w| \wedge 1 \mathrm{d}\mathfrak{m}^{\prime},
	$$
	is a complete distance on $\mathscr{M}^0$ inducing the topology $\tau$. Here, $\mathfrak{m}^{\prime}$ is a Borel probability measure on $\mathrm{X}$ such that $\mathfrak{m} \ll \mathfrak{m}^{\prime} \ll \mathfrak{m}$. $($Such an $\mathfrak{m}^{\prime}$ must exist.$)$
\end{definition} 
An $L^0(\mathrm{X},\mathfrak{m})$-normed module $\mathscr{M}^0$ is called a Hilbert module if 
$$
2|v|^2+2|w|^2=|v+w|^2+|v-w|^2,
$$
for any $v,w\in \mathscr{M}^0$. 

\begin{proposition}\label{prop1}
	Let $\mathscr{M}^0$ be a Hilbert module. Then the formula
	$$
	\langle v, w\rangle:=\frac{1}{2}\big(|v+w|^2-|v|^2-|w|^2\big) \in L^0(\mathrm{X},\mathfrak{m})
	$$
	defines an $L^{0}(\mathrm{X},\mathfrak{m})$-bilinear map $\langle\cdot, \cdot\rangle: \mathscr{M}^0 \times \mathscr{M}^0 \rightarrow L^0(\mathrm{X},\mathfrak{m})$, called pointwise scalar product, which satisfies
	\begin{align*}
		\langle v, w\rangle=\langle w, v\rangle, \quad
		|\langle v, w\rangle| \leq|v||w|\ \mathfrak{m}\text{-a.e.,} \quad
		\langle v, v\rangle=|v|^2,
	\end{align*}
	for any $v, w \in\mathscr{M}^0$.
\end{proposition}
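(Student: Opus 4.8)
The plan is to run the classical Jordan--von Neumann argument pointwise $\mathfrak{m}$-a.e.\ and then to upgrade scalar bilinearity to $L^0$-bilinearity using the locality encoded in the module axioms. The map $\langle\cdot,\cdot\rangle$ is obviously well defined into $L^0(\mathrm{X},\mathfrak{m})$ since $|v+w|^2,|v|^2,|w|^2\in L^0(\mathrm{X},\mathfrak{m})$, and it is symmetric by inspection of the formula. Moreover $\langle v,v\rangle=\tfrac12\big(|v+v|^2-2|v|^2\big)=\tfrac12\big(4|v|^2-2|v|^2\big)=|v|^2$, using $|v+v|=|\hat{2}\cdot v|=2|v|$ from axiom $\mathrm{iii})$; similarly $\langle 0,w\rangle=0$. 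The whole argument rests on the fact that, $\mathscr{M}^0$ being a Hilbert module, for every fixed pair $a,b$ the parallelogram identity $|a+b|^2+|a-b|^2=2|a|^2+2|b|^2$ holds $\mathfrak{m}$-a.e.; combined with the definition it also gives $4\langle v,w\rangle=|v+w|^2-|v-w|^2$ $\mathfrak{m}$-a.e.

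\emph{Additivity and real homogeneity in the first slot.} Applying the identity $|x|^2+|y|^2=\tfrac12|x+y|^2+\tfrac12|x-y|^2$ to $(x,y)=(a+c,b+c)$ and to $(x,y)=(a-c,b-c)$ and subtracting yields, $\mathfrak{m}$-a.e.,
$$
4\langle a,c\rangle+4\langle b,c\rangle=\big(|a+c|^2-|a-c|^2\big)+\big(|b+c|^2-|b-c|^2\big)=\tfrac12\big(|a+b+2c|^2-|a+b-2c|^2\big).
$$
Taking $b=0$ (and using $\langle 0,c\rangle=0$) gives $|x+2c|^2-|x-2c|^2=8\langle x,c\rangle$ $\mathfrak{m}$-a.e.\ for every $x$; substituting $x=a+b$ into this and plugging back into the displayed identity gives $\langle a+b,c\rangle=\langle a,c\rangle+\langle b,c\rangle$ $\mathfrak{m}$-a.e. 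From additivity one gets $\langle nv,w\rangle=n\langle v,w\rangle$ for $n\in\mathbb{N}$ and $\langle -v,w\rangle=-\langle v,w\rangle$ (since $\langle v,w\rangle+\langle -v,w\rangle=\langle 0,w\rangle=0$), hence $\langle\lambda v,w\rangle=\lambda\langle v,w\rangle$ for all $\lambda\in\mathbb{Q}$; since $\lambda\mapsto\lambda\cdot v$ is continuous into $\mathscr{M}^0$ and $|\cdot|^2\colon\mathscr{M}^0\to L^0(\mathrm{X},\mathfrak{m})$ is continuous, $\lambda\mapsto\langle\lambda v,w\rangle$ is continuous in $L^0$ and the identity extends to all $\lambda\in\mathbb{R}$.

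\emph{From constant scalars to $L^0$-coefficients.} For a Borel set $E$, bilinearity of the module multiplication gives $\mathbf{1}_E\cdot(\mathbf{1}_E\cdot v+w)=\mathbf{1}_E\cdot(v+w)$ and $\mathbf{1}_{E^c}\cdot(\mathbf{1}_E\cdot v+w)=\mathbf{1}_{E^c}\cdot w$, so $|\mathbf{1}_E\cdot v+w|=|v+w|$ on $E$ and $=|w|$ on $E^c$; together with $|\mathbf{1}_E\cdot v|^2=\mathbf{1}_E|v|^2$ this yields $\langle\mathbf{1}_E\cdot v,w\rangle=\mathbf{1}_E\langle v,w\rangle$ $\mathfrak{m}$-a.e. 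By additivity and $\mathbb{R}$-homogeneity this extends to $\langle s\cdot v,w\rangle=s\langle v,w\rangle$ for every simple function $s$. Finally, for arbitrary $f\in L^0(\mathrm{X},\mathfrak{m})$ choose simple $s_n\to f$ locally in measure; then $s_n\cdot v\to f\cdot v$ in $\mathscr{M}^0$ (because $|s_n\cdot v-f\cdot v|=|s_n-f|\,|v|\to 0$ locally in measure) and $s_n\langle v,w\rangle\to f\langle v,w\rangle$ in $L^0$, so continuity of $\langle\cdot,w\rangle$ forces $\langle f\cdot v,w\rangle=f\langle v,w\rangle$. Additivity and $L^0$-homogeneity in the second slot follow by symmetry, establishing $L^0$-bilinearity.

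\emph{Cauchy--Schwarz.} For fixed $v,w$ and $\lambda\in\mathbb{R}$, bilinearity together with $\langle z,z\rangle=|z|^2$ gives, $\mathfrak{m}$-a.e., $|v|^2+2\lambda\langle v,w\rangle+\lambda^2|w|^2=\langle v+\lambda w,v+\lambda w\rangle=|v+\lambda w|^2\ge 0$. Intersecting these $\mathfrak{m}$-a.e.\ statements over $\lambda\in\mathbb{Q}$ and using that the left side is a polynomial in $\lambda$, one gets that for $\mathfrak{m}$-a.e.\ $x$ the quadratic $\lambda\mapsto|v|^2(x)+2\lambda\langle v,w\rangle(x)+\lambda^2|w|^2(x)$ is nonnegative on all of $\mathbb{R}$; inspecting its discriminant (treating the case $|w|^2(x)=0$ separately, which forces $\langle v,w\rangle(x)=0$) gives $\langle v,w\rangle^2\le|v|^2|w|^2$, i.e.\ $|\langle v,w\rangle|\le|v|\,|w|$ $\mathfrak{m}$-a.e. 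I expect the only genuinely delicate point to be the passage from constant scalars to $L^0$-coefficients, where one must lean on the locality relation $|\mathbf{1}_E\cdot v|=\mathbf{1}_E|v|$ and on the continuity of all operations in the relevant $L^0$-topologies; the remaining steps are the standard Jordan--von Neumann manipulations, the only extra care being that each algebraic identity holds merely $\mathfrak{m}$-a.e.\ and one must combine countably many such statements.
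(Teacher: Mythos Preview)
Your proof is correct and follows essentially the same approach as the paper: the paper's proof is a two-line sketch saying that all properties come from adapting the Hilbert-space argument, except $L^0$-bilinearity, which follows from locality and continuity of $\langle\cdot,\cdot\rangle$. You have simply carried out exactly this plan in full detail---the Jordan--von Neumann manipulation for additivity, $\mathbb{Q}$-homogeneity plus continuity for $\mathbb{R}$-homogeneity, the locality step via characteristic functions and simple-function approximation for $L^0$-coefficients, and the discriminant argument for Cauchy--Schwarz---and your identification of the passage to $L^0$-coefficients as the only genuinely delicate point matches precisely what the paper singles out.
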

\begin{proof}
	All properties can be obtained by suitably adapting the proof of the analogous statements for Hilbert spaces, apart from the $L^0(\mathrm{X},\mathfrak{m})$-bilinearity of $\langle\cdot, \cdot\rangle$, which can be shown by using the fact that $\langle\cdot, \cdot\rangle$ is local and continuous with respect to both entries by its very construction.
\end{proof}
\begin{definition}[Dual of $L^0(\mathrm{X},\mathfrak{m})$-normed module]
	Let $\mathscr{M}^0$ be an $L^0(\mathrm{X},\mathfrak{m})$-normed module. Then, we define its dual module $(\mathscr{M}^0)^*$ as
	\begin{align*}
		(\mathscr{M}^0)^* :=\big\{L: \mathscr{M}^0 \rightarrow L^0(\mathrm{X},\mathfrak{m}) & | L \text { is a continuous linear operator satisfying }  \\
		&\quad  L(f\cdot v)=f L(v) \text { for any } v \in \mathscr{M}^0\text{ and }f \in L^{0}(\mathrm{X},\mathfrak{m})\big\},
	\end{align*}
	equipped with the following operations
	\begin{align*}
		(\alpha L+L^\prime)(v)&:=\alpha L(v)+L^\prime(v),\\
		(f\cdot L)(v)&:=fL(v),\\
		|L|_*&:=\operatorname{ess-sup}_{v \in \mathscr{M}^0:|v| \leq 1\ \mathfrak{m}\text{-a.e. }} L(v),
	\end{align*}
	for any $L,L^\prime\in (\mathscr{M}^0)^*$, $\alpha\in \mathbb{R}$, $f\in L^0(\mathrm{X},\mathfrak{m})$ and $v\in \mathscr{M}^0$.
\end{definition}
One can check that $(\mathscr{M}^0)^*$ has a natural $L^0(\mathrm{X},\mathfrak{m})$-normed module structure.
\begin{proposition}[\cite{GN22}]
	Let $(\mathrm{X}, \mathrm{d}, \mathfrak{m})$ be a $\operatorname{RCD}(K, N)$ space for some $K \in \mathbb{R}$, $N \in[1, \infty]$. Then there exists a triple $(L^0(T^*\mathrm{X}),\mathrm{d},\nabla)$ where $L^0(T^*\mathrm{X})$ is a Hilbert module, and $\mathrm{d}: \cup_{p \in(1, \infty)} W^{1, p}(\mathrm{X},\mathfrak{m}) \rightarrow L^0(T^*\mathrm{X})$ and $\nabla: \cup_{p \in(1, \infty)} W^{1, p}(\mathrm{X},\mathfrak{m}) \rightarrow L^0(T \mathrm{X}):=(L^0(T^*\mathrm{X}))^*$ are such that for any $p \in(1, \infty)$ it holds that\\
	$\mathrm{i})$ $\mathrm{d}|_{W^{1, p}(\mathrm{X},\mathfrak{m})}$ and $\nabla|_{W^{1, p}(\mathrm{X},\mathfrak{m})}$ are linear.\\
	$\mathrm{ii})$ For any $f \in W^{1, p}(\mathrm{X},\mathfrak{m})$, $\mathrm{d} f(\nabla f)=|\nabla f|_*^2=|\mathrm{d} f|^2=|D f|_{*,p}^2$.\\
	$\mathrm{iii})$ The spaces $\{\mathrm{d} f: f \in W^{1, p}(\mathrm{X},\mathfrak{m})\}$ and $\{\nabla f: f \in W^{1, p}(\mathrm{X},\mathfrak{m})\}$ generate $L^0(T^* \mathrm{X})$ and $L^0(T \mathrm{X})$ as modules, respectively.
\end{proposition}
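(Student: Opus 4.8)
The plan is to assemble the triple in three stages: construct the $L^{2}$-cotangent module, localize it to an $L^{0}$-module, and then extract $\nabla$ from the self-duality of Hilbert modules; the passage from $W^{1,2}$ to $\bigcup_{p\in(1,\infty)}W^{1,p}$ will be handled via the $p$-independence of the minimal relaxed slope.

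First I would recall Gigli's construction of $L^{2}(T^{*}\mathrm{X})$. One starts from the pre-cotangent module of formal finite sums $\sum_{i}\chi_{A_{i}}\mathrm{d}f_{i}$ with $f_{i}\in W^{1,2}(\mathrm{X},\mathfrak{m})$ and $\{A_{i}\}$ a Borel partition of $\mathrm{X}$, declares the pointwise norm of such an element to be $\sum_{i}\chi_{A_{i}}|Df_{i}|_{*,2}$, passes to the quotient identifying two such elements whenever their pointwise norms coincide $\mathfrak{m}$-a.e. on every Borel set, and completes with respect to the $L^{2}$-norm of the pointwise norm. This yields an $L^{2}$-normed module $L^{2}(T^{*}\mathrm{X})$ and a map $\mathrm{d}\colon W^{1,2}\to L^{2}(T^{*}\mathrm{X})$, linear, local, satisfying the Leibniz rule and closure, with $|\mathrm{d}f|=|Df|_{*,2}$ $\mathfrak{m}$-a.e. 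Since $(\mathrm{X},\mathrm{d},\mathfrak{m})$ is $\mathrm{RCD}(K,N)$, hence infinitesimally Hilbertian, $\mathrm{Ch}_{2}$ is a quadratic form; using locality of relaxed slopes one upgrades this to the $\mathfrak{m}$-a.e. identity $|D(f+g)|_{*,2}^{2}+|D(f-g)|_{*,2}^{2}=2|Df|_{*,2}^{2}+2|Dg|_{*,2}^{2}$ on $W^{1,2}$, and by density of $\{\mathrm{d}f:f\in W^{1,2}\}$ the pointwise parallelogram law then holds on all of $L^{2}(T^{*}\mathrm{X})$, so this module is Hilbert in the sense of this section.

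Next I would localize: endowing $L^{2}(T^{*}\mathrm{X})$ with the distance $\int|\omega-\eta|\wedge 1\,\mathrm{d}\mathfrak{m}'$ and completing produces an $L^{0}(\mathrm{X},\mathfrak{m})$-normed module $L^{0}(T^{*}\mathrm{X})$ into which $L^{2}(T^{*}\mathrm{X})$ embeds densely, the pointwise norm and the scalar product of Proposition~\ref{prop1} extending by $L^{0}$-continuity; in particular the parallelogram law persists, so $L^{0}(T^{*}\mathrm{X})$ is again a Hilbert module. To extend $\mathrm{d}$ to each $W^{1,p}$: for $f\in W^{1,p}$ choose $\operatorname{Lip}_{\mathrm{bs}}(\mathrm{X})$ functions $f_{k}\to f$ realizing $\mathrm{Ch}_{p}(f)$; by locality together with the $p$-independence of the minimal relaxed slope, i.e. $|Dg|_{*,p}=|Dg|_{*,p'}$ $\mathfrak{m}$-a.e. for $g\in W^{1,p}\cap W^{1,p'}$, the sequence $(\mathrm{d}f_{k})_{k}$ is $L^{0}$-Cauchy, and I set $\mathrm{d}f$ to be its limit; one checks it depends neither on the approximants nor on $p$, that $|\mathrm{d}f|=|Df|_{*,p}$ $\mathfrak{m}$-a.e., and that $\mathrm{d}$ is linear on each fixed $W^{1,p}$, which is part of (i). Then I set $L^{0}(T\mathrm{X}):=(L^{0}(T^{*}\mathrm{X}))^{*}$ and, invoking the Riesz-type self-duality of Hilbert $L^{0}$-modules, let $\sharp\colon L^{0}(T^{*}\mathrm{X})\to L^{0}(T\mathrm{X})$ be the resulting pointwise-isometric isomorphism and define $\nabla f:=\sharp(\mathrm{d}f)$. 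Linearity of $\nabla|_{W^{1,p}}$ is then immediate (composition of linear maps), completing (i), and since $\sharp$ is a pointwise isometry and $\langle\mathrm{d}f,\mathrm{d}f\rangle=|\mathrm{d}f|^{2}$, we obtain $|\nabla f|_{*}=|\mathrm{d}f|=|Df|_{*,p}$ and $\mathrm{d}f(\nabla f)=|\mathrm{d}f|^{2}$ $\mathfrak{m}$-a.e., which is (ii). For (iii): $\mathfrak{m}$ being finite on balls, every bounded compactly supported Lipschitz function lies in $W^{1,p}(\mathrm{X},\mathfrak{m})$ for all $p\in(1,\infty)$, and $\{\mathrm{d}g:g\in\operatorname{Lip}_{\mathrm{bs}}(\mathrm{X})\}$ generates $L^{2}(T^{*}\mathrm{X})$, hence $L^{0}(T^{*}\mathrm{X})$, as a module by construction; applying $\sharp$ transports the statement to $L^{0}(T\mathrm{X})$.

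The only step that goes beyond a routine transcription of the $L^{2}$-theory, and hence the main obstacle, is the $p$-independence of the minimal relaxed slope used above. This is a genuine analytic fact (resting on the identification of relaxed slopes with weak upper gradients and on the locality properties in the Ambrosio--Gigli--Savar\'{e}/Cheeger circle of ideas, valid on $\mathrm{RCD}$ and more generally on PI spaces), and it is exactly what makes the single module $L^{0}(T^{*}\mathrm{X})$ host the differentials of all the spaces $W^{1,p}$ coherently. Granting it, the remaining verifications follow by adapting \cite{Gi18} to the $L^{0}$-setting, as carried out in \cite{GN22}.
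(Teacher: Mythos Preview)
The paper does not supply its own proof of this proposition: it is stated with a citation to \cite{GN22} and no argument is given in the text. Your outline is a faithful sketch of the construction carried out in that reference---build the $L^{2}$-cotangent module via Gigli's quotient-and-completion procedure, pass to the $L^{0}$-completion, use the $p$-independence of minimal relaxed slopes on $\mathrm{RCD}$ spaces to extend $\mathrm{d}$ coherently to every $W^{1,p}$, and obtain $\nabla$ from the Riesz isomorphism of Hilbert modules---so there is nothing to contrast on the level of strategy.

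One technical point deserves a sharper statement: in your extension step you write that the sequence $(\mathrm{d}f_{k})_{k}$ is $L^{0}$-Cauchy ``by locality together with the $p$-independence of the minimal relaxed slope''. What is actually needed is that $|D(f_{k}-f_{j})|_{*,2}\to 0$ in $L^{0}$, and this does not follow merely from $\operatorname{lip}(f_{k})\to|Df|_{*,p}$ in $L^{p}$; one also uses that on $\mathrm{RCD}$ spaces the map $g\mapsto|Dg|$ is a seminorm (closure of the differential) and that the approximants can be chosen so that $|D(f_{k}-f_{j})|\to 0$ in the appropriate topology. This is handled in \cite{GN22} but your sketch glosses over it. Apart from that, the proposal is correct and matches the cited source.
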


\begin{definition}\label{defSobo}
	Let $\Omega$ be an open subset of $\mathrm{X}$. We say that $f\in L^p(\Omega,\mathfrak{m})$ belongs to $W^{1,p}(\Omega,\mathfrak{m})$ if \\
	$\mathrm{a})$ for any $\eta\in \mathrm{Lip}_{\mathrm{c}}(\Omega)$, it holds that $\eta f\in W^{1,p}(\mathrm{X},\mathfrak{m});$\\
	$\mathrm{b})$ there exists a non-negative function $G\in L^p(\Omega,\mathfrak{m})$ such that for any $\eta\in \mathrm{Lip}_{\mathrm{c}}(\Omega)$, we have that
	$$
	|D (\eta f)|_{*,p}\leq G,\quad \mathfrak{m}\text{-a.e. on }\{\eta=1\}.
	$$
\end{definition}

\vspace{0.5cm}

Given a $\operatorname{RCD}(K, N)$ space with $K \in \mathbb{R},N \in[1, \infty)$ and an open subset $\Omega\subset \mathrm{X}$, for any $f,g\in W^{1,p}(\Omega,\mathfrak{m})$ we define $|\nabla f|\in L^p(\Omega,\mathfrak{m})$ and $\langle\nabla f, \nabla g\rangle\in L^{p/2}(\Omega,\mathfrak{m})$ via
$$
|\nabla f|=|D (\eta f)|_{*,p}\text{ and } \langle\nabla f, \nabla g\rangle:=\langle\nabla(\eta f), \nabla (\eta g)\rangle,\quad\mathfrak{m}\text{-a.e. on }\{\eta=1\},
$$
for any $\eta\in \mathrm{Lip}_{\mathrm{c}}(\Omega)$. Notice that the property $\mathrm{a})$, together with the locality property of the minimal $p$-relaxed slopes, guarantees that $|\nabla f|$ and $\langle\nabla f, \nabla g\rangle$ are well-defined.

\begin{definition}[The local $p$-Laplacian and Poisson problem on $\Omega$]
	Let $(\mathrm{X}, \mathrm{d}, \mathfrak{m})$ be a $\operatorname{RCD}(K, N)$ space for some $K \in \mathbb{R}$, $N \in[1, \infty)$, let $\Omega$ be an open subset of $\mathrm{X}$ and let $f\in L^q(\Omega,\mathfrak{m})$. 
	
	The $p$-Laplacian on $\Omega$ is an operator $\mathscr{L}_p$ on $W^{1, p}(\Omega,\mathfrak{m})$ defined as the follows. For each function $u \in W^{1, p}(\Omega,\mathfrak{m})$, its $p$-Laplacian $\mathscr{L}_p u$ is a linear functional acting on $W_0^{1, p}(\Omega,\mathfrak{m})$ given by
	$$
	\mathscr{L}_p u(\psi):=-\int_{\Omega} \langle\nabla u, \nabla \psi\rangle|\nabla u|^{p-2} \mathrm{d}\mathfrak{m}, \quad \text{ for every } \psi \in W_0^{1, p}(\Omega,\mathfrak{m}).
	$$

	We say that a function $u\in W^{1,p}(\Omega,\mathfrak{m})$ is a solution to the Poisson problem 
	\begin{equation}\label{eq1}
		\begin{cases}
			-\mathscr{L}_p u=f, &\text{ in }\Omega,\\
			u=0, &\text{ on }\partial\Omega,
		\end{cases}
	\end{equation}
	if $u\in W_0^{1,p}(\Omega,\mathfrak{m})$ and for any $\psi\in W_0^{1,p}(\Omega,\mathfrak{m})$, it holds that
	$$
	-\mathscr{L}_p u(\psi)=\int_\Omega \langle\nabla u, \nabla \psi\rangle|\nabla u|^{p-2} \mathrm{d}\mathfrak{m}=\int_\Omega f \psi \mathrm{d}\mathfrak{m}.
	$$

\end{definition}

\subsection{Functions of bounded variation and Coarea formula}
We assume that the reader is familiar with the theory of functions of bounded variation and sets of finite perimeter in metric measure spaces developed in \cite{M03} and in the more recent \cite{AD14}. We recall now the main notions.
\begin{definition}[$\infty$-test plan]
	A measure $\pi\in \mathscr{P}(\mathrm{C}([0,1],\mathrm{X}))$ is said to be an $\infty$-test plan if\\
	$\mathrm{i})$ there exists $C>0$ such that $(\mathrm{e}_t)_\sharp \pi\leq C\mathfrak{m}$ for every $t\in [0,1];$\\
	$\mathrm{ii})$ $\gamma\mapsto \mathrm{Lip}(\gamma)\in {L}^\infty(\mathrm{C}([0,1],\mathrm{X}),\pi)$.
\end{definition}
The least constant $C>0$ that can be chosen in $\mathrm{i})$ is called compression constant of $\pi$ and is denoted by $\mathrm{Comp}(\pi)$.

\begin{definition}\label{codef1}
	The $\mathrm{BV}$ space $\mathrm{BV}_{\mathrm{w}}(\mathrm{X},\mathfrak{m})$ is defined as the space of all Borel functions $f: \mathrm{X} \rightarrow \mathbb{R}$ that satisfy the following properties:\\
	$\mathrm{i})$ for every $\infty$-test plan $\pi$, it holds that for $\pi$-a.e. $\gamma$, $f\circ {\gamma}\in \mathrm{BV}((0,1))$ and
	$$
	\liminf_{k\rightarrow \infty}\bigg[\fint_0^{1/k}|f(\gamma_s)-f(\gamma_0)|\mathrm{d}s+\fint_{1-1/k}^{1}|f(\gamma_s)-f(\gamma_1)|\mathrm{d}s\bigg]=0;
	$$
	$\mathrm{ii})$ there exists a finite Borel measure $\mu$ on $\mathrm{X}$ such that for every $\infty$-test plan $\pi$,
	\begin{equation}\label{coeq1}
		\int {\gamma}_{\sharp}|D(f \circ {\gamma})| \mathrm{d} {\pi}(\gamma) \leq \mathrm{Comp}(\pi) \|\operatorname{Lip}(\cdot)\|_{{L}^{\infty}({\pi})} \mu .
	\end{equation}
\end{definition}
Associated to this notion, there is the concept of weak total variation $|Df|_w$, defined as the least measure $\mu$ satisfying $(\ref{coeq1})$ for every $\infty$-test plan ${\pi}$.

Let $E\subset \mathrm{X}$ be a Borel set, we say that $E$ is a set of locally finite perimeter if $|\mathrm{D} \chi_E|_w(A)<\infty$ for every $A$ bounded open subset of $\mathrm{X}$ and we say that $E$ is a set of finite perimeter if $|\mathrm{D} \chi_E|_w(\mathrm{X})<\infty$. We denote $|\mathrm{D} \chi_E|_w(\cdot)$ also with $\operatorname{Per}(E, \cdot)$. For simplicity, $\operatorname{Per}(E):=\operatorname{Per}(E, \mathrm{X})$.
\begin{proposition}[\cite{AD14,D14}]\label{coprop1}
	Let $f\in L^1(\mathrm{X},\mathfrak{m})\cap \mathrm{BV}_{\mathrm{w}}(\mathrm{X},\mathfrak{m})$. Then 
	\begin{align*}
		|Df|_w(\mathrm{X})=\inf\bigg\{\liminf_{n\rightarrow \infty}\int \mathrm{lip}(f_n)\mathrm{d}\mathfrak{m}:\{f_n\}_n\subset \operatorname{Lip}_{\mathrm{bs}}(\mathrm{X}),\|f_n-f\|_{{L}^1}\rightarrow 0\bigg\},
	\end{align*}
	and for every open set $B\subset \mathrm{X}$, 
	\begin{align*}
		|Df|_w(B)=\inf\bigg\{\liminf_{n\rightarrow \infty}\int_B \mathrm{lip}(f_n)\mathrm{d}\mathfrak{m}:\{f_n\}_n\subset \operatorname{Lip}_{\mathrm{loc}}(B)\cap L^1(B),\|f_n-f\|_{{L}^1(B)}\rightarrow 0\bigg\}.
	\end{align*}
\end{proposition}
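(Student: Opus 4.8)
\emph{The plan is to prove the two inequalities ``$\geq$'' and ``$\leq$'' separately; I will treat the global identity and the localized one on an open set $B$ in parallel, writing $\mathcal V(f,B):=\inf\{\liminf_n\int_B \mathrm{lip}(f_n)\,\mathrm{d}\mathfrak m\}$ for the right-hand side with the relevant class of competitors (i.e.\ $\operatorname{Lip}_{\mathrm{bs}}(\mathrm X)$ when $B=\mathrm X$ and $\operatorname{Lip}_{\mathrm{loc}}(B)\cap L^1(B)$ in general).}

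The inequality $|Df|_w(B)\leq\mathcal V(f,B)$ follows from two facts. First, any locally Lipschitz $\ell\in L^1(B)$ satisfies $|D\ell|_w\llcorner B\leq\mathrm{lip}(\ell)\,\mathfrak m\llcorner B$ as measures: for any $\infty$-test plan $\pi$ and $\pi$-a.e.\ $\gamma$ one has $|D(\ell\circ\gamma)|\leq\mathrm{lip}(\ell)(\gamma_t)\,|\dot\gamma_t|\,\mathrm{d}t$, and pushing this forward and integrating against $\pi$, using the bounded compression and $|\dot\gamma|\leq\|\operatorname{Lip}(\cdot)\|_{L^\infty(\pi)}$, gives $(\ref{coeq1})$ with $\mu=\mathrm{lip}(\ell)\,\mathfrak m$. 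Second, $g\mapsto|Dg|_w$ is lower semicontinuous along $L^1(B)$-convergence: if $f_n\to f$ in $L^1(B)$ then along a subsequence $f_n\to f$ $\mathfrak m$-a.e., the bounded-compression property of any $\infty$-test plan transfers this to $L^1(0,1)$-convergence of $f_n\circ\gamma$ to $f\circ\gamma$ for $\pi$-a.e.\ $\gamma$, so lower semicontinuity of the one-dimensional total variation and Fatou's lemma show $f$ obeys Definition~\ref{codef1}, while weak-$*$ compactness of the measures $|Df_n|_w\llcorner B$ and lower semicontinuity of mass on open sets give $|Df|_w(B)\leq\liminf_n|Df_n|_w(B)$. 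Applying the first fact to each term of an admissible approximating sequence and then the second, and taking the infimum, yields $|Df|_w(B)\leq\mathcal V(f,B)$; the same argument with $B=\mathrm X$ handles the global case.

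For the reverse inequality I would mollify by the heat semigroup $\mathsf h_t$ of the $\operatorname{RCD}(0,N)$ space. One uses that $\mathsf h_t$ maps $L^1(\mathrm X,\mathfrak m)\cap\mathrm{BV}_{\mathrm w}(\mathrm X,\mathfrak m)$ into locally Lipschitz functions (Sobolev-to-Lipschitz regularization, since $|\nabla\mathsf h_t f|$ is locally bounded), that $\mathsf h_t f\to f$ in $L^1$ as $t\downarrow 0$, and the $1$-Bakry--\'Emery-type bound $|D\mathsf h_t f|_w\leq\tilde{\mathsf h}_t|Df|_w$, where $\tilde{\mathsf h}_t$ is the dual heat flow on finite measures; mass conservation then gives $\int_{\mathrm X}\mathrm{lip}(\mathsf h_t f)\,\mathrm{d}\mathfrak m\leq|D\mathsf h_t f|_w(\mathrm X)\leq|Df|_w(\mathrm X)$. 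To land in $\operatorname{Lip}_{\mathrm{bs}}(\mathrm X)$, multiply $\mathsf h_t f$ by a cutoff $\eta_R$ with $\eta_R\equiv1$ on $B_R(\bar x)$, $\operatorname{supp}\eta_R\subset B_{2R}(\bar x)$ and $\operatorname{Lip}(\eta_R)\leq2/R$: then $\mathrm{lip}(\eta_R\mathsf h_t f)\leq\mathrm{lip}(\mathsf h_t f)+\tfrac2R|\mathsf h_t f|$ and $\|\eta_R\mathsf h_t f-f\|_{L^1}\leq\|\mathsf h_t f-f\|_{L^1}+\int_{\mathrm X\setminus B_R}|\mathsf h_t f|$, both errors controlled by first taking $t$ small and then $R$ large, so a diagonal argument produces the competitor sequence. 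For the localized statement, replace the global cutoff by an exhaustion $\Omega_1\Subset\Omega_2\Subset\cdots\Subset B$ with a subordinate partition of unity $\{\phi_k\}$, set $f_n:=\sum_k\phi_k\,\mathsf h_{t_{n,k}}f$ with each $t_{n,k}$ chosen small relative to $\operatorname{dist}(\Omega_k,B\setminus\Omega_{k+1})$, and use that the cross-terms, which telescope into $\sum_k(\mathsf h_{t_{n,k}}f-\mathsf h_{t_{n,k+1}}f)\nabla\phi_k$, can be made $L^1(B)$-small; this gives $f_n\in\operatorname{Lip}_{\mathrm{loc}}(B)\cap L^1(B)$, $f_n\to f$ in $L^1(B)$, and $\limsup_n\int_B\mathrm{lip}(f_n)\,\mathrm{d}\mathfrak m\leq|Df|_w(B)$.

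The main obstacle is precisely this partition-of-unity step: unlike Euclidean convolution, $\mathsf h_t$ is not supported in small balls, so controlling the cross-terms near $\partial B$ and the energy leaking out of each $\Omega_k$ requires quantitative off-diagonal smallness of the heat kernel (Gaussian upper bounds, available on $\operatorname{RCD}(0,N)$ spaces via Li--Yau estimates), or equivalently the weak local Poincar\'e inequalities exploited in \cite{BK22}. Once this non-locality is quantified, choosing the scales $t_{n,k}\to0$ fast enough closes the argument; the remaining points --- local Lipschitz regularity of $\mathsf h_t f$ and the $\mathrm{BV}$ Bakry--\'Emery estimate $|D\mathsf h_t f|_w\leq\tilde{\mathsf h}_t|Df|_w$ --- are by now standard on $\operatorname{RCD}$ spaces. (As noted in the statement, the identity itself is due to \cite{AD14,D14}, so this is in essence a reconstruction of their argument adapted to the present setting.)
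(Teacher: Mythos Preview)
The paper does not prove this proposition; it is stated with a citation to \cite{AD14,D14} and used as a black box. So there is no in-paper argument to compare against, only the original references.

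Your outline is \emph{not} the route of \cite{AD14,D14}. Those papers establish the identity on an arbitrary metric measure space, with no curvature assumption: the inequality $|Df|_w\le\mathcal V(f,B)$ is proved essentially as you do, but the reverse inequality is obtained by showing directly that the relaxed functional $\mathcal V(f,\cdot)$ (extended to a Borel measure by Carath\'eodory's construction) itself satisfies the test-plan inequality \eqref{coeq1}, hence dominates the minimal such measure $|Df|_w$. No regularizing semigroup is used. Your approach instead exploits the $\operatorname{RCD}(0,N)$ structure through heat-flow mollification; this is a legitimate alternative in the present setting and has the advantage of producing explicit Lipschitz approximants, but it sacrifices the generality of the original result.

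That said, your argument has a genuine gap in the localized identity. For the global case the chain $\int\mathrm{lip}(\mathsf h_t f)\,\mathrm d\mathfrak m=|D\mathsf h_t f|_w(\mathrm X)\le|Df|_w(\mathrm X)$ is fine (modulo checking that $\mathsf h_t$ applied to an $L^1$-only function still yields a locally Lipschitz representative with $\mathrm{lip}=|\nabla\cdot|$ a.e., which requires some care beyond the usual $L^2$ theory). But for an open $B$, the heat semigroup is genuinely nonlocal and the partition-of-unity scheme you sketch does not close: the ``cross-terms'' are not $\sum_k(\mathsf h_{t_{n,k}}f-\mathsf h_{t_{n,k+1}}f)\nabla\phi_k$ alone, and even with Gaussian upper bounds you must control $\int_{\Omega_{k+1}\setminus\Omega_k}\mathrm{lip}(\mathsf h_{t_{n,k}}f)\,\mathrm d\mathfrak m$ by $|Df|_w(\Omega_{k+1}\setminus\Omega_{k-1})$ plus a vanishing error, which is not what $|D\mathsf h_t f|_w\le\tilde{\mathsf h}_t|Df|_w$ gives you on its own. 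The cleanest fix is to abandon the heat flow for the local statement and follow \cite{AD14}: prove that $\mathcal V(f,\cdot)$ is itself a measure satisfying \eqref{coeq1}, whence $|Df|_w\le\mathcal V(f,\cdot)$ as measures and the identity on every open $B$ follows at once.
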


\begin{lemma}[\cite{GH14}]\label{colem1}
	Let $(\mathrm{X},\mathrm{d},\mathfrak{m})$ be a $\mathrm{RCD}(K,\infty)$ space and let $p\in (1,\infty)$. Let $f\in \mathrm{BV}_{\mathrm{w}}(\mathrm{X},\mathfrak{m})$ be such that $|Df|_w\ll \mathfrak{m}$, and denote by $|Df|_1$ the density function of $|Df|_w$ with respect to $\mathfrak{m}$. If $f\in  {L}_{\mathrm{loc}}^p(\mathrm{X},\mathfrak{m}),|Df|_1\in {L}^p(\mathrm{X},\mathfrak{m})$, then $f\in \mathrm{S}^{p}(\mathrm{X})$ and $|Df|_{w,p}=|Df|_1$ $\mathfrak{m}$-a.e.
\end{lemma}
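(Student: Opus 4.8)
The plan is to recover a $p$-weak upper gradient of $f$ from the bounded-variation information by mollifying with the heat semigroup $(\mathrm{h}_t)_{t>0}$ of the $\mathrm{RCD}(K,\infty)$-structure, the $\mathrm{BV}$-version of the Bakry--Émery estimate playing the role that $|\nabla(f\ast\rho_\varepsilon)|\le|Df|\ast\rho_\varepsilon$ plays in the Euclidean argument. The inequality $|Df|_1\le|Df|_{w,p}$ is the easy half: once $f\in\mathrm{S}^p(\mathrm{X})$ is known, the standard fact that an $\mathrm{S}^p$-function with locally integrable minimal weak upper gradient is of locally bounded variation with $|Df|_w\le|\nabla f|\,\mathfrak{m}$ (see \cite{AD14}), combined with the hypothesis $|Df|_w=|Df|_1\mathfrak{m}$, gives $|Df|_1\le|Df|_{w,p}$ $\mathfrak{m}$-a.e. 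So everything reduces to proving $f\in\mathrm{S}^p(\mathrm{X})$ together with $|Df|_{w,p}\le|Df|_1$.

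First I would reduce to a bounded function with bounded support. Fix $x_0\in\mathrm{supp}\,\mathfrak{m}$, put $f_M:=\max\{-M,\min\{M,f\}\}$, and pick $\eta_n\in\mathrm{Lip}_{\mathrm{c}}(\mathrm{X})$ with $\eta_n\equiv1$ on $B_n(x_0)$, $\mathrm{supp}\,\eta_n\subset B_{n+1}(x_0)$, $\mathrm{Lip}(\eta_n)\le2$. By the chain and Leibniz rules for $\mathrm{BV}_{\mathrm{w}}$-functions, $g_{n,M}:=\eta_nf_M\in\mathrm{BV}_{\mathrm{w}}(\mathrm{X},\mathfrak{m})$ is bounded, vanishes outside $B_{n+1}(x_0)$, and has $|Dg_{n,M}|_w\ll\mathfrak{m}$; since $\mathfrak{m}(B_{n+1}(x_0))<\infty$ and $|Dg_{n,M}|_1\le|Df|_1+2M\chi_{B_{n+1}(x_0)\setminus B_n(x_0)}$ with $|Dg_{n,M}|_1$ supported in $\overline{B_{n+1}(x_0)}$, we get $g_{n,M}\in L^1\cap L^\infty$ and $|Dg_{n,M}|_1\in L^1\cap L^p(\mathrm{X},\mathfrak{m})$. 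If the conclusion, with equality of gradients, holds for every $g_{n,M}$, then the locality of minimal $p$-weak upper gradients on the open sets $B_n(x_0)$ lets me pass to the limit $n\to\infty$ to obtain $f_M\in\mathrm{S}^p_{\mathrm{loc}}(\mathrm{X})$ with $|\nabla f_M|=|Df_M|_1\le|Df|_1\in L^p$, hence $f_M\in\mathrm{S}^p(\mathrm{X})$; and since $|\nabla f_M|\le|Df|_1$ uniformly in $M$ while $f_M\to f$ in $L^p_{\mathrm{loc}}$ and $\mathfrak{m}$-a.e., weak $L^p$-compactness together with the stability (lower semicontinuity) of $p$-weak upper gradients yields $f\in\mathrm{S}^p(\mathrm{X})$ with $|Df|_{w,p}\le|Df|_1$. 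Thus it suffices to treat $f\in L^1\cap L^\infty\cap\mathrm{BV}_{\mathrm{w}}(\mathrm{X},\mathfrak{m})$ with bounded support, $|Df|_w=|Df|_1\mathfrak{m}$ and $|Df|_1\in L^1\cap L^p$.

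For such $f$ I would use heat-flow mollification: $\mathrm{h}_tf\in W^{1,2}(\mathrm{X},\mathfrak{m})\cap L^\infty$ for $t>0$ and $\mathrm{h}_tf\to f$ in $L^p$ as $t\downarrow0$. Approximating $f$ in $L^1\cap L^2$ by truncated Lipschitz functions $u_k$ with $|\nabla u_k|\,\mathfrak{m}\rightharpoonup|Df|_w$ as supplied by Proposition \ref{coprop1}, and passing to the limit in the $L^1$-Bakry--Émery estimate $|\nabla\mathrm{h}_tu_k|\le e^{-Kt}\mathrm{h}_t(|\nabla u_k|)$ valid on $\mathrm{RCD}(K,\infty)$ spaces, I would obtain
$$
|\nabla\mathrm{h}_tf|\ \le\ e^{-Kt}\,\mathrm{h}_t\big(|Df|_1\big)\qquad\mathfrak{m}\text{-a.e.}
$$
Since $|Df|_1\in L^p$ and $\mathrm{h}_t$ is an $L^p$-contraction, this gives $\sup_{0<t<1}\big\||\nabla\mathrm{h}_tf|\big\|_{L^p}\le e^{|K|}\big\||Df|_1\big\|_{L^p}<\infty$, while $\mathrm{h}_t(|Df|_1)\to|Df|_1$ strongly in $L^p$ as $t\downarrow0$; in particular $\mathrm{h}_tf\in W^{1,p}(\mathrm{X},\mathfrak{m})$ for small $t$, using the $p$-independence of minimal weak upper gradients on $\mathrm{RCD}$ spaces \cite{GH14}. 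Choosing $t_j\downarrow0$ with $|\nabla\mathrm{h}_{t_j}f|\rightharpoonup G$ weakly in $L^p$, the stability of $p$-weak upper gradients (under strong $L^p$-convergence $\mathrm{h}_{t_j}f\to f$ and weak $L^p$-convergence of the gradients) shows $f\in W^{1,p}(\mathrm{X},\mathfrak{m})\subset\mathrm{S}^p(\mathrm{X})$ with $|Df|_{w,p}\le G$; and testing $|\nabla\mathrm{h}_{t_j}f|\le e^{-Kt_j}\mathrm{h}_{t_j}(|Df|_1)$ against non-negative functions and using the strong $L^p$-convergence of the right-hand side gives $G\le|Df|_1$. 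Hence $|Df|_{w,p}\le|Df|_1$ for the reduced $f$; going back through the reduction and combining with the easy reverse inequality yields $f\in\mathrm{S}^p(\mathrm{X})$ and $|Df|_{w,p}=|Df|_1$.

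The main obstacle is the $\mathrm{BV}$-Bakry--Émery step: one has to know that the heat flow maps $\mathrm{BV}_{\mathrm{w}}$-functions into Sobolev functions and then carry out the passage to the limit from the Sobolev-level gradient bound carefully, controlling simultaneously the weak convergence of the gradient measures $|\nabla u_k|\,\mathfrak{m}$ and that of their heat-regularisations, and one must check that the weak $L^p$-limit $G$ is genuinely a $p$-weak upper gradient of $f$. An alternative, heat-flow-free route would combine the identification of $W^{1,1}_{\mathrm{loc}}$ with the class of $\mathrm{BV}$-functions having $\mathfrak{m}$-absolutely continuous variation with the $p$-independence of minimal weak upper gradients, but this essentially relocates the same analytic work into the cited results.
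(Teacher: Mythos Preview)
The paper does not prove this lemma; it is stated with a bare citation to \cite{GH14} and then used as a black box in the coarea formula that follows. Your heat-semigroup strategy---reduce to bounded $f$ with bounded support, regularise by $\mathrm{h}_t$, invoke the $\mathrm{BV}$-level Bakry--\'Emery contraction $|\nabla\mathrm{h}_tf|\le e^{-Kt}\mathrm{h}_t(|Df|_1)$, and pass to the limit via stability of $p$-weak upper gradients---is precisely the argument of the cited reference, so you are essentially reconstructing the source rather than offering an alternative route.

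One point to watch: you invoke ``the $p$-independence of minimal weak upper gradients on $\mathrm{RCD}$ spaces \cite{GH14}'' to pass from $\mathrm{h}_tf\in W^{1,2}$ with $|\nabla\mathrm{h}_tf|_{2}\in L^p$ to $\mathrm{h}_tf\in W^{1,p}$. Since the present lemma is itself one of the ingredients in the proof of $p$-independence in \cite{GH14}, this risks circularity. For $p\ge2$ the step is in fact free, because every $p$-test plan is a $2$-test plan and hence a $2$-weak upper gradient lying in $L^p$ is automatically a $p$-weak upper gradient; for $1<p<2$ one should instead argue directly, e.g.\ via the energy-density of Lipschitz functions in $W^{1,2}$ on $\mathrm{RCD}$ spaces combined with your reduction to bounded compactly supported $f$, to manufacture a genuine $W^{1,p}$-approximating sequence without appealing to the full $p$-independence theorem. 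With that adjustment your outline is correct and matches \cite{GH14}.
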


\begin{theorem}[Coarea formula]
	Let $(\mathrm{X},\mathrm{d},\mathfrak{m})$ be a $\mathrm{RCD}(K,\infty)$ space and let $p\in (1,\infty)$. If $f \in W^{1, p}(\mathrm{X},\mathfrak{m})$ is a non-negative function with $|Df|_{*,p}\in L^1(\mathrm{X},\mathfrak{m})$, then for any Borel function $g: \mathrm{X} \rightarrow[0,\infty)$ and $t\geq 0$, it holds that
	\begin{equation}\label{coeq4}
		\int_{\{f>t\}} g|Df|_{*,p} \mathrm{d}\mathfrak{m}=\int_t^{\infty} \int g \mathrm{~d} \mathrm{Per}(\{f>r\}) \mathrm{~d} r.
	\end{equation}
\end{theorem}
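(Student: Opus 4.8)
The plan is to deduce \eqref{coeq4} from the coarea formula for functions of bounded variation on $\mathrm{RCD}(K,\infty)$ spaces, after identifying $|Df|_{*,p}\,\mathfrak{m}$ with the weak total variation measure of $f$. The first move is to reduce to the case $t=0$. Given $t\ge 0$, set $h:=(f-t)^+=\phi\circ f$ with $\phi(s):=(s-t)^+$; then $\phi$ is $1$-Lipschitz with $\phi(0)=0$, so $h$ is a non-negative function in $W^{1,p}(\mathrm{X},\mathfrak{m})$, and the chain rule for minimal $p$-relaxed slopes (locality of $|D\cdot|_{*,p}$, together with the vanishing of $|Df|_{*,p}$ on the level set $\{f=t\}$) gives $|Dh|_{*,p}=\chi_{\{f>t\}}\,|Df|_{*,p}\le |Df|_{*,p}\in L^1(\mathrm{X},\mathfrak{m})$ $\mathfrak{m}$-a.e. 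Moreover $\{h>r\}=\{f>r+t\}$ for every $r>0$. Hence, once we establish the identity in the special case $t=0$, namely
\begin{equation}\label{eq:star}
	\int_{\mathrm{X}} g\,|Dh|_{*,p}\,\mathrm{d}\mathfrak{m}=\int_0^{\infty}\Big(\int g\,\mathrm{d}\mathrm{Per}(\{h>r\})\Big)\,\mathrm{d}r
\end{equation}
for every non-negative $h\in W^{1,p}(\mathrm{X},\mathfrak{m})$ with $|Dh|_{*,p}\in L^1(\mathrm{X},\mathfrak{m})$ and every Borel $g:\mathrm{X}\rightarrow[0,\infty)$, applying \eqref{eq:star} to $h=(f-t)^+$ and substituting $s=r+t$ recovers \eqref{coeq4}.

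To prove \eqref{eq:star}, the heart of the matter is that any such $h$ belongs to $\mathrm{BV}_{\mathrm{w}}(\mathrm{X},\mathfrak{m})$ and satisfies $|Dh|_w=|Dh|_{*,p}\,\mathfrak{m}$. For the inequality $|Dh|_w\le |Dh|_{*,p}\,\mathfrak{m}$ one uses relaxation: pick $\{h_k\}\subset\mathrm{Lip}_{\mathrm{bs}}(\mathrm{X})$ with $h_k\rightarrow h$ in $L^p(\mathrm{X},\mathfrak{m})$ and $\mathrm{lip}(h_k)\rightarrow |Dh|_{*,p}$ in $L^p(\mathrm{X},\mathfrak{m})$ (an optimal sequence, obtained from the definition of the minimal $p$-relaxed slope after passing to convex combinations via Mazur's lemma); on any bounded open set $B$ we have $\mathfrak{m}(B)<\infty$, so $h_k\rightarrow h$ in $L^1(B)$, and by the relaxation characterization of the weak total variation (\cite{AD14,D14}, cf.\ Proposition \ref{coprop1}) we get $|Dh|_w(B)\le\liminf_{k}\int_B\mathrm{lip}(h_k)\,\mathrm{d}\mathfrak{m}=\int_B|Dh|_{*,p}\,\mathrm{d}\mathfrak{m}$; letting $B$ exhaust $\mathrm{X}$ shows $h\in\mathrm{BV}_{\mathrm{w}}(\mathrm{X},\mathfrak{m})$, with $|Dh|_w\ll\mathfrak{m}$ of density $|Dh|_1\le |Dh|_{*,p}$ $\mathfrak{m}$-a.e.\ and $|Dh|_w(\mathrm{X})\le\||Dh|_{*,p}\|_{L^1(\mathrm{X},\mathfrak{m})}<\infty$. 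For the reverse inequality, since $h\in L^p_{\mathrm{loc}}(\mathrm{X},\mathfrak{m})$ and $|Dh|_1\le |Dh|_{*,p}\in L^p(\mathrm{X},\mathfrak{m})$, Lemma \ref{colem1} applies and yields $|Dh|_{w,p}=|Dh|_1$ $\mathfrak{m}$-a.e.; since $|Dh|_{w,p}=|Dh|_{*,p}$ (the minimal $p$-weak upper gradient coincides with the minimal $p$-relaxed slope, \cite{AGS13,Ch99}), this forces $|Dh|_1=|Dh|_{*,p}$, i.e.\ $|Dh|_w=|Dh|_{*,p}\,\mathfrak{m}$.

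With this identification, \eqref{eq:star} reduces to the coarea formula for $\mathrm{BV}_{\mathrm{w}}$ functions. By \cite{M03,AD14,D14}, for $h\in\mathrm{BV}_{\mathrm{w}}(\mathrm{X},\mathfrak{m})$ almost every superlevel set $\{h>r\}$ has finite perimeter and $|Dh|_w(B)=\int_{\mathbb{R}}\mathrm{Per}(\{h>r\},B)\,\mathrm{d}r$ for every Borel $B\subset\mathrm{X}$; as $h\ge 0$ the integral may be restricted to $r\in(0,\infty)$. Approximating an arbitrary non-negative Borel $g$ from below by simple functions and applying monotone convergence (first in the integral against $\mathrm{Per}(\{h>r\},\cdot)$, then in the integral against $\mathrm{d}r$, using that $|Dh|_w$ is a finite measure) upgrades this to $\int g\,\mathrm{d}|Dh|_w=\int_0^{\infty}\big(\int g\,\mathrm{d}\mathrm{Per}(\{h>r\})\big)\,\mathrm{d}r$, which together with $|Dh|_w=|Dh|_{*,p}\,\mathfrak{m}$ is exactly \eqref{eq:star}.

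The clean structural step is the reduction in the first paragraph, which turns $\{f>t\}$-restricted quantities into genuine level sets of $(f-t)^+$ and thereby bypasses any delicate analysis of $\mathrm{Per}(\{f>r\},\{f>t\})$ for $r$ near $t$. I expect the main technical obstacle to be the sharp identification $|Dh|_w=|Dh|_{*,p}\,\mathfrak{m}$: the easy relaxation inequality must be combined with Lemma \ref{colem1} and with the weak-upper-gradient/relaxed-slope identification, and some care with integrability on a possibly infinite measure space is required --- which is precisely why the relaxation estimate is carried out locally on balls before being globalized.
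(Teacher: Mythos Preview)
Your proof is correct and takes a genuinely different, more modular route than the paper. The paper does not reduce to $t=0$ and does not invoke the BV coarea formula as a black box: instead it re-derives both inequalities from scratch. In its Step~2 it builds cutoffs $\eta_nf$ and Lipschitz approximants $f_{n,m}$, then for each $f_{n,m}$ constructs explicit piecewise-linear truncations $h_{n,m,r}$ to bound $\operatorname{Per}(\{f_{n,m}>t\},B)$ by a derivative of $\int_{\{f_{n,m}\le t\}\cap B}\operatorname{lip}(f_{n,m})\,\mathrm{d}\mathfrak{m}$, and passes to the limit via lower semicontinuity and Fatou; in its Step~3 it proves the reverse inequality by approximating $f\wedge(n+s)\vee s$ in $L^1(B)$ by simple functions $\frac{1}{h}\sum_j\chi_{\{f>t_{j,h}\}}$ and using lower semicontinuity of total variation. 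Only at the end does it globalize via the $\pi$--$\lambda$ lemma. In particular the paper also establishes the identity $|Df|_w=|Df|_{*,p}\mathfrak{m}$ inside Step~2 (using Lemma~\ref{colem1} exactly as you do), but then does not stop there and cite the BV coarea---it keeps going and proves the two perimeter inequalities by hand.

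What your approach buys is brevity and a clean separation of the two ingredients: the slope/total-variation identification, which is specific to the $\mathrm{RCD}$ setting, and the BV coarea formula, which is general metric-measure machinery. What the paper's approach buys is self-containment: the BV coarea formula in \cite{M03,AD14} is typically stated for $f\in L^1(\mathfrak{m})$ (or $L^1_{\mathrm{loc}}$), and your $h$ is only in $L^p(\mathfrak{m})$ globally, so invoking it as a black box requires either a localization argument or an additional truncation $h\wedge M$---exactly the issue you flag in your final sentence. The paper sidesteps this by working on bounded open $B$ throughout and never needing $f\in L^1(\mathrm{X},\mathfrak{m})$.
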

\begin{proof}
	Although this result is well-known, we present a proof here for the sake of completeness.
	
	\textbf{Step 1}: we prove that for every bounded open set $B\subset\mathrm{X}$, the mapping $[0,\infty)\ni t\mapsto\operatorname{Per}(\{f>t\},B)\in[0,\infty]$ is Borel. 
	
	To this aim, put $N=\{t\geq 0:\mathfrak{m}(\{f=t\}\cap B)>0\}$. Since $B$ has finite measure, it is not difficult to prove that $N$ is a countable set. Thus, it suffices to prove that the mapping $[0,\infty)\setminus N\ni t\mapsto\operatorname{Per}(\{f>t\},B)\in[0,\infty]$ is Borel.
	
	Let $t,t_1,t_2,\cdots\in[0,\infty)\setminus N$ be such that $t_n\rightarrow t$. Then $\|\chi_{\{f>t_n\}}-\chi_{\{f>t\}}\|_{L^1(B)}\rightarrow 0$. By the lower semi-continuity, we have that
	$$
	\operatorname{Per}(\{f>t\},B)\leq \liminf_{n\rightarrow\infty}\operatorname{Per}(\{f>t_n\},B),
	$$
	which means that the mapping $[0,\infty)\setminus N\ni t\mapsto\operatorname{Per}(\{f>t\},B)\in[0,\infty]$ is lower semi-continuous, and hence Borel, as desired.
	
	\textbf{Step 2}: Let $f\in W^{1,p}(\mathrm{X},\mathfrak{m})$ be a non-negative function, and let $B\subset\mathrm{X}$ be a bounded open set. We prove that $f\in \mathrm{BV}_{\mathrm{w}}(\mathrm{X},\mathfrak{m})$, $|Df|_w\ll \mathfrak{m}$ and for any $s\geq 0$,
	$$
	\int_s^{\infty}\operatorname{Per}(\{f>t\},B)\,\mathrm{d}t\leq \int_{\{f>s\}\cap B}|D f|_{*,p}\mathrm{d}\mathfrak{m}.
	$$
	
	Fix $\bar{x}\in\mathrm{X}$ and for any $R>0$, choose a 1-Lipschitz function $\eta_R\in\operatorname{Lip}_{\mathrm{bs}}(\mathrm{X})$ such that $\eta_R=1$ on $B_R(\bar{x})$ and $\eta_R=0$ on $B_{R+1}(\bar{x})^c$. For any $n\in\mathbb{N}_{+}$, we set $f_n:=\eta_n f\in W^{1,p}(\mathrm{X},\mathfrak{m})$. It is easy to check that
	$$
	\|f_{n}-f\|_{L^p}\xrightarrow{n}0,\quad\||D f_n|_{w,p}-|D f|_{*,p}\|_{L^p}\xrightarrow{n}0.
	$$
	
	Now, for each $n$, since $f_n$ has bounded support, there exists a sequence of non-negative functions $\{f_{n,m}\}_m\in\operatorname{Lip}_{\mathrm{bs}}(\mathrm{X})$ that have uniformly bounded support such that
	$$
	\|f_{n,m}-f_n\|_{L^p}\xrightarrow{m}0,\quad\|\operatorname{lip}(f_{n,m})-|D f_n|_{*,p}\|_{L^p}\xrightarrow{m}0,
	$$
	which implies
	$$
	\|f_{n,m}-f_n\|_{L^1}\xrightarrow{m}0,\quad\|\operatorname{lip}(f_{n,m})-|D f_n|_{*,p}\|_{L^1}\xrightarrow{m}0.
	$$
	Therefore, $f_n\in\mathrm{BV}_{\mathrm{w}}(\mathrm{X},\mathfrak{m})$ and $|D f_n|_1=|D f_n|_{w,p}=|D f_n|_{*,p}$ $\mathfrak{m}$-a.e. by Proposition $\ref{coprop1}$, Lemma $\ref{colem1}$ and the equivalence of minimal $p$-weak upper gradients and minimal $p$-relaxed slopes(see \cite{AGS13}). 
	
	To prove $f\in \mathrm{BV}_{\mathrm{w}}(\mathrm{X},\mathfrak{m})$, let $\pi$ be an $\infty$-test plan, and let $B$ be a bounded open subset of $\mathrm{X}$. The fact that $\|f_n-f\|_{L^1(B)}\rightarrow 0$ implies that, up to a not relabeled subsequence, $\|f_n \circ \gamma-f \circ\gamma \|_{L^1(\gamma^{-1}(B))} \rightarrow 0$ for $\pi$-a.e. $\gamma$ (see \cite[Lemma 5.7]{AD14} for the details). Then
	\begin{align*}
		\int |D(f\circ\gamma)|(\gamma^{-1}(B))\mathrm{d}\pi(\gamma)&\leq \liminf_{n\rightarrow \infty}\int |D(f_n\circ\gamma)|(\gamma^{-1}(B))\mathrm{d}\pi(\gamma) 	\\
		&\leq \mathrm{Comp}(\pi) \|\operatorname{Lip}(\cdot)\|_{{L}^{\infty}({\pi})} \liminf_{n\rightarrow \infty}|Df_n|_w(B)\\
		&=\mathrm{Comp}(\pi) \|\operatorname{Lip}(\cdot)\|_{{L}^{\infty}({\pi})} \liminf_{n\rightarrow \infty}\int_B |Df_n|_{*,p}\mathrm{d}\mathfrak{m}\\
		&\leq \mathrm{Comp}(\pi) \|\operatorname{Lip}(\cdot)\|_{{L}^{\infty}({\pi})} \liminf_{n\rightarrow \infty}\int_B |Df|_{*,p}+|f|\chi_{ B_n(\bar{x})^c}\mathrm{d}\mathfrak{m}\\
		&=\mathrm{Comp}(\pi) \|\operatorname{Lip}(\cdot)\|_{{L}^{\infty}({\pi})} \int_B |Df|_{*,p}\mathrm{d}\mathfrak{m}.
	\end{align*}
	Hence, $|Df|_w\leq |Df|_{*,p}\mathfrak{m}$. On the other hand, the condition i) in the definition $\ref{codef1}$ holds by the construction of $\{f_n\}$. Therefore, we obtain that $f\in \mathrm{BV}_{\mathrm{w}}(\mathrm{X},\mathfrak{m})$.
	
	Next, fix $n,m$, and define $g_{n,m}:[0,\infty)\rightarrow[0,\infty)$ by
	$$
	g_{n,m}(t)=\int_{\{f_{n,m}\leq t\}\cap B}\operatorname{lip}(f_{n,m})\mathrm{d}\mathfrak{m}.
	$$
	Note that $g_{n,m}$ is a bounded non-decreasing function, hence differentiable almost everywhere. Let $t>0$ be a differentiability point of $g_{n,m}$ such that $\mathfrak{m}(\{f_{n,m}=t\}\cap B)=0$. For any $r>0$, we define $h_{n,m,r}:\mathbb{R}\rightarrow\mathbb{R}$ by
	$$
	h_{n,m,r}(s)=\begin{cases}
		0,&s\leq  t,\\
		\text{linear},&t<s<t+1/r,\\
		1,&s\geq  t+1/r.
	\end{cases}
	$$
	Define $v_{n,m,r}=h_{n,m,r}\circ f_{n,m}|_B$. We have that
	$$
	\int_B|v_{n,m,r}-\chi_{\{f_{n,m}>t\}}| \mathrm{d}\mathfrak{m}\leq \mathfrak{m}(\{t<f_{n,m}<t+1/r\}\cap B)\xrightarrow{r\rightarrow\infty}0,
	$$
	and
	$$
	\int_B\operatorname{lip}(v_{n,m,r})\mathrm{d}\mathfrak{m}\leq r\int_{\{t<f_{n,m}\leq t+1/r\}\cap B}\operatorname{lip}(f_{n,m})\mathrm{d}\mathfrak{m}=r(g_{n,m}(t+1/r)-g_{n,m}(t))\rightarrow g_{n,m}^{\prime}(t),
	$$
	as $r\rightarrow\infty$. Therefore, $\operatorname{Per}(\{f_{n,m}>t\},B)\leq g_{n,m}^{\prime}(t)$, and hence for any $s\geq 0$,
	\begin{equation}\label{coeq5}
		\int_s^{\infty}\operatorname{Per}(\{f_{n,m}>t\},B)\mathrm{d}t\leq\int_{\{f_{n,m}>s\}\cap B}\operatorname{lip}(f_{n,m})\mathrm{d}\mathfrak{m}.
	\end{equation}
	
	Since
	$$
	\int_{\mathbb{R}}|\chi_{\{f_{n,m}>t\}\cap B}(x)-\chi_{\{f_n>t\}\cap B}(x)|\mathrm{d}t=|f_{n,m}(x)-f_n(x)|, \text{ for every }x\in B,
	$$
	it follows that
	$$
	\int_B|f_{n,m}-f_n|\mathrm{d}\mathfrak{m}=\int_{\mathbb{R}}\int|\chi_{\{f_{n,m}>t\}\cap B}-\chi_{\{f_n>t\}\cap B}|\mathrm{d}\mathfrak{m}\mathrm{d}t.
	$$
	Combining this with $\|f_{n,m}-f_n\|_{L^1}\xrightarrow{m}0$, we know that there exists a sequence $\{m_k\}_k$ such that $\|\chi_{\{f_{n,m_k}>t\}}-\chi_{\{f_n>t\}}\|_{L^1(B)}\xrightarrow{k}0$ for a.e. $t>0$. Thus, by the lower semi-continuity, we have that $\operatorname{Per}(\{f_{n}>t\},B)\leq \liminf_{k\rightarrow\infty}\operatorname{Per}(\{f_{n,m_k}>t\},B)$ for a.e. $t>0$. Therefore, by Fatou's lemma, $(\ref{coeq5})$, $\|\operatorname{lip}(f_{n,m})-|D f_n|_{*,p}\|_{L^p}\xrightarrow{m}0$ and $\|\chi_{\{f_{n,m_k}>s\}}-\chi_{\{f_n>s\}}\|_{L^\frac{p}{p-1}(B)}\xrightarrow{k}0$ for a.e. $s>0$, we deduce that
	\begin{align*}
		\int_s^{\infty}\operatorname{Per}(\{f_{n}>t\},B)\mathrm{d}t&\leq \liminf_{k\rightarrow\infty}\int_s^{\infty}\operatorname{Per}(\{f_{n,m_k}>t\},B)\mathrm{d}t\\
		&=\liminf_{k\rightarrow\infty}\int \chi_{\{f_{n,m_k}>s\}\cap B}\operatorname{lip}(f_{n,m_k})\mathrm{d}\mathfrak{m}\\
		&=\int_{\{f_n>s\}} \chi_{B}|D f_n|_{*,p}\mathrm{d}\mathfrak{m},\text{ for a.e. }s>0.
	\end{align*}
	This inequality holds for all $s\geq 0$, due to the absolute continuity of the integral. By a similar argument, we can also prove that
	$$
	\int_s^{\infty}\operatorname{Per}(\{f>t\},B)\mathrm{d}t\leq\int_{\{f>s\}} \chi_{B}|D f|_{*,p}\mathrm{d}\mathfrak{m},\text{ for every }s\geq 0.
	$$
	
	\textbf{Step 3}: Let $f\in W^{1,p}(\mathrm{X},\mathfrak{m})$ be a non-negative function, and let $B\subset\mathrm{X}$ be a bounded open set. We prove that for any $s\geq 0$,
	\begin{equation}\label{coeq2}
		\int_s^{\infty}\operatorname{Per}(\{f>t\},B)\,\mathrm{d}t\geq \int_{\{f>s\}\cap B}|D f|_{*,p}\mathrm{d}\mathfrak{m}.
	\end{equation}
	
	Fix $h,n\in\mathbb{N}_{+}$ and for any $j=1,\cdots,n h$, there exists $t_{j,h}\in({(j-1)}/{h}+s,{j}/{h}+s)$ such that
	$$
	\frac{1}{h}\operatorname{Per}(\{f>t_{j,h}\},B)\leq\int_{(j-1)/{h}+s}^{{j}/{h}+s}\operatorname{Per}(\{f>t\},B)\,\mathrm{d}t.
	$$
	Define $f_{n,h}:\mathrm{X}\rightarrow[0,\infty)$ by
	$$
	f_{n, h}=s+\frac{1}{h} \sum_{j=1}^{n h} \chi_{\{f>t_{j, h}\}}.
	$$
	Then we have that
	\begin{align*}
		|D f_{n,h}|_w(B)&\leq\frac{1}{h}\sum_{j=1}^{n h} \operatorname{Per}(\{f>t_{j,h}\},B)\\ &\leq\int_s^{s+n}\operatorname{Per}(\{f>t\},B)\,\mathrm{d}t\leq\int_s^{\infty}\operatorname{Per}(\{f>t\},B)\,\mathrm{d}t.
	\end{align*}
	To prove $(\ref{coeq2})$, it suffices to prove that for all $n$,
	\begin{equation}\label{coeq3}
		\|f_{n,h}-f\wedge(n+s)\vee s\|_{L^1(B)}\xrightarrow{h\rightarrow\infty}0,
	\end{equation}
	Indeed, if this is right, then by the lower semi-continuity, the locality of minimal $p$-weak upper gradient and Lemma $\ref{colem1}$ (because $f\in \mathrm{BV}_{\mathrm{w}}(\mathrm{X},\mathfrak{m})$ implies $f\wedge(n+s)\vee s\in \mathrm{BV}_{\mathrm{w}}(\mathrm{X},\mathfrak{m})$), we have 
	$$
	\int_{\{s<f<n+s\}\cap B} |Df|_{w,p}\mathrm{d}\mathfrak{m}=|D [f\wedge(n+s)\vee s]|_w(B)\leq \int_s^{\infty}\operatorname{Per}(\{f>t\},B)\,\mathrm{d}t.
	$$
	By letting $n\rightarrow \infty$, we will obtain the desired inequality $(\ref{coeq2})$.
	
	Finally, we check that $(\ref{coeq3})$: put
	$$
	F_{i,h}=\{t_{i,h}<f\leq t_{i+1,h}\}, i=1,\cdots,n h-1,\quad F_{n h,h}=\{f>t_{n h,h}\}.
	$$
	Then we have
	$$
	f_{n, h}=s+\frac{1}{h} \sum_{j=1}^{n h} \sum_{i=j}^{n h} \chi_{F_{i, h}}=s+\frac{1}{h} \sum_{i=1}^{n h} i \chi_{F_{i, h}}
	$$
	and thus
	\begin{align*}
		\int_B|f \wedge(n+s) \vee s-f_{n, h}| \mathrm{d} \mathfrak{m}&=  \sum_{i=1}^{n h-1} \int_{B\cap F_{i, h}}\bigg|f-\frac{i}{h}-s\bigg| \mathrm{d} \mathfrak{m}+\int_{B \cap\{f \leq t_{1, h}\}}|f \vee s-s| \mathrm{d} \mathfrak{m} \\
		& \quad+\int_{B \cap\left\{f>t_{n h, h}\right\}}|f \wedge(n+s)-(n+s)| \mathrm{d} \mathfrak{m} \\
		&\leq  \frac{1}{h} \mathfrak{m}(B)\xrightarrow{h\rightarrow\infty}0.
	\end{align*}
	
	Combining Step 2 and Step 3, we know that $(\ref{coeq4})$ holds for $g=\chi_B$ if $B$ is bounded and open. By the $\pi$-$\lambda$ Lemma and the fact that any non-negative Borel function can be represented as the pointwise limit of simple Borel functions, we have that for any non-negative Borel function $g: \mathrm{X} \rightarrow[0,\infty)$, $[0,\infty)\ni t\mapsto \int g \mathrm{~d} \mathrm{Per}(\{f>t\})$ is Borel and $(\ref{coeq4})$ holds.
\end{proof}

Next, we recall the isoperimetric inequality in CD spaces, which was obtained in \cite{BK23} for the Minkowski content. This result can be extended to the perimeter through an approximation argument (see \cite{ADG17}).

\begin{theorem}\label{thmiso}
	Let $(\mathrm{X}, \mathrm{d}, \mathfrak{m})$ be a $\operatorname{CD}(0, N)$ space with $N>1$. Then for every set of finite perimeter $E \subset \mathrm{X}$ with $\mathfrak{m}(E)<\infty$, it holds that
	\begin{equation}\label{iso}
		\mathrm{Per}(E) \geq N \omega_N^{\frac{1}{N}} \mathrm{AVR}_{\mathfrak{m}}^{\frac{1}{N}} \mathfrak{m}(E)^{\frac{N-1}{N}}.
	\end{equation}
\end{theorem}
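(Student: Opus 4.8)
\emph{Proof proposal.} The statement is, modulo the passage from Minkowski content to perimeter, precisely the sharp isoperimetric estimate of \cite{BK23}, so the plan is to recall that estimate and then relax it following \cite{ADG17}. First I would record the Balogh--Kristály inequality in the form actually produced by the localization (needle decomposition) method: on a $\mathrm{CD}(0,N)$ space with $N>1$ and $\mathrm{AVR}_{\mathfrak{m}}>0$, every bounded Borel set $E\subset\mathrm{X}$ satisfies
\[
\mathfrak{m}^{+}(E):=\liminf_{\varepsilon\to 0^{+}}\frac{\mathfrak{m}(E^{\varepsilon})-\mathfrak{m}(E)}{\varepsilon}\ \geq\ N\omega_N^{\frac1N}\,\mathrm{AVR}_{\mathfrak{m}}^{\frac1N}\,\mathfrak{m}(E)^{\frac{N-1}{N}},
\]
where $E^{\varepsilon}:=\{x:\mathrm{d}(x,E)<\varepsilon\}$; here the disintegration of $\mathfrak{m}$ into one-dimensional $\mathrm{CD}(0,N)$ needles, together with the Euclidean volume growth assumption, reduces the estimate to an elementary computation on the model measures $\mathfrak{m}_{N}$. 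If $\mathrm{AVR}_{\mathfrak{m}}=0$ or $\mathfrak{m}(E)=\infty$ then \eqref{iso} is trivial, so from now on I assume $\mathrm{AVR}_{\mathfrak{m}}>0$ and that $E$ is a set of finite perimeter with $\mathfrak{m}(E)<\infty$.

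Next I would invoke the relaxation result of \cite{ADG17}. A $\mathrm{CD}(0,N)$ space with $N<\infty$ is globally doubling (Bishop--Gromov) and supports a weak local $(1,1)$-Poincaré inequality, hence is a PI space, so the perimeter measure of any set of finite perimeter coincides with the relaxation of the Minkowski content functional; concretely, there is a sequence of bounded Borel sets $\{E_h\}_{h}$ with $\mathfrak{m}(E_h\triangle E)\to 0$ and $\mathfrak{m}^{+}(E_h)\to\mathrm{Per}(E)$. If boundedness of the $E_h$ is not immediately available, I would first truncate: applying the coarea formula to $\mathrm{d}(\cdot,x_0)$ restricted to $E$ gives $\mathrm{Per}(E\cap B_{R_j}(x_0))\to\mathrm{Per}(E)$ along a suitable sequence $R_j\uparrow\infty$ (using $\mathfrak{m}(E)<\infty$ and lower semicontinuity of the perimeter), and then one relaxes each truncation and extracts a diagonal sequence.

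Finally I would feed each bounded set $E_h$ into the first step and pass to the limit, using $\mathfrak{m}(E_h)\to\mathfrak{m}(E)$ (which follows from $\mathfrak{m}(E_h\triangle E)\to 0$ and $\mathfrak{m}(E)<\infty$):
\[
\mathrm{Per}(E)=\lim_{h\to\infty}\mathfrak{m}^{+}(E_h)\ \geq\ \liminf_{h\to\infty}N\omega_N^{\frac1N}\,\mathrm{AVR}_{\mathfrak{m}}^{\frac1N}\,\mathfrak{m}(E_h)^{\frac{N-1}{N}}=N\omega_N^{\frac1N}\,\mathrm{AVR}_{\mathfrak{m}}^{\frac1N}\,\mathfrak{m}(E)^{\frac{N-1}{N}},
\]
which is exactly \eqref{iso}.

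I expect the only genuinely delicate point to be the second step: checking that the sets provided by the relaxation of \cite{ADG17} can be taken bounded, so that the Minkowski-content inequality of \cite{BK23} — which is stated for bounded sets — applies to them, and controlling the interplay of that relaxation with the truncation $E\cap B_{R_j}(x_0)$. Once the sharp constant $N\omega_N^{1/N}\mathrm{AVR}_{\mathfrak{m}}^{1/N}$ is in hand from the localization argument, the remainder is a routine lower-semicontinuity and limiting argument.
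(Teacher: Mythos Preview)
Your proposal is correct and follows exactly the route the paper itself indicates: the paper does not give a detailed proof of Theorem~\ref{thmiso} but simply records that the inequality was obtained in \cite{BK23} for the Minkowski content and that the passage to perimeter is the approximation argument of \cite{ADG17}. Your outline faithfully unpacks those two citations, including the only subtle point (truncation to bounded sets so that the Minkowski-content estimate applies), so there is nothing to add.
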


\subsection{Rearrangements and symmetrizations}
Let $\Omega \subset \mathrm{X}$ be an open subset with $\mathfrak{m}(\Omega)\in (0,\infty)$.
\begin{definition}[Distribution Function and Decreasing Rearrangement]
	Let $u: \Omega \rightarrow \mathbb{R}$ be a Borel function. We define $\mu=\mu_u:[0,\infty) \rightarrow[0, \mathfrak{m}(\Omega)]$, the distribution function of $u$, as
	$$
	\mu(t):=\mathfrak{m}(\{|u|>t\})
	$$

	For $u$ and $\mu$ as above, we define $u^{\sharp}:[0, \mathfrak{m}(\Omega)] \rightarrow[0,\infty]$, the decreasing rearrangement of $u$, as
	$$
	u^{\sharp}(s):= \begin{cases}\operatorname{ess-sup} |u|, & \text { if } \mathrm{s}=0, \\ \inf \{t\geq 0: \mu(t)<s\}, & \text { if } \mathrm{s}>0 .\end{cases}
	$$
\end{definition}
It is not difficult to check that $u^{\sharp}$ is non-increasing and left-continuous, and $u^{\sharp}$ is right-continuous at $0$.

Given $N \in(1, \infty)$ and $u: \Omega \rightarrow \mathbb{R}$ a Borel function, we define the Schwarz symmetrization as follows: first, we consider $r>0$ such that $\mathfrak{m}(\Omega)=\mathfrak{m}_{N}([0, r])$, then we define the Schwarz symmetrization $u_{ N}^{\star}=u^{\star}:[0, r] \rightarrow[0, \infty]$ as
$$
u^{\star}(x):=u^{\sharp}(\mathfrak{m}_{N}([0, x])), \quad \text{ for every } x \in[0, r] .
$$

We state here a collection of useful facts concerning the decreasing rearrangement and Schwarz symmetrization of a function. 
\begin{proposition}[Some properties of $u^{\sharp}$ and $u^\star$]\label{propre}
	With the notations above, we have\\
	$(\mathrm{a})$ $u, u^{\sharp}$ and $u^{\star}$ are equi-measurable, in the sense that
	$$
	\mathfrak{m}(\{|u|>t\})=\mathcal{L}^1(\{u^{\sharp}>t\})=\mathfrak{m}_{ N}(\{u^{\star}>t\}),
	$$
	for all $t\geq 0$.\\
	$(\mathrm{b})$ If $u \in L^p(\Omega,\mathfrak{m})$ for some $1 \leq p \leq \infty$, then $u^{\sharp} \in L^p([0, \mathfrak{m}(\Omega)], \mathcal{L}^1)$ and $u^{\star} \in L^p([0, r], \mathfrak{m}_{ N})$. The converse implications also hold. In that case, moreover,
	$$
	\|u\|_{L^p(\Omega, \mathfrak{m})}=\|u^{\sharp}\|_{L^p([0, \mathfrak{m}(\Omega)], \mathcal{L}^1)}=\|u^{\star}\|_{L^p([0, r], \mathfrak{m}_{ N})} .
	$$
	$(\mathrm{c})$ If $u \in L^1(\Omega,\mathfrak{m})$ and let $E \subset \Omega$ be Borel, then
	$$
	\int_E u \mathrm{d}\mathfrak{m} \leq \int_0^{\mathfrak{m}(E)} u^{\sharp}(s) \mathrm{d} s.
	$$
	Moreover, if $u$ is non-negative, then equality holds if and only if $(u|_E)^{\sharp}=(u^{\sharp})|_{[0, \mathfrak{m}(E)]}$.\\
	$(\mathrm{d})$ If $\psi: [0, \infty) \rightarrow[0, \infty)$ be a non-increasing function, then $\psi^{\star}=\psi$ for all $x \in [0, \infty) \backslash L$, where $L$ is a countable set.\\
	$(\mathrm{e})$ $u^\sharp(\mu(t))\geq t$ for any $t\in [0,\operatorname{ess-sup}|u|]\cap [0,\infty)$ and $\mu(u^\sharp(s))\leq s$ for any $s\in [0,\mathfrak{m}(\Omega)]$.
\end{proposition}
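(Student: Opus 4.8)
The plan is to reduce all five statements to one elementary set-identity for the super-level sets of $u^{\sharp}$. First I would record that $\mu$ is non-increasing and right-continuous: right-continuity holds because $\{|u|>t_n\}\uparrow\{|u|>t\}$ whenever $t_n\downarrow t$, so $\mu(t_n)\to\mu(t)$ by continuity from below of $\mathfrak{m}$. Feeding these two properties into the infimum characterization $u^{\sharp}(s)=\inf\{\tau\ge0:\mu(\tau)<s\}$, a short argument yields, for every $t\ge0$, the two-sided inclusion $[0,\mu(t))\subseteq\{s:u^{\sharp}(s)>t\}\subseteq[0,\mu(t)]$; in particular $\mathcal{L}^1(\{u^{\sharp}>t\})=\mu(t)$. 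Since $\Phi(x):=\mathfrak{m}_{N}([0,x])=\omega_N x^N$ is a strictly increasing homeomorphism of $[0,r]$ onto $[0,\mathfrak{m}(\Omega)]$ whose inverse pushes $\mathcal{L}^1$ restricted to $[0,\mathfrak{m}(\Omega)]$ forward to $\mathfrak{m}_{N}$ restricted to $[0,r]$, and $u^{\star}=u^{\sharp}\circ\Phi$, it follows that $\mathfrak{m}_{N}(\{u^{\star}>t\})=\mathcal{L}^1(\{u^{\sharp}>t\})=\mu(t)$ for all $t$. This is exactly $(\mathrm{a})$. I would also note, for later use, that $u^{\sharp}$ (hence $u^{\star}$) is non-increasing, and recall from the text that it is left-continuous and right-continuous at $0$.

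For $(\mathrm{b})$ with $p<\infty$, Cavalieri's formula gives $\|u\|_{L^p(\Omega,\mathfrak{m})}^p=p\int_0^\infty t^{p-1}\mu(t)\,\mathrm dt$ and the same identity for $u^{\sharp}$ and $u^{\star}$, whose distribution functions agree with $\mu$ by $(\mathrm{a})$; hence the three norms coincide, and each is finite iff the others are. For $p=\infty$, $\operatorname{ess-sup}|u|=u^{\sharp}(0)$ by definition, and this equals $\operatorname{ess-sup}u^{\sharp}$ because $u^{\sharp}$ is non-increasing and right-continuous at $0$; likewise for $u^{\star}$. Part $(\mathrm{e})$ is also immediate from the generalized-inverse structure: if $\mu(t)>0$ then $\mu(\tau)\ge\mu(t)$ for $\tau<t$ forces $\inf\{\tau:\mu(\tau)<\mu(t)\}\ge t$, i.e.\ $u^{\sharp}(\mu(t))\ge t$, while if $\mu(t)=0$ then $u^{\sharp}(\mu(t))=u^{\sharp}(0)=\operatorname{ess-sup}|u|\ge t$; for the second inequality, the infimum characterization together with right-continuity of $\mu$ gives $\mu(\tau)<s$ for every $\tau>u^{\sharp}(s)$, and letting $\tau\downarrow u^{\sharp}(s)$ yields $\mu(u^{\sharp}(s))\le s$.

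For $(\mathrm{c})$, I would first replace $u$ by $|u|$, which leaves $u^{\sharp}$ unchanged and does not decrease $\int_E u\,\mathrm d\mathfrak{m}$, so we may assume $u\ge0$. Then Fubini gives $\int_E u\,\mathrm d\mathfrak{m}=\int_0^\infty\mathfrak{m}(E\cap\{u>t\})\,\mathrm dt$ and, using the first paragraph, $\int_0^{\mathfrak{m}(E)}u^{\sharp}(s)\,\mathrm ds=\int_0^\infty\min\{\mathfrak{m}(E),\mu(t)\}\,\mathrm dt$. The pointwise bound $\mathfrak{m}(E\cap\{u>t\})\le\min\{\mathfrak{m}(E),\mu(t)\}$ then yields the inequality upon integration in $t$. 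Equality forces $\mathfrak{m}(E\cap\{u>t\})=\min\{\mathfrak{m}(E),\mu(t)\}$ for $\mathcal{L}^1$-a.e.\ $t$; since both sides are non-increasing and right-continuous in $t$, the identity holds for every $t$, and applying the set-identity of the first paragraph to the function $u|_E$ on $(E,\mathfrak{m})$ (note $\mathfrak{m}(E)\le\mathfrak{m}(\Omega)<\infty$) translates this, via the fact that a non-increasing left-continuous function is determined by its super-level sets, precisely into $(u|_E)^{\sharp}=(u^{\sharp})|_{[0,\mathfrak{m}(E)]}$.

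Finally, $(\mathrm{d})$ follows from $(\mathrm{a})$ and the monotonicity of $u^{\star}$: if $\psi\colon[0,\infty)\to[0,\infty)$ is non-increasing, then $\psi$ and $\psi^{\star}$ are two non-increasing functions that are equi-measurable with respect to the non-atomic measure $\mathfrak{m}_{N}$, for which $x\mapsto\mathfrak{m}_{N}([0,x])$ is strictly increasing; hence for each $t$ the sets $\{\psi>t\}$ and $\{\psi^{\star}>t\}$ are intervals $[0,b(t))$ or $[0,b(t)]$ with the same right endpoint $b(t)$, so each symmetric difference $\{\psi>t\}\,\triangle\,\{\psi^{\star}>t\}$ is at most a single point, and the union over rational $t$ produces a countable set $L$ outside of which $\psi=\psi^{\star}$. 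I expect the only genuinely delicate points of the whole proposition to be this boundary and one-sided-limit bookkeeping — the clean proof of the two-sided inclusion for $\{u^{\sharp}>t\}$, the upgrade from a.e.\ to everywhere in the equality case of $(\mathrm{c})$, and the second inequality in $(\mathrm{e})$ — while everything else is routine Cavalieri, Fubini and monotonicity.
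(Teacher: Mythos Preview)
Your proposal is correct. The paper itself treats (a) and (b) as ``standard arguments as in the Euclidean case'' without details, refers (c) and (d) to \cite{MV21}, and only spells out (e); your argument for (e) is essentially identical to the paper's (infimum characterization plus right-continuity of $\mu$, with the boundary case $\mu(t)=0$ handled via $u^\sharp(0)=\operatorname{ess-sup}|u|$), while for (a)--(d) you supply the explicit level-set/Cavalieri/Fubini arguments that the paper omits. One small omission: in the second inequality of (e) you treat only $s>0$; the case $s=0$ (which the paper does mention) needs the one-line observation $\mu(u^\sharp(0))=\mu(\operatorname{ess-sup}|u|)=0$, and in (c) the converse implication of the equality characterization, though trivial via (b), should be stated.
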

\begin{proof}
	(a) and (b) can be proved by standard arguments as in the Euclidean case. For parts (c) and (d), we refer to \cite{MV21}. 
	
	To prove (e), fix $t\geq 0$. If $t\in [0,\operatorname{ess-sup}|u|)$, then $\mu(t)>0$. Let $t^\prime>u^\sharp(\mu(t))$. Hence, we have that $\mu(t^\prime)<\mu(t)$ by the definition of $u^\sharp$. Then, we must have $t^\prime>t$. By letting $t^\prime\rightarrow u^\sharp(\mu(t))$, we get that $u^\sharp(\mu(t))\geq t$. If instead $t= \operatorname{ess-sup}|u|<\infty$, then $\mu(t)=0$ and thus $u^\sharp(\mu(t))=\operatorname{ess-sup}|u|=t$, as desired.
	
	Next, fix $s\in [0,\mathfrak{m}(\Omega)]$. If $s=0$, then $\mu(u^\sharp(0))=0$. If instead $s>0$, then for any $t^\prime>u^\sharp(s)$, we have that $\mu(t^\prime)<s$. Since $\mu$ is right-continuous, by letting $t^\prime\rightarrow u^\sharp(s)$, we obtain that $\mu(u^\sharp(s))\leq s$, as desired.
\end{proof}
\section{Poisson problem on the one-dimensional model space}
Let $N\in (1,\infty)$ and let us define the following density function on $[0,\infty)$:
$$
h_N(t):=N\omega_N t^{N-1},\quad \text{ where }\omega_N=\frac{\pi^{N/2}}{\int_0^\infty t^{N/2}\mathrm{e}^{-t}\mathrm{d}t}.
$$
\begin{definition}[Model spaces]
	Let $N\in (1,\infty)$. We define the one-dimensional model space with dimension parameter $N$ as $([0,\infty),\mathrm{d}_{eu},\mathfrak{m}_N)$, where $\mathfrak{m}_N:=h_N \mathcal{L}^1\llcorner_{[0,\infty)}$, $\mathrm{d}_{eu}$ is the restriction to $[0,\infty)$ of the canonical Euclidean distance over the real line and $\mathcal{L}^1$ is the standard $1$-dimensional Lebesgue measure.
\end{definition}
We denote by $H_N:[0,\infty)\rightarrow [0,\infty)$ the cumulative distribution function of $\mathfrak{m}_N$, i.e.
$$
H_N(x):=\mathfrak{m}_N([0,x])=\omega_N x^N.
$$
\begin{proposition}[A characterization of Sobolev functions on $([0,\infty),\mathrm{d}_{eu},\mathfrak{m}_N)$]\label{propchso}
It holds that
$$
W^{1, p}([0,r), \mathfrak{m}_{N})=\big\{v \in L^p([0,r), \mathfrak{m}_{N}): v^\prime \text { exists and belongs to } L^p([0,r), \mathfrak{m}_{N})\big\}.
$$
Moreover, if $v\in W^{1, p}([0,r), \mathfrak{m}_{N})$, then $|\nabla v|=|v^{\prime}|$, $\mathfrak{m}_{N}$-a.e. in $[0,r)$, where $v^\prime$ is the distributional derivative defined by
\begin{equation}\label{distri deri}
	\int_0^r v(t) \psi^{\prime}(t) \mathrm{d} t=-\int_0^r v^{\prime}(t) \psi(t) \mathrm{d} t, \quad \text{ for any } \psi \in \mathrm{C}_{\mathrm{c}}^1((0, r)).
\end{equation}
\end{proposition}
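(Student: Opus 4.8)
The plan is to reduce, via Definition~\ref{defSobo}, the characterization of $W^{1,p}([0,r),\mathfrak{m}_N)$ to the analogous description of $W^{1,p}(\mathrm{X},\mathfrak{m}_N)$ on the whole model space $\mathrm{X}=([0,\infty),\mathrm{d}_{eu},\mathfrak{m}_N)$, and then to exploit that on every interval $[a,b]\subset(0,\infty)$ the density $h_N(t)=N\omega_N t^{N-1}$ is bounded above and below by positive constants. Consequently $\mathfrak{m}_N$ is there comparable to $\mathcal{L}^1$, so the metric-measure Sobolev space coincides with the classical one-dimensional Sobolev space on such intervals (with equivalent norms): membership is equivalent to having a locally absolutely continuous representative with $L^p$ derivative, and the minimal relaxed slope is $|v'|$. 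Here I use the standard fact that multiplying the reference measure by a density bounded above and below changes neither the Sobolev space nor the minimal relaxed slope. The only genuinely delicate point is the behaviour near the degenerate point $t=0$.

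For the inclusion ``$\subseteq$'' together with $|\nabla v|\le |v'|$, I would proceed as follows. Let $v\in W^{1,p}([0,r),\mathfrak{m}_N)$ and fix an interval $[a,b]\subset(0,r)$; choose $\eta\in\mathrm{Lip}_{\mathrm{c}}([0,r))$ with $\eta\equiv 1$ on a bounded open neighbourhood $U$ of $[a,b]$ with $\overline U\subset(0,r)$. By Definition~\ref{defSobo}(a), $\eta v\in W^{1,p}(\mathrm{X},\mathfrak{m}_N)$, and restricting a sequence of Lipschitz functions approximating $\eta v$ in the Cheeger energy to $[a,b]$ (where $\mathfrak{m}_N\ge h_N(a)\,\mathcal{L}^1$) shows $v=\eta v\in W^{1,p}((a,b),\mathcal{L}^1)$. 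Hence $v$ has a locally absolutely continuous representative on $(0,r)$ with a.e. derivative $v'$, and $|\nabla v|=|D(\eta v)|_{*,p}=|(\eta v)'|=|v'|$ on $U$. Since Definition~\ref{defSobo}(b) provides $G\in L^p([0,r),\mathfrak{m}_N)$ with $|\nabla v|\le G$ $\mathfrak{m}_N$-a.e., letting $(a,b)$ exhaust $(0,r)$ yields $|v'|\le G$ $\mathfrak{m}_N$-a.e., so $v'\in L^p([0,r),\mathfrak{m}_N)$, $v$ belongs to the right-hand side, and $|\nabla v|=|v'|$.

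For the reverse inclusion ``$\supseteq$'', take $v\in L^p([0,r),\mathfrak{m}_N)$ whose distributional derivative $v'$ (as in $(\ref{distri deri})$) lies in $L^p([0,r),\mathfrak{m}_N)$; then $v$ has a locally absolutely continuous representative on $(0,r)$. Fix $\eta\in\mathrm{Lip}_{\mathrm{c}}([0,r))$ supported in $[0,r']$ with $r'<r$, and set $w:=\eta v$, extended by $0$ to $[0,\infty)$. Then $w$ has bounded support, is locally absolutely continuous on $(0,\infty)$, and $w,w'\in L^p(\mathrm{X},\mathfrak{m}_N)$ with $w'=\eta'v+\eta v'$. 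It suffices to show $w\in W^{1,p}(\mathrm{X},\mathfrak{m}_N)$: combining this for all such $\eta$ with the argument of the previous paragraph applied to $w$ on $(0,\infty)$ — which gives $|Dw|_{*,p}=|w'|$ and $w'=v'$ on $\{\eta=1\}$ — Definition~\ref{defSobo} yields $v\in W^{1,p}([0,r),\mathfrak{m}_N)$ with $G=|v'|$, and also $|\nabla v|=|v'|$. To prove $w\in W^{1,p}(\mathrm{X},\mathfrak{m}_N)$ one cannot merely mollify $w$, since an $L^p(\mathfrak{m}_N)$ function whose distributional derivative lies in $L^p(\mathfrak{m}_N)$ need not have a derivative in $L^1$ near $0$, nor even a limit at $0$. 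Instead I would use the translates $w_\delta(t):=w(t+\delta)$ for $\delta\in(0,r')$: each $w_\delta$ is supported in $[0,r']$ and absolutely continuous on all of $[0,r']$, being the shift by $\delta$ of a function absolutely continuous on $[\delta,r'+\delta]\subset(0,\infty)$, with $w_\delta'\in L^p([0,r'],\mathcal{L}^1)$. Since $h_N$ is bounded on $[0,r']$, approximating $w_\delta$ in the classical $W^{1,p}([0,r'],\mathcal{L}^1)$-norm by compactly supported Lipschitz functions $g_k$ gives $g_k\to w_\delta$ in $L^p(\mathrm{X},\mathfrak{m}_N)$ with $\int\operatorname{lip}(g_k)^p\,\mathrm{d}\mathfrak{m}_N\to\int|w_\delta'|^p\,\mathrm{d}\mathfrak{m}_N$, whence $w_\delta\in W^{1,p}(\mathrm{X},\mathfrak{m}_N)$ and $p\operatorname{Ch}_p(w_\delta)\le\int|w_\delta'|^p\,\mathrm{d}\mathfrak{m}_N$. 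Because $h_N$ is non-decreasing on $[0,\infty)$, the substitution $s=t+\delta$ shows $f\mapsto f(\cdot+\delta)$ is a contraction on $L^p([0,\infty),\mathfrak{m}_N)$; testing this on the dense subspace $\mathrm{C}_{\mathrm{c}}((0,\infty))$ (on which the convergence is obvious) gives $w_\delta\to w$ in $L^p(\mathfrak{m}_N)$ and, applied to $w'$, $\int|w_\delta'|^p\,\mathrm{d}\mathfrak{m}_N\le\int|w'|^p\,\mathrm{d}\mathfrak{m}_N$. Thus $\{w_\delta\}_\delta$ has uniformly bounded Cheeger energy and converges to $w$ in $L^p(\mathfrak{m}_N)$, so the lower semicontinuity of $\operatorname{Ch}_p$ forces $w\in W^{1,p}(\mathrm{X},\mathfrak{m}_N)$, as needed.

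The main obstacle is exactly this last step: the degeneracy of $h_N$ at the origin obstructs the naive ``mollify the absolutely continuous function'' argument, and the device that saves it is the monotone translation $w\mapsto w(\cdot+\delta)$ together with the monotonicity of $h_N$, which makes these translations contractions on $L^p(\mathfrak{m}_N)$ and keeps their Cheeger energies bounded; everything else reduces to classical one-dimensional Sobolev theory on intervals bounded away from $0$.
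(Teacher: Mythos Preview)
Your argument is correct, and for the inclusion $W^{1,p}([0,r),\mathfrak{m}_N)\subseteq\{v:v'\in L^p\}$ it coincides with the paper's Step~1: localise with a cutoff, pass Lipschitz approximants of $\eta v$ to a compact interval of $(0,r)$ where $h_N$ is bounded below, and conclude local absolute continuity together with $|v'|\le|\nabla v|\le G$. For the reverse inclusion the two proofs genuinely differ. The paper builds, for each $n$, a smooth function $v_n$ on $\mathbb{R}$ approximating $v$ on $(\epsilon_n,r)$ via the one-dimensional Sobolev extension theorem, with the $W^{1,p}(\mathcal{L}^1)$-mass on $(0,\epsilon_n)$ driven to zero; then $\eta v_n\to\eta v$ in $L^p(\mathfrak{m}_N)$ with $\operatorname{lip}(\eta v_n)\to|\eta v'+\eta'v|$, which delivers the pointwise bound $|D(\eta v)|_{*,p}\le|(\eta v)'|$ directly. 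Your translation device $w_\delta=w(\cdot+\delta)$ is a clean alternative that exploits the monotonicity of $h_N$ (making the shift a contraction on $L^p(\mathfrak{m}_N)$) and avoids any extension construction; the price is that lower semicontinuity of $\mathrm{Ch}_p$ alone yields only the integral inequality $\int|Dw|_{*,p}^p\,\mathrm{d}\mathfrak{m}_N\le\int|w'|^p\,\mathrm{d}\mathfrak{m}_N$.

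One point to tighten: in your first paragraph the equality $|D(\eta v)|_{*,p}=|(\eta v)'|$ on $U$ is asserted too early. Restricting approximants to $[a,b]$ shows only that the global relaxed slope dominates the local one, i.e.\ $|(\eta v)'|\le|D(\eta v)|_{*,p}$ there; the opposite inequality does not follow from the comparability of $\mathfrak{m}_N$ and $\mathcal{L}^1$ on $U$ alone, since $|D(\eta v)|_{*,p}$ is computed on all of $[0,\infty)$. The equality is recovered once both halves are in place: having shown $w=\eta v\in W^{1,p}(\mathrm{X},\mathfrak{m}_N)$ with the integral bound above, apply the restriction argument of the first paragraph to $w$ itself to get $|w'|\le|Dw|_{*,p}$ $\mathfrak{m}_N$-a.e., and the two together force $|Dw|_{*,p}=|w'|$. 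With this reordering your proof is complete.
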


\begin{proof}
Step 1: we prove that if $v \in W^{1, p}([0,r), \mathfrak{m}_{N})$, then $v^\prime$ exists and $|{v}^{\prime}| \leq |\nabla v|$, $\mathfrak{m}_{N}$-a.e. in $[0,r)$. 

Given $\epsilon\in (0,r/2)$, let us choose a function $\eta \in \mathrm{C}_{\mathrm{c}}^1((0, r))\subset \mathrm{Lip}_{\mathrm{c}}([0, r))$ such that $0\leq \eta\leq 1$ and $\eta=1$ on $[\epsilon,r-\epsilon]$. By the definition of $ W^{1, p}([0,r), \mathfrak{m}_{N})$, we know that $\eta v\in W^{1, p}([0,\infty), \mathfrak{m}_{N})$. Hence, there exists $\{v_n\}_n\subset \operatorname{Lip}_{\mathrm{bs}}([0,\infty))$ such that
	$$
	\|v_n-\eta v\|_{{L}^p}\rightarrow 0,\quad\|\mathrm{lip}(v_n)-|D(\eta v)|_{*,p}\|_{{L}^p}\rightarrow 0.
	$$
	Up to a not relabeled subsequence, we have that $v_n \rightarrow \eta v$, $\mathfrak{m}_N$-a.e. in $[0,\infty)$. Let us denote by $F$ the ($\mathcal{L}^1$-negligible) set of $t \in [\epsilon,r-\epsilon]$ such that either $\lim_n v_n(t)$ does not exist, or $\lim_n v_n(t)$ exists but are different from $\eta v$. Hence, for any $s,t\in [\epsilon,r-\epsilon]\backslash F$ with $s<t$, we have that
	\begin{align*}
		|v(t)-v(s)|&=|(\eta v)(t)-(\eta v)(s)|=\lim_n |v_n(t)-v_n(s)|\\
		&\leq (t-s) \lim_n \int_0^1 \mathrm{lip}(v_n)((1-u)s+ut)\mathrm{d}u\\
		&=\lim_n \int_s^t \mathrm{lip}(v_n)(w)\mathrm{d}w=\int_{s}^t |D(\eta v)|_{*,p}(w)\mathrm{d}w=\int_{s}^t |\nabla v|(w)\mathrm{d}w,
	\end{align*}
	where we used the facts that $\|\mathrm{lip}(v_n)-|D(\eta v)|_{*,p}\|_{{L}^p(\mathfrak{m}_{N})}\rightarrow 0$ and $h_N$ has a positive lower bound on $[\epsilon,r-\epsilon]$. On the other hand, it is easy to see that $|\nabla v|\in L^1([\epsilon,r-\epsilon],\mathcal{L}^1)$. Therefore, $v$ coincides a.e. in $[\epsilon,r-\epsilon]$ with an absolutely continuous map $\hat{v}:[\epsilon,r-\epsilon]\rightarrow \mathbb{R}$ and 
	$$
	|\hat{v}^{\prime}| \leq |\nabla v|,\quad \mathcal{L}^1\text{-a.e. in }(\epsilon,r-\epsilon).
	$$
	In particular, the distributional derivative on $(\epsilon,r-\epsilon)$ of $v$ exists and $|{v}^{\prime}| \leq |\nabla v|$ $\mathcal{L}^1$-a.e. in $(\epsilon,r-\epsilon)$. By the arbitrariness of $\epsilon>0$, we have that the distributional derivative on $(0,r)$ of $v$ exists and $|{v}^{\prime}| \leq |\nabla v|$ $\mathfrak{m}_{N}$-a.e. in $[0,r)$. 
	
Step 2: If $v \in L^p([0,r), \mathfrak{m}_{N})$ is such that $v^\prime$ exists and $v^{\prime} \in L^p([0,r), \mathfrak{m}_{N})$, then $v\in W^{1, p}([0,r), \mathfrak{m}_{N})$ and $|\nabla v|  \leq |{v}^{\prime}|$, $\mathfrak{m}_{N}$-a.e. in $[0,r)$. 

Given any $\epsilon\in (0,r)$, we know that $v,v^\prime \in L^p((\epsilon,r), \mathcal{L}^1\llcorner_{[0,\infty)})$. Now, we fix a sequence $\{\epsilon_n\}_n$ such that $\epsilon_n\searrow 0$. By using the extension theorem of Sobolev functions on $\mathbb{R}$, we can find a sequence $\{v_n\}_n$ of smooth functions on $\mathbb{R}$ such that 
	\begin{align}
		&\|v_n-v\|_{L^p((\epsilon_n,r), \mathcal{L}^1\llcorner_{[0,\infty)})}\rightarrow 0,\text{ and }\|v_n^\prime -v^\prime \|_{L^p((\epsilon_n,r), \mathcal{L}^1\llcorner_{[0,\infty)})}\rightarrow 0,\notag\\
		&\int_0^{\epsilon_n}|v_n|^p\mathrm{d}\mathcal{L}^1\rightarrow 0,\text{ and }\int_0^{\epsilon_n}|v_n^\prime|^p\mathrm{d}\mathcal{L}^1\rightarrow 0.\label{esti}
	\end{align}
	
	Fix $\eta\in \mathrm{Lip}_{\mathrm{c}}([0, r))$ and set $g:=\eta v$. Notice that $\eta v_n\in \mathrm{Lip}_{\mathrm{c}}([0, \infty))$ and $(\eta v_n)^\prime=\mathrm{lip}(\eta v_n)$ $\mathfrak{m}_{N}$-a.e. in $[0, \infty)$ for any $n$. Hence, by using $(\ref{esti})$, it is easy to check that
	\begin{align*}
		\|\eta v_n-g\|_{L^p([0,\infty), \mathfrak{m}_{N})}\rightarrow 0\text{ and }
		\|\mathrm{lip}(\eta v_n)-|\eta v^\prime+\eta^\prime v|\|_{L^p([0,\infty), \mathfrak{m}_{N})}\rightarrow 0.
	\end{align*}
	This implies that $g\in W^{1,p}([0,\infty), \mathfrak{m}_{N})$ and $|\nabla v|=|Dg|_{*,p}\leq |v^\prime|$, $\mathfrak{m}_{N}$-a.e. on $\{\eta=1\}$. By the arbitrariness of $\eta$, we obtain that $v\in W^{1, p}([0,r), \mathfrak{m}_{N})$ and $|\nabla v|\leq |v^\prime|$ $\mathfrak{m}_{N}$-a.e. in $[0,r)$. 
\end{proof}

By Propositions $\ref{prop1}$ and $\ref{propchso}$, we immediately obtain the following corollary:
\begin{corollary}
	Let $p \in(1, \infty)$ and $r>0$. \\
	A function $v\in W_0^{1,p}([0,r),\mathfrak{m}_N)$ is a solution to the Poisson problem 
	\begin{equation}\label{eq2}
		\begin{cases}
			-\mathscr{L}_p v=f, &\text{ in }[0,r),\\
			v=0, &\text{ on }\partial [0,r)=\{r\},
		\end{cases}
	\end{equation}
	if and only if for any $\psi\in W_0^{1,p}([0,r),\mathfrak{m}_N)$, it holds that
	$$
	\int_0^r  v^\prime  \psi^\prime |v^\prime|^{p-2} \mathrm{d}\mathfrak{m}_N=\int_0^r f \psi \mathrm{d}\mathfrak{m}_N.
	$$
\end{corollary}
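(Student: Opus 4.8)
The plan is to unwind the definition of a solution to the Poisson problem on the open subset $[0,r)$ of the one-dimensional model space $([0,\infty),\mathrm{d}_{eu},\mathfrak{m}_N)$ — which is itself an $\mathrm{RCD}(0,N)$ space, so that the differential calculus recalled in the preliminaries applies — and then to identify the module-theoretic quantities $|\nabla v|$ and $\langle\nabla v,\nabla\psi\rangle$ appearing in the weak formulation with expressions built purely from the distributional derivative $(\ref{distri deri})$.

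First I would recall that, by the definition of the local $p$-Laplacian, $v\in W_0^{1,p}([0,r),\mathfrak{m}_N)$ solves $(\ref{eq2})$ exactly when $\int_{[0,r)}\langle\nabla v,\nabla\psi\rangle|\nabla v|^{p-2}\,\mathrm{d}\mathfrak{m}_N=\int_{[0,r)}f\psi\,\mathrm{d}\mathfrak{m}_N$ for every $\psi\in W_0^{1,p}([0,r),\mathfrak{m}_N)$. Since such $\psi$, as well as $v$ and $v+\psi$, restrict to elements of $W^{1,p}([0,r),\mathfrak{m}_N)$, Proposition $\ref{propchso}$ provides, for each of them, a distributional derivative in $L^p([0,r),\mathfrak{m}_N)$ together with the $\mathfrak{m}_N$-a.e. identities $|\nabla v|=|v^\prime|$, $|\nabla\psi|=|\psi^\prime|$ and $|\nabla(v+\psi)|=|(v+\psi)^\prime|=|v^\prime+\psi^\prime|$, the last equality using the linearity of the distributional derivative.

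The key step is then to compute the pointwise scalar product. Applying the polarization formula of Proposition $\ref{prop1}$ in the Hilbert module $L^0(T^*[0,\infty))$ and inserting the identities above gives
$$
\langle\nabla v,\nabla\psi\rangle=\tfrac12\big(|\nabla(v+\psi)|^2-|\nabla v|^2-|\nabla\psi|^2\big)=\tfrac12\big((v^\prime+\psi^\prime)^2-(v^\prime)^2-(\psi^\prime)^2\big)=v^\prime\psi^\prime,\qquad\mathfrak{m}_N\text{-a.e. in }[0,r).
$$
Hence $\langle\nabla v,\nabla\psi\rangle|\nabla v|^{p-2}=v^\prime\psi^\prime|v^\prime|^{p-2}$ $\mathfrak{m}_N$-a.e., and substituting this into the weak formulation turns it into $\int_0^r v^\prime\psi^\prime|v^\prime|^{p-2}\,\mathrm{d}\mathfrak{m}_N=\int_0^r f\psi\,\mathrm{d}\mathfrak{m}_N$; since every identity used is an $\mathfrak{m}_N$-a.e. equality, reversing the argument yields the converse implication. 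I do not anticipate any genuine obstacle here: the only substantive point is the identification of the abstract pointwise scalar product with $v^\prime\psi^\prime$, which is delivered by polarization together with Proposition $\ref{propchso}$ applied to $v$, $\psi$ and $v+\psi$.
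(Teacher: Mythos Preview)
Your proposal is correct and follows exactly the route the paper indicates: the paper simply states that the corollary follows ``by Propositions~\ref{prop1} and~\ref{propchso}'', and you have spelled out precisely how --- using Proposition~\ref{propchso} to identify $|\nabla v|$, $|\nabla\psi|$, $|\nabla(v+\psi)|$ with the moduli of the corresponding distributional derivatives, and then the polarization formula of Proposition~\ref{prop1} to obtain $\langle\nabla v,\nabla\psi\rangle=v'\psi'$.
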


\begin{proposition}[An explicit solution to Poisson problem on $([0,\infty),\mathrm{d}_{eu},\mathfrak{m}_N)$]\label{propex}
	Let $p \in(1, \infty)$ and $r>0$. Let $f\in L^q([0,r),\mathfrak{m}_N)$ be a non-negative function. Then there exists a unique $($in the $\mathfrak{m}_N$-a.e. sense$)$ solution to the Poisson problem $(\ref{eq2})$, which can be represented as
	\begin{align}
		&v(\rho)=\int_\rho^{r}\bigg(\frac{1}{h_{ N}(u)} \int_0^u f \mathrm{d}\mathfrak{m}_{ N}\bigg)^{\frac{1}{p-1}} \mathrm{d} u, \quad \text{ for any } \rho \in[0, r],\label{ex-so-1}
	\end{align}
	or equivalently as
	\begin{align}
		&v(\rho)=\int_{H_{N}(\rho)}^{H_{ N}(r)} \frac{1}{\mathcal{I}_{ N}(\sigma)}\bigg(\frac{1}{\mathcal{I}_{ N}(\sigma)} \int_0^\sigma f \circ H_{ N}^{-1}(t) \mathrm{d} t\bigg)^{\frac{1}{p-1}} \mathrm{d} \sigma, \quad \text{ for any } \rho \in[0, r],\label{ex-so-2}
	\end{align}
	where $\mathcal{I}_{N}:=h_N\circ H_N^{-1}$.
\end{proposition}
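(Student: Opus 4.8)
The plan is to verify directly that the function $v$ in $(\ref{ex-so-1})$ solves the Poisson problem $(\ref{eq2})$, to deduce uniqueness from the strict monotonicity of $\xi\mapsto|\xi|^{p-2}\xi$, and to obtain $(\ref{ex-so-2})$ from $(\ref{ex-so-1})$ by a change of variables. Throughout set $F(\rho):=\int_0^\rho f\,\mathrm{d}\mathfrak{m}_N$; since $f\geq 0$, $\mathfrak{m}_N([0,r])<\infty$ and $f\in L^q$, H\"{o}lder's inequality gives $F(\rho)\leq\|f\|_{L^q([0,\rho],\mathfrak{m}_N)}(\omega_N\rho^N)^{1/p}$, so $F$ is non-decreasing and continuous on $[0,r]$ with $F(0)=0$.

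First I would check that $v(\rho)=\int_\rho^r(F(u)/h_N(u))^{1/(p-1)}\,\mathrm{d}u$ belongs to $W_0^{1,p}([0,r),\mathfrak{m}_N)$. This function is non-negative, non-increasing, satisfies $v(r)=0$, is locally Lipschitz on $(0,r)$, and has distributional derivative $v'(\rho)=-(F(\rho)/h_N(\rho))^{1/(p-1)}\leq 0$. Inserting the H\"{o}lder bound for $F$ and $h_N(u)=N\omega_N u^{N-1}$, a short computation shows that in $|v'(\rho)|^p h_N(\rho)$ the powers of $\rho$ combine to the exponent $\tfrac1{p-1}>-1$, and an analogous computation, using the bound $v(\rho)\leq C\int_\rho^r u^{(N/p-N+1)/(p-1)}\,\mathrm{d}u$ and distinguishing the sign of the resulting exponent, shows that $|v(\rho)|^p h_N(\rho)$ is integrable near $0$ as well; hence $v,v'\in L^p([0,r),\mathfrak{m}_N)$. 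By Proposition $\ref{propchso}$ this yields $v\in W^{1,p}([0,r),\mathfrak{m}_N)$ with $|\nabla v|=|v'|$, and, $v$ being continuous on $[0,r]$ and vanishing at $r$, cutting it off near $r$ places it in $W_0^{1,p}([0,r),\mathfrak{m}_N)$.

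For the weak formulation, note that $v'\leq 0$ forces $h_N|v'|^{p-2}v'=-F$, whence for every $\psi\in W_0^{1,p}([0,r),\mathfrak{m}_N)$
$$
\int_0^r v'\psi'|v'|^{p-2}\,\mathrm{d}\mathfrak{m}_N=-\int_0^r F(\rho)\psi'(\rho)\,\mathrm{d}\rho=-\lim_{\epsilon\downarrow 0}\int_\epsilon^r F(\rho)\psi'(\rho)\,\mathrm{d}\rho .
$$
Integrating by parts on $[\epsilon,r]$, where $F$ and $\psi$ are absolutely continuous, the boundary term at $r$ vanishes because $\psi(r)=0$, and the term at $\epsilon$ vanishes as $\epsilon\downarrow 0$ because $F(\epsilon)$ tends to $0$ faster than $\psi(\epsilon)$ can blow up (again using the $W^{1,p}(\mathfrak{m}_N)$-bound on $\psi$), so the limit equals $\int_0^r F'(\rho)\psi(\rho)\,\mathrm{d}\rho=\int_0^r f\psi\,\mathrm{d}\mathfrak{m}_N$; by the preceding Corollary, $v$ solves $(\ref{eq2})$. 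Uniqueness is standard: if $v_1,v_2$ are two solutions, subtracting the weak formulations and testing with $\psi=v_1-v_2\in W_0^{1,p}$ gives $\int_0^r(v_1'|v_1'|^{p-2}-v_2'|v_2'|^{p-2})(v_1'-v_2')\,\mathrm{d}\mathfrak{m}_N=0$, and strict monotonicity of $\xi\mapsto|\xi|^{p-2}\xi$ forces $v_1'=v_2'$ $\mathfrak{m}_N$-a.e., so $v_1-v_2$ is constant and the condition at $r$ gives $v_1=v_2$. Finally, substituting $\sigma=H_N(u)$ (so $\mathrm{d}u=\mathrm{d}\sigma/\mathcal{I}_N(\sigma)$) together with $t=H_N(s)$ in the inner integral, which replaces $\int_0^u f\,\mathrm{d}\mathfrak{m}_N$ by $\int_0^\sigma f\circ H_N^{-1}(t)\,\mathrm{d}t$, turns $(\ref{ex-so-1})$ into $(\ref{ex-so-2})$.

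The main obstacle is the integrability step: since the weight $h_N$ degenerates at $0$, it is not a priori obvious that $v$ and $v'$ are $p$-integrable near that endpoint, and this is precisely where the explicit form $h_N(t)=N\omega_N t^{N-1}$ enters --- the exponents produced by the H\"{o}lder bound on $F$ land in the integrable range for all $p,N>1$. An alternative that avoids guessing the formula is to obtain existence and uniqueness by minimizing $w\mapsto\frac1p\int_0^r|w'|^p\,\mathrm{d}\mathfrak{m}_N-\int_0^r fw\,\mathrm{d}\mathfrak{m}_N$ over $W_0^{1,p}([0,r),\mathfrak{m}_N)$ via strict convexity and a Poincar\'{e} inequality on the bounded interval, and then integrate the Euler--Lagrange identity $(h_N|v'|^{p-2}v')'=-fh_N$ twice; the direct verification above is shorter.
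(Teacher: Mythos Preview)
Your proof is correct and follows essentially the same route as the paper: verify that $v,v'\in L^p(\mathfrak{m}_N)$ via the H\"older bound on $F$, show $v\in W_0^{1,p}$ by cutting off near $r$, check the weak formulation, prove uniqueness, and obtain $(\ref{ex-so-2})$ by the change of variables $\sigma=H_N(u)$. The only technical differences are that the paper establishes the identity $\int_0^r F\psi'\,\mathrm{d}\rho=-\int_0^r f\psi\,\mathrm{d}\mathfrak{m}_N$ via Fubini (avoiding any discussion of boundary terms at $0$) rather than your integration-by-parts limit, and proves uniqueness by explicitly integrating $(h_N|\bar v'|^{p-2}\bar v')'=-fh_N$ to recover the formula, whereas you use the standard monotonicity argument for the $p$-Laplacian; both variants are routine.
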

\begin{proof}
	Step 1: we show that $v(\rho)<\infty$ for any $\rho\in (0,r]$. If $\rho=r$, then $v(r)=0$. Now, consider $\rho\in (0,r)$. By H\"{o}lder inequality, we have that
	\begin{equation}\label{Holder}
		\int_0^s|f| \mathrm{d}\mathfrak{m}_{N} \leq\|f\|_{L^q([0,r), \mathfrak{m}_{N})} H_{N}(s)^{\frac{1}{p}},\quad \text { for any } s \in(0, r].
	\end{equation}
	Hence, 
	\begin{align}
		|v(\rho)| & \leq \|f\|_{L^q(\mathfrak{m}_N)}^{\frac{1}{p-1}} \int_\rho^{r} \frac{H_{ N}(u)^{\frac{1}{p(p-1)}}}{h_{ N}(u)^{\frac{1}{p-1}}} \mathrm{d} u \leq C_1\|f\|_{L^q(\mathfrak{m}_N)}^{\frac{1}{p-1}} \int_\rho^{r} \frac{u^{\frac{N}{p(p-1)}}}{u^{\frac{N-1}{p-1}}} \mathrm{d} u \label{w-esti}\\
		& = \begin{cases}
			C_2\|f\|_{L^q(\mathfrak{m}_N)}^{\frac{1}{p-1}}\bigg(r^{\frac{N}{p(p-1)}-\frac{N-1}{p-1}+1}-\rho^{\frac{N}{p(p-1)}-\frac{N-1}{p-1}+1}\bigg), & \frac{N}{p(p-1)}-\frac{N-1}{p-1}>-1, \notag\\
			C_1\|f\|_{L^q(\mathfrak{m}_N)}^{\frac{1}{p-1}}(\log r-\log \rho), & \frac{N}{p(p-1)}-\frac{N-1}{p-1}=-1, \notag\\
			C_2\|f\|_{L^q(\mathfrak{m}_N)}^{\frac{1}{p-1}}\bigg(\rho^{\frac{N}{p(p-1)}-\frac{N-1}{p-1}+1}-r^{\frac{N}{p(p-1)}-\frac{N-1}{p-1}+1}\bigg), & \frac{N}{p(p-1)}-\frac{N-1}{p-1}<-1,
		\end{cases}
	\end{align}
	where $C_1,C_2>0$ are constants depending only on $p$ and $N$.
	
	Step 2: we show that the two expressions $(\ref{ex-so-1})$, $(\ref{ex-so-2})$ are actually equivalent. Indeed, by using the fact $\mathcal{I}_{ N}=h_{ N} \circ H_{ N}^{-1}$, we deduce that
	\begin{align*}
		\int_\rho^{r}\bigg(\frac{1}{h_{ N}(u)} \int_0^u f \mathrm{d}\mathfrak{m}_{ N}\bigg)^{\frac{1}{p-1}} \mathrm{d} u & =\int_\rho^{r} \frac{1}{h_{ N}(u)}\bigg(\frac{1}{h_{ N}(u)} \int_0^u f \mathrm{d}\mathfrak{m}_{ N}\bigg)^{\frac{1}{p-1}} h_{ N}(u) \mathrm{d} u \\
		& =\int_{H_{N}(\rho)}^{H_{N}(r)} \frac{1}{\mathcal{I}_{ N}(\sigma)}\bigg(\frac{1}{\mathcal{I}_{ N}(\sigma)} \int_0^{H_{ N}^{-1}(\sigma)} f(s) \mathrm{d} \mathfrak{m}_{ N}(s)\bigg)^{\frac{1}{p-1}} \mathrm{d} \sigma \\
		& =\int_{H_{N}(\rho)}^{H_{ N}(r)} \frac{1}{\mathcal{I}_{N}(\sigma)}\bigg(\frac{1}{\mathcal{I}_{ N}(\sigma)} \int_0^\sigma f \circ H_{ N}^{-1}(t) \mathrm{d} t\bigg)^{\frac{1}{p-1}} \mathrm{d} \sigma,
	\end{align*}
	where we have used the change of variables $\sigma=H_{N}(u)$ in the external integral, and then the change of variables $t=H_{N}(s)$ in the internal integral.
	
	Step 3: we show that a solution to the Poisson problem $(\ref{eq2})$ must coincide (in the $\mathfrak{m}_N$-a.e. sense) with the function in $(\ref{ex-so-1})$. In particular, we get the uniqueness of solutions to $(\ref{eq2})$. Let $\bar{v} \in W^{1, p}([0,r), \mathfrak{m}_{N})$ be a solution to $(\ref{eq2})$. First, we prove that the distributional derivative as in $(\ref{distri deri})$ of $\bar{v}$ coincides $\mathfrak{m}_{N}$-a.e. in $(0,r)$ with the function $-g^{\frac{1}{p-1}}$, where
	$$
	g(x)=\frac{1}{h_{N}(x)} \int_0^x f \mathrm{d}\mathfrak{m}_{ N}, \quad x \in[0, r].
	$$
	Indeed, for any $\psi \in \mathrm{C}_{\mathrm{c}}^1([0, r))$ one has $(x, s) \mapsto \chi_{[0, x]}(s) f(s) \psi^{\prime}(x) h_{N}(s) \in L^1([0, r]^2,(\mathcal{L}^1)^2)$, and thus by the Fubini Theorem:
	\begin{align}
		\int_0^r g(x) \psi^{\prime}(x) \mathrm{d}\mathfrak{m}_{N}(x) & =\int_0^{r}\bigg(\int_0^{r} \chi_{[0, x]}(s) f(s) \frac{\psi^{\prime}(x)}{h_{N}(x)} \mathrm{d}\mathfrak{m}_{N}(s)\bigg) \mathrm{d}\mathfrak{m}_{N}(x)\notag \\
		& =\int_0^{r} f(s)\bigg(\int_s^{r} \psi^{\prime}(x) \mathrm{d} x\bigg) \mathrm{d}\mathfrak{m}_{N}(s)\notag \\
		& =-\int_0^{r} f \psi \mathrm{d}\mathfrak{m}_{N}.\label{so}
	\end{align}
	Thus, since $\bar{v}$ is a solution to $(\ref{eq2})$, for any $\psi \in \mathrm{C}_{\mathrm{c}}^1([0, r))$ we have that
	$$
	\int_0^r\big[g(x)+|\bar{v}^{\prime}|^{p-2}(x) \bar{v}^{\prime}(x)\big] h_{N}(x) \psi^{\prime}(x) \mathrm{d} x=0.
	$$
	By a classical result, there exists a constant $C \in \mathbb{R}$ such that $|\bar{v}^{\prime}|^{p-2}(x) \bar{v}^{\prime}(x) h_{N}(x)+g(x) h_{N}(x)= C$ for $\mathcal{L}^1$-a.e. $x \in (0,r)$. This however implies that for any $\psi \in \mathrm{C}_{\mathrm{c}}^1([0, r))$,
	$$
	0=C \int_0^{r} \psi^{\prime}(x) \mathrm{d} x=-C \psi(0),
	$$
	hence $C=0$. Moreover, we have $\bar{v}^{\prime}=-g^{\frac{1}{p-1}}$ (recall that $g \geq 0$). Since $\bar{v} \in W_0^{1, p}([0,r), \mathfrak{m}_{N})$, there exists $\{v_n\}_n\subset \operatorname{Lip}_{\mathrm{c}}([0,r))$ such that
	$$
	\|v_n-\bar{v}\|_{{L}^p([0,\infty),\mathfrak{m}_{N})}\rightarrow 0,\quad\||D(v_n-\bar{v})|_{*,p}\|_{{L}^p([0,\infty),\mathfrak{m}_{N})}\rightarrow 0.
	$$
	In particular, $\|(v_n-\bar{v})^\prime\|_{{L}^p([0,\infty),\mathfrak{m}_{N})}\rightarrow 0$ by Proposition $\ref{propchso}$ and up to a not relabeled subsequence, we have that $v_n \rightarrow \bar{v}$ $\mathfrak{m}_N$-a.e. in $[0,\infty)$. Fix $\epsilon\in(0,r)$. Let us denote by $F$ the ($\mathcal{L}^1$-negligible) set of $t \in [\epsilon,r]$ such that either $\lim_n v_n(t)$ does not exist, or $\lim_n v_n(t)$ exists but are different from $\bar{v}$. Hence, for any $s\in [\epsilon,r]\backslash F$, we have that
	\begin{align*}
		-\bar{v}(s)=\lim_n [v_n(r)-v_n(s)]=\lim_n \int_s^r v_n^\prime (u)\mathrm{d}u=\int_s^r \bar{v}^\prime (u)\mathrm{d}u,
	\end{align*}
	where we used the facts that $\|(v_n-\bar{v})^\prime\|_{{L}^p([0,\infty),\mathfrak{m}_{N})}\rightarrow 0$ and $h_N$ has a positive lower bound on $[\epsilon,r]$. By the arbitrariness of $\epsilon>0$, we have that
	$$
	\bar{v}(\rho)=-\int_\rho^{r} \bar{v}^{\prime}(s) \mathrm{d} s=\int_\rho^{r} g^{\frac{1}{p-1}}(s) \mathrm{d} s=v(\rho),\quad \mathfrak{m}_{ N} \text{-a.e. } \rho\in (0,r).
	$$
	
	Step 4: we show that $v$ defined by $(\ref{ex-so-1})$ is actually a solution to $(\ref{eq2})$. Observe that $v$ is $\mathrm{C}^1$ on $(0, r]$ and continuous on $[0, r]$ (with $v(r)=0$) since the integrand is continuous on $(0, r]$. By straightforward computations, we show that $v$ and $v^{\prime}$ belong to $L^p([0,r), \mathfrak{m}_{N})$ and thus $v\in W^{1, p}([0,r), \mathfrak{m}_{N})$ by Proposition $\ref{propchso}$. By $(\ref{w-esti})$, if $\frac{N}{p(p-1)}-\frac{N-1}{p-1} \neq-1$, then
	\begin{equation}\label{2.21}
		\int_0^{r}|v|^p \mathrm{d}\mathfrak{m}_{N} \leq C_3\|f\|_{L^q(\mathfrak{m}_{N})}^{\frac{p}{p-1}} \int_0^{r}\big(r^{\frac{N}{p-1}-\frac{(N-1) p}{p-1}+p}+\rho^{\frac{N}{p-1}-\frac{(N-1) p}{p-1}+p}\big) \rho^{N-1} \mathrm{d} \rho<\infty;
	\end{equation}
	if instead $\frac{N}{p(p-1)}-\frac{N-1}{p-1}=-1$, then
	\begin{equation}\label{2.22}
		\int_0^{r}|v|^p \mathrm{d}\mathfrak{m}_{N} \leq C_3\|f\|_{L^q(\mathfrak{m}_{N})}^{\frac{p}{p-1}} \int_0^{r}\big(\log r-\log \rho\big)^p \rho^{N-1} \mathrm{d} \rho<\infty.
	\end{equation}
	Moreover, exploiting $(\ref{Holder})$, we have that
	\begin{equation}\label{2.23}
		\int_0^{r}|v^{\prime}|^p \mathrm{d}\mathfrak{m}_{N} \leq\|f\|_{L^q(\mathfrak{m}_{N})}^{\frac{p}{p-1}} \int_0^{r} \frac{H_{ N}(s)^{\frac{1}{p-1}}}{h_{ N}(s)^{\frac{1}{p-1}}} \mathrm{d} s \leq C_4\|f\|_{L^q(\mathfrak{m}_{N})}^{\frac{p}{p-1}} \int_0^{r} \frac{s^{\frac{N}{p-1}}}{s^{\frac{N-1}{p-1}}} \mathrm{d} r<\infty.
	\end{equation}
	We remark that all constants $C_1, C_2, C_3$ and $C_4$ depend only on $N$ and $p$. 
	
	Extend $v$ to $[0,\infty)$ by taking it to be $0$ on $(r,\infty)$. Next, we prove that $v \in W_0^{1, p}([0,r), \mathfrak{m}_{N})$. Next let $\zeta:\mathbb{R}\rightarrow [0,1]$ be a $1$-Lipschitz function such that
	$$
	\zeta = 1 \text { on }(-\infty,1],\quad \zeta = 0 \text { on } [2,\infty),
	$$
	and write
	$$
	\left\{\begin{array}{l}
		\zeta_m(x):=\zeta(m (r-x)), \quad x \in [0,\infty), \\
		w_m:=v(1-\zeta_m) .
	\end{array}\right.
	$$
	Then $w_{m}^\prime(x)=v^\prime(x)(1-\zeta_m(x))+m v \zeta^{\prime}(m (r-x))$ for a.e. $x\in (0,r)$.
	Consequently,
	\begin{align*}
		\int_0^\infty|w_{m}^\prime-v^\prime|^p \mathrm{d}\mathfrak{m}_{N}&=\int_0^r|v^\prime(x)(1-\zeta_m(x))+m v \zeta^{\prime}(m (r-x))-v^\prime|^p \mathrm{d}\mathfrak{m}_{N}\\ 
		&\leq  2^{p-1} \int_0^r|\zeta_m|^p|v^\prime|^p\mathrm{d}\mathfrak{m}_{N} 
		+ 2^{p-1}m^p \int_{r-2/m}^{r}|v|^p \mathrm{d}\mathfrak{m}_{N} \\
		&:=  A+B.
	\end{align*}
	Now
	$$
	A \rightarrow 0 \quad \text { as } m \rightarrow \infty
	$$
	since $\zeta_m(x)\neq 0$ only if $r-2/m \leq x \leq r$. To estimate the term $B$:
	\begin{align*}
		B &=2^{p-1}m^p \int_{r-2/m}^{r}|v(x)-v(r)|^p \mathrm{d}\mathfrak{m}_{N}(x)\\
		& \leq 2^{p-1} m^ph_N(r)\int_{r-2/m}^{r} 
		(r-x)^{p-1} \mathrm{d}x\int_{r-2/m}^{r} |v^\prime(s)|^p \mathrm{d} s \\
		& = 2^{2p-1} p^{-1}h_N(r)  \int_{r-2/m}^{r} |v^\prime(s)|^p \mathrm{d} s  \rightarrow 0 \quad \text { as } m \rightarrow 0.
	\end{align*}
	Since clearly $w_m \rightarrow v$ in $L^p([0,\infty), \mathfrak{m}_{N})$, we conclude
	$$
	w_m \rightarrow v \quad \text { in } W^{1, p}([0,\infty), \mathfrak{m}_{N}).
	$$
	But $w_m(x)=0$ if $r-1/m\leq x\leq r$. We can therefore approximate the $w_m$ to produce functions $v_m \in \mathrm{Lip}_{\mathrm{c}}([0,r))$ such that $v_m \rightarrow v$ in $ W^{1, p}([0,\infty), \mathfrak{m}_{N})$. Hence $v \in W_0^{1, p}([0,r), \mathfrak{m}_{N})$.
	
	Finally, by tracing back the identity in $(\ref{so})$, the very same argument shows that $v$ is a solution to $-\mathscr{L}_p v=f$.
\end{proof}

\section{A Talenti-type comparison theorem for $\mathrm{RCD}(0,N)$ spaces}
Throughout this section, let $(\mathrm{X}, \mathrm{d}, \mathfrak{m})$ be a $\operatorname{RCD}(0, N)$ space for some $N \in(1, \infty)$ and let $\Omega$ be an open subset of $\mathrm{X}$ with $\mathfrak{m}(\Omega)\in (0,\infty)$. Let $p, q \in(1, \infty)$ be conjugate exponents.

Before passing to the proof of the main comparison theorem, we state two Lemmas. For their proof, we refer to \cite{MV21} and \cite{Wu24}.
\begin{lemma}\label{Talem0}
	Let $u \in L^p(\Omega,\mathfrak{m})$, $f \in L^q(\Omega,\mathfrak{m})$. Define
	$$
	F(t)=\int_{\{u>t\}}(u-t) f \mathrm{d}\mathfrak{m}, \quad \text{ for any } t \in \mathbb{R}.
	$$
	Then $F$ is differentiable out of a countable set $C \subset \mathbb{R}$, and
	$$
	F^{\prime}(t)=-\int_{\{u>t\}} f\mathrm{d}\mathfrak{m}, \quad \text{ for any } t \in \mathbb{R} \backslash C.
	$$
\end{lemma}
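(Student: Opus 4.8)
The plan is to compute the one‑sided difference quotients of $F$ explicitly and to show that both of them converge to $-\int_{\{u>t\}}f\,\mathrm{d}\mathfrak{m}$ for every $t$ outside the set
\[
C:=\{t\in\mathbb{R}:\mathfrak{m}(\{u=t\})>0\}.
\]
This set is countable, since the level sets $\{u=t\}$, $t\in\mathbb{R}$, are pairwise disjoint and $\mathfrak{m}(\Omega)<\infty$, so only countably many of them can carry positive $\mathfrak{m}$‑measure. First I would check that $F$ is finite‑valued: because $\mathfrak{m}(\Omega)<\infty$, the function $(u-t)\chi_{\{u>t\}}$ lies in $L^p(\Omega,\mathfrak{m})$ (it is dominated by $|u|+|t|$) and $f\in L^q(\Omega,\mathfrak{m})\subset L^1(\Omega,\mathfrak{m})$, so the integrand defining $F(t)$ is in $L^1$ by H\"older.

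For the right derivative, fix $t$ and take $s>t$. Splitting $\{u>t\}=\{u>s\}\cup\{t<u\le s\}$ and rearranging the defining integrals gives
\[
F(s)-F(t)=-(s-t)\int_{\{u>s\}}f\,\mathrm{d}\mathfrak{m}-\int_{\{t<u\le s\}}(u-t)f\,\mathrm{d}\mathfrak{m},
\]
hence
\[
\frac{F(s)-F(t)}{s-t}=-\int_{\{u>s\}}f\,\mathrm{d}\mathfrak{m}-\frac{1}{s-t}\int_{\{t<u\le s\}}(u-t)f\,\mathrm{d}\mathfrak{m}.
\]
On $\{t<u\le s\}$ one has $0<u-t\le s-t$, so the last term is bounded in absolute value by $\int_{\{t<u\le s\}}|f|\,\mathrm{d}\mathfrak{m}$; since $\{t<u\le s\}\downarrow\{u=t\}$ as $s\downarrow t$ and $\mathfrak{m}(\{u=t\})=0$ for $t\notin C$, dominated convergence (with dominating function $|f|$) forces this term to vanish, while $\{u>s\}\uparrow\{u>t\}$ gives $\int_{\{u>s\}}f\,\mathrm{d}\mathfrak{m}\to\int_{\{u>t\}}f\,\mathrm{d}\mathfrak{m}$. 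The left derivative is symmetric: for $s<t$, writing $\{u>s\}=\{u>t\}\cup\{s<u\le t\}$ and performing the same manipulation yields
\[
\frac{F(s)-F(t)}{s-t}=-\int_{\{u>t\}}f\,\mathrm{d}\mathfrak{m}+\frac{1}{s-t}\int_{\{s<u\le t\}}(u-s)f\,\mathrm{d}\mathfrak{m},
\]
whose error term is bounded by $\int_{\{s<u\le t\}}|f|\,\mathrm{d}\mathfrak{m}\to 0$ as $s\uparrow t$. Therefore for every $t\notin C$ both one‑sided derivatives exist and equal $-\int_{\{u>t\}}f\,\mathrm{d}\mathfrak{m}$, which is the assertion.

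There is no essential obstacle here: the argument is careful bookkeeping of the difference quotient followed by two applications of dominated convergence. The only points that require attention are the identification of the countable exceptional set $C$ — precisely the levels $t$ at which the ``boundary layer'' contribution $\int_{\{t<u\le s\}}(u-t)f\,\mathrm{d}\mathfrak{m}$ may fail to be $o(s-t)$ — and the systematic use of the standing hypothesis $\mathfrak{m}(\Omega)<\infty$, both to guarantee $F(t)\in\mathbb{R}$ and to legitimize the passages to the limit.
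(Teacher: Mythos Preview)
Your argument is correct and is the standard direct computation of the difference quotient; the paper does not actually supply a proof of this lemma but refers to \cite{MV21} and \cite{Wu24}, where the same elementary approach is used. One minor inaccuracy: in the right-derivative step you write $\{t<u\le s\}\downarrow\{u=t\}$, but in fact $\bigcap_{s>t}\{t<u\le s\}=\emptyset$ (the condition $t\notin C$ is genuinely needed only for the left derivative, where $\{s<u\le t\}\downarrow\{u=t\}$); this does not affect the validity of your conclusion.
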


\begin{lemma}\label{Talem1}
	Let $f \in L^q(\Omega,\mathfrak{m})$. Let $u \in W_0^{1, p}(\Omega,\mathfrak{m})$ be a solution to the Poisson problem $(\ref{eq1})$. Then for $\mathcal{L}^1$-a.e. $t>0$, it holds that
	$$
	-\frac{\mathrm{d}}{\mathrm{d} t} \int_{\{|u|>t\}}|\nabla u| \mathrm{d}\mathfrak{m} \leq\big(-\mu^{\prime}(t)\big)^{1 / q}\bigg(\int_{\{|u|>t\}}|f| \mathrm{d}\mathfrak{m}\bigg)^{1 / p},
	$$
	and 
	$$
	-\frac{\mathrm{d}}{\mathrm{d} t} \int_{\{|u|>t\}}|\nabla u|^p \mathrm{d}\mathfrak{m} \leq\int_{\{|u|>t\}}|f| \mathrm{d}\mathfrak{m},
	$$
	where $\mu$ is the distribution function of $u$.
\end{lemma}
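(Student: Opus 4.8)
The plan is to test the weak formulation of $(\ref{eq1})$ against truncations of $u$ at two heights and then pass to the limit in one-sided difference quotients, using that the three monotone functions of $t$ that appear are differentiable $\mathcal{L}^1$-a.e.

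\textbf{Step 1: admissible test functions.} For fixed $t>0$ and $h>0$ I would consider the $1$-Lipschitz function $\Phi_{t,h}:\mathbb{R}\to\mathbb{R}$ defined by $\Phi_{t,h}(r):=\operatorname{sgn}(r)\,\min\{(|r|-t)^{+},h\}$, which satisfies $\Phi_{t,h}(0)=0$, $\Phi_{t,h}'(r)=1$ for $t<|r|<t+h$ and $\Phi_{t,h}'(r)=0$ otherwise. Put $\psi_{t,h}:=\Phi_{t,h}\circ u$. Since $u\in W_0^{1,p}(\Omega,\mathfrak m)\subset W^{1,p}(\mathrm X,\mathfrak m)$ and $\Phi_{t,h}$ is $1$-Lipschitz with $\Phi_{t,h}(0)=0$, the chain rule in the calculus of the $L^0(T\mathrm X)$-module, combined with the fact that $\nabla u=0$ $\mathfrak m$-a.e. on every level set $\{|u|=c\}$, gives $\psi_{t,h}\in W^{1,p}(\mathrm X,\mathfrak m)$ with $\nabla\psi_{t,h}=\chi_{\{t<|u|\le t+h\}}\nabla u$ $\mathfrak m$-a.e., hence $\langle\nabla u,\nabla\psi_{t,h}\rangle\,|\nabla u|^{p-2}=\chi_{\{t<|u|\le t+h\}}|\nabla u|^{p}$ $\mathfrak m$-a.e. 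Moreover, approximating $u$ by $\mathrm{Lip}_{\mathrm c}(\Omega)$ functions $u_n$ (so that $\Phi_{t,h}\circ u_n\in\mathrm{Lip}_{\mathrm c}(\Omega)$), noting that these compositions are bounded in $W^{1,p}(\mathrm X,\mathfrak m)$ and converge to $\psi_{t,h}$ in $L^p$, and invoking the weak closedness of $W_0^{1,p}(\Omega,\mathfrak m)$ (or Mazur's lemma), one obtains $\psi_{t,h}\in W_0^{1,p}(\Omega,\mathfrak m)$, so it is an admissible test function in $(\ref{eq1})$.

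\textbf{Step 2: the energy inequality.} Inserting $\psi=\psi_{t,h}$ into the weak formulation and bounding $|\psi_{t,h}|\le h\,\chi_{\{|u|>t\}}$ gives
$$\int_{\{t<|u|\le t+h\}}|\nabla u|^{p}\,\mathrm d\mathfrak m=\int_\Omega f\,\psi_{t,h}\,\mathrm d\mathfrak m\le h\int_{\{|u|>t\}}|f|\,\mathrm d\mathfrak m .$$
Set $\Psi(t):=\int_{\{|u|>t\}}|\nabla u|^{p}\,\mathrm d\mathfrak m$, $\phi(t):=\int_{\{|u|>t\}}|\nabla u|\,\mathrm d\mathfrak m$ and $\mu(t):=\mathfrak m(\{|u|>t\})$; all three are finite (since $\mathfrak m(\Omega)<\infty$ and $|\nabla u|\in L^{p}\subset L^{1}$) and non-increasing, hence differentiable at $\mathcal L^{1}$-a.e.\ $t$. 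Since $\Psi(t)-\Psi(t+h)=\int_{\{t<|u|\le t+h\}}|\nabla u|^{p}\,\mathrm d\mathfrak m$, dividing the estimate above by $h$ and letting $h\downarrow0$ at a differentiability point of $\Psi$ yields $-\Psi'(t)\le\int_{\{|u|>t\}}|f|\,\mathrm d\mathfrak m$, which is precisely the second asserted inequality.

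\textbf{Step 3: the first inequality and the main difficulty.} Writing $\{|u|>t\}=\{|u|>t+h\}\sqcup\{t<|u|\le t+h\}$ and applying H\"older's inequality with exponents $p,q$ on the ``shell'',
$$\phi(t)-\phi(t+h)=\int_{\{t<|u|\le t+h\}}|\nabla u|\,\mathrm d\mathfrak m\le\big(\Psi(t)-\Psi(t+h)\big)^{1/p}\big(\mu(t)-\mu(t+h)\big)^{1/q}.$$
Dividing by $h=h^{1/p}h^{1/q}$ and letting $h\downarrow0$ at a common differentiability point of $\phi,\Psi,\mu$ gives $-\phi'(t)\le(-\Psi'(t))^{1/p}(-\mu'(t))^{1/q}$, and combining with Step 2 (and $-\Psi'(t)\ge0$) produces $-\phi'(t)\le\big(\int_{\{|u|>t\}}|f|\,\mathrm d\mathfrak m\big)^{1/p}(-\mu'(t))^{1/q}$, the first inequality. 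The only genuinely delicate point is Step 1: verifying that the truncations $\psi_{t,h}$ lie in $W_0^{1,p}(\Omega,\mathfrak m)$ and identifying their gradients through the module calculus (chain rule, locality, vanishing of $\nabla u$ on level sets); once this is granted the remainder is a routine manipulation of difference quotients of monotone functions together with H\"older's inequality.
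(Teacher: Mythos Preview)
Your proof is correct and follows essentially the same approach as the references \cite{MV21}, \cite{Wu24} to which the paper defers for this lemma: test the weak formulation against the truncation $\Phi_{t,h}\circ u$ (equivalently $(u-t)^{+}-(-u-t)^{+}-(u-t-h)^{+}+(-u-t-h)^{+}$, the form visible in the paper's proof of the almost rigidity theorem), apply H\"older on the shell $\{t<|u|\le t+h\}$, and pass to the limit in the difference quotients of the three monotone functions $\phi,\Psi,\mu$. The justification that $\psi_{t,h}\in W_0^{1,p}(\Omega,\mathfrak m)$ via weak closedness of this norm-closed subspace is standard and sufficient.
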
 

We can now prove the Talenti-type comparison theorem. A similar result was proved in \cite{MV21} in the case of positive curvature.
\begin{theorem}[A Talenti-Type Comparison Theorem]
	Let $p, q \in(1, \infty)$ be conjugate exponents. Let $(\mathrm{X}, \mathrm{d}, \mathfrak{m})$ be a $\mathrm{RCD}(0, N)$ space with $N \in(1, \infty)$, and $\Omega \subset \mathrm{X}$ an open set with $\mathfrak{m}(\Omega)=a \in(0,\infty)$.
	
	Let $f \in L^q(\Omega, \mathfrak{m})$. Assume that $u \in W_0^{1, p}(\Omega, \mathfrak{m})$ is a solution to the Poisson problem $(\ref{eq1})$. Let also $v \in W^{1, p}([0, r_a), \mathfrak{m}_{N})$ be a solution to the problem
	\begin{equation*}
		\begin{cases}
			-\mathscr{L}_p v=f^\star, &\text{ in }[0,r_a),\\
			v=0, &\text{ on }\partial [0,r_a)=\{r_a\},
		\end{cases}
	\end{equation*}
	where $r_a>0$ is such that $\mathfrak{m}_N([0,r_a])=\mathfrak{m}(\Omega)$, and $f^\star$ is the Schwarz symmetrization of $f$. Then\\
	$(\mathrm{i})$ $\mathrm{AVR}_{\mathfrak{m}}^{q/N}u^{\star} \leq v$, $\mathfrak{m}_{N}$-a.e. in $[0, r_a]$. In addition, this inequality is sharp.\\
	$(\mathrm{ii})$ For any $1 \leq r \leq p$, the following $L^r$-gradient estimate holds:
	\begin{equation}\label{r-grad}
		\mathrm{AVR}_{\mathfrak{m}}^{\frac{r}{(p-1)N}}\int_{\Omega}|\nabla u|^r \mathrm{d}\mathfrak{m} \leq \int_0^{r_a}|\nabla v|^r \mathrm{d}\mathfrak{m}_{N}.
	\end{equation}
\end{theorem}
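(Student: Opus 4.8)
The strategy follows the classical Talenti scheme of Pólya–Szegő plus the coarea formula, but now weighted by the isoperimetric deficit coming from Theorem \ref{thmiso}. The starting point is the pointwise identity, valid for $\mathcal{L}^1$-a.e. $t>0$, obtained by testing the weak formulation of $(\ref{eq1})$ against the truncation $\min\{(|u|-t)^+,h\}/h$ and letting $h\to 0$: combined with Lemma \ref{Talem1} this gives
\begin{equation*}
	\int_{\{|u|>t\}}|f|\,\mathrm{d}\mathfrak{m}=-\frac{\mathrm{d}}{\mathrm{d}t}\int_{\{|u|>t\}}|\nabla u|^p\,\mathrm{d}\mathfrak{m}\geq -\frac{\mathrm{d}}{\mathrm{d}t}\int_{\{|u|>t\}}|\nabla u|\,\mathrm{d}\mathfrak{m}\cdot\Big(\text{appropriate power}\Big),
\end{equation*}
so the key estimate to exploit is really the first inequality in Lemma \ref{Talem1},
\begin{equation*}
	-\frac{\mathrm{d}}{\mathrm{d}t}\int_{\{|u|>t\}}|\nabla u|\,\mathrm{d}\mathfrak{m}\leq(-\mu'(t))^{1/q}\Big(\int_{\{|u|>t\}}|f|\,\mathrm{d}\mathfrak{m}\Big)^{1/p}.
\end{equation*}

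\textbf{Step 1 (coarea + isoperimetry).} First I would apply the coarea formula $(\ref{coeq4})$ with $g\equiv 1$ on $\{|u|>t\}$ to write $-\frac{\mathrm{d}}{\mathrm{d}t}\int_{\{|u|>t\}}|\nabla u|\,\mathrm{d}\mathfrak{m}=\mathrm{Per}(\{|u|>t\})$ for a.e.\ $t$. Then Theorem \ref{thmiso} gives
\begin{equation*}
	N\omega_N^{1/N}\mathrm{AVR}_{\mathfrak{m}}^{1/N}\mu(t)^{(N-1)/N}\leq\mathrm{Per}(\{|u|>t\})\leq(-\mu'(t))^{1/q}\Big(\int_{\{|u|>t\}}|f|\,\mathrm{d}\mathfrak{m}\Big)^{1/p}.
\end{equation*}
Next, by Proposition \ref{propre}(c) and equimeasurability, $\int_{\{|u|>t\}}|f|\,\mathrm{d}\mathfrak{m}\leq\int_0^{\mu(t)}f^\sharp(s)\,\mathrm{d}s=\int_0^{\mu(t)}f^\star\circ H_N^{-1}\,\mathrm{d}s$, and one rewrites the right-hand side in terms of $\mathfrak{m}_N$ using the substitution $s=H_N(\rho)$. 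Rearranging yields a differential inequality for $\mu$: for a.e.\ $t$ with $\mu(t)>0$,
\begin{equation*}
	-\mu'(t)\geq\frac{\big(N\omega_N^{1/N}\mathrm{AVR}_{\mathfrak{m}}^{1/N}\mu(t)^{(N-1)/N}\big)^{q}}{\big(\int_0^{\mu(t)}f^\star\circ H_N^{-1}(s)\,\mathrm{d}s\big)^{q/p}}.
\end{equation*}

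\textbf{Step 2 (reduction to the model ODE and comparison).} I would now pass to the rearranged variable. Write $s=\mu(t)$, so that $t=u^\sharp(s)$ for a.e.\ $s$, and by the inverse function relation $-\,\mathrm{d}t/\mathrm{d}s=1/(-\mu'(u^\sharp(s)))\leq$ the reciprocal of the right-hand side above. Changing to the model-space variable $\rho$ via $s=H_N(\rho)=\omega_N\rho^N$ turns this into an explicit upper bound for $\frac{\mathrm{d}}{\mathrm{d}\rho}\big(u^\star(\rho)\big)$ (note $u^\star=u^\sharp\circ H_N$), namely
\begin{equation*}
	-\frac{\mathrm{d}}{\mathrm{d}\rho}\,u^\star(\rho)\leq\mathrm{AVR}_{\mathfrak{m}}^{-q/N}\,\frac{1}{h_N(\rho)}\Big(\frac{1}{h_N(\rho)}\int_0^\rho f^\star\,\mathrm{d}\mathfrak{m}_N\Big)^{1/(p-1)},
\end{equation*}
after absorbing all dimensional constants into the factor $N\omega_N^{1/N}$ which combines with $h_N$ correctly because $q=p/(p-1)$ and $(N-1)q/p$ bookkeeping. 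Comparing with the explicit solution formula $(\ref{ex-so-1})$ from Proposition \ref{propex} (the function $v$ there is exactly $\int_\rho^{r_a}(h_N^{-1}\int_0^u f^\star\,\mathrm{d}\mathfrak{m}_N)^{1/(p-1)}\,\mathrm{d}u$), and using $u^\star(r_a)=0=v(r_a)$, integration from $\rho$ to $r_a$ gives $\mathrm{AVR}_{\mathfrak{m}}^{q/N}u^\star(\rho)\leq v(\rho)$, which is $(\mathrm{i})$. Sharpness follows by checking that on a Euclidean metric measure cone over an $\mathrm{RCD}(N-2,N-1)$ space (where $\mathrm{AVR}_{\mathfrak{m}}$ equals the normalized cross-section mass) radial $f$ makes every inequality above an equality.

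\textbf{Step 3 ($L^r$-gradient estimate).} For $(\mathrm{ii})$, I would use the second inequality of Lemma \ref{Talem1}, $-\frac{\mathrm{d}}{\mathrm{d}t}\int_{\{|u|>t\}}|\nabla u|^p\,\mathrm{d}\mathfrak{m}\leq\int_{\{|u|>t\}}|f|\,\mathrm{d}\mathfrak{m}$, integrated over $t\in(0,\infty)$, to get $\int_\Omega|\nabla u|^p\,\mathrm{d}\mathfrak{m}\leq\int_0^\infty\int_{\{|u|>t\}}|f|\,\mathrm{d}\mathfrak{m}\,\mathrm{d}t$; for $1\le r<p$ one interpolates via Hölder in the coarea decomposition, using at each level $t$ the isoperimetric lower bound on $\mathrm{Per}(\{|u|>t\})$ exactly as in Step 1 to convert a power of $|\nabla u|$ into a power of $(-\mu')$ and then into the model quantity $|\nabla v|^r=|v'|^r$ via $(\ref{ex-so-1})$; the factor $\mathrm{AVR}_{\mathfrak{m}}^{r/((p-1)N)}$ is precisely what the $r$-th power of the isoperimetric constant contributes. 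The main obstacle is the careful bookkeeping in Step 2: making sure the dimensional constant $N\omega_N^{1/N}$ raised to the power $q$, divided out against $h_N(\rho)=N\omega_N\rho^{N-1}$ and its powers, reassembles into exactly the factor $h_N(\rho)^{-1}\big(h_N(\rho)^{-1}\int_0^\rho f^\star\,\mathrm{d}\mathfrak{m}_N\big)^{1/(p-1)}$ with the clean prefactor $\mathrm{AVR}_{\mathfrak{m}}^{-q/N}$ — and in verifying the measurability/absolute-continuity of $t\mapsto\mu(t)$ and $s\mapsto u^\sharp(s)$ needed to legitimately pass between the $t$-integral and the $\rho$-integral (here Lemma \ref{Talem0}, Proposition \ref{propre}(e), and the fact that $u\in W_0^{1,p}$ forces $\mu$ to be locally absolutely continuous away from $t=0$ are the tools).
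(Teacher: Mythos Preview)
Your overall strategy matches the paper's: combine the coarea formula with the isoperimetric inequality (Theorem \ref{thmiso}) and Lemma \ref{Talem1}, bound $\int_{\{|u|>t\}}|f|\,\mathrm d\mathfrak m$ by $\int_0^{\mu(t)}f^\sharp$ via Proposition \ref{propre}(c), and compare with the explicit solution $(\ref{ex-so-1})$--$(\ref{ex-so-2})$. The ingredients and the structure are correct.

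The gap is in your Step~2. You pass from the differential inequality for $\mu$ to a pointwise bound on $-\tfrac{\mathrm d}{\mathrm d\rho}u^\star(\rho)$ via the ``inverse function relation'' $-\tfrac{\mathrm dt}{\mathrm ds}=1/(-\mu'(u^\sharp(s)))$, and you justify this by asserting that $u\in W_0^{1,p}$ forces $\mu$ to be locally absolutely continuous. That claim is false in general: if $|\nabla u|$ vanishes on a set of positive measure inside $\{u\neq 0\}$, the distribution function $\mu$ can have a singular (flat) part, and then $u^\sharp$ jumps, destroying both the inverse-function identity and the integration step. What \emph{is} true is that $u^\star\in W^{1,p}([0,r_a),\mathfrak m_N)$ via P\'olya--Szeg\H{o}, hence $u^\star$ is locally AC, but this is a different statement and you do not invoke it; and even with it, the identity $(u^\sharp)'(s)=-1/\mu'(u^\sharp(s))$ still fails at points where $\mu$ is constant or discontinuous.

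The paper avoids this entirely. It rearranges the chain of inequalities into the form
\[
1\le \frac{-\mu'(t)}{\mathrm{AVR}_{\mathfrak m}^{q/N}\,\mathcal I_N(\mu(t))}\Big(\frac{1}{\mathcal I_N(\mu(t))}\int_0^{\mu(t)}f^\sharp\Big)^{1/(p-1)},
\]
integrates in $t$ over $[0,\tau]$, and only then substitutes $\xi=\mu(t)$. Because the integrand is non-negative and $\mu$ is merely monotone, the change of variables gives an \emph{inequality} in the right direction (jumps of $\mu$ can only help). Taking $\tau=u^\sharp(s)-\eta$ and using $\mu(\tau)\ge s$ from the definition of $u^\sharp$ then yields $u^\sharp(s)\le \mathrm{AVR}_{\mathfrak m}^{-q/N}v^\sharp(s)$ directly, with no regularity of $\mu$ or $u^\sharp$ needed beyond monotonicity. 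For part~(ii) the paper likewise keeps everything on the $t$-side: it multiplies the Hölder estimate for $-\tfrac{\mathrm d}{\mathrm dt}\int_{\{|u|>t\}}|\nabla u|^r$ by the $(r/p)$-th power of the displayed inequality above to produce a clean factor $-\mu'(t)$, then integrates and changes variables once. Your sketch of (ii) is headed the same way but does not isolate this multiplication trick.
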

\begin{proof}
	Without loss of generality, we can assume that $u \neq 0$.\\
	(i) Combining Theorem $\ref{thmiso}$, coarea formula $(\ref{coeq4})$, Chain rule of minimal $p$-relaxed slope, Lemma $\ref{Talem1}$ and Proposition $\ref{propre}$(c), we obtain the following chain of inequalities:
	\begin{align}
		\mathrm{AVR}_{\mathfrak{m}}^{\frac{1}{N}}\mathcal{I}_{N}(\mu(t)) & \leq \operatorname{Per}(\{|u|>t\})=-\frac{\mathrm{d}}{\mathrm{d} t} \int_{\{|u|>t\}}|\nabla| u| | \mathrm{d}\mathfrak{m}=-\frac{\mathrm{d}}{\mathrm{d} t} \int_{\{|u|>t\}}|\nabla u| \mathrm{d}\mathfrak{m}\notag \\
		& \leq\big(-\mu^{\prime}(t)\big)^{1 / q}\bigg(\int_{\{|u|>t\}}|f| \mathrm{d}\mathfrak{m}\bigg)^{1 / p} \leq\big(-\mu^{\prime}(t)\big)^{1 / q}\bigg(\int_0^{\mu(t)} f^{\sharp}(s) \mathrm{d} s\bigg)^{1 / p},\label{thmeq1.0}
	\end{align}
	for $\mathcal{L}^1$-a.e. $t>0$, which can be rewritten as
	\begin{equation}\label{thmeq1.1}
		1 \leq\frac{-\mu^{\prime}(t)}{\mathrm{AVR}_{\mathfrak{m}}^{\frac{q}{N}}\mathcal{I}_{ N}(\mu(t))}\bigg(\frac{1}{\mathcal{I}_{N}(\mu(t))} \int_0^{\mu(t)} f^{\sharp}(s) \mathrm{d} s\bigg)^{\frac{1}{p-1}},
	\end{equation}
	for $\mathcal{L}^1$-a.e. $t \in(0, M)$, where $M=\operatorname{ess-sup} |u| \in(0, \infty]$. For simplicity, let
	$$
	F(\xi)=\int_0^{\xi} f^{\sharp}(s) \mathrm{d} s, \quad \text { for any } \xi>0.
	$$
	Let now $0 \leq \tau^{\prime}<\tau\leq M$. Integrating $(\ref{thmeq1.1})$ from $\tau^{\prime}$ to $\tau$ we get
	$$
	\tau-\tau^{\prime} \leq \int_{\tau^{\prime}}^\tau \frac{-\mu^{\prime}(t)}{\mathrm{AVR}_{\mathfrak{m}}^{\frac{q}{N}}\mathcal{I}_{ N}(\mu(t))}\bigg(\frac{F(\mu(t))}{\mathcal{I}_{ N}(\mu(t))}\bigg)^{\frac{1}{p-1}} \mathrm{d} t, \quad 0 \leq \tau^{\prime}<\tau\leq M .
	$$
	Using the change of variables $\xi=\mu(t)$ on the intervals where $\mu$ is continuous(see \cite[p. 108]{Fo99}), and observing that the integrand is non-negative, we obtain
	\begin{equation}\label{thmeq1.2}
		\tau-\tau^{\prime} \leq \int_{\mu(\tau)}^{\mu(\tau^{\prime})} \frac{1}{\mathrm{AVR}_{\mathfrak{m}}^{\frac{q}{N}}\mathcal{I}_{ N}(\xi)}\bigg(\frac{F(\xi)}{\mathcal{I}_{ N}(\xi)}\bigg)^{\frac{1}{p-1}} \mathrm{d} \xi, \quad 0 \leq \tau^{\prime}<\tau\leq M .
	\end{equation}
	Let us fix $s \in(0, \mu(0))$ with $u^{\sharp}(s)>0$ and let $\eta>0$ be a small enough parameter such that $u^{\sharp}(s)>\eta$; consider $\tau^{\prime}=0$ and $\tau=u^{\sharp}(s)-\eta$. Notice that, since $u^{\sharp}(s)$ is the infimum of the $\tilde{\tau}$ such that $\mu(\tilde{\tau})<s$, we have that $\mu(\tau) \geq s$. Using $(\ref{thmeq1.2})$ and the non-negativity of the integrand again, we obtain that
	$$
	u^{\sharp}(s)-\eta \leq \int_{s}^{\mu(0)} \frac{1}{\mathrm{AVR}_{\mathfrak{m}}^{\frac{q}{N}}\mathcal{I}_{ N}(\xi)}\bigg(\frac{F(\xi)}{\mathcal{I}_{ N}(\xi)}\bigg)^{\frac{1}{p-1}} \mathrm{d} \xi, \quad \text{ for every } s \in(0, \mu(0)) .
	$$
	Letting $\eta \downarrow 0$, enlarging the integration interval and notice that the function $u^{\sharp}$ vanishes on $(\mu(0), \mathfrak{m}(\Omega)]$ and $u^{\sharp}$ is left continuous on $[0, \mathfrak{m}(\Omega)]$ and right-continuous at $0$, we get that
	$$
	u^{\sharp}(s) \leq \int_s^{\mathfrak{m}(\Omega)} \frac{1}{\mathrm{AVR}_{\mathfrak{m}}^{\frac{q}{N}}\mathcal{I}_{ N}(\xi)}\bigg(\frac{1}{\mathcal{I}_{ N}(\xi)} \int_0^{\xi} f^{\sharp}(t) \mathrm{d} t\bigg)^{\frac{1}{p-1}} \mathrm{d} \xi, \quad \text{ for every }  s \in[0, \mathfrak{m}(\Omega)] .
	$$
	Finally, by the definition of the Schwarz symmetrization, we obtain
	\begin{align*}
		u^{\star}(x) &\leq \int_{H_{N}(x)}^{\mathfrak{m}(\Omega)} \frac{1}{\mathrm{AVR}_{\mathfrak{m}}^{\frac{q}{N}}\mathcal{I}_{ N}(\xi)}\bigg(\frac{1}{\mathcal{I}_{ N}(\xi)} \int_0^{\xi} f^{\star} \circ H_{N}^{-1}(t) \mathrm{d} t\bigg)^{\frac{1}{p-1}} \mathrm{d} \xi,\\
		&=\frac{1}{\mathrm{AVR}_{\mathfrak{m}}^{\frac{q}{N}}}v(x), \quad \text{ for every }  x \in[0, r_a],
	\end{align*}
	where we used the characterization of $v$ we obtained in $(\ref{ex-so-2})$ and $H_{ N}(r_a)=\mathfrak{m}(\Omega)$.
	
	Finally, if $(\mathrm{X}, \mathrm{d}, \mathfrak{m})=([0,\infty),\mathrm{d}_{eu},\mathfrak{m}_N)$, and assume that $f: [0,\infty) \rightarrow[0, \infty)$ is a non-increasing and non-negative function, then, by Proposition $\ref{propre}$(d), we obtain $f^{\star}=f$ $\mathfrak{m}_{N}$-a.e. and $v^{\star}=v$ on $[0,\infty)$ since $v$ is continuous. On the other hand, we have that $\mathrm{AVR}_{\mathfrak{m}_N}=1$ by a simple computation. Hence, the Talenti-type comparison inequality is sharp. We complete the proof of (i).
	
	(ii) We start by noticing that for $1 \leq r \leq p$, we have
	$$
	\int_{\Omega}|\nabla u|^r \mathrm{d}\mathfrak{m}=\int_{\{|u|>0\}}|\nabla u|^r \mathrm{d}\mathfrak{m}
	$$
	since $|\nabla u|=0$, $\mathfrak{m}$-a.e. in $\{u=0\}$, by the locality of $\nabla$. Let $M:=\operatorname{ess-sup}|u| \in(0,\infty]$; fix $t, h>0$. Using the Hölder inequality, we infer that
	\begin{equation}\label{thmeq1.4}
		\frac{1}{h} \int_{\{t<|u| \leq t+h\}}|\nabla u|^r \mathrm{d}\mathfrak{m} \leq\bigg(\frac{1}{h} \int_{\{t<|u| \leq t+h\}}|\nabla u|^p \mathrm{d}\mathfrak{m}\bigg)^{\frac{r}{p}}\bigg(\frac{\mathfrak{m}(\{t<|u| \leq t+h\})}{h}\bigg)^{\frac{p-r}{p}}.
	\end{equation}
	By Lemma $\ref{Talem1}$, letting $h$ tend to zero in $(\ref{thmeq1.4})$, we obtain
	$$
	-\frac{\mathrm{d}}{\mathrm{d} t} \int_{\{|u|>t\}}|\nabla u|^r \mathrm{d}\mathfrak{m} \leq\bigg(\int_{\{|u|>t\}}|f| \mathrm{d}\mathfrak{m}\bigg)^{\frac{r}{p}}\bigg(-\mu^{\prime}(t)\bigg)^{\frac{p-r}{p}},
	$$
	for $\mathcal{L}^1$-a.e. $t>0$. Let us now adopt again the notation
	$$
	F(\xi)=\int_0^{\xi} f^{\sharp}(s) \mathrm{d} s.
	$$
	Using Propositions $\ref{propre}$(c) again, we get that
	\begin{equation}\label{thmeq1.5}
		-\frac{\mathrm{d}}{\mathrm{d} t} \int_{\{|u|>t\}}|\nabla u|^r \mathrm{d}\mathfrak{m} \leq F(\mu(t))^{\frac{r}{p}}(-\mu^{\prime}(t))^{\frac{p-r}{p}}
	\end{equation}
	for $\mathcal{L}^1$-a.e. $t$. In order to obtain a clean term $\mu^{\prime}(t)$ at the right hand side, we multiply both sides of $(\ref{thmeq1.5})$ with the respective sides of $(\ref{thmeq1.1})$ raised at the power $r / p$. This gives, for $\mathcal{L}^1$-a.e. $t \in(0, M)$:
	$$
	-\frac{\mathrm{d}}{\mathrm{d} t} \int_{\{|u|>t\}}|\nabla u|^r \mathrm{d}\mathfrak{m}  \leq\bigg(\frac{F(\mu(t))}{\mathrm{AVR}_{\mathfrak{m}}^{\frac{1}{N}}\mathcal{I}_{ N}(\mu(t))}\bigg)^{\frac{r}{p-1}}(-\mu^{\prime}(t)).
	$$
	Combining this and coarea formula $(\ref{coeq4})$, and changing the variables as usual with $\xi=\mu(t)$, the following estimate holds that
	\begin{equation}\label{thmeq1.3}
		\int_{\Omega}|\nabla u|^r \mathrm{d} \mathfrak{m}\leq \int_0^{\mathfrak{m}(\Omega)}\bigg(\frac{F(\xi)}{\mathrm{AVR}_{\mathfrak{m}}^{\frac{1}{N}}\mathcal{I}_{ N}(\xi)}\bigg)^{\frac{r}{p-1}} \mathrm{d} \xi .
	\end{equation}
	Finally, we recall that $v$ has an explicit expression we can differentiate. Differentiating $(\ref{ex-so-1})$ (with datum $f^{\star}$), we have, for all $\rho \in(0, r_a)$,
	$$
	v^{\prime}(\rho)=-\bigg(\frac{1}{h_{N}(\rho)} \int_0^{H_{ N}(\rho)} f^{\star}(H_{ N}^{-1}(t)) \mathrm{d} t\bigg)^{\frac{1}{p-1}}=-\bigg(\frac{F(H_{ N}(\rho))}{h_{ N}(\rho)}\bigg)^{\frac{1}{p-1}}.
	$$
	Thus, the following identity holds that
	$$
	\int_0^{r_a}|v^{\prime}|^r \mathrm{d}\mathfrak{m}_{ N}=\int_0^{r_a}\bigg(\frac{F(H_{ N}(\rho))}{h_{ N}(\rho)}\bigg)^{\frac{r}{p-1}} h_{N}(\rho) \mathrm{d} \rho=\int_0^{\mathfrak{m}(\Omega)}\bigg(\frac{F(\xi)}{\mathcal{I}_{ N}(\xi)}\bigg)^{\frac{r}{p-1}} \mathrm{d} \xi,
	$$
	where we have used the change of variables $\xi=H_{ N}(\rho)$ and the fact that $\mathcal{I}_{N}(\xi)=h_{ N}(H_{N}^{-1}(\xi))$. Comparing with $(\ref{thmeq1.3})$, we obtain the desired $L^r$-gradient estimate.
\end{proof}

\section{Rigidity}
Let $(F, \mathrm{d}_F, \mathfrak{m}_F)$ be a length metric measure space. Let $\mathrm{d}_C$ be the pseudo-distance on $[0,\infty) \times F$ defined by
$$
\mathrm{d}_C((p, x),(q, y))=\inf \{L(\gamma) : \gamma(0)=(p, x), \gamma(1)=(q, y)\},
$$
where, for any absolutely continuous curve $\gamma=(\gamma_1, \gamma_2):[0,1] \rightarrow[0,\infty) \times F$,
$$
L(\gamma)=\int_0^1\big(|\dot{\gamma}_1|^2+ \gamma_1^2|\dot{\gamma}_2|^2\big)^{\frac{1}{2}} \mathrm{d} t.
$$

The pseudo-distance $\mathrm{d}_C$ induces an equivalence relation on $[0,\infty) \times F$ by $(p, x) \sim(q, y)$ iff $\mathrm{d}_C((p, x),(q, y))=0$ and then a metric on the quotient. With a common slight abuse of notation we shall denote the completion of such quotient by $([0,\infty) \times_{t}^{N-1} F, \mathrm{d}_C)$. Let us denote by $\pi:[0,\infty) \times F \rightarrow [0,\infty) \times_{t}^{N-1} F$ the quotient map and then define the measure $\mathfrak{m}_C$ on $([0,\infty) \times_{t}^{N-1} F, \mathrm{d}_C)$:
$$
\mathfrak{m}_C=\pi_\sharp \big(t^{N-1} \mathcal{L}^1 \times \mathfrak{m}_F\big).
$$
\begin{definition}[Euclidean metric measure cone]
	We say that a $\mathrm{RCD}(0,N)$ space $(\mathrm{X},\mathrm{d},\mathfrak{m})$ is a Euclidean metric measure cone if it is isomorphic to $([0,\infty) \times_{t}^{N-1} F,\mathrm{d}_C,\mathfrak{m}_C)$ for a $\mathrm{RCD}(N-2,N-1)$ space $(F, \mathrm{d}_F, \mathfrak{m}_F)$. The point $\pi(\{0\}\times F)$ is called tip of the cone.
\end{definition}

\begin{theorem}[Rigidity of the Euclidean isoperimetric inequality]\label{isorig}
	Let $(\mathrm{X}, \mathrm{d}, \mathfrak{m})$ be a $\operatorname{RCD}(0, N)$ space for some $N>1$ with $\mathrm{AVR}_{\mathfrak{m}}>0$. Then the equality $(\ref{iso})$ holds for some Borel set $E \subset \mathrm{X}$ if and only if $\mathrm{X}$ is isometric to a Euclidean metric measure cone over a $\operatorname{RCD}(N-2, N-1)$ space and $E$ is isometric to the ball centered at one of the tips of $\mathrm{X}$. 
\end{theorem}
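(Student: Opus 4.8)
The plan is to prove the two implications separately: the forward one by an explicit computation on the cone, the converse one by analysing the equality case inside the proof of (\ref{iso}).

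\textbf{The ``if'' direction.} Suppose $(\mathrm{X},\mathrm{d},\mathfrak{m})$ is (isomorphic to) $([0,\infty)\times_{t}^{N-1}F,\mathrm{d}_C,\mathfrak{m}_C)$ for some $\mathrm{RCD}(N-2,N-1)$ space $(F,\mathrm{d}_F,\mathfrak{m}_F)$, with tip $o=\pi(\{0\}\times F)$, and take $E=B_r(o)$. Since $\mathrm{d}_C(o,\pi(t,x))=t$ one has $B_s(o)=\pi([0,s)\times F)$, whence, by the definition of $\mathfrak{m}_C$,
$$
\mathfrak{m}(B_s(o))=\mathfrak{m}_F(F)\int_0^s t^{N-1}\,\mathrm{d}t=\frac{\mathfrak{m}_F(F)}{N}\,s^{N},\qquad s>0 .
$$
Letting $s\to\infty$ gives $\mathrm{AVR}_{\mathfrak{m}}=\mathfrak{m}_F(F)/(N\omega_N)$, so that $\mathfrak{m}(B_s(o))=\mathrm{AVR}_{\mathfrak{m}}\,\omega_N s^{N}$; moreover, applying the coarea formula (\ref{coeq4}) to the nonnegative function $(r-\mathrm{d}_o)_+\in W^{1,p}(\mathrm{X},\mathfrak{m})$, whose minimal relaxed slope equals $1$ on $B_r(o)$, yields $\mathrm{Per}(B_r(o))=\frac{\mathrm{d}}{\mathrm{d}s}\big|_{s=r}\mathfrak{m}(B_s(o))=N\omega_N\,\mathrm{AVR}_{\mathfrak{m}}\,r^{N-1}$. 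Substituting these two identities into the two sides of (\ref{iso}) one verifies at once that equality holds.

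\textbf{The ``only if'' direction.} Assume $\mathrm{Per}(E)=N\omega_N^{1/N}\mathrm{AVR}_{\mathfrak{m}}^{1/N}\mathfrak{m}(E)^{(N-1)/N}$ with $\mathfrak{m}(E)\in(0,\infty)$. By (\ref{iso}) every competitor of the same volume has at least as large a perimeter, so $E$ is an isoperimetric region, and we may assume it bounded. The argument now revisits the proof of (\ref{iso}), which runs through the $L^1$-localization (needle decomposition) of Cavalletti--Mondino. Fix $R$ large with $E\subset B_R(\bar x)$, set $\alpha_R:=\mathfrak{m}(E)/\mathfrak{m}(B_R(\bar x)\setminus E)$, and localize $f_R:=\chi_E-\alpha_R\chi_{B_R(\bar x)\setminus E}$: this decomposes $B_R(\bar x)$, up to an $\mathfrak{m}$-null set, into a family of geodesic needles $\{X_q\}_{q\in Q_R}$, each isomorphic to an interval equipped with a $\mathrm{CD}(0,N)$ density $h_q$ (equivalently $h_q^{1/(N-1)}$ is concave), along which $E\cap X_q$ is an initial sub-segment of mass $\mathfrak{m}_q(E)$ balancing its complement, and such that
$$
\mathfrak{m}(E)=\int_{Q_R}\mathfrak{m}_q(E)\,\mathrm{d}\mathfrak{q}_R(q),\qquad \mathrm{Per}(E)\geq\int_{Q_R}h_q(t_q)\,\mathrm{d}\mathfrak{q}_R(q),
$$
where $t_q$ denotes the interface point on $X_q$. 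On each needle the one-dimensional $\mathrm{CD}(0,N)$ isoperimetric estimate, together with the constraint on the density ``at infinity'' forced by $\mathrm{AVR}_{\mathfrak{m}}$, gives in the limit $R\to\infty$ the pointwise bound $h_q(t_q)\geq N\omega_N^{1/N}\mathrm{AVR}_{\mathfrak{m}}^{1/N}\mathfrak{m}_q(E)^{(N-1)/N}$; a Jensen/H\"older inequality in the integration over $Q$ then reproduces (\ref{iso}).

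Under the standing equality all of these inequalities are saturated. Tracing them: equality in the one-dimensional $\mathrm{CD}(0,N)$ estimate forces $h_q(t)=c_q t^{N-1}$ on the whole needle, with left endpoint at $0$, so every needle emanates from a point; equality in the Jensen/H\"older step forces $h_q(t_q)/\mathfrak{m}_q(E)^{(N-1)/N}$ to be $\mathfrak{q}$-a.e.\ constant in $q$; and the geometry of the $L^1$-partition forces all the initial points to coincide with a single $o\in\mathrm{X}$, with $E=B_{r_0}(o)$ for the corresponding radius. Hence $\mathfrak{m}(B_s(o))=\mathrm{AVR}_{\mathfrak{m}}\,\omega_N s^{N}$ for $s>0$ small, and therefore for every $s>0$ by the monotonicity in the Bishop--Gromov inequality; the rigidity of this monotonicity (the ``volume cone implies metric cone'' theorem of De Philippis--Gigli) then identifies $(\mathrm{X},\mathrm{d},\mathfrak{m})$ with a Euclidean metric measure cone over an $\mathrm{RCD}(N-2,N-1)$ space with tip $o$, and by construction $E=B_{r_0}(o)$ is the ball centered at that tip.

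I expect the main obstacle to be precisely this equality analysis within the localization: one must make rigorous the limit $R\to\infty$, in which the needles become genuine rays and $\mathrm{AVR}_{\mathfrak{m}}$ appears as the limiting ratio $\lim_{t\to\infty}h_q(t)/t^{N-1}$, and then exclude the degenerate possibilities --- showing not merely that each needle density is a multiple of $t^{N-1}$ but that all the needles share one common origin and that $E$ is exactly the metric ball around it, which is what makes the volume-cone rigidity applicable.
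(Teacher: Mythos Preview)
The paper does not prove this theorem at all: immediately after the statement it simply records that the result is established in \cite{CM24} under the extra hypothesis that $E$ is bounded, and that this hypothesis can be removed thanks to \cite{APPV23}. So there is nothing in the paper to compare your argument against beyond these citations.

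Your sketch does follow the strategy of \cite{CM24} (localization/needle decomposition, one-dimensional rigidity on each needle, then volume-cone-implies-metric-cone), and the ``if'' computation is fine. However, there is a genuine gap in your ``only if'' direction: the sentence ``so $E$ is an isoperimetric region, and we may assume it bounded'' is precisely the nontrivial step that \cite{APPV23} is invoked to supply. A priori an isoperimetric set in a noncompact $\mathrm{RCD}(0,N)$ space need not be bounded, and the whole localization machinery you describe is set up inside a large ball $B_R(\bar x)$ containing $E$; without boundedness of $E$ the decomposition with the test function $f_R=\chi_E-\alpha_R\chi_{B_R(\bar x)\setminus E}$ does not get off the ground. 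You should either restrict the statement to bounded $E$ (as in \cite{CM24}) or explain how to reduce to that case.

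Beyond this, your final paragraph is honest about the remaining difficulties (passing to the limit $R\to\infty$ in the needle decomposition, forcing all needles to share a common origin, identifying $E$ as a metric ball), and these are exactly the substantive points of \cite{CM24}; what you have written is a plausible outline rather than a proof.
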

Theorem $\ref{isorig}$ is stated in \cite{CM24} with the extra assumption that $E$ is bounded, however this assumption can be dropped thanks to the recent \cite{APPV23}.

\begin{proposition}[Euclidean P\'{o}lya-Szeg\"{o} inequality\cite{NV22}]
	Let $(\mathrm{X}, \mathrm{d}, \mathfrak{m})$ be a $\operatorname{CD}(0, N)$ space with $N>1$, and $\Omega \subset \mathrm{X}$ an open set with $\mathfrak{m}(\Omega) \in(0,\infty)$. Then for any $u \in W_0^{1, p}(\Omega, \mathfrak{m})$, it holds that 
	\begin{equation}\label{rigeq1}
		\mathrm{AVR}_{\mathfrak{m}}^{p/N}\int_0^{r_a}|\nabla u^{\star}|^p  \mathrm{d}\mathfrak{m}_{N} \leq \int_{\Omega}|\nabla u|^p  \mathrm{d}\mathfrak{m}.
	\end{equation}
\end{proposition}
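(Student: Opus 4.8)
The plan is to run the coarea--isoperimetric--H\"older scheme already used to establish the $L^r$-gradient estimate $(\ref{r-grad})$ above, now for a general $u\in W_0^{1,p}(\Omega,\mathfrak{m})$ rather than for a solution of a Poisson problem, and to observe that on the one-dimensional model space this same scheme degenerates into a chain of \emph{equalities}.

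First I would reduce to $u\in\mathrm{Lip}_{\mathrm{c}}(\Omega)$ non-negative: replacing $u$ by $|u|$ changes neither side, since $|\nabla u|=|\nabla|u||$ $\mathfrak{m}$-a.e.\ and $|u|^{\star}=u^{\star}$; and for $u\in\mathrm{Lip}_{\mathrm{c}}(\Omega)$ one checks, using the coarea identity below together with $-\mu'(t)\ge \mathrm{Per}(\{u>t\})/\|\nabla u\|_{L^\infty}$ and Theorem $\ref{thmiso}$, that $u^{\star}$ is Lipschitz with $\mathrm{Lip}(u^{\star})\le \mathrm{AVR}_{\mathfrak{m}}^{-1/N}\,\mathrm{Lip}(u)$, hence $u^{\star}\in W^{1,p}([0,r_a),\mathfrak{m}_N)$ by Proposition $\ref{propchso}$. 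Granting $(\ref{rigeq1})$ for all such $u$, one passes to a general non-negative $u\in W_0^{1,p}(\Omega,\mathfrak{m})$ by approximating it with $u_k\in\mathrm{Lip}_{\mathrm{c}}(\Omega)$ in $W^{1,p}(\mathrm{X},\mathfrak{m})$: then $|\nabla u_k|^{p}\to|\nabla u|^{p}$ in $L^1$, while $u_k^{\star}\to u^{\star}$ in $L^p([0,r_a),\mathfrak{m}_N)$ by the $L^p$-non-expansivity of the decreasing rearrangement and the change of variables $\xi=H_N(x)$, so that $(\ref{rigeq1})$ for the $u_k$ together with the lower semicontinuity of $v\mapsto\int_0^{r_a}|\nabla v|^{p}\,\mathrm{d}\mathfrak{m}_N$ along $L^p$-convergence yields both $u^{\star}\in W^{1,p}$ and $(\ref{rigeq1})$ for $u$.

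For the core estimate, set $\mu(t)=\mathfrak{m}(\{u>t\})$; since $\mathfrak{m}(\Omega)<\infty$ we have $|\nabla u|\in L^1(\Omega,\mathfrak{m})$, so the coarea formula $(\ref{coeq4})$ is available. Applying $(\ref{coeq4})$ with $g=|\nabla u|^{-1}\chi_{\{|\nabla u|>0\}}$ gives, for a.e.\ $t$, $\int_{\{u=t\}}|\nabla u|^{-1}\,\mathrm{d}\mathrm{Per}(\{u>t\})\le -\mu'(t)$, and since for a.e.\ level $t$ the perimeter $\mathrm{Per}(\{u>t\})$ is carried by $\{u=t\}\cap\{|\nabla u|>0\}$, H\"older's inequality applied there to the splitting $1=|\nabla u|^{\frac{p-1}{p}}\cdot|\nabla u|^{-\frac{p-1}{p}}$ yields
\[
\mathrm{Per}(\{u>t\})^{p}\le\Big(\int_{\{u=t\}}|\nabla u|^{p-1}\,\mathrm{d}\mathrm{Per}(\{u>t\})\Big)(-\mu'(t))^{p-1}.
\]
Since $N\omega_N^{1/N}\mathfrak{m}(E)^{\frac{N-1}{N}}=\mathcal{I}_N(\mathfrak{m}(E))$, Theorem $\ref{thmiso}$ gives $\mathrm{Per}(\{u>t\})\ge \mathrm{AVR}_{\mathfrak{m}}^{1/N}\mathcal{I}_N(\mu(t))$; inserting this, integrating in $t$, and using $(\ref{coeq4})$ once more with $g=|\nabla u|^{p-1}$ and $t=0$, I obtain
\[
\int_{\Omega}|\nabla u|^{p}\,\mathrm{d}\mathfrak{m}=\int_0^{\infty}\Big(\int_{\{u=t\}}|\nabla u|^{p-1}\,\mathrm{d}\mathrm{Per}(\{u>t\})\Big)\mathrm{d}t\ \ge\ \mathrm{AVR}_{\mathfrak{m}}^{p/N}\int_0^{\infty}\frac{\mathcal{I}_N(\mu(t))^{p}}{(-\mu'(t))^{p-1}}\,\mathrm{d}t.
\]
It then remains to identify $\int_0^{r_a}|\nabla u^{\star}|^{p}\,\mathrm{d}\mathfrak{m}_N$ with $\int_0^{\infty}\mathcal{I}_N(\mu(t))^{p}(-\mu'(t))^{1-p}\,\mathrm{d}t$. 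I would do this either by re-running the chain above on $([0,\infty),\mathrm{d}_{eu},\mathfrak{m}_N)$, where it becomes a string of equalities: each superlevel set $\{u^{\star}>t\}$ is an interval $[0,\rho_t)$ with $H_N(\rho_t)=\mu(t)$ and perimeter exactly $\mathcal{I}_N(\mu(t))$, every level set of $u^{\star}$ is a single point, H\"older is saturated, and $\mathrm{AVR}_{\mathfrak{m}_N}=1$; or directly, by observing via Proposition $\ref{propchso}$ that $\mathfrak{m}_N(\{u^{\star}>t\})=\mu(t)$ forces $|\nabla u^{\star}|(\rho_t)=\mathcal{I}_N(\mu(t))/(-\mu'(t))$ for a.e.\ $t$ and then changing variables $\xi=H_N(\rho)$. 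Combining this identity with the previous display gives $(\ref{rigeq1})$.

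Routine manipulations aside, the main obstacle is the analytic care required around the coarea formula and the distribution function: the a.e.\ differentiability of $\mu$, the fact that for a.e.\ level $t$ the perimeter $\mathrm{Per}(\{u>t\})$ concentrates on $\{u=t\}\cap\{|\nabla u|>0\}$ (which is what legitimizes the H\"older step), the bookkeeping at the at most countably many levels with $\mathfrak{m}(\{u=t\})>0$, the Lipschitz bound needed to place $u^{\star}$ in $W^{1,p}$, and the $L^p$-continuity of $u\mapsto u^{\star}$ used in the approximation. All of these are carried out as in \cite{NV22}.
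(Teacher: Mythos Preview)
The paper does not prove this proposition; it is stated with a citation to \cite{NV22} and no argument is given. So there is no ``paper's own proof'' to compare against.

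Your sketch is the standard coarea--isoperimetric--H\"older route and is, in outline, exactly what \cite{NV22} does: reduce to non-negative $u\in\mathrm{Lip}_{\mathrm{c}}(\Omega)$, combine the coarea identity $(\ref{coeq4})$ with $g=|\nabla u|^{p-1}$, the H\"older bound $\mathrm{Per}(\{u>t\})^p\le\big(\int|\nabla u|^{p-1}\,\mathrm{d}\mathrm{Per}\big)(-\mu'(t))^{p-1}$, and the sharp isoperimetric inequality (Theorem~$\ref{thmiso}$) to get
\[
\int_{\Omega}|\nabla u|^{p}\,\mathrm{d}\mathfrak{m}\ \ge\ \mathrm{AVR}_{\mathfrak{m}}^{p/N}\int_0^{\infty}\frac{\mathcal{I}_N(\mu(t))^{p}}{(-\mu'(t))^{p-1}}\,\mathrm{d}t,
\]
and then identify the right-hand integral with $\int_0^{r_a}|\nabla u^{\star}|^{p}\,\mathrm{d}\mathfrak{m}_N$. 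Two minor remarks: first, for the final identification one actually only needs the inequality $\int_0^{r_a}|\nabla u^{\star}|^{p}\,\mathrm{d}\mathfrak{m}_N\le\int_0^{\infty}\mathcal{I}_N(\mu(t))^{p}(-\mu'(t))^{1-p}\,\mathrm{d}t$, which is what the chain on the model space gives without any extra regularity on $\mu$ (plateaus of $u^{\star}$ contribute zero on the left and possibly $+\infty$ on the right); second, the statement that $\mathrm{Per}(\{u>t\})$ is concentrated on $\{|\nabla u|>0\}$ for a.e.\ $t$ is the genuinely delicate input in the non-smooth setting, and you are right to flag it and defer to \cite{NV22} for the details.
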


\begin{proposition}[Rigidity of the Euclidean P\'{o}lya-Szeg\"{o} inequality\cite{NV24}]
	Let $(\mathrm{X}, \mathrm{d}, \mathfrak{m})$ be a $\operatorname{RCD}(0, N)$ space for some $N>1$ with $\mathrm{AVR}_{\mathfrak{m}}>0$, and let $\Omega \subset \mathrm{X}$ be an open set with $\mathfrak{m}(\Omega) \in(0,\infty)$. 
	
	If there exists $u \in W_0^{1, p}(\Omega, \mathfrak{m}) \cap \operatorname{Lip}(\Omega)$ achieving equality in $(\ref{rigeq1})$, with $|\nabla u| \neq 0$, $\mathfrak{m}$-a.e. in $\{u\neq 0\}$, then $(\mathrm{X}, \mathrm{d}, \mathfrak{m})$ is a Euclidean metric measure cone and $u$ is radial: that is, $u=u^\star(\mathrm{AVR}_{\mathfrak{m}}^{\frac{1}{N}}\mathrm{d}(\cdot, x_0))$, where $x_0$ is a tip of a Euclidean metric measure cone structure of $\mathrm{X}$.
\end{proposition}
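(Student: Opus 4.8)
The plan is to retrace the proof of the P\'olya--Szeg\"o inequality \eqref{rigeq1} and read off rigidity from the two inequalities that enter it. Set $\mu(t)=\mathfrak{m}(\{|u|>t\})$ and $M=\operatorname{ess-sup}|u|$. Applying the coarea formula \eqref{coeq4} to $f=|u|$ with $g=|\nabla u|^{p-1}$ and with $g=|\nabla u|^{-1}$ (legitimate on the level sets $\{|u|=t\}$, $t>0$, since $|\nabla u|>0$ $\mathfrak{m}$-a.e. on $\{u\neq 0\}$), one gets, for $\mathcal{L}^1$-a.e. $t\in(0,M)$,
$$
-\frac{\mathrm{d}}{\mathrm{d}t}\int_{\{|u|>t\}}|\nabla u|^{p}\,\mathrm{d}\mathfrak{m}=\int|\nabla u|^{p-1}\,\mathrm{d}\mathrm{Per}(\{|u|>t\}),\qquad -\mu'(t)=\int|\nabla u|^{-1}\,\mathrm{d}\mathrm{Per}(\{|u|>t\});
$$
H\"older's inequality applied on the level set to $1=|\nabla u|^{1/q}\cdot|\nabla u|^{-1/q}$ gives $\mathrm{Per}(\{|u|>t\})^{p}\le\big(-\tfrac{\mathrm{d}}{\mathrm{d}t}\int_{\{|u|>t\}}|\nabla u|^{p}\,\mathrm{d}\mathfrak{m}\big)\,(-\mu'(t))^{p-1}$, and Theorem \ref{thmiso} in the form $\mathrm{Per}(\{|u|>t\})\ge\mathrm{AVR}_{\mathfrak{m}}^{1/N}\mathcal{I}_{N}(\mu(t))$ then yields
$$
-\frac{\mathrm{d}}{\mathrm{d}t}\int_{\{|u|>t\}}|\nabla u|^{p}\,\mathrm{d}\mathfrak{m}\ \ge\ \mathrm{AVR}_{\mathfrak{m}}^{p/N}\,\frac{\mathcal{I}_{N}(\mu(t))^{p}}{(-\mu'(t))^{p-1}}.
$$
Since $u^{\star}$ is equimeasurable with $u$, is monotone, and the corresponding one-dimensional H\"older step is an equality, the right-hand side equals $\mathrm{AVR}_{\mathfrak{m}}^{p/N}\big(-\tfrac{\mathrm{d}}{\mathrm{d}t}\int_{\{u^{\star}>t\}}|\nabla u^{\star}|^{p}\,\mathrm{d}\mathfrak{m}_{N}\big)$; integrating in $t\in(0,\infty)$ and using coarea once more recovers \eqref{rigeq1}. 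Hence, if equality holds in \eqref{rigeq1}, then for $\mathcal{L}^1$-a.e. $t\in(0,M)$ one must have simultaneously: $(\mathrm{a})$ equality in the isoperimetric inequality for $E_t:=\{|u|>t\}$, and $(\mathrm{b})$ $|\nabla u|=c(t)$ $\mathrm{Per}$-a.e. on $\{|u|=t\}$ for some constant $c(t)>0$.

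From $(\mathrm{a})$ and the rigidity of the Euclidean isoperimetric inequality (Theorem \ref{isorig}) --- whose structural conclusion ``$\mathrm{X}$ is a Euclidean metric measure cone'' does not depend on the particular optimal set --- the space $(\mathrm{X},\mathrm{d},\mathfrak{m})$ is (isometric to) a Euclidean metric measure cone, and for a.e. $t\in(0,M)$ the set $E_t$ agrees, up to an $\mathfrak{m}$-null set, with a ball $B_{\rho(t)}(z_t)$ centered at a tip $z_t$; by the cone volume identity $\mathfrak{m}(B_{\rho}(\mathrm{tip}))=\mathrm{AVR}_{\mathfrak{m}}\,\omega_{N}\rho^{N}$ the radius is forced to be $\rho(t)=\big(\mu(t)/(\omega_{N}\mathrm{AVR}_{\mathfrak{m}})\big)^{1/N}$. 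The hypothesis $|\nabla u|\neq 0$ $\mathfrak{m}$-a.e. on $\{u\neq 0\}$ guarantees $\mathfrak{m}(\{|u|=t\})=0$ for every $t\in(0,M)$, hence $\mu$ is continuous and (essentially) strictly decreasing, the family $\{E_t\}$ is essentially nested with $\bigcap_{t<M}E_t$ of zero measure, and $\rho(t)\downarrow 0$ as $t\uparrow M$. The crux of the argument is then to upgrade this to the statement that the centers $z_t$ can be taken equal to a single tip $x_0$: when $\mathrm{X}$ has a unique tip this is automatic, but when $\mathrm{X}$ splits off a Euclidean factor (and thus has a continuum of tips) one must exclude ``drifting'' centers, and this is exactly where $(\mathrm{b})$ enters --- in the smooth model, constancy of $|\nabla u|$ on each level sphere forces, after differentiating the level foliation, the velocity of $z_t$ to vanish, and in the $\mathrm{RCD}$ setting the analogue has to be argued intrinsically from the cone structure. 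I expect \emph{this concentricity step} to be the main obstacle.

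Granting concentricity, $E_t=B_{\rho(t)}(x_0)$ up to $\mathfrak{m}$-null sets for a.e. $t$, so $|u|$ equals $\mathfrak{m}$-a.e. a non-increasing function of $\mathrm{d}(\cdot,x_0)$; using $u\in\operatorname{Lip}(\Omega)$ this identity holds everywhere on $\Omega$, and $|\nabla u|\neq0$ $\mathfrak{m}$-a.e. on $\{u\neq0\}$ pins the sign of $u$ (so, with the standard normalization $u\ge0$, $u=|u|$). To identify the profile, note that by equimeasurability $\mathfrak{m}_{N}(\{u^{\star}>t\})=\mu(t)=\omega_{N}\mathrm{AVR}_{\mathfrak{m}}\rho(t)^{N}=H_{N}\big(\mathrm{AVR}_{\mathfrak{m}}^{1/N}\rho(t)\big)$, so the model superlevel set is $\{u^{\star}>t\}=[0,\mathrm{AVR}_{\mathfrak{m}}^{1/N}\rho(t))$; comparing with $\{|u|>t\}=B_{\rho(t)}(x_0)$ and matching the left-continuous generalized inverses via Proposition \ref{propre}(e) gives $u^{\star}\big(\mathrm{AVR}_{\mathfrak{m}}^{1/N}\rho\big)=|u|$ at distance $\rho$ from $x_0$, i.e. $u=u^{\star}\big(\mathrm{AVR}_{\mathfrak{m}}^{1/N}\mathrm{d}(\cdot,x_0)\big)$, as claimed. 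Throughout, the role of $u\in\operatorname{Lip}(\Omega)$ and $|\nabla u|\neq0$ $\mathfrak{m}$-a.e. on $\{u\neq0\}$ is to exclude plateaus and positive-measure level sets, which would otherwise break the bijective correspondence between levels $t$ and radii $\rho(t)$.
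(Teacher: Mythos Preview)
The paper does not prove this proposition; it is quoted from \cite{NV24} and used as a black box in the subsequent Rigidity for Talenti theorem. So there is no in-paper argument to compare your proposal against.

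Your outline is the standard route and is correct up to the point you yourself flag. The decomposition into (a) isoperimetric equality for each superlevel set and (b) $\mathrm{Per}$-a.e.\ constancy of $|\nabla u|$ on each level is exactly how the chain of inequalities behind \eqref{rigeq1} is reversed, and invoking Theorem~\ref{isorig} to obtain the cone structure and to identify $E_t$ with a ball at a tip is right. But the concentricity step you single out is not a technicality: it is the substance of the rigidity, and your heuristic (``differentiate the level foliation and show the velocity of $z_t$ vanishes'') does not transfer to the $\mathrm{RCD}$ setting without real work --- there is no a priori regularity of $t\mapsto z_t$, the constancy in (b) is only $\mathrm{Per}$-a.e., and when $\mathrm{X}$ splits a Euclidean factor the set of tips is a continuum. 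In \cite{NV24} this is handled by a careful argument combining the nestedness of the isoperimetric balls on cones with the measure-theoretic information coming from (b); it is not short. As written, your proposal is an accurate roadmap with an honestly acknowledged gap, not a proof.

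One further point: the stated conclusion $u=u^\star(\mathrm{AVR}_{\mathfrak{m}}^{1/N}\mathrm{d}(\cdot,x_0))$ forces $u\ge 0$, since $u^\star\ge 0$ by construction. Your line ``with the standard normalization $u\ge 0$'' is therefore not a normalization but part of what must be shown (or else the conclusion should read $|u|=u^\star(\ldots)$). The sign argument you sketch from $|\nabla u|\neq 0$ on $\{u\neq 0\}$ needs connectedness of $\{u\neq 0\}$, which you only get \emph{after} concentricity is established.
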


The following rigidity result for the Talenti-type comparison theorem will build on top of the rigidity in the Euclidean isoperimetric inequality and Euclidean P\'{o}lya-Szeg\"{o} inequality. A similar result was proved in \cite{MV21} in the case of positive curvature.
\begin{theorem}[Rigidity for Talenti]
	Let $(\mathrm{X}, \mathrm{d}, \mathfrak{m})$ be a $\operatorname{RCD}(0, N)$ space for some $N>1$ with $\mathrm{AVR}_{\mathfrak{m}}>0$, and let $\Omega \subset \mathrm{X}$ be an open set with $\mathfrak{m}(\Omega)=a \in(0,\infty)$.  
	
	Let $f \in L^q(\Omega, \mathfrak{m})$ with $f \neq 0$. Assume that $u \in W_0^{1, p}(\Omega, \mathfrak{m})$ is a solution to the Poisson problem $(\ref{eq1})$. Let also $v \in W_0^{1, p}([0, r_a), \mathfrak{m}_{N})$ be the continuous solution to the problem
	\begin{equation*}
		\begin{cases}
			-\mathscr{L}_p v=f^\star, &\text{ in }[0,r_a),\\
			v=0, &\text{ on }\partial [0,r_a)=\{r_a\},
		\end{cases}
	\end{equation*}
	where $r_a>0$ is such that $\mathfrak{m}_N([0,r_a])=\mathfrak{m}(\Omega)$, and $f^\star$ is the Schwarz symmetrization of $f$. Assume that $\mathrm{AVR}_{\mathfrak{m}}^{q/N}u^{\star}(\bar{x})=v(\bar{x})$ for a point $\bar{x} \in[0, r_a)$. Then\\
	$(\mathrm{i})$ $\mathrm{AVR}_{\mathfrak{m}}^{q/N}u^{\star}=v$ in the whole interval $[\bar{x}, r_a];$\\
	$(\mathrm{ii})$ $(\mathrm{X}, \mathrm{d}, \mathfrak{m})$ is a Euclidean metric measure cone;\\
	$(\mathrm{iii})$ if $\bar{x}=0$, $u \in \operatorname{Lip}(\Omega)$ and $|\nabla u| \neq 0$, $\mathfrak{m}$-a.e. in $\{u\neq 0\}$, then $u$ is radial.
\end{theorem}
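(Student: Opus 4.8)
The strategy is to promote the single scalar equality at $\bar x$ to equality in every inequality appearing in the proof of the Talenti-type comparison theorem, on the relevant range, and then to invoke the rigidity of the Euclidean isoperimetric inequality (Theorem \ref{isorig}) for part $(\mathrm{ii})$ and the rigidity of the Euclidean P\'{o}lya-Szeg\"{o} inequality for part $(\mathrm{iii})$. Write $s_0:=H_N(\bar x)\in[0,a)$ and, for $s\in[0,a]$, set
$$
\Phi(s):=\int_s^{a}\frac{1}{\mathrm{AVR}_{\mathfrak{m}}^{q/N}\mathcal{I}_N(\xi)}\bigg(\frac{1}{\mathcal{I}_N(\xi)}\int_0^{\xi}f^{\sharp}(t)\,\mathrm{d}t\bigg)^{\frac{1}{p-1}}\mathrm{d}\xi=\mathrm{AVR}_{\mathfrak{m}}^{-q/N}\,v\!\left(H_N^{-1}(s)\right),
$$
the last identity being the explicit formula $(\ref{ex-so-2})$ for $v$ with datum $f^{\star}$. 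The comparison theorem is precisely the statement $u^{\sharp}\le\Phi$ on $[0,a]$, and the hypothesis reads $u^{\sharp}(s_0)=\Phi(s_0)$. Since $f\neq0$, $F(\xi):=\int_0^{\xi}f^{\sharp}>0$ for $\xi>0$, so $\Phi$ is continuous, strictly decreasing on $[0,a]$ with $\Phi(a)=0$; in particular, $\bar x<r_a$ gives $u^{\sharp}(s_0)=\Phi(s_0)>0$.

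To prove $(\mathrm{i})$, fix $s_1\in(s_0,a)$. Then $u^{\sharp}(s_1)\le\Phi(s_1)<\Phi(s_0)=u^{\sharp}(s_0)$, so for $0<\eta<u^{\sharp}(s_0)-u^{\sharp}(s_1)$ I apply $(\ref{thmeq1.2})$ with $\tau'=u^{\sharp}(s_1)$ and $\tau=u^{\sharp}(s_0)-\eta$. Since $\tau<u^{\sharp}(s_0)$, the definition of $u^{\sharp}$ gives $\mu(\tau)\ge s_0$, while Proposition \ref{propre}(e) gives $\mu(\tau')=\mu(u^{\sharp}(s_1))\le s_1$; as $\mu$ is non-increasing, the interval of integration in $(\ref{thmeq1.2})$ is contained in $[s_0,s_1]$, and the non-negativity of the integrand gives
$$
u^{\sharp}(s_0)-\eta-u^{\sharp}(s_1)\le\int_{s_0}^{s_1}\frac{1}{\mathrm{AVR}_{\mathfrak{m}}^{q/N}\mathcal{I}_N(\xi)}\bigg(\frac{F(\xi)}{\mathcal{I}_N(\xi)}\bigg)^{\frac{1}{p-1}}\mathrm{d}\xi=\Phi(s_0)-\Phi(s_1).
$$
Letting $\eta\downarrow0$ and using $u^{\sharp}(s_0)=\Phi(s_0)$ yields $u^{\sharp}(s_1)\ge\Phi(s_1)$, hence $u^{\sharp}(s_1)=\Phi(s_1)$; the left-continuity of $u^{\sharp}$ and the continuity of $\Phi$ extend this to all of $[s_0,a]$, which is exactly $\mathrm{AVR}_{\mathfrak{m}}^{q/N}u^{\star}=v$ on $[\bar x,r_a]$.

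For $(\mathrm{ii})$, applying the estimate that leads to $(\ref{thmeq1.2})$ with $\tau'=0$ and $\tau=u^{\sharp}(s_0)-\eta$ and letting $\eta\downarrow0$ gives $\int_0^{u^{\sharp}(s_0)}(\text{right-hand side of }(\ref{thmeq1.1}))\,\mathrm{d}t\le\Phi(s_0)=u^{\sharp}(s_0)=\int_0^{u^{\sharp}(s_0)}1\,\mathrm{d}t$; since the right-hand side of $(\ref{thmeq1.1})$ is $\ge1$ $\mathcal{L}^1$-a.e., it must equal $1$ for $\mathcal{L}^1$-a.e. $t\in(0,u^{\sharp}(s_0))$, i.e.\ every inequality in the chain $(\ref{thmeq1.0})$ is an equality for a.e. such $t$. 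In particular, since $\mathcal{I}_N(\xi)=N\omega_N^{1/N}\xi^{(N-1)/N}$, the set $\{|u|>t\}$ realizes equality in the isoperimetric inequality $(\ref{iso})$; as $u^{\sharp}(s_0)>0$, for a.e. $t\in(0,u^{\sharp}(s_0))$ this set has finite perimeter (coarea formula) and satisfies $0<\mathfrak{m}(\{|u|>t\})\le a<\infty$, so Theorem \ref{isorig} gives that $(\mathrm{X},\mathrm{d},\mathfrak{m})$ is a Euclidean metric measure cone. For $(\mathrm{iii})$, if $\bar x=0$ then part $(\mathrm{i})$ gives $\mathrm{AVR}_{\mathfrak{m}}^{q/N}u^{\star}=v$ on all of $[0,r_a]$, whence $|\nabla v|=\mathrm{AVR}_{\mathfrak{m}}^{q/N}|\nabla u^{\star}|$ $\mathfrak{m}_N$-a.e.; inserting this into the $L^p$-gradient estimate $(\ref{r-grad})$ with $r=p$ and using $q-\frac{1}{p-1}=1$ gives $\int_{\Omega}|\nabla u|^p\,\mathrm{d}\mathfrak{m}\le\mathrm{AVR}_{\mathfrak{m}}^{p/N}\int_0^{r_a}|\nabla u^{\star}|^p\,\mathrm{d}\mathfrak{m}_N$, which is the reverse of the Euclidean P\'{o}lya-Szeg\"{o} inequality $(\ref{rigeq1})$; hence both are equalities, and since $u\in W_0^{1,p}(\Omega,\mathfrak{m})\cap\operatorname{Lip}(\Omega)$ with $|\nabla u|\neq0$ $\mathfrak{m}$-a.e. on $\{u\neq0\}$, the rigidity of the Euclidean P\'{o}lya-Szeg\"{o} inequality forces $u$ to be radial.

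The main obstacle is the propagation in $(\mathrm{i})$ and $(\mathrm{ii})$: extracting the pointwise equalities from the single equality $u^{\sharp}(s_0)=\Phi(s_0)$ requires combining with some care the bound $\mu(t)\ge s$ for $t<u^{\sharp}(s)$ with $\mu(u^{\sharp}(s))\le s$ from Proposition \ref{propre}(e), the monotonicity of $\mu$, the left-continuity of $u^{\sharp}$, and the fact that $f\neq0$ keeps the integrand of $\Phi$ strictly positive so that no mass is lost when the intervals of integration are shrunk; once the relevant equality cases of the isoperimetric inequality and of the P\'{o}lya-Szeg\"{o} inequality are secured, the geometric conclusions are immediate consequences of the two cited rigidity theorems.
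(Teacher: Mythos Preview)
Your proof is correct and follows the same overall strategy as the paper: trace equality back through the chain $(\ref{thmeq1.0})$--$(\ref{thmeq1.2})$, then invoke the rigidity of the isoperimetric inequality for $(\mathrm{ii})$ and of the P\'{o}lya--Szeg\"{o} inequality for $(\mathrm{iii})$.

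Your execution of part $(\mathrm{i})$ is organized a bit differently from the paper, and arguably more cleanly. The paper first argues that $u^{\sharp}$ is strictly decreasing to the right of $\bar s$, then passes a sequence $s_n\searrow\bar s$ through the chain of inequalities to force every step to equality, and extracts along the way the continuity of $\mu$ on $[0,u^{\sharp}(\bar s))$ and $\mu(0)=\mathfrak{m}(\Omega)$ before finally re-deriving $u^{\sharp}(s)=\Phi(s)$. You instead apply $(\ref{thmeq1.2})$ once with $\tau'=u^{\sharp}(s_1)$ and $\tau=u^{\sharp}(s_0)-\eta$, use $\mu(\tau)\ge s_0$ and Proposition~\ref{propre}(e) to pin the integration interval inside $[s_0,s_1]$, and read off $u^{\sharp}(s_1)\ge\Phi(s_1)$ directly. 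This sidesteps the sequence argument and the explicit extraction of continuity of $\mu$. The cost is that those auxiliary facts (continuity of $\mu$, $\mu(0)=\mathfrak{m}(\Omega)$) are not stated separately, but you don't need them for the conclusions. Parts $(\mathrm{ii})$ and $(\mathrm{iii})$ match the paper essentially line for line.
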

\begin{proof}
	To prove (i), it is enough to show that
	$$
	\mathrm{AVR}_{\mathfrak{m}}^{-q/N}v^\sharp(s)=\int_{s}^{\mathfrak{m}(\Omega)} \frac{1}{\mathrm{AVR}_{\mathfrak{m}}^{q/N}\mathcal{I}_{ N}(\sigma)}\bigg(\frac{F(\sigma)}{\mathcal{I}_{ N}(\sigma)} \bigg)^{\frac{1}{p-1}} \mathrm{d} \sigma=u^{\sharp}(s),
	$$ 
	for any $s\in (\bar{s},\mathfrak{m}(\Omega))$, where $\bar{s}=H_N(\bar{x})\in [0,\mathfrak{m}(\Omega))$ and $F(\sigma)=\int_0^\sigma f^\sharp (t) \mathrm{d} t$. By the P\'{o}lya-Szeg\"{o} inequality, $u^\star$ belongs to $W^{1, p}([0, r_a), \mathfrak{m}_{N})$ and it is thus locally absolutely continuous in $(0,r_a)$; the same conclusion thus holds for $u^\sharp$. In fact, $u^\sharp$ is continuous in the whole closed inteval $[0,\mathfrak{m}(\Omega)]$ since it is left-continuous and it is also right-continuous at $0$.
	
	First, we notice that for any $s>\bar{s}$, $u^\sharp (s)<u^\sharp (\bar{s})$. If not, then there exists $s_1>\bar{s}$ such that $u^\sharp (s_1)=u^\sharp (\bar{s})$. Hence, by the Talenti-type comparison theorem, it holds that
	$$
	\mathrm{AVR}_{\mathfrak{m}}^{-q/N}v^\sharp(s_1)\geq u^\sharp (s_1)=u^\sharp (\bar{s})=\mathrm{AVR}_{\mathfrak{m}}^{-q/N}v^\sharp(\bar{s}),
	$$
	which contradicts the strict monotonicity of $v^\sharp$. Let $\{s_n\}_n\subset (\bar{s},\mathfrak{m}(\Omega))$ be such that $s_n\searrow \bar{s}$ and hence
	$$
	\bar{s}\leq \liminf_{n\rightarrow\infty}\mu(u^\sharp(s_n))\leq \limsup_{n\rightarrow\infty}\mu(u^\sharp(s_n))\leq \limsup_{n\rightarrow\infty}s_n=\bar{s},
	$$ 
	by Proposition $\ref{propre}$(e) and $\mu^\sharp (s_n)<\mu^\sharp (\bar{s})$ for all $n$. Then, as in the proof of Talenti-type comparison theorem, we can deduce that
	\begin{align*}
		u^\sharp(\bar{s})&=\lim_{n\rightarrow \infty} u^\sharp(s_n)\leq \limsup_{n\rightarrow \infty}\int_{0}^{u^\sharp(s_n)} \frac{-\mu^{\prime}(t)}{\mathrm{AVR}_{\mathfrak{m}}^{\frac{q}{N}}\mathcal{I}_{ N}(\mu(t))}\bigg(\frac{F(\mu(t))}{\mathcal{I}_{ N}(\mu(t))}\bigg)^{\frac{1}{p-1}} \mathrm{d} t\\
		&\leq  \lim_{n\rightarrow \infty}\int_{\mu(u^\sharp(s_n))}^{\mu(0)} \frac{1}{\mathrm{AVR}_{\mathfrak{m}}^{\frac{q}{N}}\mathcal{I}_{ N}(\sigma)}\bigg(\frac{F(\sigma)}{\mathcal{I}_{ N}(\sigma)}\bigg)^{\frac{1}{p-1}} \mathrm{d} \sigma\\
		&=\int_{\bar{s}}^{\mu(0)} \frac{1}{\mathrm{AVR}_{\mathfrak{m}}^{\frac{q}{N}}\mathcal{I}_{ N}(\sigma)}\bigg(\frac{F(\sigma)}{\mathcal{I}_{ N}(\sigma)}\bigg)^{\frac{1}{p-1}} \mathrm{d} \sigma\stackrel{\text{by assumption}}{\leq} u^\sharp(\bar{s}).
	\end{align*}
	Hence, all the inequalities in the estimate are actually equalities and thus we must have:
	\begin{itemize}
		\item $\mu$ is continuous in $[0,u^\sharp(\bar{s}));$ in particular, $\mu(u^\sharp(s))=s$ for all $s\in (\bar{s},\mathfrak{m}(\Omega)]$.
		\item $\mu(0)=\mathfrak{m}(\Omega);$
		\item for any $s\in (\bar{s},\mathfrak{m}(\Omega))$,
		$$
		\int_{0}^{u^\sharp(s)} \frac{-\mu^{\prime}(t)}{\mathcal{I}_{ N}(\mu(t))}\bigg(\frac{F(\mu(t))}{\mathcal{I}_{ N}(\mu(t))}\bigg)^{\frac{1}{p-1}} \mathrm{d} t=\int_{\mu(u^\sharp(s))}^{\mu(0)} \frac{1}{\mathcal{I}_{ N}(\sigma)}\bigg(\frac{F(\sigma)}{\mathcal{I}_{ N}(\sigma)}\bigg)^{\frac{1}{p-1}} \mathrm{d} \sigma.
		$$
		\item for a.e. $t\in (0,u^\sharp(\bar{s}))$,
		\begin{equation}\label{rigeq2}
			1 =\frac{-\mu^{\prime}(t)}{\mathrm{AVR}_{\mathfrak{m}}^{\frac{q}{N}}\mathcal{I}_{ N}(\mu(t))}\bigg(\frac{F(\mu(t))}{\mathcal{I}_{ N}(\mu(t))}\bigg)^{\frac{1}{p-1}}.
		\end{equation}
	\end{itemize}
	Therefore, for any $s\in (\bar{s},\mathfrak{m}(\Omega))$ we have that
	\begin{align*}
		u^{\sharp}(s)&=\int_0^{u^{\sharp}(s)}\frac{-\mu^{\prime}(t)}{\mathrm{AVR}_{\mathfrak{m}}^{\frac{q}{N}}\mathcal{I}_{ N}(\mu(t))}\bigg(\frac{F(\mu(t))}{\mathcal{I}_{ N}(\mu(t))}\bigg)^{\frac{1}{p-1}}\mathrm{d} t\\
		&=\int_{\mu(u^\sharp(s))}^{\mu(0)} \frac{1}{\mathrm{AVR}_{\mathfrak{m}}^{\frac{q}{N}}\mathcal{I}_{ N}(\sigma)}\bigg(\frac{F(\sigma)}{\mathcal{I}_{ N}(\sigma)}\bigg)^{\frac{1}{p-1}} \mathrm{d} \sigma\\
		&=\int_{s}^{\mathfrak{m}(\Omega)} \frac{1}{\mathrm{AVR}_{\mathfrak{m}}^{\frac{q}{N}}\mathcal{I}_{ N}(\sigma)}\bigg(\frac{F(\sigma)}{\mathcal{I}_{ N}(\sigma)}\bigg)^{\frac{1}{p-1}} \mathrm{d} \sigma=\mathrm{AVR}_{\mathfrak{m}}^{-q/N}v^\sharp(s),
	\end{align*}
	as desired.
	
	To prove (ii), since for a.e. $t\in (0,u^\sharp(\bar{s}))$, $(\ref{rigeq2})$ holds and thus $\mathrm{AVR}_{\mathfrak{m}}^{\frac{1}{N}}\mathcal{I}_{N}(\mu(t))  = \operatorname{Per}(\{|u|>t\})$ by $(\ref{thmeq1.0})$. By Theorem $\ref{isorig}$, this implies that $(\mathrm{X}, \mathrm{d}, \mathfrak{m})$ is a Euclidean metric measure cone.
	
	For part (iii), assume that $\bar{x}=0$ (thus $\mathrm{AVR}_{\mathfrak{m}}^{q/N} u^\star=v$ in $[0,r_a]$), $u \in \operatorname{Lip}(\Omega)$ and $|\nabla u| \neq 0$, $\mathfrak{m}$-a.e. in $\{u\neq 0\}$. Putting together the gradient comparison inequality $(\ref{r-grad})$ and the Euclidean P\'{o}lya-Szeg\"{o} inequality $(\ref{rigeq1})$, we have that
	\begin{align*}
		\mathrm{AVR}_{\mathfrak{m}}^{p/N}\int_0^{r_a}|\nabla u^{\star}|^p  \mathrm{d}\mathfrak{m}_{N} &\leq \int_{\Omega}|\nabla u|^p  \mathrm{d}\mathfrak{m}\leq \mathrm{AVR}_{\mathfrak{m}}^{-\frac{p}{(p-1)N}} \int_0^{r_a}|\nabla v|^p \mathrm{d}\mathfrak{m}_{N}\\
		&=\mathrm{AVR}_{\mathfrak{m}}^{p/N} \int_0^{r_a}|\nabla u^{\star}|^p \mathrm{d}\mathfrak{m}_{N}.
	\end{align*}
	This implies that equality in the Euclidean P\'{o}lya-Szeg\"{o} inequality is achieved. By the rigidity in the Euclidean P\'{o}lya-Szeg\"{o} inequality, $u$ is radial.
\end{proof}

\section{Almost rigidity}
In this section, we will prove a stable version of the rigidity result, namely Theorem $\ref{alrig}$. We begin in Subsection $\ref{alrig1}$ with the necessaries preliminaries concerning the weak/strong convergence for real-valued functions defined in varying $L^p/W^{1,p}$-spaces. In Subsection $\ref{alrig2}$, we prove a result about compact embeddings from $W^{1,p}$ spaces into $L^p$ spaces (Theorem $\ref{compact}$). In Subsection $\ref{alrig3}$, we prove the main almost rigidity result of this note stated in Theorem $\ref{alrig}$. For this result we apply the compactness theorem under pmGH-convergence and fully exploit the rigidity result on the limit space developed in the previous section. Finally, in Subsection $\ref{alrig4}$, we will state a continuity result for the solutions to the Poisson problem (Theorem $\ref{conPoi}$) .

\subsection{Weak and strong $L^p/W^{1,p}$-Convergence w.r.t. varying measures}\label{alrig1}
$\quad$ In this subsection, we assume that $(\mathrm{Y},\mathrm{d})$ is a complete and separable metric space and $p\in (1,\infty)$, and that $\{\mathfrak{m}_n\}_{n\in \overline{\mathbb{N}}_+}\subset \mathcal{M}_{\mathrm{loc}}(\mathrm{Y})$ is such that $\{\mathfrak{m}_n\}_{n}$ weakly converges to $\mathfrak{m}_\infty$, i.e.
$$
\lim_{n \rightarrow \infty} \int \phi \mathrm{d}\mathfrak{m}_n=  \int \phi  \mathrm{d}\mathfrak{m}_\infty,\quad \text { for every } \phi \in \mathrm{C}_{\mathrm{bs}}(\mathrm{Y}).
$$

Recall that whenever we fix $p\in (1,\infty)$, the letter $q$ is automatically defined as the conjugate exponent by the relation
$p^{-1}+q^{-1}=1$ and thus ranging in $(1, \infty)$.

\begin{definition}[$L^p$-Convergence]
	Let $f_n \in L_{\mathrm{loc}}^1(\mathrm{Y},\mathfrak{m}_n)$, $n\in \overline{\mathbb{N}}_+$. We say that $\{f_n\}_n$ weakly converges to $f_\infty$ if 
	\begin{equation}\label{1}
		\lim_{n \rightarrow \infty} \int \phi f_n \mathrm{d}\mathfrak{m}_n=  \int \phi f_\infty \mathrm{d}\mathfrak{m}_\infty,\quad \text { for every } \phi \in \mathrm{C}_{\mathrm{bs}}(\mathrm{Y}).
	\end{equation}
	We say that $\{f_n\}_n$ $L^p$-weakly converges to $f_\infty$ if $(\ref{1})$ holds and moreover 
	\begin{equation}\label{2}
		\sup_{n\in \mathbb{N}_+}\int |f_n|^p \mathrm{d}\mathfrak{m}_n<\infty.
	\end{equation}
	If $\{f_n\}_n$ is $L^p$-weakly converging to $f_\infty$ and furthermore
	\begin{equation}\label{3}
		\limsup_{n\rightarrow \infty}\int |f_n|^p \mathrm{d}\mathfrak{m}_n\leq \int |f_\infty|^p \mathrm{d}\mathfrak{m}_\infty<\infty,
	\end{equation}
	then we say that it is $L^p$-strongly converging to $f_\infty$.	
\end{definition} 

The proof of the following result is very similar to the proof in the case when $\mathfrak{m}_n=\mathfrak{m}_\infty$ holds for every $n\in \mathbb{N}_+$.
\begin{proposition}\label{alrigprop5}
	If $\{f_n\}_n$ $L^p$-weakly converges to $f_\infty$, then 
	$$\|f_{\infty}\|_{L^p(\mathrm{Y},\mathfrak{m}_{\infty})} \leq \liminf_{n\rightarrow \infty}\|f_n\|_{L^p(\mathrm{Y},\mathfrak{m}_n)}.$$
\end{proposition}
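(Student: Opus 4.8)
The plan is to reduce the statement to the classical lower semicontinuity of the $L^p$-norm under weak convergence against a single measure, exploiting the duality with $L^q$ and the definition of $L^p$-weak convergence, which only tests against $\mathrm{C}_{\mathrm{bs}}(\mathrm{Y})$. Write $M:=\liminf_{n\to\infty}\|f_n\|_{L^p(\mathrm{Y},\mathfrak{m}_n)}$; by passing to a subsequence (not relabeled) we may assume the $\liminf$ is a genuine limit and that $M<\infty$, since otherwise there is nothing to prove. The quantity $\|f_\infty\|_{L^p(\mathrm{Y},\mathfrak{m}_\infty)}$ can be computed as a supremum over a countable family: by separability of $\mathrm{C}_{\mathrm{bs}}(\mathrm{Y})$ in the relevant topology and density arguments, it suffices to show that for every $\phi\in \mathrm{C}_{\mathrm{bs}}(\mathrm{Y})$ with $\|\phi\|_{L^q(\mathrm{Y},\mathfrak{m}_\infty)}\le 1$ one has $\int \phi f_\infty\,\mathrm{d}\mathfrak{m}_\infty \le M$; taking the supremum over such $\phi$ then yields $\|f_\infty\|_{L^p(\mathrm{Y},\mathfrak{m}_\infty)}\le M$ by the converse Hölder inequality (duality of $L^p$ and $L^q$).

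First I would fix such a test function $\phi$. By the defining property $(\ref{1})$ of weak convergence of $\{f_n\}_n$, we have $\int \phi f_n\,\mathrm{d}\mathfrak{m}_n \to \int \phi f_\infty\,\mathrm{d}\mathfrak{m}_\infty$. On the other hand, Hölder's inequality on each space $(\mathrm{Y},\mathfrak{m}_n)$ gives
$$
\int \phi f_n\,\mathrm{d}\mathfrak{m}_n \le \|f_n\|_{L^p(\mathrm{Y},\mathfrak{m}_n)}\,\|\phi\|_{L^q(\mathrm{Y},\mathfrak{m}_n)}.
$$
Since $|\phi|^q\in \mathrm{C}_{\mathrm{bs}}(\mathrm{Y})$, the weak convergence $\mathfrak{m}_n\rightharpoonup\mathfrak{m}_\infty$ yields $\|\phi\|_{L^q(\mathrm{Y},\mathfrak{m}_n)}^q=\int|\phi|^q\,\mathrm{d}\mathfrak{m}_n \to \int|\phi|^q\,\mathrm{d}\mathfrak{m}_\infty = \|\phi\|_{L^q(\mathrm{Y},\mathfrak{m}_\infty)}^q \le 1$. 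Passing to the limit along the chosen subsequence,
$$
\int \phi f_\infty\,\mathrm{d}\mathfrak{m}_\infty = \lim_{n\to\infty}\int \phi f_n\,\mathrm{d}\mathfrak{m}_n \le \limsup_{n\to\infty}\big(\|f_n\|_{L^p(\mathrm{Y},\mathfrak{m}_n)}\,\|\phi\|_{L^q(\mathrm{Y},\mathfrak{m}_n)}\big) \le M.
$$

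It remains to handle the supremum step carefully, and this is where I expect the main (mild) obstacle to lie: one must justify that $\|f_\infty\|_{L^p(\mathrm{Y},\mathfrak{m}_\infty)}$ equals $\sup\{\int \phi f_\infty\,\mathrm{d}\mathfrak{m}_\infty : \phi\in \mathrm{C}_{\mathrm{bs}}(\mathrm{Y}),\ \|\phi\|_{L^q(\mathrm{Y},\mathfrak{m}_\infty)}\le 1\}$, i.e. that the continuous, boundedly-supported functions are dense enough in the unit ball of $L^q(\mathrm{Y},\mathfrak{m}_\infty)$ to recover the full dual norm. This follows from the standard facts that $\mathrm{C}_{\mathrm{bs}}(\mathrm{Y})$ is dense in $L^q(\mathrm{Y},\mathfrak{m}_\infty)$ (as $\mathfrak{m}_\infty$ is a Borel measure finite on balls on a complete separable metric space) together with a truncation argument applied to $f_\infty$: for $R,k>0$ set $f_\infty^{R,k}:=(\,(-k)\vee f_\infty\wedge k\,)\chi_{B_R(y_0)}$, observe $\|f_\infty^{R,k}\|_{L^p}\uparrow\|f_\infty\|_{L^p}$ as $R,k\to\infty$ by monotone convergence, and for each such bounded truncation pick a near-optimal $\phi\in \mathrm{C}_{\mathrm{bs}}(\mathrm{Y})$ in the $L^q$ unit ball via the duality and density just recalled. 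Combining this with the displayed bound $\int \phi f_\infty\,\mathrm{d}\mathfrak{m}_\infty\le M$ for every admissible $\phi$ gives $\|f_\infty^{R,k}\|_{L^p(\mathrm{Y},\mathfrak{m}_\infty)}\le M$ for all $R,k$, and letting $R,k\to\infty$ completes the proof. Finally, since the argument was carried out along an arbitrary subsequence realizing the $\liminf$, the original $\liminf$ bound holds.
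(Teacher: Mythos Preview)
Your proof is correct and is exactly the duality argument the paper has in mind; the paper gives no proof, stating only that it is ``very similar to the proof in the case when $\mathfrak{m}_n=\mathfrak{m}_\infty$ holds for every $n\in\mathbb{N}_+$.'' Your handling of the final supremum step via truncation works, though a slightly cleaner route is to note that the bound $\bigl|\int\phi f_\infty\,\mathrm{d}\mathfrak{m}_\infty\bigr|\le M\|\phi\|_{L^q(\mathrm{Y},\mathfrak{m}_\infty)}$ for all $\phi\in\mathrm{C}_{\mathrm{bs}}(\mathrm{Y})$ extends by density to a bounded linear functional on $L^q(\mathrm{Y},\mathfrak{m}_\infty)$ of norm at most $M$, and Riesz representation then identifies this functional with $f_\infty\in L^p(\mathrm{Y},\mathfrak{m}_\infty)$.
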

Strongly inspired by Honda's work in \cite{H}, we introduce the following notion:
\begin{definition}
	Let $f_\infty\in L^p(\mathrm{Y},\mathfrak{m}_\infty)$ and let $f_{n, m} \in L^p(\mathrm{Y},\mathfrak{m}_n)$ for every $n\in \overline{\mathbb{N}}_+$ and $m\in \mathbb{N}_+$. We say that $\{f_{n, m}\}_{n, m}$ is an $L^p$-approximate sequence of $f_\infty$ if the following two conditions hold$:$\\
	$(1)$ For every $m$, $\{f_{n, m}\}_{n=1}^\infty$ $L^p$-strongly converges to $f_{\infty, m}$.\\
	$(2)$ $\|f_\infty-f_{\infty, m}\|_{L^p(\mathrm{Y},\mathfrak{m}_\infty)} \rightarrow 0$ as $m \rightarrow \infty$. 
\end{definition}

\begin{proposition}\label{alrigprop1}
	Let $f_{\infty} \in L^p(\mathrm{Y},\mathfrak{m}_\infty)$ and let $\{f_{n, m}\}_{n, m}$, $\{\hat{f}_{n, m}\}_{n, m}$ be $L^p$-approximate sequences of $f_{\infty}$. Then
	$$
	\limsup_{m\rightarrow \infty} \limsup_{n \rightarrow \infty}\|f_{n, m}-\hat{f}_{n, m}\|_{L^p(\mathrm{Y},\mathfrak{m}_n)} =0 .
	$$
\end{proposition}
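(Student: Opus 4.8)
The plan is to fix $m$, reduce the inner $\limsup_{n\to\infty}$ to the single quantity $\|f_{\infty,m}-\hat f_{\infty,m}\|_{L^p(\mathrm{Y},\mathfrak{m}_\infty)}$, and then let $m\to\infty$ and conclude by the triangle inequality together with condition $(2)$ in the definition of an $L^p$-approximate sequence.

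To carry this out I will first record two structural properties of $L^p$-strong convergence with respect to the varying measures $\mathfrak{m}_n\rightharpoonup\mathfrak{m}_\infty$, each of which is standard — proved by the same argument as in the fixed-measure case, or quoted from the theory of convergence of functions over varying metric measure spaces: \emph{(a)} linearity, i.e.\ if $\{a_n\}_n$ and $\{b_n\}_n$ $L^p$-strongly converge to $a_\infty$ and $b_\infty$ respectively, then $\{a_n-b_n\}_n$ $L^p$-strongly converges to $a_\infty-b_\infty$; and \emph{(b)} convergence of norms, i.e.\ if $\{a_n\}_n$ $L^p$-strongly converges to $a_\infty$, then
$$
\lim_{n\to\infty}\|a_n\|_{L^p(\mathrm{Y},\mathfrak{m}_n)}=\|a_\infty\|_{L^p(\mathrm{Y},\mathfrak{m}_\infty)},
$$
which is immediate from $(\ref{3})$ combined with the lower semicontinuity of the norm under $L^p$-weak convergence, namely Proposition \ref{alrigprop5}.

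With \emph{(a)} and \emph{(b)} at hand the argument is short. Fix $m\in\mathbb{N}_+$. Since $\{f_{n,m}\}_n$ and $\{\hat f_{n,m}\}_n$ $L^p$-strongly converge to $f_{\infty,m}$ and $\hat f_{\infty,m}$, by \emph{(a)} the difference $\{f_{n,m}-\hat f_{n,m}\}_n$ $L^p$-strongly converges to $f_{\infty,m}-\hat f_{\infty,m}$, so by \emph{(b)}
$$
\limsup_{n\to\infty}\|f_{n,m}-\hat f_{n,m}\|_{L^p(\mathrm{Y},\mathfrak{m}_n)}=\|f_{\infty,m}-\hat f_{\infty,m}\|_{L^p(\mathrm{Y},\mathfrak{m}_\infty)}\leq\|f_{\infty,m}-f_\infty\|_{L^p(\mathrm{Y},\mathfrak{m}_\infty)}+\|f_\infty-\hat f_{\infty,m}\|_{L^p(\mathrm{Y},\mathfrak{m}_\infty)}.
$$
Passing to $\limsup_{m\to\infty}$ and using condition $(2)$ of the definition of $L^p$-approximate sequence, applied to $\{f_{n,m}\}_{n,m}$ and to $\{\hat f_{n,m}\}_{n,m}$, the right-hand side tends to $0$, which proves the proposition.

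The step I expect to be the main obstacle is \emph{(a)}: unlike in the fixed-measure case — where linearity is trivial because $L^p$-strong convergence is just norm convergence of the difference and the norm is linear — here $a_n$ and $b_n$ live on spaces carrying different measures, so one cannot directly estimate $\|(a_n-b_n)-(a_\infty-b_\infty)\|$ by $\|a_n-a_\infty\|+\|b_n-b_\infty\|$. I would instead note that $\{a_n-b_n\}_n$ is automatically $L^p$-weakly convergent to $a_\infty-b_\infty$ — condition $(\ref{1})$ is linear, and the uniform bound $(\ref{2})$ follows from the triangle inequality in $L^p(\mathrm{Y},\mathfrak{m}_n)$ — so by Proposition \ref{alrigprop5} it only remains to verify the remaining inequality
$$
\limsup_{n\to\infty}\|a_n-b_n\|_{L^p(\mathrm{Y},\mathfrak{m}_n)}\leq\|a_\infty-b_\infty\|_{L^p(\mathrm{Y},\mathfrak{m}_\infty)}.
$$
This is where the uniform convexity of $L^p$ for $p\in(1,\infty)$ enters, through Clarkson-type inequalities, in precisely the same manner as in the proof of the analogous fixed-measure statement; alternatively, the linearity \emph{(a)} can be invoked directly from the literature on convergence of functions over varying $L^p$-spaces, after which the proposition reduces entirely to the three short steps above. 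The remaining verifications — linearity of $(\ref{1})$, the uniform $L^p$-bound, and the passage to the limit in $m$ — are routine.
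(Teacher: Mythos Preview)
Your proof is correct and rests on the same idea as the paper's: the uniform convexity of $L^p$ via Clarkson's inequalities. The only difference is organizational. The paper applies Clarkson's inequality directly to the pair $(f_{n,m},\hat f_{n,m})$ in $L^p(\mathrm{Y},\mathfrak{m}_n)$, obtaining for $p\ge 2$
\[
\|f_{n,m}-\hat f_{n,m}\|_{L^p}^p \le 2^{p-1}\big(\|f_{n,m}\|_{L^p}^p+\|\hat f_{n,m}\|_{L^p}^p\big)-\|f_{n,m}+\hat f_{n,m}\|_{L^p}^p,
\]
then passes to $\limsup_n$ (using convergence of norms on the first two terms and lower semicontinuity on the third) and finally to $\limsup_m$, where the right-hand side collapses to $2^p\|f_\infty\|^p-2^p\|f_\infty\|^p=0$; the case $1<p<2$ uses the dual Clarkson inequality. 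You instead package the Clarkson step into the linearity property \emph{(a)} of $L^p$-strong convergence and then read off the result via \emph{(b)} and the triangle inequality in $L^p(\mathrm{Y},\mathfrak{m}_\infty)$. Since you yourself note that establishing \emph{(a)} in this varying-measure setting is exactly the Clarkson argument, the two proofs are the same in substance; your version has the advantage of isolating a reusable lemma (indeed the paper later records this linearity as Proposition~\ref{2.18}(ii)), while the paper's version is more self-contained at this point in the exposition.
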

\begin{proof}
	If $p\geq 2$, then Clarkson's inequality for $p\geq 2$ yields
	\begin{align*}
		2^{p-1}(\|f_{n, m}\|_{L^p}^p+\|\hat{f}_{n, m}\|_{L^p}^p) \geq\|f_{n, m}+\hat{f}_{n, m}\|_{L^p}^p+\|f_{n, m}-\hat{f}_{n, m}\|_{L^p}^p.
	\end{align*}
	By letting $n\rightarrow \infty$ and then letting $m\rightarrow \infty$, we obtain the desired result. Similarly, the assertion of the case $1<p<2$ follows from Clarkson's inequality for $1<p<2$.
\end{proof}
\begin{proposition}\label{alrigprop6}
	For every $f_\infty\in L^p(\mathrm{Y},\mathfrak{m}_\infty)$, there exists an $L^p$-approximate sequence $\{f_{n,m}\}_{n,m}$ of $f_\infty$. Moreover, for such sequence, we have that $\{\eta f_{n,m}\}_{n,m}$ is an $L^p$-approximate sequence of $\eta f_\infty$ for any $\eta\in \mathrm{C}_{\mathrm{b}}(\mathrm{Y})$. 
	
\end{proposition}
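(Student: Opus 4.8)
The plan is to construct the approximate sequence by first mollifying $f_\infty$ to a bounded, compactly supported Lipschitz function, and then transplanting this single function onto the varying spaces via the ambient metric. First I would fix $\epsilon>0$ and, using density of $\mathrm{Lip}_{\mathrm{bs}}(\mathrm{Y})$ in $L^p(\mathrm{Y},\mathfrak{m}_\infty)$, choose $g_m\in \mathrm{Lip}_{\mathrm{bs}}(\mathrm{Y})$ with $\|f_\infty-g_m\|_{L^p(\mathrm{Y},\mathfrak{m}_\infty)}\to 0$ as $m\to\infty$; this immediately handles condition $(2)$ of the definition once we arrange $f_{\infty,m}=g_m$. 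For condition $(1)$ I would simply set $f_{n,m}:=g_m$ for every $n$ (viewing the fixed bounded continuous function $g_m$ as an element of each $L^p(\mathrm{Y},\mathfrak{m}_n)$). Then $\{g_m\}_n$ converges to $g_m$ in the sense of $(\ref{1})$ because $\phi g_m\in \mathrm{C}_{\mathrm{bs}}(\mathrm{Y})$ and $\mathfrak{m}_n\rightharpoonup\mathfrak{m}_\infty$; the uniform $L^p$-bound $(\ref{2})$ follows since $|g_m|^p\in \mathrm{C}_{\mathrm{bs}}(\mathrm{Y})$, so $\int|g_m|^p\,\mathrm{d}\mathfrak{m}_n\to \int|g_m|^p\,\mathrm{d}\mathfrak{m}_\infty$; and the strong-convergence upper bound $(\ref{3})$ holds with equality in the limit for the same reason. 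Hence $\{f_{n,m}\}_{n,m}$ with $f_{n,m}=g_m$ is an $L^p$-approximate sequence of $f_\infty$.

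For the ``moreover'' part, let $\eta\in \mathrm{C}_{\mathrm{b}}(\mathrm{Y})$ and consider $\eta f_{n,m}$. Condition $(2)$ for $\eta f_\infty$ is clear: $\|\eta f_\infty-\eta f_{\infty,m}\|_{L^p(\mathrm{Y},\mathfrak{m}_\infty)}\le \|\eta\|_{L^\infty}\|f_\infty-f_{\infty,m}\|_{L^p(\mathrm{Y},\mathfrak{m}_\infty)}\to 0$. For condition $(1)$, I must show that for each fixed $m$, $\{\eta f_{n,m}\}_n$ $L^p$-strongly converges to $\eta f_{\infty,m}$. With the concrete choice $f_{n,m}=g_m\in \mathrm{Lip}_{\mathrm{bs}}(\mathrm{Y})$ this is again transparent: $\eta g_m$ is bounded and Borel with compact support, and for any $\phi\in \mathrm{C}_{\mathrm{bs}}(\mathrm{Y})$ we have $\phi\eta g_m\in \mathrm{C}_{\mathrm{bs}}(\mathrm{Y})$, so $\int \phi\eta g_m\,\mathrm{d}\mathfrak{m}_n\to \int \phi\eta g_m\,\mathrm{d}\mathfrak{m}_\infty$; moreover $|\eta g_m|^p\le \|\eta\|_{L^\infty}^p|g_m|^p$ gives the uniform $L^p$-bound, and $\int |\eta g_m|^p\,\mathrm{d}\mathfrak{m}_n\to \int|\eta g_m|^p\,\mathrm{d}\mathfrak{m}_\infty$ since $|\eta g_m|^p$ is bounded, Borel and compactly supported (here one uses that weak convergence of measures together with tightness coming from the compact support passes to bounded Borel functions that are $\mathfrak{m}_\infty$-a.e. continuous, or one approximates $|\eta g_m|^p$ uniformly by continuous functions on a fixed compact set). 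This yields $(\ref{3})$ with equality in the limit, hence $L^p$-strong convergence.

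The one point that requires a little care — and which I expect to be the main obstacle — is the convergence $\int |\eta g_m|^p\,\mathrm{d}\mathfrak{m}_n\to \int|\eta g_m|^p\,\mathrm{d}\mathfrak{m}_\infty$ when $\eta$ is merely bounded continuous (not compactly supported): a priori $|\eta g_m|^p$ is only bounded and Borel, and $\eta$ need not be continuous at $\mathfrak{m}_\infty$-a.e. point a priori (though it is, being continuous everywhere). Since $g_m$ has compact support $K_m$, the integrals localize to $K_m$, on which $\eta|_{K_m}$ is uniformly continuous and bounded; approximating $\eta|_{K_m}$ uniformly by functions in $\mathrm{C}_{\mathrm{bs}}(\mathrm{Y})$ (extend by Tietze, truncate) and using that $\sup_n \mathfrak{m}_n(K_m')<\infty$ on a slightly larger compact neighbourhood $K_m'$ (a consequence of weak convergence), the error terms are controlled uniformly in $n$, and the claim follows by a standard $3\epsilon$-argument. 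Finally, I would remark that by a diagonal extraction one may also phrase the construction with a genuine double-indexed sequence $f_{n,m}$ depending on $n$ (e.g. mollifications at the space $\mathscr{X}_n$), but the transplant construction above already satisfies every requirement in the statement, so I would present that.
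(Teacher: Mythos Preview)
Your proposal is correct and follows exactly the paper's approach: pick $g_m\in\mathrm{Lip}_{\mathrm{bs}}(\mathrm{Y})$ approximating $f_\infty$ in $L^p(\mathfrak{m}_\infty)$, set $f_{n,m}:=g_m$ for all $n$, and verify the definition using weak convergence of $\mathfrak{m}_n$ to $\mathfrak{m}_\infty$. One remark: the ``main obstacle'' you flag is not actually an obstacle, since $\eta\in\mathrm{C}_{\mathrm{b}}(\mathrm{Y})$ and $g_m\in\mathrm{Lip}_{\mathrm{bs}}(\mathrm{Y})$ are both continuous, so $|\eta g_m|^p$ is itself in $\mathrm{C}_{\mathrm{bs}}(\mathrm{Y})$ and the convergence $\int|\eta g_m|^p\,\mathrm{d}\mathfrak{m}_n\to\int|\eta g_m|^p\,\mathrm{d}\mathfrak{m}_\infty$ is immediate from the definition of weak convergence --- no Tietze extension or $3\epsilon$-argument is needed.
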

\begin{proof}
	Since $\mathrm{Lip}_{\mathrm{bs}}(\mathrm{Y})$ is dense in $L^p(\mathrm{Y},\mathfrak{m}_\infty)$, we can find a sequence $\{f_{\infty,m}\}_m\subset \mathrm{Lip}_{\mathrm{bs}}(\mathrm{Y})$ such that $\|f_{\infty,m}-f_\infty\|_{L^p(\mathrm{Y},\mathfrak{m}_\infty)}\rightarrow 0$ as $m \rightarrow \infty$. Then, we define $f_{n,m}=f_{\infty,m}$ for every $n\in \overline{\mathbb{N}}_+$ and $m\in \mathbb{N}_+$. Since $\{\mathfrak{m}_n\}_{n}$ weakly converges to $\mathfrak{m}_\infty$, it is easy to check that $\{\eta f_{n,m}\}_{n,m}$ is an $L^p$-approximate sequence of $\eta f_\infty$ for any $\eta\in \mathrm{C}_{\mathrm{b}}(\mathrm{Y})$ and in particular, $\{f_{n,m}\}_{n,m}$ is an $L^p$-approximate sequence of $f_\infty$.
\end{proof}
We say that $\{f_{n,m}\}$ is a canonical $L^p$-approximate sequence of $f_\infty$ if it is an $L^p$-approximate sequence of $f_\infty$ and moreover for every $n\in \overline{\mathbb{N}}_+$ and $m\in \mathbb{N}_+$, $f_{n,m}=f_{\infty,m}\in \mathrm{Lip}_{\mathrm{bs}}(\mathrm{Y})$.
\begin{proposition}\label{alrigprop2}
	Let $f_n ,g_n\in L^p(\mathrm{Y},\mathfrak{m}_n)$, $n\in \overline{\mathbb{N}}_+$. Assume that 
	\begin{equation}\label{alrigeq1}
		\limsup_{m \rightarrow \infty} \limsup _{n \rightarrow \infty}\|f_n-f_{n, m}\|_{L^p(\mathrm{Y},\mathfrak{m}_n)} =0,
	\end{equation} 
	for some $($and thus any$)$ $L^p$-approximate sequence $\{f_{n,m}\}$ of $f_\infty$ and that $\{g_n\}_n$ $L^q$-weakly converges to $g_\infty$, then
	$$
	\lim _{n \rightarrow \infty} \int f_n g_n \mathrm{d} \mathfrak{m}_n=\int f_\infty g_\infty \mathrm{d} \mathfrak{m}_\infty.
	$$
\end{proposition}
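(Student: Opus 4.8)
The strategy is the classical ``strong times weak passes to the limit'' argument: approximate $f_n$ by functions which, for each fixed auxiliary index, are a single fixed function independent of $n$ and continuous with bounded support, and then exploit the weak convergence $\mathfrak{m}_n\rightharpoonup\mathfrak{m}_\infty$ together with the $L^q$-weak convergence of $\{g_n\}_n$.

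First I would fix a \emph{canonical} $L^p$-approximate sequence $\{f_{n,m}\}$ of $f_\infty$, as provided by Proposition $\ref{alrigprop6}$, so that $f_{n,m}=f_{\infty,m}\in\mathrm{Lip}_{\mathrm{bs}}(\mathrm{Y})$ for all $n\in\overline{\mathbb{N}}_+$ and $m\in\mathbb{N}_+$, with $\|f_{\infty,m}-f_\infty\|_{L^p(\mathrm{Y},\mathfrak{m}_\infty)}\to 0$. By Proposition $\ref{alrigprop1}$, the hypothesis $(\ref{alrigeq1})$, which is assumed for some $L^p$-approximate sequence of $f_\infty$, then holds in particular for this canonical one. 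Moreover, since $\{g_n\}_n$ $L^q$-weakly converges, we have $C:=\sup_{n\in\mathbb{N}_+}\|g_n\|_{L^q(\mathrm{Y},\mathfrak{m}_n)}<\infty$, and Proposition $\ref{alrigprop5}$ applied to $\{g_n\}_n$ gives $g_\infty\in L^q(\mathrm{Y},\mathfrak{m}_\infty)$.

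Next I would split, for each $m,n$,
\begin{align*}
\left|\int f_n g_n\,\mathrm{d}\mathfrak{m}_n-\int f_\infty g_\infty\,\mathrm{d}\mathfrak{m}_\infty\right|
&\le \left|\int (f_n-f_{\infty,m})g_n\,\mathrm{d}\mathfrak{m}_n\right|
+\left|\int f_{\infty,m}g_n\,\mathrm{d}\mathfrak{m}_n-\int f_{\infty,m}g_\infty\,\mathrm{d}\mathfrak{m}_\infty\right|\\
&\quad+\left|\int (f_{\infty,m}-f_\infty)g_\infty\,\mathrm{d}\mathfrak{m}_\infty\right|.
\end{align*}
By Hölder's inequality the first term is $\le C\,\|f_n-f_{\infty,m}\|_{L^p(\mathrm{Y},\mathfrak{m}_n)}$, while the third is $\le \|g_\infty\|_{L^q(\mathrm{Y},\mathfrak{m}_\infty)}\,\|f_{\infty,m}-f_\infty\|_{L^p(\mathrm{Y},\mathfrak{m}_\infty)}$. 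The middle term tends to $0$ as $n\to\infty$ for each fixed $m$: indeed $f_{\infty,m}\in\mathrm{Lip}_{\mathrm{bs}}(\mathrm{Y})\subset\mathrm{C}_{\mathrm{bs}}(\mathrm{Y})$ is an admissible test function and $\{g_n\}_n$ converges to $g_\infty$ in the sense of $(\ref{1})$. Hence, taking $\limsup_{n\to\infty}$,
\begin{align*}
\limsup_{n\to\infty}\left|\int f_n g_n\,\mathrm{d}\mathfrak{m}_n-\int f_\infty g_\infty\,\mathrm{d}\mathfrak{m}_\infty\right|
&\le C\,\limsup_{n\to\infty}\|f_n-f_{\infty,m}\|_{L^p(\mathrm{Y},\mathfrak{m}_n)}\\
&\quad+\|g_\infty\|_{L^q(\mathrm{Y},\mathfrak{m}_\infty)}\,\|f_{\infty,m}-f_\infty\|_{L^p(\mathrm{Y},\mathfrak{m}_\infty)}
\end{align*}
for every $m$, and letting $m\to\infty$ the right-hand side vanishes by $(\ref{alrigeq1})$ and condition $(2)$ in the definition of an $L^p$-approximate sequence, which concludes.

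There is no serious obstacle here; the only points that require a little care are (a) invoking Proposition $\ref{alrigprop1}$ to transfer the hypothesis $(\ref{alrigeq1})$ to the canonical approximate sequence, which is what allows the approximating functions to be taken $n$-independent and continuous with bounded support (so that they can be used as test functions against the weakly convergent $\{g_n\}_n$), and (b) recording that $g_\infty\in L^q(\mathrm{Y},\mathfrak{m}_\infty)$ and $\sup_n\|g_n\|_{L^q(\mathrm{Y},\mathfrak{m}_n)}<\infty$, so that the two Hölder estimates are meaningful and the constant $C$ is finite.
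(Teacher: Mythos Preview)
Your proof is correct and follows essentially the same approach as the paper: pass to a canonical $L^p$-approximate sequence via Proposition~\ref{alrigprop1}, split into three terms, control the outer two by H\"older and the middle one by testing the $L^q$-weak convergence against $f_{\infty,m}\in\mathrm{C}_{\mathrm{bs}}(\mathrm{Y})$, then let $n\to\infty$ followed by $m\to\infty$. Your write-up is in fact slightly more careful in recording that $\sup_n\|g_n\|_{L^q}<\infty$ and $g_\infty\in L^q$, but the argument is the same.
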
 
\begin{proof}
	By Proposition $\ref{alrigprop1}$, we know that $(\ref{alrigeq1})$ holds for any $L^p$-approximate sequence $\{f_{n,m}\}$ of $f_\infty$ if if it holds for some such sequence. Now, let $\{f_{n,m}\}$ be a canonical $L^p$-approximate sequence of $f_\infty$. Since $\{g_n\}_n$ $L^q$-weakly converges to $g_\infty$, we have that for every $m$,
	$$
	\lim_{n \rightarrow \infty} \int f_{\infty,m} g_n \mathrm{d} \mathfrak{m}_n=\int f_{\infty,m} g_\infty \mathrm{d} \mathfrak{m}_\infty.
	$$ 
	On the other hand, the triangle inequality and the H\"{o}lder inequality yield
	\begin{align*}
		\bigg|	\int f_n g_n \mathrm{d} \mathfrak{m}_n-\int f_\infty g_\infty \mathrm{d} \mathfrak{m}_\infty\bigg|&\leq \|f_n-f_{n, m}\|_{L^p(\mathrm{Y},\mathfrak{m}_n)}\|g_n\|_{L^q(\mathrm{Y},\mathfrak{m}_n)}\\
		&\quad +\bigg|	\int f_{\infty,m} g_n  \mathrm{d} \mathfrak{m}_n-\int f_{\infty,m} g_\infty \mathrm{d} \mathfrak{m}_\infty\bigg|\\
		&\quad+  \|f_{\infty,m}-f_{\infty}\|_{L^p(\mathrm{Y},\mathfrak{m}_\infty)}\|g_\infty\|_{L^q(\mathrm{Y},\mathfrak{m}_\infty)}.
	\end{align*}
	By letting $n\rightarrow \infty$ and then letting $m\rightarrow \infty$, we obtain the desired result.
\end{proof}
\begin{proposition}\label{alrigprop3}
	Let $f_n \in L^p(\mathrm{Y},\mathfrak{m}_n)$, $n\in \overline{\mathbb{N}}_+$. Then the following are equivalent.\\
	$(1)$ $\{f_n\}_n$ $L^p$-strongly converges to $f_\infty$.\\
	$(2)$ For some $($and thus any$)$ $L^p$-approximate sequence $f_{n,m}$ of $f_\infty$, it holds that $$
	\limsup_{m \rightarrow \infty} \limsup _{n \rightarrow \infty}\|f_n-f_{n, m}\|_{L^p(\mathrm{Y},\mathfrak{m}_n)} =0.
	$$
	$(3)$ For every $\zeta \in \mathrm{C}(\mathrm{Y} \times \mathbb{R})$ with $|\zeta(y, r)| \leq \varphi(y)+C|r|^p$, $\varphi\in \mathrm{C}_{\mathrm{bs}}(\mathrm{Y})$, $C \geq 0$, we have that
	\begin{equation}\label{4}
		\int \zeta(y, f_n(y)) \mathrm{d} \mathfrak{m}_n(y) \rightarrow \int \zeta(y, f_\infty(y)) \mathrm{d} \mathfrak{m}_\infty(y).
	\end{equation}		
\end{proposition}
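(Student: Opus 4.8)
The plan is to prove the three conditions equivalent through the chain $(1)\Leftrightarrow(2)$ and $(1)\Rightarrow(3)\Rightarrow(2)$. By Proposition \ref{alrigprop1}, condition (2) for one $L^p$-approximate sequence is equivalent to condition (2) for all of them, so throughout I would work with the \emph{canonical} one, $f_{n,m}=f_{\infty,m}\in\mathrm{Lip}_{\mathrm{bs}}(\mathrm{Y})$ with $\|f_{\infty,m}-f_\infty\|_{L^p(\mathfrak{m}_\infty)}\to0$, provided by Proposition \ref{alrigprop6}. The point of this choice is that each $f_{\infty,m}$ is bounded with compact support and $|f_{\infty,m}|^p\in\mathrm{C}_{\mathrm{bs}}(\mathrm{Y})$, so the weak convergence $\mathfrak{m}_n\to\mathfrak{m}_\infty$ gives $\|f_{\infty,m}\|_{L^p(\mathfrak{m}_n)}\to\|f_{\infty,m}\|_{L^p(\mathfrak{m}_\infty)}$, and moreover $y\mapsto\zeta(y,f_{\infty,m}(y))\in\mathrm{C}_{\mathrm{bs}}(\mathrm{Y})$ for any $\zeta\in\mathrm{C}(\mathrm{Y}\times\mathbb{R})$ with compact $y$-support.

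For $(1)\Rightarrow(2)$ I would repeat the Clarkson-inequality argument of Proposition \ref{alrigprop1}, now comparing $f_n$ with the constant-in-$n$ family $f_{\infty,m}$ inside $L^p(\mathfrak{m}_n)$: letting $n\to\infty$, the term $\|f_n\|_{L^p(\mathfrak{m}_n)}^p$ tends to $\|f_\infty\|_{L^p(\mathfrak{m}_\infty)}^p$ by (1), the term $\|f_{\infty,m}\|_{L^p(\mathfrak{m}_n)}^p$ tends to $\|f_{\infty,m}\|_{L^p(\mathfrak{m}_\infty)}^p$ by the remark above, and the ``sum'' term $\|(f_n+f_{\infty,m})/2\|_{L^p(\mathfrak{m}_n)}^p$ is bounded below in the limit by Proposition \ref{alrigprop5}; this leaves $\limsup_n\|f_n-f_{\infty,m}\|_{L^p(\mathfrak{m}_n)}$ controlled by a quantity that vanishes as $m\to\infty$ because $f_{\infty,m}\to f_\infty$ in $L^p(\mathfrak{m}_\infty)$ (the cases $p\ge2$ and $1<p<2$ using the two Clarkson inequalities exactly as in Proposition \ref{alrigprop1}). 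For the converse $(2)\Rightarrow(1)$: the $L^p$-weak convergence of $f_n$ follows by splitting $\int\phi f_n\,\mathrm{d}\mathfrak{m}_n$ through $\int\phi f_{\infty,m}\,\mathrm{d}\mathfrak{m}_n$ for $\phi\in\mathrm{C}_{\mathrm{bs}}(\mathrm{Y})$ and estimating the errors by H\"older together with $\sup_n\mathfrak{m}_n(\operatorname{supp}\phi)<\infty$, and one has $\limsup_n\|f_n\|_{L^p(\mathfrak{m}_n)}\le\limsup_n\|f_n-f_{\infty,m}\|_{L^p(\mathfrak{m}_n)}+\|f_{\infty,m}\|_{L^p(\mathfrak{m}_\infty)}\to\|f_\infty\|_{L^p(\mathfrak{m}_\infty)}$ as $m\to\infty$, which combined with Proposition \ref{alrigprop5} upgrades weak to strong convergence.

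The implication $(3)\Rightarrow(2)$ is then immediate: apply (3) to $\zeta(y,r)=|r-f_{\infty,m}(y)|^p$, admissible since $|\zeta(y,r)|\le2^{p-1}|r|^p+2^{p-1}|f_{\infty,m}(y)|^p$ with $2^{p-1}|f_{\infty,m}|^p\in\mathrm{C}_{\mathrm{bs}}(\mathrm{Y})$, to get $\int|f_n-f_{\infty,m}|^p\,\mathrm{d}\mathfrak{m}_n\to\|f_\infty-f_{\infty,m}\|_{L^p(\mathfrak{m}_\infty)}^p\to0$. The real work is $(1)\Rightarrow(3)$, which I expect to be the main obstacle. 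The plan there is: first, using $(1)\Leftrightarrow(2)$ established above, extract a uniform-integrability-and-tightness estimate — since $f_{\infty,m}$ is bounded and compactly supported and $\limsup_m\limsup_n\|f_n-f_{\infty,m}\|_{L^p(\mathfrak{m}_n)}=0$, one finds $\lim_{R\to\infty}\limsup_n\int_{\{|f_n|>R\}}(\varphi+|f_n|^p)\,\mathrm{d}\mathfrak{m}_n=0$ and $\lim_{j\to\infty}\limsup_n\int(1-\eta_j)(\varphi+|f_n|^p)\,\mathrm{d}\mathfrak{m}_n=0$ for cutoffs $\eta_j\uparrow1$ in $\mathrm{C}_{\mathrm{bs}}(\mathrm{Y})$ (on $\{|f_n|>R\}$ one has $|f_n|\le2|f_n-f_{\infty,m}|$ once $R>2\|f_{\infty,m}\|_\infty$, and off a neighbourhood of $\operatorname{supp}f_{\infty,m}$ one has $|f_n|=|f_n-f_{\infty,m}|$, while $\mathfrak{m}_n(\{|f_n|>R\})\le CR^{-p}$ handles the $\varphi$-terms). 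Then, for fixed $R,j$, decompose $\zeta(y,f_n(y))$ by truncating $r\mapsto T_Rr$ and localizing by $\eta_j$: the two tails are absorbed by the above estimates, using $|\zeta(y,T_Rr)|\le\varphi(y)+C|r|^p$; on the compact cylinder $(\operatorname{supp}\eta_j)\times[-R,R]$ the function $\zeta$ is uniformly continuous, so $\int\eta_j\zeta(y,T_Rf_n)\,\mathrm{d}\mathfrak{m}_n$ is close to $\int\eta_j\zeta(y,T_Rf_{\infty,m})\,\mathrm{d}\mathfrak{m}_n$ up to an error controlled by Chebyshev through $\mathfrak{m}_n(\{|f_n-f_{\infty,m}|\ge\delta\})\le\delta^{-p}\|f_n-f_{\infty,m}\|_{L^p(\mathfrak{m}_n)}^p$, and the latter integral passes to the $\mathfrak{m}_\infty$-limit as $n\to\infty$ because its integrand lies in $\mathrm{C}_{\mathrm{bs}}(\mathrm{Y})$. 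One concludes by sending $n\to\infty$, then $m\to\infty$, then $j\to\infty$, then $R\to\infty$, the last three limits on the $\mathfrak{m}_\infty$-side being dominated-convergence arguments with majorant $\varphi+C|f_\infty|^p\in L^1(\mathfrak{m}_\infty)$. The hard part is precisely this bookkeeping: one must simultaneously control $\zeta$ for large $|r|$ (where the assumed $|r|^p$-growth is critical, so a naive truncation to a constant is not affordable) and for $y$ outside compacta (where $\mathfrak{m}_n$ need not be finite), and the uniform-integrability-and-tightness step extracted from the approximate sequence is exactly what makes this controllable.
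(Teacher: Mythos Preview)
Your proof is correct. The equivalence $(1)\Leftrightarrow(2)$ matches the paper's (same Clarkson argument for $(1)\Rightarrow(2)$, same triangle-inequality reduction for $(2)\Rightarrow(1)$), and your $(3)\Rightarrow(2)$ via $\zeta(y,r)=|r-f_{\infty,m}(y)|^p$ is a harmless variant of the paper's $(3)\Rightarrow(1)$ via $\zeta(y,r)=\phi(y)r$ and $\zeta(y,r)=|r|^p$. The real difference is $(1)\Rightarrow(3)$. The paper does not do the bookkeeping you sketch: instead it first proves an auxiliary lemma (that $L^p$-strong convergence is preserved under multiplication by any $\eta\in\mathrm{C}_{\mathrm{b}}(\mathrm{Y})$, hence $\int\phi|f_n|^p\,\mathrm{d}\mathfrak{m}_n\to\int\phi|f_\infty|^p\,\mathrm{d}\mathfrak{m}_\infty$ for $\phi\in\mathrm{C}_{\mathrm{b}}$), then localizes by setting $\tilde{\mathfrak{m}}_n:=\eta\,\mathfrak{m}_n$ for $0\le\eta\in\mathrm{Lip}_{\mathrm{bs}}(\mathrm{Y})$ to reduce to \emph{finite} measures, invokes \cite[Theorem~5.4.4]{AGS08} as a black box for that case, and finally handles the spatial tail by the subtraction trick $\limsup_n\int_{B_R^c}|f_n|^p\le\int|f_\infty|^p-\liminf_n\int_{B_R}|f_n|^p\le\int_{B_R^c}|f_\infty|^p$. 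Your approach buys self-containedness: the uniform-integrability and tightness extracted from the canonical approximate sequence replace both the external reference and the multiplication lemma, at the cost of the four-limit bookkeeping you describe. The paper's approach buys brevity and yields a reusable side result (the $\mathrm{C}_{\mathrm{b}}$-stability lemma).
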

\begin{proof}
	Since the proof of (1) $\Rightarrow$ (2) is very similar to the proof of Proposition $\ref{alrigprop1}$, we omit it. 
	
	(2) $\Rightarrow$ (1): Let $\{f_{n, m}\}_{n, m}$ be an $L^p$-approximate sequence of $f_\infty$. Then we have that
	\begin{align*}
		\limsup_{n \rightarrow \infty}\|f_n\|_{L^p(\mathrm{Y},\mathfrak{m}_n)}&\leq \limsup_{m \rightarrow \infty}\limsup _{n \rightarrow \infty}\|f_n-f_{n, m}\|_{L^p(\mathrm{Y},\mathfrak{m}_n)}+\limsup_{m \rightarrow \infty}\limsup_{n \rightarrow \infty}\|f_{n, m}\|_{L^p(\mathrm{Y},\mathfrak{m}_n)}\\
		&\leq \limsup_{m \rightarrow \infty}\|f_{\infty, m}\|_{L^p(\mathrm{Y},\mathfrak{m}_\infty)}=\|f_{\infty}\|_{L^p(\mathrm{Y},\mathfrak{m}_\infty)}.
	\end{align*}
	On the other hand, by Proposition $\ref{alrigprop2}$, we have that
	\begin{align*}
		\lim_{n \rightarrow \infty}\int f_n\phi \mathrm{d}\mathfrak{m}_n=\int f_\infty\phi \mathrm{d}\mathfrak{m}_\infty,
	\end{align*}
	for every $\phi \in \mathrm{C}_{\mathrm{bs}}(\mathrm{Y})$.
	
	(3) $\Rightarrow$ (1): Fix $\phi \in \mathrm{C}_{\mathrm{bs}}(\mathrm{Y})$. Then (1) follows by taking $\zeta(y,r)=\phi(y) r$ and $\zeta(y,r)=|r|^p$ in $(\ref{4})$, respectively.
	
	In order to prove (1) $\Rightarrow$ (3), we need the following Lemma $\ref{alriglem1}$.
\end{proof}

\begin{lemma}\label{alriglem1}
	Let $f_n\in L^p(\mathrm{Y},\mathfrak{m}_n)$, $n\in \overline{\mathbb{N}}_+$. If $\{f_n\}_n$ $L^p$-strongly converges to $f_\infty$, then for every $\eta\in \mathrm{C}_{\mathrm{b}}(\mathrm{Y})$, $\{\eta f_n\}_n$ $L^p$-strongly converges to $\eta f_\infty$. In particular, for any lower semi-continuous function $g:\mathrm{Y}\rightarrow [0,\infty)$ and $\phi\in \mathrm{C}_{\mathrm{b}}(\mathrm{Y})$ with $\phi\geq 0$, it holds that
	$$
	\int \phi|f_\infty|^p\mathrm{d}\mathfrak{m}_\infty= \lim_{n\rightarrow \infty} \int \phi|f_n|^p\mathrm{d}\mathfrak{m}_n\text{ and }\int g|f_\infty|^p\mathrm{d}\mathfrak{m}_\infty\leq \liminf_{n\rightarrow \infty} \int g|f_n|^p\mathrm{d}\mathfrak{m}_n.
	$$
\end{lemma}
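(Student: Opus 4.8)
The plan is to derive the stability of $L^p$-strong convergence under multiplication by a bounded continuous function from the characterization of strong convergence via approximate sequences, and then to read off the two ``in particular'' assertions as easy consequences.

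First I would prove that $\{\eta f_n\}_n$ $L^p$-strongly converges to $\eta f_\infty$ for $\eta\in\mathrm{C}_{\mathrm{b}}(\mathrm{Y})$ (note $\eta f_\infty\in L^p(\mathrm{Y},\mathfrak{m}_\infty)$ since $\eta$ is bounded). Since $\{f_n\}_n$ $L^p$-strongly converges to $f_\infty$, the implication (1) $\Rightarrow$ (2) of Proposition \ref{alrigprop3} — together with Proposition \ref{alrigprop1}, which makes the condition in (2) independent of the chosen approximate sequence — gives
$$
\limsup_{m\rightarrow\infty}\limsup_{n\rightarrow\infty}\|f_n-f_{n,m}\|_{L^p(\mathrm{Y},\mathfrak{m}_n)}=0,
$$
where $\{f_{n,m}\}$ denotes the canonical $L^p$-approximate sequence of $f_\infty$ from Proposition \ref{alrigprop6}, so that $f_{n,m}=f_{\infty,m}\in\mathrm{Lip}_{\mathrm{bs}}(\mathrm{Y})$. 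The ``moreover'' part of Proposition \ref{alrigprop6} says $\{\eta f_{n,m}\}$ is an $L^p$-approximate sequence of $\eta f_\infty$, and from
$$
\|\eta f_n-\eta f_{n,m}\|_{L^p(\mathrm{Y},\mathfrak{m}_n)}\le\|\eta\|_{L^\infty}\,\|f_n-f_{n,m}\|_{L^p(\mathrm{Y},\mathfrak{m}_n)}
$$
the corresponding double $\limsup$ for $\{\eta f_n\}$ against $\{\eta f_{n,m}\}$ vanishes too, so the implication (2) $\Rightarrow$ (1) of Proposition \ref{alrigprop3} yields that $\{\eta f_n\}_n$ $L^p$-strongly converges to $\eta f_\infty$.

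Next I would deduce the two displayed consequences. For the equality, given $\phi\in\mathrm{C}_{\mathrm{b}}(\mathrm{Y})$ with $\phi\ge0$, I would put $\psi:=\phi^{1/p}\in\mathrm{C}_{\mathrm{b}}(\mathrm{Y})$; by the first part $\{\psi f_n\}_n$ $L^p$-strongly converges to $\psi f_\infty$, so $\limsup_n\int|\psi f_n|^p\,\mathrm{d}\mathfrak{m}_n\le\int|\psi f_\infty|^p\,\mathrm{d}\mathfrak{m}_\infty$ by definition, while $\int|\psi f_\infty|^p\,\mathrm{d}\mathfrak{m}_\infty\le\liminf_n\int|\psi f_n|^p\,\mathrm{d}\mathfrak{m}_n$ by Proposition \ref{alrigprop5}; the two bounds force $\int\phi|f_n|^p\,\mathrm{d}\mathfrak{m}_n\rightarrow\int\phi|f_\infty|^p\,\mathrm{d}\mathfrak{m}_\infty$. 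For the inequality, I would write the nonnegative lower semi-continuous function $g$ as an increasing pointwise limit $g=\sup_k\phi_k$ of nonnegative bounded continuous functions (a standard fact on metric spaces, e.g.\ $\phi_k(x):=\inf_{y\in\mathrm{Y}}\big((g(y)\wedge k)+k\,\mathrm{d}(x,y)\big)$); applying the equality just proved to each $\phi_k$ and using $\phi_k\le g$,
$$
\int\phi_k|f_\infty|^p\,\mathrm{d}\mathfrak{m}_\infty=\lim_{n\rightarrow\infty}\int\phi_k|f_n|^p\,\mathrm{d}\mathfrak{m}_n\le\liminf_{n\rightarrow\infty}\int g|f_n|^p\,\mathrm{d}\mathfrak{m}_n,
$$
and letting $k\rightarrow\infty$ by the monotone convergence theorem on the left-hand side gives $\int g|f_\infty|^p\,\mathrm{d}\mathfrak{m}_\infty\le\liminf_{n\rightarrow\infty}\int g|f_n|^p\,\mathrm{d}\mathfrak{m}_n$.

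I expect the only delicate point to be the first step, and it is a bookkeeping matter rather than a genuine difficulty: one must use only the equivalence (1) $\Leftrightarrow$ (2) of Proposition \ref{alrigprop3} — which is all that is needed — so as to avoid circularity with the implication (1) $\Rightarrow$ (3) that this lemma is designed to support, and one must invoke precisely the ``moreover'' clause of Proposition \ref{alrigprop6} for the fact that multiplication by $\eta\in\mathrm{C}_{\mathrm{b}}(\mathrm{Y})$ preserves the approximate-sequence structure. Everything else is routine measure theory.
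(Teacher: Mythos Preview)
Your proof is correct and follows essentially the same route as the paper: the paper's argument for the first part is precisely ``Proposition \ref{alrigprop6} plus the equivalence (1) $\Leftrightarrow$ (2) of Proposition \ref{alrigprop3}'', and for the second part it invokes the first statement together with the representation of a nonnegative lower semi-continuous function as a supremum of bounded continuous functions. Your write-up is more explicit in two places---the $\psi=\phi^{1/p}$ trick to extract the equality from strong convergence of $\psi f_n$, and the careful remark about avoiding circularity with (1) $\Rightarrow$ (3)---but these are elaborations of the same strategy rather than a different approach.
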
 
\begin{proof}
	The first statement follows from Proposition $\ref{alrigprop6}$ and the equivalence of (1) and (2). And the second statement follows from the first statement and the fact that every non-negative lower semi-continuous function can be represented as the supremum of bounded continuous functions with bounded support.
\end{proof} 

Now, we proceed with the proof of (1) $\Rightarrow$ (3) in Proposition $\ref{alrigprop3}$:
\begin{proof}
	By \cite[Theorem 5.4.4]{AGS08}, we know that the conclusion holds whenever $\mathfrak{m}_n\in \mathscr{P}(\mathrm{Y})$ for every $n\in \overline{\mathbb{N}}_+$. Observe that for any $\mathfrak{n}\in \mathcal{M}_{\mathrm{loc}}(\mathrm{Y})$ and any Borel function $f:\mathrm{Y}\rightarrow \mathbb{R}$, we have that $(\mathrm{Id}\times f)_\sharp \mathfrak{n}\in \mathcal{M}_{\mathrm{loc}}(\mathrm{Y}\times \mathbb{R})$. 
	
	We fix a non-negative function $\eta\in \mathrm{Lip}_{\mathrm{bs}}(\mathrm{Y})$ and set $\tilde{\mathfrak{m}}_n=\eta {\mathfrak{m}}_n$ for every $n\in \overline{\mathbb{N}}_+$. Clearly, $\{\tilde{\mathfrak{m}}_n\}_n$ weakly converges to $\tilde{\mathfrak{m}}_\infty$ and for every $\phi \in \mathrm{C}_{\mathrm{bs}}(\mathrm{Y})$, we have that
	$$
	\int f_n \phi \mathrm{d}\tilde{\mathfrak{m}}_n=\int f_n \phi \eta\mathrm{d}\mathfrak{m}_n\rightarrow \int f_\infty \phi \eta\mathrm{d}\mathfrak{m}_\infty=\int f_\infty \phi \mathrm{d}\tilde{\mathfrak{m}}_\infty.
	$$ 
	By Lemma $\ref{alriglem1}$, we know that $\{f_n\eta^{1/p}\}_n$ $L^p$-strongly converges to $f_\infty\eta^{1/p}$. Hence,  	
	$$
	\limsup_{n\rightarrow \infty}\int|f_n|^p\mathrm{d} \tilde{\mathfrak{m}}_n=\limsup_{n\rightarrow \infty}\int|f_n\eta^{1/p}|^p \mathrm{d}{\mathfrak{m}}_n\leq \int|f_\infty\eta^{1/p}|^p {\mathfrak{m}}_\infty=\int|f_\infty|^p\mathrm{d} \tilde{\mathfrak{m}}_\infty.
	$$ 	
	Therefore, we obtain that $\{f_n\}_n$ $L^p$-strongly converges to $f_\infty$ with respect to the measures $\{\tilde{\mathfrak{m}}_n\}_{n\in \overline{\mathbb{N}}_+}$. By \cite[Theorem 5.4.4]{AGS08}, we have that
	\begin{equation}\label{alrigeq2}
		\int \zeta(y, f_n(y))\eta(y) \mathrm{d} {\mathfrak{m}}_n(y) \rightarrow \int \zeta(y, f_\infty(y))\eta(y)  \mathrm{d} {\mathfrak{m}}_\infty(y),
	\end{equation}	
	for every $\zeta \in \mathrm{C}(\mathrm{Y} \times \mathbb{R})$ with $|\zeta(y, r)| \leq \varphi(y)+C|r|^p$, $\varphi\in \mathrm{C}_{\mathrm{bs}}(\mathrm{Y})$, $C \geq 0$
	
	Finally, we fix a function $\zeta \in \mathrm{C}(\mathrm{Y} \times \mathbb{R})$ with $|\zeta(y, r)| \leq \varphi(y)+C|r|^p$, $\varphi\in \mathrm{C}_{\mathrm{bs}}(\mathrm{Y})$, $C \geq 0$. Without loss of generality, we can assume that $\zeta$ is non-negative. To prove $(\ref{4})$, since $(\ref{alrigeq2})$ holds for every non-negative function $\eta\in \mathrm{Lip}_{\mathrm{bs}}(\mathrm{Y})$, it suffices to show that for every $R>0$,
	$$
	\lim_{R\rightarrow\infty}    \limsup_{n\rightarrow\infty}\int_{B_R(\bar{y}_\infty)^c} \zeta(y, f_n(y))\mathrm{d} {\mathfrak{m}}_n(y)=0.
	$$	
	Indeed, let $\bar{y}\in \mathrm{Y}$ and let $R>0$ be sufficiently large such that $\mathrm{supp}(\varphi)\subset B_R(\bar{y})$. Then by Lemma $\ref{alriglem1}$ again,
	\begin{align*}
		\limsup_{n\rightarrow\infty}\int_{B_R(\bar{y})^c} \zeta(y, f_n(y))\mathrm{d} {\mathfrak{m}}_n(y)&\leq C\limsup_{n\rightarrow\infty}\int_{B_R(\bar{y})^c} |f_n(y)|^p\mathrm{d} {\mathfrak{m}}_n(y)\\
		&\leq C\int_{B_R(\bar{y})^c} |f_\infty|^p\mathrm{d} {\mathfrak{m}}_\infty\rightarrow 0,\text{ as }R\rightarrow \infty.
	\end{align*}  	
\end{proof}
In the following proposition, we collect a list of useful properties of $L^p$-convergence, see also \cite[Proposition 2.18]{NV22}.
\begin{proposition}[Properties of $L^p$-convergence]\label{2.18}
	$(\mathrm{i})$ If $\{f_n\}$ $L^p$-strongly converges to $f_\infty$, then $\{\varphi\circ f_n\}$ $L^p$-strongly converges to $\varphi\circ f_\infty$ for every $\varphi \in \operatorname{Lip}(\mathbb{R})$ with $\varphi(0)=0$.\\
	$(\mathrm{ii})$ If $\{f_n\}$ $($resp. $\{g_n\})$ $L^p$-strongly converges to $f_\infty$ $($resp. $g_\infty)$ and $\alpha\in \mathbb{R}$, then $\{\alpha f_n+g_n\}_n$ $L^p$-strongly converges to $f_\infty+g_\infty$.\\
	$(\mathrm{iii})$ If $f_{\infty} \in L^p(\mathrm{Y},\mathfrak{m}_{\infty})$, then there exists a sequence $n\mapsto f_{n}\in L^p(\mathrm{Y},\mathfrak{m}_{n})$ $L^p$-strongly converging to $f_{\infty}$.\\
	$(\mathrm{iv})$ If $\sup_n\|f_n\|_{L^p(\mathrm{Y},\mathfrak{m}_n)}<\infty$, then, up to a subsequence, $\{f_n\}$ $L^p$-weakly converges to some $f_{\infty} \in L^p(\mathrm{Y},\mathfrak{m}_{\infty})$.\\
	$(\mathrm{v})$ If $\{f_n\}$ $L^p$-strongly converges to $f_{\infty}$ and for each $n$, $f_n\geq 0$, then for every $r \in(1, \infty)$, $\{f_n^{p / r}\}$ $L^r$-strongly converges to $f_{\infty}^{p / r}$.\\
	$(\mathrm{vi})$ Let $r,s \in(1, \infty]$ be such that $r<s$. If $\{f_n\}$ is uniformly bounded in $L^s$ and $L^r$-strongly converges to $f_{\infty}$, then it $L^t$-strongly converges also to $f_{\infty}$ for every $t \in[r, s)$.	
\end{proposition}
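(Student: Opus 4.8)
The plan is to read off all six statements from the tools already assembled in this subsection: the existence of canonical $L^p$-approximate sequences (Proposition~\ref{alrigprop6}), the approximate-sequence criterion for $L^p$-strong convergence (the equivalence $(1)\Leftrightarrow(2)$ in Proposition~\ref{alrigprop3}, together with Proposition~\ref{alrigprop1}), the nonlinear test-function criterion (Proposition~\ref{alrigprop3}$(3)$), Lemma~\ref{alriglem1}, and lower semicontinuity (Proposition~\ref{alrigprop5}); a fact used throughout is that $\limsup_{n}\mathfrak{m}_n(B)<\infty$ for every bounded open $B\subset\mathrm{Y}$, obtained by testing $\mathfrak{m}_n\to\mathfrak{m}_\infty$ against a cutoff $\psi\in\mathrm{C}_{\mathrm{bs}}(\mathrm{Y})$ with $\chi_B\le\psi$. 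Items $(\mathrm{i})$ and $(\mathrm{v})$ follow from Proposition~\ref{alrigprop3}$(3)$ by composing test nonlinearities: for $(\mathrm{i})$, if $\zeta\in\mathrm{C}(\mathrm{Y}\times\mathbb{R})$ satisfies $|\zeta(y,w)|\le\psi(y)+C|w|^p$, then $\tilde\zeta(y,w):=\zeta(y,\varphi(w))$ is continuous and, since $\varphi(0)=0$ gives $|\varphi(w)|\le\mathrm{Lip}(\varphi)|w|$, obeys the same bound with $C\,\mathrm{Lip}(\varphi)^p$ in place of $C$; applying Proposition~\ref{alrigprop3}$(3)$ for $\{f_n\}$ to $\tilde\zeta$ yields exactly the test-function convergence for $\{\varphi\circ f_n\}$, and $\varphi\circ f_\infty\in L^p(\mathrm{Y},\mathfrak{m}_\infty)$ by the same Lipschitz bound, so $(3)\Rightarrow(1)$ gives $(\mathrm{i})$. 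Item $(\mathrm{v})$ is identical with $w\mapsto|w|^{p/r}$ in place of $\varphi$, since a test nonlinearity of $r$-growth for $\{f_n^{p/r}\}$ becomes one of $p$-growth for $\{f_n\}$; here $f_\infty\ge0$ $\mathfrak{m}_\infty$-a.e.\ because $f_n\ge0$ and $\{f_n\}$ $L^p$-weakly converges.

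For $(\mathrm{ii})$ the natural test function would depend on two inner variables, so I would use the approximate-sequence criterion instead: taking canonical $L^p$-approximate sequences $f_{n,m}=f_{\infty,m}$ and $g_{n,m}=g_{\infty,m}$ in $\mathrm{Lip}_{\mathrm{bs}}(\mathrm{Y})$ (Proposition~\ref{alrigprop6}), $\{\alpha f_{n,m}+g_{n,m}\}$ is again an $L^p$-approximate sequence, now of $\alpha f_\infty+g_\infty$ (the intended conclusion of $(\mathrm{ii})$), and the triangle inequality gives $\limsup_m\limsup_n\|(\alpha f_n+g_n)-(\alpha f_{n,m}+g_{n,m})\|_{L^p(\mathrm{Y},\mathfrak{m}_n)}=0$ out of the corresponding vanishing for $\{f_n\}$ and $\{g_n\}$ (Proposition~\ref{alrigprop3}, $(1)\Rightarrow(2)$); Proposition~\ref{alrigprop3}, $(2)\Rightarrow(1)$, then concludes. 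Item $(\mathrm{iii})$ is a diagonalization: each constant sequence $n\mapsto f_{\infty,m}$ $L^p$-strongly converges to $f_{\infty,m}$ (because $|f_{\infty,m}|^p$ and $\phi f_{\infty,m}$ lie in $\mathrm{C}_{\mathrm{bs}}(\mathrm{Y})$ and $\mathfrak{m}_n\to\mathfrak{m}_\infty$), so choosing $k(n)\to\infty$ slowly enough that $\limsup_n\|f_{\infty,k(n)}-f_{\infty,m}\|_{L^p(\mathrm{Y},\mathfrak{m}_n)}\le\|f_\infty-f_{\infty,m}\|_{L^p(\mathrm{Y},\mathfrak{m}_\infty)}$ for every $m$, the sequence $f_n:=f_{\infty,k(n)}$ satisfies the criterion of Proposition~\ref{alrigprop3}$(2)$. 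Item $(\mathrm{iv})$ is standard $L^p$-weak compactness for varying measures: fixing a countable $\{\phi_k\}\subset\mathrm{C}_{\mathrm{bs}}(\mathrm{Y})$ dense in $L^q(\mathrm{Y},\mathfrak{m}_\infty)$, a diagonal extraction makes $\int\phi_k f_n\,\mathrm{d}\mathfrak{m}_n$ converge for every $k$; the bounds $|\int\phi_k f_n\,\mathrm{d}\mathfrak{m}_n|\le(\sup_n\|f_n\|_{L^p(\mathrm{Y},\mathfrak{m}_n)})\,\|\phi_k\|_{L^q(\mathrm{Y},\mathfrak{m}_n)}$ and $\|\phi_k\|_{L^q(\mathrm{Y},\mathfrak{m}_n)}\to\|\phi_k\|_{L^q(\mathrm{Y},\mathfrak{m}_\infty)}$ identify the limiting functional with $h\mapsto\int h\,f_\infty\,\mathrm{d}\mathfrak{m}_\infty$ for some $f_\infty\in L^p(\mathrm{Y},\mathfrak{m}_\infty)$ via Riesz duality (legitimate as $q\in(1,\infty)$), and the uniform local bounds on $\mathfrak{m}_n$ allow one to pass from $\{\phi_k\}$ to all of $\mathrm{C}_{\mathrm{bs}}(\mathrm{Y})$.

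The one point requiring more than bookkeeping is $(\mathrm{vi})$. Fix $t\in(r,s)$ and $K\ge1$ and split $|f_n|^t=(|f_n|^t\wedge K)+(|f_n|^t-K)^+$. The map $w\mapsto|w|^t\wedge K$ is continuous and satisfies $|w|^t\wedge K\le K|w|^r$ for all $w$ (treating $|w|\le1$ and $|w|\ge1$ separately), hence is an admissible test nonlinearity for the exponent-$r$ form of Proposition~\ref{alrigprop3}$(3)$ applied to the $L^r$-strongly convergent $\{f_n\}$, which gives $\int(|f_n|^t\wedge K)\,\mathrm{d}\mathfrak{m}_n\to\int(|f_\infty|^t\wedge K)\,\mathrm{d}\mathfrak{m}_\infty\le\int|f_\infty|^t\,\mathrm{d}\mathfrak{m}_\infty$; on $\{|f_n|>K^{1/t}\}$ one has $|f_n|^t\le K^{1-s/t}|f_n|^s$, so $\int(|f_n|^t-K)^+\,\mathrm{d}\mathfrak{m}_n\le K^{1-s/t}\sup_n\|f_n\|_{L^s(\mathrm{Y},\mathfrak{m}_n)}^s$, which tends to $0$ as $K\to\infty$ since $t<s$ (the tail being empty for large $K$ when $s=\infty$). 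Hence $\limsup_n\int|f_n|^t\,\mathrm{d}\mathfrak{m}_n\le\int|f_\infty|^t\,\mathrm{d}\mathfrak{m}_\infty$; combined with the $L^t$-weak convergence of $\{f_n\}$---which follows from the $L^r$-strong convergence together with the interpolation $\|f_n\|_{L^t(\mathrm{Y},\mathfrak{m}_n)}\le\|f_n\|_{L^r(\mathrm{Y},\mathfrak{m}_n)}^{\theta}\|f_n\|_{L^s(\mathrm{Y},\mathfrak{m}_n)}^{1-\theta}$ (with $\tfrac1t=\tfrac{\theta}{r}+\tfrac{1-\theta}{s}$) and $\limsup_n\|f_n\|_{L^r(\mathrm{Y},\mathfrak{m}_n)}\le\|f_\infty\|_{L^r(\mathrm{Y},\mathfrak{m}_\infty)}$ (Proposition~\ref{alrigprop5} and $(\ref{3})$)---this yields $L^t$-strong convergence, so $(\mathrm{vi})$ holds. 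The main obstacle is thus recognising the right truncation in $(\mathrm{vi})$ and, more mildly, the functional-analytic input in $(\mathrm{iv})$; the remaining items are formal once Propositions~\ref{alrigprop1}, \ref{alrigprop3}, \ref{alrigprop5}, \ref{alrigprop6} and Lemma~\ref{alriglem1} are available.
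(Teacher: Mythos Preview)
The paper does not prove this proposition at all; it simply records the statements and refers the reader to \cite[Proposition~2.18]{NV22}. Your proposal therefore goes well beyond the paper by supplying a self-contained argument built entirely from the machinery assembled in this subsection (Propositions~\ref{alrigprop1}, \ref{alrigprop3}, \ref{alrigprop5}, \ref{alrigprop6} and Lemma~\ref{alriglem1}).

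Your treatment of $(\mathrm{i})$, $(\mathrm{ii})$, $(\mathrm{iii})$, $(\mathrm{v})$ and $(\mathrm{vi})$ is correct. The use of the nonlinear test-function criterion Proposition~\ref{alrigprop3}(3) for $(\mathrm{i})$ and $(\mathrm{v})$ via composition, the approximate-sequence criterion for $(\mathrm{ii})$, and the truncation $|f_n|^t=(|f_n|^t\wedge K)+(|f_n|^t-K)^+$ in $(\mathrm{vi})$ are all clean and efficient; you also correctly spot the misprint in the stated conclusion of $(\mathrm{ii})$ (the limit should be $\alpha f_\infty+g_\infty$). The only place that needs a further word is $(\mathrm{iv})$: after extracting along a countable $\{\phi_k\}\subset\mathrm{C}_{\mathrm{bs}}(\mathrm{Y})$ dense in $L^q(\mathrm{Y},\mathfrak{m}_\infty)$ and identifying $f_\infty$ via Riesz, the passage to a general $\phi\in\mathrm{C}_{\mathrm{bs}}(\mathrm{Y})$ requires $\limsup_n\|\phi-\phi_{k_j}\|_{L^q(\mathrm{Y},\mathfrak{m}_n)}\to 0$, which $L^q(\mathfrak{m}_\infty)$-density alone does not give. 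The fix is either to choose the $\phi_k$ so that each $\phi\in\mathrm{C}_{\mathrm{bs}}(\mathrm{Y})$ is approximated \emph{uniformly} with supports contained in a common bounded set---legitimate once $(\mathrm{Y},\mathrm{d})$ is proper, as it is in all the paper's applications (Assumptions~\ref{alrigassu1} and~\ref{alrigassu})---so that your uniform local bound $\sup_n\mathfrak{m}_n(B)<\infty$ indeed closes the estimate; or, alternatively, to argue via weak compactness of the signed measures $f_n\mathfrak{m}_n$ and then identify the limit as absolutely continuous with $L^p(\mathfrak{m}_\infty)$ density. With this small adjustment your argument is complete and substantially more informative than the paper's bare citation.
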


Next, we review the concepts of weak and strong convergence in the Sobolev space $W^{1,p}$ with respect to variable reference measures.
\begin{definition}[$W^{1,p}$-Convergence]
	Let $f_n \in W^{1,p}(\mathrm{Y},\mathfrak{m}_n)$, $n\in \overline{\mathbb{N}}_+$. We say that $\{f_n\}_n$ $W^{1,p}$-weakly converges to $f_\infty$ if $\{f_n\}_n$ $L^p$-weakly converges to $f_\infty$ and moreover 
	\begin{equation}\label{5}
		\sup_{n\in \mathbb{N}_+} \int |D f_n|^p\mathrm{d}\mathfrak{m}_n<\infty.
	\end{equation}
	If $\{f_n\}_n$ is $L^p$-strongly converging to $f_\infty$ and furthermore
	\begin{equation}\label{6}
		\limsup_{n\rightarrow \infty}\int |D f_n|^p \mathrm{d}\mathfrak{m}_n\leq \int |D f_\infty|^p \mathrm{d}\mathfrak{m}_\infty<\infty,
	\end{equation}
	then we say that it is $W^{1,p}$-strongly converging to $f_\infty$.			
\end{definition}

\subsection{Compactness theorem}\label{alrig2}
In this subsection, we will deal with PI spaces that are uniformly locally doubling and support a weak local $1$-$p$ Poincar\'{e} inequality. 
\begin{definition}
	The metric measure space $(\mathrm{X},\mathrm{d},\mathfrak{m})$ is said to be uniformly locally doubling if 
	\begin{equation}\label{uniformly locally doubling}
		0<\mathfrak{m}(B_{2r}(x))\leq \mathrm{D}(R)\mathfrak{m}(B_{r}(x)),\quad \text{ for every } x\in \mathrm{X}, \ r\in (0,R],
	\end{equation} 
	for some non-decreasing function $\mathrm{D}:(0,\infty)\rightarrow (0,\infty)$.
	
	We say that $(\mathrm{X},\mathrm{d},\mathfrak{m})$ is a Poincar\'{e} space if $\mathfrak{m}$ is uniformly locally doubling and if, in addition, $(\mathrm{X},\mathrm{d},\mathfrak{m})$ supports a weak local $1$-$1$ Poincar\'{e} inequality, i.e. there exists $\lambda>0$ such that for any Lipschitz function $u:\mathrm{X}\rightarrow \mathbb{R}$, it holds that
	$$
	\fint_{B_r(x)}|u-u_{B_r(x)}|\mathrm{d}\mathfrak{m}\leq \mathrm{P}(R)r\fint_{B_{\lambda r}(x)} \mathrm{lip}(u)\mathrm{d}\mathfrak{m}, \text{ for every } x\in \mathrm{X}, \ r\in (0,R],
	$$
	for some non-decreasing function $\mathrm{P}:(0,\infty)\rightarrow (0,\infty)$, where $u_B:=\fint_Bu \mathrm{d}\mathfrak{m}$.
\end{definition}
\begin{remark}\cite{Ra12,St06}
	If $(\mathrm{X},\mathrm{d},\mathfrak{m})$ is a $\mathrm{CD}(K,N)$ space for some $K\in \mathbb{R},N\in(1,\infty)$ with $\mathrm{supp}[\mathfrak{m}]=\mathrm{X}$, then it is uniformly locally doubling with the function $\mathrm{D}_{K,N}(r)=2^N[r\cosh\sqrt{K^-/(N-1)}]^{N-1}$ and it supports a weak local $1$-$p$ Poincar\'{e} inequality with the function $\mathrm{P}_{K,N}(r)=2^{N+3}[r\cosh\sqrt{K^-/(N-1)}]^{N-1}\exp(2r\sqrt{(N-1)K^-})$.
\end{remark}
We say that a p.m.m.s. $(\mathrm{X},\mathrm{d},\mathfrak{m},\bar{x})$ is normalized if $\int_{B_1(\bar{x})} 1-\mathrm{d}(\cdot, \bar{x}) \mathrm{d}\mathfrak{m}=1$. Obviously, given any p.m.m.s. $(\mathrm{X},\mathrm{d},\mathfrak{m},\bar{x})$ there exists a unique $c>0$ such that $(\mathrm{X},\mathrm{d},c\mathfrak{m},\bar{x})$ is normalized, namely $c:=(\int_{B_1(\bar{x})} 1-\mathrm{d}(\cdot, \bar{x}) \mathrm{d}\mathfrak{m})^{-1}$.

We denote by $\mathscr{M}_{\mathrm{D}(\cdot)}$ the class of (isomorphism classes of) normalized p.m.m.s. fulfilling $(\ref{uniformly locally doubling})$ for a given non-decreasing $\mathrm{D}:(0, \infty) \rightarrow(0, \infty)$. Recall that two p.m.m.s. $(\mathrm{X},\mathrm{d},\mathfrak{m},\bar{x})$, $(\mathrm{X}^{\prime},\mathrm{d}^{\prime},\mathfrak{m}^{\prime},\bar{x}^{\prime})\in \mathscr{M}_{\mathrm{D}(\cdot)}$ are said to be isomorphic if there exists an isometry $T:(\mathrm{X}, \mathrm{d}) \rightarrow(\mathrm{X}^{\prime}, \mathrm{d}^{\prime})$ such that $T_{\sharp} \mathfrak{m}=\mathfrak{m}^{\prime}$ and $T(\bar{x})=\bar{x}^{\prime}$.

\begin{definition}[Pointed measured Gromov-Hausdorff convergence]  
	Let $\mathrm{D}:(0, \infty) \rightarrow(0, \infty)$ be a non-decreasing function and let $(\mathrm{X}_n,\mathrm{d}_n,\mathfrak{m}_n,\bar{x}_n)\in \mathscr{M}_{\mathrm{D}(\cdot)}$, $n\in \overline{\mathbb{N}}_+$. We say that $\{(\mathrm{X}_n,\mathrm{d}_n,\mathfrak{m}_n,\bar{x}_n)\}_{n\in {\mathbb{N}}_+}$ converges to $(\mathrm{X}_\infty,\mathrm{d}_\infty,\mathfrak{m}_\infty,\bar{x}_\infty)$ in the pointed measured Gromov-Hausdorff $(\mathrm{pmGH}$ for short$)$ sense if there exists a complete and separable metric space $(\mathrm{Z}, \mathrm{d}_{\mathrm{Z}})$ and a sequence of isometric embeddings $\{\iota_n:(\mathrm{X}_n,\mathrm{d}_n)\rightarrow (\mathrm{Z}, \mathrm{d}_{\mathrm{Z}})\}_{n\in \overline{\mathbb{N}}_+}$ such that for every $\epsilon>0$ and $R>0$ there exists $N\in {\mathbb{N}}_+$ such that for all $n\geq N$,
	$$
	\iota_\infty(\bar{B}_R^{\mathrm{d}_\infty}(\bar{x}_\infty))\subset B^{\mathrm{Z}}_\epsilon(\iota_n(\bar{B}_R^{\mathrm{d}_n}(\bar{x}_n)))\quad\text{ and }\quad \iota_n(\bar{B}_R^{\mathrm{d}_n}(\bar{x}_n))\subset B^{\mathrm{Z}}_\epsilon(\iota_\infty(\bar{B}_R^{\mathrm{d}_\infty}(\bar{x}_n)))
	$$
	where $B_{\epsilon}^{\mathrm{Z}}(A):=\{z \in \mathrm{Z}: \mathrm{d}_{\mathrm{Z}}(z, A)<\epsilon\}$ for every subset $A \subset \mathrm{Z}$, and $$
	\lim_{n\rightarrow \infty}\int \varphi \mathrm{d}(\iota_n)_{\sharp}\mathfrak{m}_n= \int \varphi \mathrm{d}(\iota_{\infty})_{\sharp}\mathfrak{m}_{\infty},\quad \text{ for every }  \varphi \in {\mathrm{C}}_{\mathrm{bs}}(\mathrm{Z}).
	$$
\end{definition}
Sometimes in the following, for simplicity of notation, we will identify the spaces $\mathrm{X}_n$, $n\in \overline{\mathbb{N}}_+$, with their isomorphic copies $\iota_n(\mathrm{X}_n) \subset \mathrm{Z}$.

It is obvious that this is in fact a notion of convergence for isomorphism classes of p.m.m.s., moreover it is induced by a metric (see e.g. \cite{GMS} for details):
\begin{proposition}\label{alrigprop7}
	Let $\mathrm{D}:(0, \infty) \rightarrow(0, \infty)$ be a non-decreasing function. Then there exists a distance $\mathrm{d}_{\mathrm{pmGH}}$ on $\mathscr{M}_{\mathrm{D}(\cdot)}$ for which converging sequences are precisely those converging in the $\mathrm{pmGH}$ sense. Furthermore, the metric space $(\mathscr{M}_{\mathrm{D}(\cdot)}, \mathrm{d}_{\mathrm{pmGH}})$ is compact.	
\end{proposition}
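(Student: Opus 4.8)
The plan is to follow the by-now-standard development of pointed measured Gromov--Hausdorff (pmGH) convergence, carried out in full generality in \cite{GMS}; I only indicate the main steps. \textbf{Metrizability.} First I would fix a complete separable metric space ``large enough'' to contain an isometric copy of every element of $\mathscr{M}_{\mathrm{D}(\cdot)}$ (e.g. the Urysohn space), or equivalently work with an infimum over all common ambient spaces. Given $\mathscr{X}=(\mathrm{X},\mathrm{d},\mathfrak{m},\bar{x})$ and $\mathscr{X}'=(\mathrm{X}',\mathrm{d}',\mathfrak{m}',\bar{x}')$ in $\mathscr{M}_{\mathrm{D}(\cdot)}$, I would set $\mathrm{d}_{\mathrm{pmGH}}(\mathscr{X},\mathscr{X}')$ equal to the infimum, over all complete separable $(\mathrm{Z},\mathrm{d}_{\mathrm{Z}})$ and isometric embeddings $\iota\colon\mathrm{X}\hookrightarrow\mathrm{Z}$, $\iota'\colon\mathrm{X}'\hookrightarrow\mathrm{Z}$, of a quantity of the form
$$
\mathrm{d}_{\mathrm{Z}}\big(\iota(\bar{x}),\iota'(\bar{x}')\big)+\int_0^{\infty}\mathrm{e}^{-R}\Big(\mathrm{d}_{\mathrm{H}}^{R}\big(\iota(\mathrm{X}),\iota'(\mathrm{X}')\big)\wedge 1+\mathrm{d}_{\mathrm{P}}^{R}\big(\iota_\sharp\mathfrak{m},\iota'_\sharp\mathfrak{m}'\big)\wedge 1\Big)\,\mathrm{d}R,
$$
where $\mathrm{d}_{\mathrm{H}}^{R}$ is a Hausdorff-type distance between the closed $R$-balls around the images of the base points and $\mathrm{d}_{\mathrm{P}}^{R}$ a Prokhorov/Lévy-type distance between the pushed-forward measures restricted to those balls. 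Symmetry is immediate; the triangle inequality follows by gluing two near-optimal ambient spaces along a common isometric copy of the intermediate space; and $\mathrm{d}_{\mathrm{pmGH}}(\mathscr{X},\mathscr{X}')=0$ forces $\mathscr{X}$ and $\mathscr{X}'$ to be isomorphic, via a diagonal argument exploiting that the spaces are proper (a consequence of uniform local doubling $(\ref{uniformly locally doubling})$). Finally I would check that $\mathrm{d}_{\mathrm{pmGH}}(\mathscr{X}_n,\mathscr{X}_\infty)\to 0$ is equivalent to pmGH-convergence: the embeddings realizing pmGH-convergence bound $\mathrm{d}_{\mathrm{pmGH}}$ from above, while conversely a sequence of near-optimal ambient spaces can be merged into a single $\mathrm{Z}$ yielding exactly the Hausdorff- and weak-approximation properties in the definition.

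\textbf{Compactness.} Here I would invoke Gromov's precompactness theorem. Iterating $(\ref{uniformly locally doubling})$, the number of $\varepsilon$-separated points of $\bar{B}_R(\bar{x}_n)$ is bounded by a constant depending only on $R$, $\varepsilon$ and $\mathrm{D}(\cdot)$; here normalization is crucial, since $\int_{B_1(\bar{x}_n)}(1-\mathrm{d}(\cdot,\bar{x}_n))\,\mathrm{d}\mathfrak{m}_n=1$ together with doubling pins $\mathfrak{m}_n(B_r(\bar{x}_n))$ between two positive constants, uniformly in $n$, for every fixed $r$. Hence $\{(\mathrm{X}_n,\mathrm{d}_n,\bar{x}_n)\}_n$ is uniformly totally bounded, so along a subsequence it converges in the pointed Gromov--Hausdorff sense to a complete pointed space $(\mathrm{X}_\infty,\mathrm{d}_\infty,\bar{x}_\infty)$; realize this via isometric embeddings $\iota_n$ of all the $\mathrm{X}_n$ and of $\mathrm{X}_\infty$ into a common complete separable $(\mathrm{Z},\mathrm{d}_{\mathrm{Z}})$. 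Since the measures $(\iota_n)_\sharp\mathfrak{m}_n$ have uniformly bounded mass on each bounded subset of $\mathrm{Z}$, a further diagonal subsequence converges weakly, in duality with $\mathrm{C}_{\mathrm{bs}}(\mathrm{Z})$, to a locally finite Borel measure $\mathfrak{m}_\infty$ concentrated on $\mathrm{X}_\infty$ (locally finite version of Prokhorov's theorem). The limit is non-zero and normalized, because $\mathfrak{n}\mapsto\int_{B_1(\bar{x})}(1-\mathrm{d}(\cdot,\bar{x}))\,\mathrm{d}\mathfrak{n}$ is continuous along such convergence (bounded continuous integrand with bounded support), and the doubling inequality $(\ref{uniformly locally doubling})$ passes to the limit by approximating balls from inside and outside via the portmanteau theorem. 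Thus $(\mathrm{X}_\infty,\mathrm{d}_\infty,\mathfrak{m}_\infty,\bar{x}_\infty)\in\mathscr{M}_{\mathrm{D}(\cdot)}$, so $\mathscr{M}_{\mathrm{D}(\cdot)}$ is sequentially compact, hence compact by the metrizability established above.

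\textbf{Main obstacle.} The genuinely delicate points are the careful choice of $\mathrm{d}_{\mathrm{pmGH}}$ together with the proofs of the triangle inequality and of its equivalence with pmGH-convergence, and, on the compactness side, making sure that all covering numbers and mass bounds are uniform in $n$---which rests entirely on the normalization. Since all of this is already worked out in detail in \cite{GMS}, I would adapt that argument rather than reprove it from scratch.
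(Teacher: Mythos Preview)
Your proposal is correct and follows essentially the same approach as the paper: the paper does not give a proof either, but simply refers to \cite{GMS} for the construction of $\mathrm{d}_{\mathrm{pmGH}}$ and its equivalence with pmGH-convergence, and then adds a one-paragraph sketch of compactness via Gromov's argument (uniform local doubling gives uniform total boundedness of balls, and normalization plus doubling gives weak compactness of the measures). Your write-up is a more detailed version of exactly this outline.
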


Notice that the compactness of $(\mathscr{M}_{\mathrm{D}(\cdot)}, \mathrm{d}_{\mathrm{pmGH}})$ follows by the standard argument of Gromov: the measures of spaces in $\mathscr{M}_{\mathrm{D}(\cdot)}$ are uniformly locally doubling, hence balls of given radius around the reference points are uniformly compact in the GH-topology. Then weak compactness of the measures follows using the doubling condition again and the fact that they are normalized.

\vspace{0.5cm}

Next, we recall the weak local $p^*$-$p$ Poincar\'{e} inequalities, which plays a central role in the proof of the compactness Theorem $\ref{compact}$. By \cite[Theorem 5.1]{HK00}, we have that $(\ref{alrigeq3})$ holds for Lipschitz functions. By the energy density, it extends to $W^{1,p}$ functions. 
\begin{proposition}\label{alrigprop4}
	Let $(\mathrm{X}, \mathrm{d}, \mathfrak{m})$ be a Poincar\'{e} space and let $p\in (1,\infty)$. Assume that $\mathfrak{m}$ satisfies
	$$
	\mathrm{C}(R)\frac{\mathfrak{m}(B_{r_1}(x))}{\mathfrak{m}(B_{r_2}(x))}\geq \frac{r_1^N}{r_2^N},\quad \text{ for every } x\in \mathrm{X},\ 0<r_1<r_2\leq R,
	$$
	for some non-decreasing function $\mathrm{C}:(0,\infty)\rightarrow (0,\infty)$ and some constant $N\in (1,\infty)$. 
	
	Then for every $R>0$, there exists a constant $C=C(p,N,\mathrm{D}(4\lambda R),\mathrm{P}(2R),\mathrm{C}((2\lambda+1)R))$ such that for every $u\in W^{1,p}(\mathrm{X},\mathfrak{m})$, $x\in \mathrm{X}$ and $r\in (0,R]$,
	\begin{equation}\label{alrigeq3}
		\bigg(\fint_{B_r(x)}|u-u_{B_r(x)}|^{p^*}\mathrm{d}\mathfrak{m}\bigg)^{1/p^*}\leq Cr\bigg(\fint_{B_{(2\lambda+1) r}(x)} |Du|^p\mathrm{d}\mathfrak{m}\bigg)^{1/p},
	\end{equation}
	where $p^*=Np/(N-p)$ if $N>p$, $p^*$ can be any power in $[1,\infty)$ if $N\in (1,p]$.
\end{proposition}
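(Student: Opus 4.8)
The plan is to obtain $(\ref{alrigeq3})$ first for Lipschitz functions, via the Haj\l asz--Koskela self-improvement of Poincar\'e inequalities, and then to extend it to all of $W^{1,p}(\mathrm{X},\mathfrak{m})$ by the energy density of Lipschitz functions in the Cheeger energy. On a Poincar\'e space the available ingredients are uniform local doubling with function $\mathrm{D}$ and a weak local $1$-$1$ Poincar\'e inequality with constant $\lambda$ and function $\mathrm{P}$; by H\"older's inequality the latter upgrades, with the \emph{same} $\lambda$ and $\mathrm{P}$, to a weak local $1$-$p$ Poincar\'e inequality
$$
\fint_{B_r(x)}|u-u_{B_r(x)}|\,\mathrm{d}\mathfrak{m}\leq \mathrm{P}(R)\,r\bigg(\fint_{B_{\lambda r}(x)}\mathrm{lip}(u)^p\,\mathrm{d}\mathfrak{m}\bigg)^{1/p},\qquad x\in\mathrm{X},\ r\in(0,R].
$$
The role of the extra hypothesis is that $\mathrm{C}(R)\,\mathfrak{m}(B_{r_1}(x))/\mathfrak{m}(B_{r_2}(x))\geq (r_1/r_2)^N$ for $0<r_1<r_2\leq R$ is precisely the lower mass-decay bound with exponent $N$ which, together with doubling and the $1$-$p$ Poincar\'e inequality, drives the Sobolev--Poincar\'e self-improvement and fixes the gain exponent $p^{*}$. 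Applying \cite[Theorem 5.1]{HK00}: since all the hypotheses are uniform in the center, the chaining argument there, run on a ball $B_r(x)$ with $r\le R$, only ever subdivides $B_r(x)$ into balls of strictly smaller radius and invokes doubling and the $1$-$p$ Poincar\'e inequality on such balls and their $\lambda$-dilates, i.e.\ at scales comparable to $R$; this produces a constant $C=C(p,N,\mathrm{D}(4\lambda R),\mathrm{P}(2R),\mathrm{C}((2\lambda+1)R))$ with $(\ref{alrigeq3})$ valid for every Lipschitz $u$, with $\mathrm{lip}(u)$ in place of $|Du|$ on the right, for all $x\in\mathrm{X}$ and $r\in(0,R]$. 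When $N\leq p$ the same machinery, together with a Trudinger-type truncation on the level sets of $|u-u_{B_r(x)}|$, yields the inequality for every finite $p^{*}$.

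For the density step, given $u\in W^{1,p}(\mathrm{X},\mathfrak{m})$ one picks, using the definition of the Cheeger energy together with the standard weak-to-strong argument in the uniformly convex space $L^p$, a sequence $\{u_k\}\subset\mathrm{Lip}_{\mathrm{bs}}(\mathrm{X})$ with $u_k\to u$ in $L^p(\mathrm{X},\mathfrak{m})$ and $\mathrm{lip}(u_k)\to|Du|_{*,p}$ in $L^p(\mathrm{X},\mathfrak{m})$. Fixing $x$, $r\in(0,R]$ and $B:=B_r(x)$ (which has finite positive measure), $u_k\to u$ in $L^p(B)$ forces $u_{k,B}\to u_B$, hence $u_k-u_{k,B}\to u-u_B$ in $L^p(B)$ and, along a subsequence, $\mathfrak{m}$-a.e.\ on $B$; writing $(\ref{alrigeq3})$ for each $u_k$ (with $\mathrm{lip}(u_k)$ on the right), Fatou's lemma on the left and the $L^p$-convergence $\mathrm{lip}(u_k)\to|Du|_{*,p}$ on the right give $(\ref{alrigeq3})$ for $u$. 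Here one should note that $p^{*}\geq p$, so $L^{p^{*}}$-convergence of $u_k-u_{k,B}$ is not available; it is precisely the uniform $L^{p^{*}}$-bound furnished by $(\ref{alrigeq3})$ applied to the $u_k$ that makes the Fatou step legitimate.

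I expect the main obstacle to be the bookkeeping in the Lipschitz case: running the Haj\l asz--Koskela iteration in the ``weak local'' framework so that only the radius-truncated doubling and Poincar\'e data, together with the exponent-$N$ mass bound, enter, and verifying that the resulting constant genuinely depends only on $p$, $N$, $\mathrm{D}(4\lambda R)$, $\mathrm{P}(2R)$ and $\mathrm{C}((2\lambda+1)R)$. Once this localization is in place the density step is routine.
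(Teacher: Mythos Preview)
Your proposal is correct and follows essentially the same approach as the paper: the paper simply remarks that $(\ref{alrigeq3})$ holds for Lipschitz functions by \cite[Theorem 5.1]{HK00} and then extends to $W^{1,p}$ by energy density, which is exactly your two-step strategy. Your write-up merely fills in the routine details (the H\"older upgrade from $1$-$1$ to $1$-$p$ Poincar\'e, the Fatou passage along an optimal approximating sequence) that the paper leaves implicit.
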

\begin{remark}
	If $(\mathrm{X},\mathrm{d},\mathfrak{m})$ is a $\mathrm{CD}(K,N)$ space for some $K\in \mathbb{R},N\in(1,\infty)$ with $\mathrm{supp}[\mathfrak{m}]=\mathrm{X}$, then 
	$$
	\mathrm{C}_{K,N}(R)\frac{\mathfrak{m}(B_{r_1}(x))}{\mathfrak{m}(B_{r_2}(x))}\geq \frac{r_1^N}{r_2^N},\quad \text{ for every } x\in \mathrm{X},\ 0<r_1<r_2\leq R,
	$$
	where $\mathrm{C}_{K,N}(r)=\exp[r\sqrt{K^-(N-1)}]$. This is a corollary of the Bishop-Gromov inequality\cite{St06}.
\end{remark}

Finally, we prove the main compactness result. To this aim, we make the following assumptions:
\begin{assumption}\label{alrigassu1}
	Let $p\in (1,\infty)$.
	\begin{itemize}
		\item $\mathrm{D},\mathrm{P},\mathrm{C}:(0,\infty)\rightarrow (0,\infty)$ are three non-decreasing function. 
		\item The sequence $\{\mathscr{X}_n:=[\mathrm{X}_n, \mathrm{d}_n, \mathfrak{m}_n,\bar{x}_n]\}_{n\in \bar{\mathbb{N}}_+}\subset \mathscr{M}_{\mathrm{D}(\cdot)}$ is such that $$\mathrm{d}_{\mathrm{pmGH}}(\mathscr{X}_n,\mathscr{X}_\infty)\rightarrow 0.$$  
		\item For all $z\in \mathrm{X}_\infty$ and $r>0$, $\mathfrak{m}_\infty(\bar{B}_r(z)\backslash {B}_r(z))=0$.
		\item For all $n\in \mathbb{N}_+$, $\mathscr{X}_n$ supports a weak local $1$-$1$ Poincar\'{e} inequality with $\mathrm{P}$ and $\mathfrak{m}_n$ satisfies
		$$
		\mathrm{C}(R)\frac{\mathfrak{m}_n(B_{r_1}(x))}{\mathfrak{m}_n(B_{r_2}(x))}\geq \frac{r_1^N}{r_2^N},\quad \text{ for every } x\in \mathrm{X}_n,\ 0<r_1<r_2\leq R.
		$$
		\item There is a proper metric space $(\mathrm{Y},\mathrm{d})$ containing isometrically all the $(\mathrm{X}_n, \mathrm{d}_n)$, $n\in \bar{\mathbb{N}}_+$.
		\item The sequence $\{\mathfrak{m}_n\}_{n}$ weakly converges to $\mathfrak{m}_\infty$ and $\mathrm{d}(\bar{x}_n,\bar{x}_\infty)\rightarrow 0$.
		\item Let $x_n\in \mathrm{X}_n$, $n\in \overline{\mathbb{N}}_+$, be such that $\mathrm{d}({x}_n,{x}_\infty)\in(0,1)$ for every $n$ and $\mathrm{d}({x}_n,{x}_\infty)\rightarrow 0$.
		\item Let $\{R_n\}_{n\in \overline{\mathbb{N}}_+}\subset (0,\infty)$ be such that $|R_n-R_\infty|\in(0,1)$ for every $n$ and $R_n\rightarrow R_\infty$.
	\end{itemize}
\end{assumption}

\begin{lemma}\label{alriglem2}
	Let $f_n\in W^{1, p}(\mathrm{Y},\mathfrak{m}_n)$, $n\in \mathbb{N}_+$, be non-negative functions with 
	$$\sup_n\|f_n\|_{W^{1, p}(\mathrm{Y},\mathfrak{m}_n)}<\infty.$$ Then all $L^p$-weak limits of subsequences of $\{f_n\}_n$ belongs to $W^{1, p}(\mathrm{Y}, \mathfrak{m}_\infty)$.
\end{lemma}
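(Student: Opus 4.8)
The plan is to push a covering-family (discrete-gradient) form of the weak local Poincar\'e inequality from the approximating spaces to the limit, and then to recognise the limit through the characterisation of Sobolev functions on PI spaces. Since the conclusion concerns only $f_\infty$, I would fix one subsequence (not relabeled) along which $\{f_n\}_n$ $L^p$-weakly converges to $f_\infty\in L^p(\mathrm{Y},\mathfrak{m}_\infty)$; using $\sup_n \||Df_n|\|_{L^p(\mathrm{Y},\mathfrak{m}_n)}\le M:=\sup_n\|f_n\|_{W^{1,p}(\mathrm{Y},\mathfrak{m}_n)}<\infty$ and Proposition \ref{2.18}(iv), I would pass to a further subsequence along which $|Df_n|$ $L^p$-weakly converges to some $0\le G\in L^p(\mathrm{Y},\mathfrak{m}_\infty)$. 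As $\mathfrak{m}_\infty$ is concentrated on $\mathrm{X}_\infty$, it is enough to prove $f_\infty\in W^{1,p}(\mathrm{X}_\infty,\mathfrak{m}_\infty)$, and I would use that $(\mathrm{X}_\infty,\mathrm{d}_\infty,\mathfrak{m}_\infty)$ is again a Poincar\'e space with the same structural functions $\mathrm{D},\mathrm{P},\mathrm{C}$ and the same $N$ (stability of uniform local doubling, of the weak local $1$-$1$ Poincar\'e inequality and of the measure-ratio bound under pmGH-convergence together with the no-sphere-charging hypothesis of Assumption \ref{alrigassu1} is classical; cf. \cite{GMS}), so that Proposition \ref{alrigprop4} is available on every $\mathrm{X}_n$ and on $\mathrm{X}_\infty$ with constants independent of $n$.

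For the core step I would work scale by scale. Fix $r\in(0,1]$ and $\rho>0$, take a maximal $r$-separated set in $B_\rho(\bar x_\infty)\cap\mathrm{X}_\infty$ and, via pmGH-convergence, corresponding points $z_i^n\in\mathrm{X}_n$ with $z_i^n\to z_i$; since $\mathfrak{m}_\infty$ charges no metric sphere (Assumption \ref{alrigassu1}), all the balls $B_r(z_i)$ and their bounded dilations appearing below have $\mathfrak{m}_\infty$-negligible boundary. To a locally integrable $u$ on $\mathrm{X}_n$ (or $\mathrm{X}_\infty$) associate the piecewise constant discrete gradient $D_r u$ at scale $r$, defined on the net ball of $z_i$ by $r^{-1}\sum_{j}|u_{B_r(z_i)}-u_{B_r(z_j)}|$, the sum taken over the (boundedly many) indices $j$ with $B_r(z_j)\cap B_r(z_i)\neq\varnothing$. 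Two ingredients are needed. First, combining the $p$-$p$ form of Proposition \ref{alrigprop4} on $\mathrm{X}_n$ (downgraded from $p^*$-$p$ by Jensen) with the bounded overlap of the dilated net balls and the doubling of $\mathfrak{m}_n$, one gets $\|D_r f_n\|_{L^p(\mathrm{X}_n,\mathfrak{m}_n)}\le C\||Df_n|\|_{L^p(\mathrm{X}_n,\mathfrak{m}_n)}\le CM$, with $C$ depending only on $p,N$ and $\mathrm{D},\mathrm{P},\mathrm{C}$, hence uniform in $n$ and in $r\in(0,1]$. Second, the relevant ball averages converge: weak convergence $\mathfrak{m}_n\to\mathfrak{m}_\infty$ together with the null-boundary property gives $\mathfrak{m}_n(B_r(z_i^n))\to\mathfrak{m}_\infty(B_r(z_i))$, and testing the $L^p$-weak convergence of $f_n$ against continuous cutoffs that squeeze $\chi_{B_r(z_i)}$ (the thin annuli being controlled by the uniform $L^p$-bound) gives $(f_n)_{B_r(z_i^n)}\to(f_\infty)_{B_r(z_i)}$. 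Consequently $D_r f_n$, viewed as an element of $L^p(\mathrm{X}_n,\mathfrak{m}_n)$, $L^p$-weakly converges to $D_r f_\infty\in L^p(\mathrm{X}_\infty,\mathfrak{m}_\infty)$, so Proposition \ref{alrigprop5} yields
\[
\|D_r f_\infty\|_{L^p(\mathrm{X}_\infty,\mathfrak{m}_\infty)}\le\liminf_{n\to\infty}\|D_r f_n\|_{L^p(\mathrm{X}_n,\mathfrak{m}_n)}\le CM .
\]
Letting $\rho\to\infty$, and since $C$ does not depend on $r$, this gives $\sup_{r\in(0,1]}\|D_r f_\infty\|_{L^p(\mathrm{X}_\infty,\mathfrak{m}_\infty)}\le CM$.

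Finally, on the Poincar\'e space $(\mathrm{X}_\infty,\mathrm{d}_\infty,\mathfrak{m}_\infty)$ a function $f_\infty\in L^p(\mathfrak{m}_\infty)$ whose discrete gradients stay bounded in $L^p$ as the scale tends to $0$ must belong to $W^{1,p}(\mathrm{X}_\infty,\mathfrak{m}_\infty)$: extracting (by reflexivity of $L^p$, since $p>1$) an $L^p$-weak limit $g$ of $|D_{r_k}f_\infty|$ along a sequence $r_k\downarrow 0$, one checks via the doubling and Poincar\'e structure that $Cg$ bounds a $p$-weak upper gradient of $f_\infty$, whence $f_\infty\in N^{1,p}(\mathrm{X}_\infty,\mathfrak{m}_\infty)=W^{1,p}(\mathrm{X}_\infty,\mathfrak{m}_\infty)$; this is precisely the discrete-gradient characterisation of Sobolev functions on PI spaces used in \cite{BK22}. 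Since $\mathfrak{m}_\infty$ lives on $\mathrm{X}_\infty$, this is the same as $f_\infty\in W^{1,p}(\mathrm{Y},\mathfrak{m}_\infty)$, which proves the lemma. I expect the main difficulty to lie in the core step: carrying out the passage to the limit in the Poincar\'e-type inequality uniformly in $n$ — forcing the ball averages to converge through the interplay of pmGH-convergence, weak convergence of the measures and the no-sphere-charging hypothesis, while keeping every doubling and Poincar\'e constant independent of $n$ — and, on the limit side, verifying that the scale-uniform bound genuinely places $f_\infty$ in $W^{1,p}$ and not merely in a larger Besov-type class, which is where the reflexivity afforded by $p>1$ is essential.
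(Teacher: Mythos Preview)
Your proposal shares the paper's opening moves exactly: pass to a subsequence so that $|Df_n|$ converges $L^p$-weakly to some $g\in L^p(\mathfrak{m}_\infty)$, exploit the uniform Poincar\'e inequality on the approximating spaces $\mathrm{X}_n$, and pass ball averages to the limit via the no-sphere-charging hypothesis of Assumption~\ref{alrigassu1}. The divergence is in the concluding mechanism. The paper works \emph{pointwise}: for Lebesgue points $z_1,z_2\in\mathrm{X}_\infty$ at distance $r$, it telescopes along dyadic radii $s_k=2^{-k}r$ to obtain directly a Haj\l asz-type inequality
\[
|f_\infty(z_1)-f_\infty(z_2)|\le C\,\mathrm{d}(z_1,z_2)\big(M_{R}g(z_1)+M_{R}g(z_2)\big),
\]
and then cites the inclusion Haj\l asz--Sobolev $\subset W^{1,p}$ from \cite{GT21}. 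You instead work \emph{globally in $L^p$}: you bound the discrete gradients $\|D_r f_\infty\|_{L^p}$ uniformly in $r\in(0,1]$ and then invoke a characterisation of $W^{1,p}$ on PI spaces via discrete gradients. The paper's route is shorter and more self-contained, since the telescoping is elementary and the Haj\l asz inclusion is a one-line citation; your route aligns with the covering-family machinery that the paper reserves for the subsequent compactness Theorem~\ref{compact}, so it is natural but somewhat heavier here. One caution: your final implication ``$\sup_r\|D_r f_\infty\|_{L^p}<\infty\Rightarrow f_\infty\in W^{1,p}$'' is correct on PI spaces but is not a triviality---on a bare metric measure space a uniform $L^p$-bound on difference quotients lands you only in a Besov-type class $B^1_{p,\infty}$, and it is precisely the Poincar\'e structure on $\mathrm{X}_\infty$ (which you do invoke) that closes the gap. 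The phrase ``one checks'' should be replaced either by a precise citation or by the standard argument: the discrete convolutions $f_r=\fint_{B_r(\cdot)}f_\infty\,\mathrm{d}\mathfrak{m}_\infty$ are locally Lipschitz with $\operatorname{lip}(f_r)\le C\,D_r f_\infty$ by doubling, and $f_r\to f_\infty$ in $L^p$, so $\operatorname{Ch}_p(f_\infty)\le\liminf_r\int|D_r f_\infty|^p\,\mathrm{d}\mathfrak{m}_\infty<\infty$.
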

\begin{proof}
	Let $f_\infty \in L^p(\mathrm{Y}, \mathfrak{m}_\infty)$ be a $L^p$-weak limit of some subsequence of $\{f_n\}_n$ and we need to prove that $f_\infty\in W^{1, p}(\mathrm{Y}, \mathfrak{m}_\infty)$. Without loss of generality, we assume that $\{f_n\}_n$ $L^p$-weakly converges to $f_\infty$. By Proposition $\ref{alrigprop5}$ and $\sup_n\||Df_n|\|_{ L^p(\mathrm{Y},\mathfrak{m}_n)}<\infty$, up to a not relabeled subsequence, we can assume that $\{|Df_n|\}_n$ $L^p$-weakly converges to some function $g \in L^p(\mathrm{Y}, \mathfrak{m}_\infty)$. For simplify, we denote by $v_{z, r}$ the mean value of the function $v$ on $B_r(z)$.
	
	Fix $\tau>0$ and let $s_k=2^{-k} r$ for every $r\in (0,\tau]$ and $k\in \mathbb{N}$. Let $z_1,z_2\in \mathrm{X}_\infty$ be Lebesgue points for $f_\infty$ with $\mathrm{d}(z_1,z_2)= r$ and thus we have that $(f_\infty)_{z_1, s_k} \rightarrow f_\infty(z_1)$ and $(f_\infty)_{z_2, s_k} \rightarrow f_\infty(z_2)$ as $k \rightarrow \infty$. Since $\mathscr{X}_n\rightarrow\mathscr{X}_\infty$, we can find two sequences $n\mapsto z_{1,n}\in \mathrm{X}_n$ and $n\mapsto z_{2,n}\in \mathrm{X}_n$ such that $\mathrm{d}(z_{1,n},z_{2,n})<2r$ for all $n$ and $\mathrm{d}(z_{i,n},z_i)\rightarrow 0$  for $i=1,2$. 
	
	Fix $n$. Since $\mathfrak{m}_n$ is uniformly locally doubling and the space $\mathscr{X}_n$ supports $1$-$1$ Poincar\'{e} inequality, we have that
	\begin{align}
		|(f_n)_{z_{1,n}, s_k}-(f_n)_{z_{1,n}, 2 s_k}| & \leq \fint_{B_{s_k}(z_{1,n})}|f_n-(f_n)_{z_{1,n}, 2 s_k}| \mathrm{d} \mathfrak{m}_n\notag\\
		&\leq \mathrm{D}(\tau) \fint_{B_{2s_k}(z_{1,n})}|f_n-(f_n)_{z_1, 2 s_k}| \mathrm{d} \mathfrak{m}_n \notag\\
		& \leq \mathrm{D}(\tau) \mathrm{P}(2\tau) 2s_k\fint_{B_{2\lambda s_k}(z_{1,n})} |Df_n| \mathrm{d} \mathfrak{m}_n,\label{alrigeq6}
	\end{align}
	and
	\begin{align}
		|(f_n)_{z_{1,n}, r}-(f_n)_{z_{2,n}, 2 r}| & \leq \fint_{B_{r}(z_{1,n})}|(f_n)-(f_n)_{z_{2,n}, 2 r}| \mathrm{d} \mathfrak{m}_n\notag\\
		& \leq \mathrm{D}(2\tau)^2\fint_{B_{2r}(z_{2,n})}|(f_n)-(f_n)_{z_{2,n}, 2 r}| \mathrm{d} \mathfrak{m}_n \notag\\
		& \leq \mathrm{D}(2\tau)^2 \mathrm{P}(2\tau) 2r\fint_{B_{2\lambda r}(z_{2,n})} |Df_n| \mathrm{d} \mathfrak{m}_n.\label{alrigeq7}
	\end{align}
	
	Now, observe that it holds that
	$$
	(f_n)_{z_{i,n}, r}\xrightarrow{n} (f_\infty)_{z_{i}, r},\quad i=1,2,
	$$
	by the facts that $\{f_n\}_n$ weakly converges to $f_\infty$, the sequence $\{\mathfrak{m}_n\}_{n}$ weakly converges to $\mathfrak{m}_\infty$ and $\mathfrak{m}_\infty(\bar{B}_s(z)\backslash {B}_s(z))=0$ for all $z\in \mathrm{X}_\infty$ and $s>0$. The similar assertions hold for $\{|Df_n|\}$ and $g$. Thereofore, by letting $n\rightarrow \infty$ in $(\ref{alrigeq6})$ and $(\ref{alrigeq7})$, we obtain that
	\begin{align*}
		|(f_\infty)_{z_{1}, s_k}-(f_\infty)_{z_{1}, 2 s_k}| \leq \mathrm{D}(\tau) \mathrm{P}(2\tau) 2s_k\fint_{B_{2\lambda s_k}(z_{1})} g \mathrm{d} \mathfrak{m}_\infty
		\leq  \mathrm{D}(\tau) \mathrm{P}(2\tau) 2s_kM_{2\lambda\tau}(g)(z_1),
	\end{align*}
	and
	\begin{align*}
		|(f_\infty)_{z_{1}, r}-(f_\infty)_{z_{2}, 2 r}| \leq \mathrm{D}(2\tau)^2 \mathrm{P}(2\tau) 2r\fint_{B_{2\lambda r}(z_{2})} g \mathrm{d} \mathfrak{m}_\infty
		\leq \mathrm{D}(2\tau)^2 \mathrm{P}(2\tau) 2rM_{2\lambda\tau}(g)(z_2),
	\end{align*}
	where
	$$
	M_{2\lambda\tau}(g)(z_1):=\sup_{0<s\leq 2\lambda\tau}\fint_{B_{s}(z_{1})} g \mathrm{d} \mathfrak{m}_\infty.
	$$
	Hence,
	$$
	|(f_\infty)_{z_{1}, r}-f_\infty(z_1)| \leq \sum_{k=1}^{\infty}|(f_\infty)_{z_{1}, s_k}-(f_\infty)_{z_{1}, 2 s_k}| \leq \mathrm{D}(\tau) \mathrm{P}(2\tau) 2r M_{2\lambda\tau}(g)(z_1).
	$$
	Similarly, we can deduce that 
	$$
	|(f_\infty)_{z_{2}, 2r}-f_\infty(z_2)| \leq \mathrm{D}(\tau) \mathrm{P}(2\tau)4r M_{4\lambda\tau}(g)(z_2).
	$$
	Since $\mathrm{d}(z_1,z_2)=r$, we have that
	\begin{align*}
		|f_\infty(z_1)-f_\infty(z_2)| & \leq|f_\infty(z_1)-(f_\infty)_{z_{1}, r}|+|(f_\infty)_{z_{1}, r}-(f_\infty)_{z_{2}, 2r}|+|(f_\infty)_{z_2, 2 r}-f_\infty(z_2)| \\
		& \leq 6\mathrm{D}(2\tau)^2 \mathrm{P}(2\tau)\mathrm{d}(z_1,z_2)\big[  M_{4\lambda\tau}(g)(z_1)+  M_{4\lambda\tau}(g)(z_2)\big].
	\end{align*}
	This implies that $f_\infty$ is in the Hajlasz-Sobolev space, and therefore also in $W^{1,p}(\mathrm{Y},\mathfrak{m}_\infty)$ by \cite[Proposition 2.15]{GT21}.
\end{proof}

\begin{theorem}[Compactness Theorem]\label{compact}
	Under the assumption $\ref{alrigassu1}$, let $n\mapsto f_n\in W_0^{1, p}(B_{R_n}(x_n),\mathfrak{m}_n)$ be such that $\sup_n\|f_n\|_{W^{1, p}(\mathrm{Y}, \mathfrak{m}_n)}<\infty$. Then the sequence $\{f_n\}_n$ has a subsequence $\{f_{n_k}\}_k$ that $L^p$-strongly converges to some function $f_\infty\in W^{1, p}(\mathrm{Y},\mathfrak{m}_\infty)$.
\end{theorem}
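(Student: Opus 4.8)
The plan is to carry out a Rellich--Kondrachov type argument directly in the varying-space framework, the key tool being a discrete convolution attached to a covering of a large ball, together with the weak local Poincar\'e inequality of Proposition~\ref{alrigprop4}; this is exactly where the covering families and Poincar\'e inequalities enter. First I would extract a limit: since $\sup_n\|f_n\|_{L^p(\mathrm{Y},\mathfrak{m}_n)}<\infty$, Proposition~\ref{2.18}(iv) gives a (not relabeled) subsequence with $f_n\to f_\infty$ $L^p$-weakly for some $f_\infty\in L^p(\mathrm{Y},\mathfrak{m}_\infty)$. Applying Lemma~\ref{alriglem2} to the non-negative parts $f_n^+$, and then along a further subsequence to $f_n^-$ (legitimate since $|\nabla f_n^\pm|\le|\nabla f_n|$ by the chain rule), shows that the $L^p$-weak limits of $f_n^\pm$ lie in $W^{1,p}(\mathrm{Y},\mathfrak{m}_\infty)$, hence $f_\infty\in W^{1,p}(\mathrm{Y},\mathfrak{m}_\infty)$. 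Extending every $f_n$ by $0$ outside $B_{R_n}(x_n)$, one has $f_n\in W^{1,p}(\mathrm{Y},\mathfrak{m}_n)$ with $|\nabla f_n|$ supported in $\bar B_{R_n}(x_n)$ and $\Lambda:=\sup_n\||\nabla f_n|\|_{L^p(\mathrm{Y},\mathfrak{m}_n)}<\infty$.

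Next I would set up the discretization. Because $R_n\to R_\infty$ and $\mathrm{d}(x_n,x_\infty)\to0$, fix $\rho>R_\infty$ so that $\bar B_{R_n}(x_n)\subset B_\rho(x_\infty)$ for $n$ large. For each $\epsilon\in(0,1)$ I would pick a finite cover $\{B_\epsilon(y_i^\epsilon)\}_{i=1}^{N_\epsilon}$ of $\bar B_{\rho+1}(x_\infty)$ with $\epsilon$-separated centers $y_i^\epsilon\in\mathrm{X}_\infty$, whose cardinality and the overlap of the dilated balls are bounded in terms of the doubling function $\mathrm{D}$ only, together with a subordinate Lipschitz partition of unity $\{\phi_i^\epsilon\}\subset\mathrm{Lip}_{\mathrm{bs}}(\mathrm{Y})$ with $\sum_i\phi_i^\epsilon\equiv1$ on $\bar B_{\rho+1}(x_\infty)$, $\mathrm{supp}\,\phi_i^\epsilon\subset B_{2\epsilon}(y_i^\epsilon)$ and $\mathrm{Lip}(\phi_i^\epsilon)\le c\epsilon^{-1}$. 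Choosing $y_{i,n}^\epsilon\in\mathrm{X}_n$ with $\mathrm{d}(y_{i,n}^\epsilon,y_i^\epsilon)\to0$, I define $f_n^\epsilon:=\sum_{i=1}^{N_\epsilon}c_{i,n}^\epsilon\,\phi_i^\epsilon$ with $c_{i,n}^\epsilon:=\fint_{B_\epsilon(y_{i,n}^\epsilon)}f_n\,\mathrm{d}\mathfrak{m}_n$ for $n\in\overline{\mathbb{N}}_+$ (for $n=\infty$, averages of $f_\infty$ over $B_\epsilon(y_i^\epsilon)$).

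Then I would establish three facts. (i) Uniform closeness: on $\{\sum_i\phi_i^\epsilon=1\}\supset\mathrm{supp}\,f_n$ one has, by Jensen, $|f_n-f_n^\epsilon|^p\le\sum_i|f_n-c_{i,n}^\epsilon|^p\phi_i^\epsilon$; integrating, bounding each $\int_{B_{2\epsilon}(y_{i,n}^\epsilon)}|f_n-c_{i,n}^\epsilon|^p\,\mathrm{d}\mathfrak{m}_n$ by the $(p,p)$-Poincar\'e inequality that follows from Proposition~\ref{alrigprop4} (its constant depending only on $p,N,\mathrm{D},\mathrm{P},\mathrm{C},\lambda,\rho$, hence uniform in $n$) and the doubling condition, and then summing with bounded overlap, yields $\|f_n-f_n^\epsilon\|_{L^p(\mathrm{Y},\mathfrak{m}_n)}\le C\Lambda\,\epsilon$ for every $n\in\overline{\mathbb{N}}_+$. (ii) $L^p$-strong convergence at fixed scale: for $\epsilon$ fixed, $c_{i,n}^\epsilon\to\fint_{B_\epsilon(y_i^\epsilon)}f_\infty\,\mathrm{d}\mathfrak{m}_\infty$ as $n\to\infty$ (by weak convergence of $\mathfrak{m}_n$, $L^p$-weak convergence of $f_n$, and $\mathfrak{m}_\infty(\bar B_s(z)\setminus B_s(z))=0$, exactly as in the proof of Lemma~\ref{alriglem2}), and since each $\phi_i^\epsilon\in\mathrm{Lip}_{\mathrm{bs}}(\mathrm{Y})$, every summand, hence the finite sum, is $L^p$-strongly convergent by Proposition~\ref{alrigprop6} and Proposition~\ref{2.18}(ii); thus $f_n^\epsilon\to f_\infty^\epsilon$ $L^p$-strongly. (iii) Approximation of $f_\infty$: the $(p,p)$-Poincar\'e inequality on $\mathrm{X}_\infty$ gives $\|f_\infty-f_\infty^\epsilon\|_{L^p(\mathrm{Y},\mathfrak{m}_\infty)}\le C\epsilon\||\nabla f_\infty|\|_{L^p(\mathrm{Y},\mathfrak{m}_\infty)}\to0$ as $\epsilon\to0$. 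Now (ii) and (iii) say that $\{f_n^{1/m}\}_{n,m}$ is an $L^p$-approximate sequence of $f_\infty$ in the sense introduced before Proposition~\ref{alrigprop1}, while (i) gives $\limsup_{m\to\infty}\limsup_{n\to\infty}\|f_n-f_n^{1/m}\|_{L^p(\mathrm{Y},\mathfrak{m}_n)}=0$; by the equivalence $(1)\Leftrightarrow(2)$ in Proposition~\ref{alrigprop3}, $\{f_n\}$ $L^p$-strongly converges to $f_\infty\in W^{1,p}(\mathrm{Y},\mathfrak{m}_\infty)$, which is the assertion.

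The step I expect to be the main obstacle is the uniform-in-$n$ transfer of the covering and of the partition of unity from the limit space to the varying spaces: one must ensure that the number of balls, their mutual separation, and the overlap of their dilates remain controlled uniformly in $n$ (this is exactly what uniform local doubling in $\mathscr{M}_{\mathrm{D}(\cdot)}$ together with pmGH convergence provides, bounded balls being uniformly totally bounded), and one must prove convergence of the moving ball-averages $c_{i,n}^\epsilon\to\fint_{B_\epsilon(y_i^\epsilon)}f_\infty\,\mathrm{d}\mathfrak{m}_\infty$ while both the centers and the measures vary --- a technical point already isolated in the proof of Lemma~\ref{alriglem2}. Everything else is the classical Rellich--Kondrachov mechanism on PI spaces.
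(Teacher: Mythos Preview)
Your proposal is correct and follows essentially the same Rellich--Kondrachov scheme as the paper: extract a weak limit, use Lemma~\ref{alriglem2} to place it in $W^{1,p}$, build an $L^p$-approximate sequence via a covering with bounded overlap, control the error by the weak local Poincar\'e inequality, and close with the criterion $(1)\Leftrightarrow(2)$ of Proposition~\ref{alrigprop3}. The only real difference is the choice of approximate sequence: the paper takes a \emph{fixed} canonical sequence $f_{n,m}=f_{\infty,m}\in\mathrm{Lip}_{\mathrm{c}}(\mathrm{Y})$ of Lipschitz approximations of the limit $f_\infty$ (with $\int\mathrm{lip}_a^p(f_{\infty,m})\,\mathrm{d}\mathfrak{m}_\infty\to\int|Df_\infty|^p\,\mathrm{d}\mathfrak{m}_\infty$) and compares $f_n$ to $f_{\infty,m}$ cell-by-cell via ball averages, whereas you take the \emph{discrete convolution} $f_n^\epsilon=\sum_i c_{i,n}^\epsilon\phi_i^\epsilon$ built from $f_n$ itself. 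Your version is closer to the classical PI-space argument; the paper's version buys one technical simplification: since $f_{\infty,m}$ is a fixed Lipschitz function, the Poincar\'e bound for it uses only $\int\mathrm{lip}_a^p(f_{\infty,m})\,\mathrm{d}\mathfrak{m}_n$ with respect to the varying measures, and in particular the argument never invokes a Poincar\'e inequality on the \emph{limit} space $\mathrm{X}_\infty$. In your step~(iii) you do use Poincar\'e on $\mathrm{X}_\infty$, which is not part of Assumption~\ref{alrigassu1} (only required there for finite $n$). This is harmless---either cite stability of PI under pmGH convergence, or simply note that $\|f_\infty-f_\infty^\epsilon\|_{L^p}\le\liminf_n\|f_n-f_n^\epsilon\|_{L^p}\le C\Lambda\epsilon$ by step~(ii), $L^p$-weak convergence of $f_n-f_n^\epsilon$ to $f_\infty-f_\infty^\epsilon$, and Proposition~\ref{alrigprop5}---but it is worth being aware that the paper's route sidesteps this point entirely.
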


\begin{proof}
	Without loss of generality, we can assume that $f_n\geq 0$ for every $n\in {\mathbb{N}}_+$. By Proposition $\ref{alrigprop5}$ and Lemma $\ref{alriglem2}$, up to a not relabeled subsequence, we can assume that $\{f_n\}_n$ $L^p$-weakly converges to some function $f_\infty\in  W^{1, p}(\mathrm{Y},\mathfrak{m}_\infty)$. Since $\{f_{n}\}_n$ weakly converges to $f_\infty$, it is not difficult to check that $f\in \hat{W}_0^{1, p}(B_{R_\infty}(x_\infty), \mathfrak{m}_\infty)$. 
	
	By Proposition $\ref{alrigprop6}$, we can find an $L^p$-approximate sequence $\{f_{n,m}\}$ of $f_\infty$ such that $f_{n,m}=f_{\infty,m}\in \mathrm{Lip}_{\mathrm{c}}(\mathrm{Y})$ for every $n\in \overline{\mathbb{N}}_+$ and $m\in {\mathbb{N}}_+$. Since $f_\infty\in W^{1, p}(\mathrm{Y}, \mathfrak{m}_\infty)$ and $f_\infty=0$, $\mathfrak{m}_\infty$-a.e. in $\mathrm{Y}\backslash \bar{B}_{R_\infty}(x_\infty)$, we can also require that $\mathrm{supp}(f_{\infty,m})\subset B_{{R_\infty}+2}(x_\infty)$ for every $m$ and
	$$ \int\mathrm{lip}_a^p(f_{\infty,m})\mathrm{d}\mathfrak{m}_\infty\xrightarrow{m\rightarrow \infty} \int |D f_\infty|^p\mathrm{d}\mathfrak{m}_\infty, 
	$$
	where $\mathrm{lip}_a(g)$ is the asymptotic Lipschitz constant of a function $g:\mathrm{Y}\rightarrow \mathbb{R}$.
	
	Choose a sequence $\{r_k\}_k\subset (0,1)$ such that $r_k\rightarrow 0$ and then fix $k$. By Zorn's Lemma and the uniformly locally doubling property of $\mathscr{X}_\infty$, we can find a finite set $\{z_i: i \in I_k\} \subset B_{{R_\infty}+6}({x}_\infty)\cap \mathrm{X}_\infty$ such that
	$$
	B_{{R_\infty}+6}({x}_\infty) \subset \bigcup_{i \in I_k} B_{r_k}(z_i)\text{ and }\mathrm{d}(z_i, z_j) \geq r_k,\ \text{ for every $i, j\in I_k$ with $i \neq j$}.
	$$
	The fact that $\mathscr{X}_n\rightarrow \mathscr{X}_\infty$ implies that for every $i$, we can find a sequence $n\mapsto z_{n,i}\in B_{R_n+6}(x_n)\cap \mathrm{X}_n$ converging to $z_i$ in $(\mathrm{Y},\mathrm{d})$. Note that we can require that $\mathrm{d}(z_{n,i},z_i)\leq r_k/4$ for all $n$ and $i$. Hence, we have that for each $n$,
	$$
	B_{R_n+4}(x_n) \subset \bigcup_{i \in I_k} B_{5r_k/4}(z_{n,i})\text{ and }\mathrm{d}(z_{n,i}, z_{n,j}) \geq r_k/2,\ \text{ for every $i, j\in I_k$ with $i \neq j$},
	$$
	where we used the fact that $\mathrm{d}({x}_n,{x}_\infty)\in(0,1)$ for every $n$. Set $E_{n,i}=B_{5r_k/4}(z_{n,i})$, $E_{n,i}^{\prime}=B_{ 5(2\lambda+1) r_k/4}(z_{n,i})$ and $E_{i}=B_{5r_k/4}(z_{i})$. Now, we claim that for every $n$, the dilated balls $\{E_{n,i}^{\prime}\}_{i\in I_k}$ have a bounded overlap $L_1:=\mathrm{D}(5\lambda+11/4)^{2+\log_2(20\lambda+11)}$ in $\mathrm{X}_n$. Indeed, let $x\in \mathrm{X}_n$ and let $A_x=\{i\in I_k: x\in E_{n,i}^{\prime}\}$. Then
	\begin{align*}
		\mathfrak{m}_n(B_{[1+5(2\lambda+1)]r_k/4}(x))&\geq \sum_{i\in A_x}\mathfrak{m}_n(B_{r_k/4}(z_{n,i})) \\
		&\geq \mathrm{D}(5\lambda+11/4)^{-\log_2(80\lambda+44)}\mathfrak{m}_n(B_{[1+10(2\lambda+1)]r_k/4}(z_{n,i}))\\
		&\geq\mathrm{D}(5\lambda+11/4)^{-\log_2(80\lambda+44)}\mathfrak{m}_n(B_{[1+5(2\lambda+1)]r_k/4}(x))\sum_{i\in A_x} 1.
	\end{align*}
	Similarly, we can prove that the dilated balls $\{E_{i}\}_{i\in I_k}$ have a bounded overlap $L_2:=\mathrm{D}(7/4)^{1+\log_2 7}$ in $\mathrm{X}_\infty$. 
	
	By the weak local $p$-$p$ Poincar\'{e} inequality, we have that
	\begin{align}
		\sum_{i\in I_k} \int_{E_{n,i}}|f_n-(f_n)_{E_{n,i}}|^p \mathrm{d} \mathfrak{m}_n &\leq \frac{(5C)^p}{4^p}r_k^p L_1\int|D f_n|^p \mathrm{d} \mathfrak{m}_n,\notag\\
		\sum_{i\in I_k} \int_{E_{n,i}}|f_{n,m}-(f_{n,m})_{E_{n,i}}|^p \mathrm{d} \mathfrak{m}_n &\leq \frac{(5C)^p}{4^p}r_k^p L_1\int\mathrm{lip}_a^p(f_{n,m}) \mathrm{d} \mathfrak{m}_n\label{alrigeq4}\\
		&=\frac{(5C)^p}{4^p}r_k^p L_1\int\mathrm{lip}_a^p(f_{\infty,m}) \mathrm{d} \mathfrak{m}_n,\notag
	\end{align}
	where $C>0$ is independent of $n,k,m$. On the other hand, for all $n,m\geq 1$, by the H\"{o}lder inequality and the fact that $f_n=f_{n,m}=0$ on $\mathrm{X}_n\backslash B_{R_n+4}(x_n)\subset \mathrm{X}_n\backslash \cup_{i\in I_k}E_{n,i}$, we have that 
	\begin{align}
		\int|f_n-f_{n,m}|^p \mathrm{d} \mathfrak{m}_n &\leq 3^{p-1}  \sum_{i\in I_k}\bigg(\int_{E_{n,i}}|f_n-(f_n)_{E_{n,i}}|^p \mathrm{d} \mathfrak{m}_n \notag\\
		&\quad +\int_{E_{n,i}}|(f_{n,m})_{E_{n,i}}-f_{n,m}|^p \mathrm{d} \mathfrak{m}_n\label{alrigeq5}\\
		&\quad+\int_{E_{n,i}}|(f_n)_{E_{n,i}}-(f_{n,m})_{E_{n,i}}|^p \mathrm{d} \mathfrak{m}_n\bigg).\notag
	\end{align}
	By combining $(\ref{alrigeq4})$ and $(\ref{alrigeq5})$, and then letting $n\rightarrow \infty$, we obtain that
	\begin{align*}
		3^{1-p}\limsup_{n\rightarrow \infty}\int|f_n-f_{n,m}|^p \mathrm{d} \mathfrak{m}_n &\leq \frac{(5C)^p}{4^p}r_k^p  L_1\sup_n\int|D f_n|^p \mathrm{d} \mathfrak{m}_n \\
		&\quad +\frac{(5C)^p}{4^p}r_k^p  L_1\int\mathrm{lip}_a^p(f_{\infty,m}) \mathrm{d} \mathfrak{m}_\infty\\
		&\quad+ L_2\int |f_\infty-f_{\infty,m}|^p\mathrm{d} \mathfrak{m}_\infty.
	\end{align*}
	where we used the upper semi-continuity of $y\mapsto \mathrm{lip}_a(f_{\infty,m})(y)$. Finally by letting $m\rightarrow \infty$ and then letting $k\rightarrow \infty$, we get that
	\begin{align*}
		3^{1-p}\limsup_{m\rightarrow \infty}\limsup_{n\rightarrow \infty}\int|f_n-f_{n,m}|^p \mathrm{d} \mathfrak{m}_n =0.
	\end{align*}
	This means that $\{f_n\}_n$ $L^p$-strongly converges to $f_\infty$ by Proposition $\ref{alrigprop3}$.
\end{proof}

\subsection{Almost rigidity theorem}\label{alrig3}
In this and the following subsections, the following assumptions will be made:
\begin{assumption}\label{alrigassu}
	Let $p\in (1,\infty)$.
	\begin{itemize}
		\item Given $N\in (1,\infty)$, the sequence $\{\mathscr{X}_n:=[\mathrm{X}_n, \mathrm{d}_n, \mathfrak{m}_n,\bar{x}_n]\}_{n\in \bar{\mathbb{N}}_+}\subset \mathscr{M}_{\mathrm{D}_{0,N}(\cdot)}$ is such that $$\mathrm{d}_{\mathrm{pmGH}}(\mathscr{X}_n,\mathscr{X}_\infty)\rightarrow 0.$$  
		\item $\mathscr{X}_n$ is a $\mathrm{RCD}(0,N)$ space for every $n\in \bar{\mathbb{N}}_+$. 
		\item There is a proper metric space $(\mathrm{Y},\mathrm{d})$ containing isometrically all the $(\mathrm{X}_n, \mathrm{d}_n)$, $n\in \bar{\mathbb{N}}_+$.
		\item The sequence $\{\mathfrak{m}_n\}_{n}$ weakly converges to $\mathfrak{m}_\infty$ and $\mathrm{d}(\bar{x}_n,\bar{x}_\infty)\rightarrow 0$.
		\item Let $x_n\in \mathrm{X}_n$, $n\in \bar{\mathbb{N}}_+$, be such that $\mathrm{d}({x}_n,{x}_\infty)\in(0,1)$ for every $n$ and $\mathrm{d}({x}_n,{x}_\infty)\rightarrow 0$.
		\item Let $\{R_n\}_{n\in \bar{\mathbb{N}}_+}\subset (0,\infty)$ be such that $|R_n-R_\infty|\in(0,1)$ for every $n$ and $R_n\rightarrow R_\infty$.
	\end{itemize}
\end{assumption} 

The following proposition, which is crucial for our discussion, is excerpted from \cite[Theorem 8.1]{AH17}.
\begin{proposition}[$\Gamma$-convergence of $p$-Cheeger energies]\label{alrigprop8}
	$(\mathrm{a})$ For every sequence $n \mapsto f_n \in L^p(\mathrm{Y}, \mathfrak{m}_n)$ $L^p$-strongly converging to some $f_\infty \in L^p(\mathrm{Y}, \mathfrak{m}_\infty)$, it holds that
	$$
	\mathrm{Ch}_p(f_\infty) \leq \liminf _{n \rightarrow \infty} \mathrm{Ch}_p^n(f_n).
	$$
	$(\mathrm{b})$ For every $f_\infty \in L^p(\mathrm{Y}, \mathfrak{m}_\infty)$, there exists a sequence $n \mapsto f_n \in L^p(\mathrm{Y}, \mathfrak{m}_n)$ $L^p$-strongly convergent to $f_\infty$ such that
	$$
	\mathrm{Ch}_p(f_\infty) \geq \limsup _{n \rightarrow \infty} \mathrm{Ch}_p^n(f_n).
	$$
\end{proposition}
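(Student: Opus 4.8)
The plan is to prove the two inequalities separately; part (b), the $\Gamma$-$\limsup$, is a soft recovery-sequence construction, while part (a), the $\Gamma$-$\liminf$, carries the real difficulty. \emph{Part (b).} If $f_\infty\notin W^{1,p}(\mathrm{Y},\mathfrak{m}_\infty)$ then $\mathrm{Ch}_p(f_\infty)=+\infty$ and any $L^p$-strongly convergent sequence---one exists by Proposition~\ref{2.18}(iii)---does the job, so I assume $f_\infty\in W^{1,p}(\mathrm{Y},\mathfrak{m}_\infty)$. Using density in energy of bounded-support Lipschitz functions with respect to the asymptotic Lipschitz constant (valid on $\mathrm{RCD}$ spaces and already used in the proof of Theorem~\ref{compact}), I would pick $g_m\in\mathrm{Lip}_{\mathrm{bs}}(\mathrm{Y})$ with $g_m\to f_\infty$ in $L^p(\mathrm{Y},\mathfrak{m}_\infty)$ and $\int\mathrm{lip}_a^p(g_m)\,\mathrm{d}\mathfrak{m}_\infty\to p\,\mathrm{Ch}_p(f_\infty)$. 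For each fixed $m$ the constant sequence $n\mapsto g_m$ is $L^p$-strongly convergent to $g_m$ by Proposition~\ref{alrigprop6}; since $\mathrm{lip}(g_m)\le\mathrm{lip}_a(g_m)$ and $\mathrm{lip}_a^p(g_m)$ is bounded, upper semicontinuous and (as $\mathrm{Y}$ is proper) compactly supported, the weak convergence $\mathfrak{m}_n\to\mathfrak{m}_\infty$ gives
\[
\limsup_{n\to\infty}\mathrm{Ch}_p^n(g_m)\le\frac1p\limsup_{n\to\infty}\int\mathrm{lip}_a^p(g_m)\,\mathrm{d}\mathfrak{m}_n\le\frac1p\int\mathrm{lip}_a^p(g_m)\,\mathrm{d}\mathfrak{m}_\infty.
\]
Since $\{g_m\}_{n,m}$ is an $L^p$-approximate sequence of $f_\infty$, a standard diagonalization based on the equivalence of (1) and (2) in Proposition~\ref{alrigprop3} produces $m(n)\to\infty$ for which $f_n:=g_{m(n)}$ is $L^p$-strongly convergent to $f_\infty$ and $\limsup_n\mathrm{Ch}_p^n(f_n)\le\lim_m\frac1p\int\mathrm{lip}_a^p(g_m)\,\mathrm{d}\mathfrak{m}_\infty=\mathrm{Ch}_p(f_\infty)$.

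\emph{Part (a).} I would assume $L:=\liminf_n\mathrm{Ch}_p^n(f_n)<\infty$ and, after passing to a subsequence realizing the liminf, that $\sup_n\mathrm{Ch}_p^n(f_n)<\infty$; since $L^p$-strong convergence forces $\|f_n\|_{L^p(\mathfrak{m}_n)}\to\|f_\infty\|_{L^p(\mathfrak{m}_\infty)}$, this yields $\sup_n\|f_n\|_{W^{1,p}(\mathrm{Y},\mathfrak{m}_n)}<\infty$. Writing $f_n=f_n^+-f_n^-$ (each $L^p$-strongly convergent to $f_\infty^\pm$ by Proposition~\ref{2.18}(i)) and applying Lemma~\ref{alriglem2} gives $f_\infty\in W^{1,p}(\mathrm{Y},\mathfrak{m}_\infty)$; after a further subsequence, Proposition~\ref{2.18}(iv) provides an $L^p$-weak limit $G\in L^p(\mathrm{Y},\mathfrak{m}_\infty)$, $G\ge 0$, of $\{|Df_n|_{*,p}\}_n$. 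The proof then reduces to the pointwise bound
\[
|Df_\infty|_{*,p}\le G\qquad\mathfrak{m}_\infty\text{-a.e.},
\]
since, granting it, Proposition~\ref{alrigprop5} gives $\int|Df_\infty|_{*,p}^p\,\mathrm{d}\mathfrak{m}_\infty\le\int G^p\,\mathrm{d}\mathfrak{m}_\infty\le\liminf_n\int|Df_n|_{*,p}^p\,\mathrm{d}\mathfrak{m}_n$, i.e.\ $\mathrm{Ch}_p(f_\infty)\le L$.

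\emph{The main obstacle} is exactly this domination: proving that the $L^p$-weak limit $G$ of the minimal $p$-relaxed slopes is a $p$-weak upper gradient of $f_\infty$ on $\mathrm{X}_\infty$. The plan would be to exploit the uniform PI structure (each $\mathrm{X}_n$ is $\mathrm{RCD}(0,N)$, hence uniformly doubling and supporting a weak local $1$-$1$ Poincar\'{e} inequality with $\mathrm{D}_{0,N},\mathrm{P}_{0,N}$): run the telescoping estimate from the proof of Lemma~\ref{alriglem2} with the scales kept explicit, test against Lebesgue points of $G$, and combine this with a Semmes-type pencil-of-curves/modulus argument that transfers the upper-gradient inequality $|f_n(\gamma_1)-f_n(\gamma_0)|\le\int_\gamma|Df_n|_{*,p}$---valid along the curves produced by the Poincar\'{e} inequality on $\mathrm{X}_n$---to $p$-a.e.\ curve on $\mathrm{X}_\infty$, passing to the limit via $\mathrm{d}_{\mathrm{pmGH}}(\mathscr{X}_n,\mathscr{X}_\infty)\to 0$ and the $L^p$-weak convergence $|Df_n|_{*,p}\to G$. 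The delicate point is getting $G$ itself on the right-hand side rather than a maximal function of $G$; this sharpness is where the asymptotic-Lipschitz relaxation of $\mathrm{Ch}_p$ on PI spaces enters. An alternative---the route of \cite{AH17}, from which the statement is taken---is to work with the differentials $\mathrm{d}f_n\in L^0(T^*\mathrm{X}_n)$, establish convergence of the cotangent modules under pmGH convergence so that $\mathrm{d}f_n\to\mathrm{d}f_\infty$ in a suitable sense, and deduce $|Df_\infty|_{*,p}=|\mathrm{d}f_\infty|\le G$ from lower semicontinuity of the pointwise norm; within the scope of the present paper one may simply cite \cite[Theorem~8.1]{AH17}.
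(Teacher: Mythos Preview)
The paper does not prove this proposition at all: it is simply ``excerpted from \cite[Theorem 8.1]{AH17}'' and stated without proof. So there is nothing to compare your argument against on the paper's side beyond the citation.

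Your recovery-sequence argument for (b) is correct and is in fact the standard one. For (a), however, what you have written is a sketch with an acknowledged gap, not a proof. The telescoping/Poincar\'e argument you borrow from Lemma~\ref{alriglem2} only yields a Haj\l asz-type bound $|f_\infty(z_1)-f_\infty(z_2)|\le C\,\mathrm{d}(z_1,z_2)\big(M_R G(z_1)+M_R G(z_2)\big)$, which gives $|Df_\infty|_{*,p}\le C\,M_R G$ and hence only $\mathrm{Ch}_p(f_\infty)\le C^p\liminf_n\mathrm{Ch}_p^n(f_n)$ for some structural constant $C>1$; it does not give the sharp $\Gamma$-$\liminf$ inequality. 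You correctly flag this (``getting $G$ itself on the right-hand side rather than a maximal function of $G$''), but neither of the two routes you then mention is actually carried out: the Semmes pencil-of-curves transfer of the upper-gradient inequality across a pmGH-converging sequence is nontrivial and is not developed here, and the module-theoretic alternative is just a pointer to \cite{AH17}. Since the paper itself settles the matter by citing \cite[Theorem~8.1]{AH17}, your final sentence---that within the scope of this paper one may simply invoke that reference---is the honest and sufficient conclusion; the preceding sketch should be understood as motivation, not as a self-contained proof.
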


\begin{proposition}\label{alrigprop9}
	$(\mathrm{i})$ If the sequence $n\mapsto f_n\in W_0^{1,p}(B_{R_n}(x_n),\mathfrak{m}_n)$ $L^p$-weakly converges to $f_\infty\in L^p(\mathrm{Y}, \mathfrak{m}_\infty)$ and $\sup_n \|f_n\|_{W^{1,p}(\mathrm{Y},\mathfrak{m}_n)}<\infty$, then $\{f_n\}_n$ $L^p$-strongly converges to $f_\infty\in \hat{W}^{1,p}(B_{R_\infty}(x_\infty),\mathfrak{m}_\infty)$ and
	\begin{equation}\label{alrigeq9}\liminf_{n \rightarrow \infty} \int_{B_{R_n}(x_n)}|\nabla f_n|^p \mathrm{d}\mathfrak{m}_n \geq \int_{B_{R_\infty}(x_\infty)}|\nabla f_\infty|^p \mathrm{d}\mathfrak{m}_\infty.
	\end{equation}
	$(\mathrm{ii})$ For any $f_\infty \in W_0^{1, p}(B_{R_\infty}(x_\infty), \mathfrak{m}_\infty)$, there exists a sequence $$n\mapsto f_n \in W_0^{1, p}(B_{R_n}(x_n), \mathfrak{m}_n)$$ such that $\{f_n\}_n$ $W^{1, p}$-strongly converges to $f_\infty$.\\
	$(\mathrm{iii})$ If the sequence $n\mapsto f_n\in W_0^{1,p}(B_{R_n}(x_n),\mathfrak{m}_n)$ $W^{1, p}$-weakly converges to $f_\infty \in W^{1, p}(\mathrm{Y}, \mathfrak{m}_\infty)$, then
	\begin{equation}\label{alrigeq8}
		\liminf_{n \rightarrow \infty} \int g|\nabla f_n| \mathrm{d}\mathfrak{m}_n \geq \int g|\nabla f_\infty| \mathrm{d}\mathfrak{m}_\infty
	\end{equation}
	for any lower semi-continuous $g: \mathrm{Y} \rightarrow[0, \infty]$.\\
	$(\mathrm{iv})$ If the sequence $n\mapsto f_n\in W_0^{1,p}(B_{R_n}(x_n),\mathfrak{m}_n)$ $W^{1, p}$-strongly converges to $f_\infty$, then $\{|\nabla f_n|\}$ $L^p$-strongly converges to $|\nabla f_\infty|$.\\
	$(\mathrm{v})$ Let $g_n\in L^p(B_{R_n}(x_n),\mathfrak{m}_n)$, $n\in \bar{\mathbb{N}}_+$. If $\{g_n\}$ is $L^p$-strongly converging to $g_\infty$, then $\{\mu_{g_n}(t)\}_n$ converges to $\mu_{g_\infty}(t)$ for every $t\in (0,\infty)$, where $C$ is a countable set.\\
	$(\mathrm{vi})$Let $g_n \in W_0^{1, p}(B_{R_n}(x_n), \mathfrak{m}_n)$, $n\in \bar{\mathbb{N}}_+$. If $\{g_n\}$ $W^{1, p}$-weakly converges to $g_\infty$, then, up to a not relabeled subsequence, $\{g_n^{\star}\}$ converges to $g_\infty^{\star}$ in $L^p([0,\infty), \mathfrak{m}_{N})$.
\end{proposition}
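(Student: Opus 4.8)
The plan is to establish the six parts in the listed order, since each builds on the previous, after recording that Assumption $\ref{alrigassu}$ forces Assumption $\ref{alrigassu1}$: an $\mathrm{RCD}(0,N)$ space is uniformly locally doubling, supports a weak local $1$-$1$ Poincar\'{e} inequality with the universal functions $\mathrm{D}_{0,N},\mathrm{P}_{0,N}$, satisfies the measure--ratio bound with $\mathrm{C}_{0,N}$, and fulfills $\mathfrak{m}_\infty(\bar B_r(z)\setminus B_r(z))=0$ for all $z\in\mathrm{X}_\infty$ and $r>0$; thus the Compactness Theorem $\ref{compact}$ and Proposition $\ref{alrigprop8}$ apply. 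For part (i) I would apply Theorem $\ref{compact}$ to get a subsequence of $\{f_n\}$ that $L^p$-strongly converges to some $g\in W^{1,p}(\mathrm{Y},\mathfrak{m}_\infty)$; since $L^p$-strong convergence implies $L^p$-weak convergence and the $L^p$-weak limit is unique (it is determined by testing $(\ref{1})$ against $\mathrm{C}_{\mathrm{bs}}(\mathrm{Y})$), one has $g=f_\infty$, and then the usual subsequence argument, together with $\|f_\infty\|_{L^p}\le\liminf_n\|f_n\|_{L^p}$ (Proposition $\ref{alrigprop5}$), forces $\limsup_n\|f_n\|_{L^p(\mathfrak{m}_n)}\le\|f_\infty\|_{L^p(\mathfrak{m}_\infty)}$, i.e. $L^p$-strong convergence of the whole sequence. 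That $f_\infty=0$ $\mathfrak{m}_\infty$-a.e. outside $B_{R_\infty}(x_\infty)$ follows by testing $(\ref{1})$ against $\phi\in\mathrm{C}_{\mathrm{bs}}(\mathrm{Y})$ with $\mathrm{supp}\,\phi$ at positive distance from $\bar B_{R_\infty}(x_\infty)$, so that $\mathrm{supp}\,\phi\cap B_{R_n}(x_n)=\emptyset$ and hence $\int\phi f_n\,\mathrm{d}\mathfrak{m}_n=0$ for $n$ large, combined with $\mathfrak{m}_\infty(\bar B_{R_\infty}(x_\infty)\setminus B_{R_\infty}(x_\infty))=0$; together with $f_\infty\in W^{1,p}(\mathrm{Y},\mathfrak{m}_\infty)$ from Lemma $\ref{alriglem2}$, this gives $f_\infty\in\hat{W}_0^{1,p}(B_{R_\infty}(x_\infty),\mathfrak{m}_\infty)$. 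Finally $(\ref{alrigeq9})$ is exactly the $\Gamma$-liminf bound of Proposition $\ref{alrigprop8}$(a), using $p\,\mathrm{Ch}_p^n(f_n)=\int_{B_{R_n}(x_n)}|\nabla f_n|^p\,\mathrm{d}\mathfrak{m}_n$ and, by locality of the relaxed slope, $p\,\mathrm{Ch}_p(f_\infty)=\int_{B_{R_\infty}(x_\infty)}|\nabla f_\infty|^p\,\mathrm{d}\mathfrak{m}_\infty$.

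For part (ii) I would use the definition of $W_0^{1,p}$ to write $f_\infty=\lim_k h_k$ in $W^{1,p}(\mathrm{Y},\mathfrak{m}_\infty)$ with $h_k\in\mathrm{Lip}_{\mathrm{c}}(B_{R_\infty}(x_\infty))$. For fixed $k$, Proposition $\ref{alrigprop8}$(b) produces an $L^p$-strongly convergent sequence $n\mapsto\tilde h_{k,n}$ with $\limsup_n\mathrm{Ch}_p^n(\tilde h_{k,n})\le\mathrm{Ch}_p(h_k)$, which together with Proposition $\ref{alrigprop8}$(a) makes $\tilde h_{k,n}\to h_k$ $W^{1,p}$-strongly. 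To land inside $B_{R_n}(x_n)$ I would fix $\psi_k\in\mathrm{Lip}_{\mathrm{c}}(B_{R_\infty}(x_\infty))$ with $\psi_k\equiv1$ on $\mathrm{supp}\,h_k$ and set $h_{k,n}:=\psi_k\tilde h_{k,n}$; for $n$ large $\mathrm{supp}\,\psi_k\subset B_{R_n}(x_n)$, so $h_{k,n}\in W_0^{1,p}(B_{R_n}(x_n),\mathfrak{m}_n)$, while the Leibniz cross term lives on $\{\psi_k<1\}\cap\{h_k=0\}$, where a localized form of $L^p$-strong convergence (Lemma $\ref{alriglem1}$, testing against a $\mathrm{C}_{\mathrm{bs}}$ function vanishing on $\mathrm{supp}\,h_k$) makes it asymptotically negligible; hence $h_{k,n}\to h_k=\psi_k h_k$ is still $W^{1,p}$-strong. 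A diagonal choice $n\mapsto h_{k(n),n}$ then gives the desired $f_n\in W_0^{1,p}(B_{R_n}(x_n),\mathfrak{m}_n)$ converging $W^{1,p}$-strongly to $f_\infty$.

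Part (iii) is the place I expect the real work to be. The bare $\Gamma$-liminf for $p$-energies only gives $\int|\nabla f_\infty|^p\,\mathrm{d}\mathfrak{m}_\infty\le\liminf_n\int|\nabla f_n|^p\,\mathrm{d}\mathfrak{m}_n$, which is strictly weaker: $(\ref{alrigeq8})$ is equivalent to the assertion that any $L^p$-weak limit $h$ of $\{|\nabla f_n|\}$ dominates $|\nabla f_\infty|$ pointwise $\mathfrak{m}_\infty$-a.e., not merely in $L^p$-norm, and this cannot follow from $\Gamma$-convergence of the energies alone. After reducing to $g=\phi\in\mathrm{C}_{\mathrm{bs}}(\mathrm{Y})$, $\phi\ge0$ (writing the lower semicontinuous $g$ as an increasing supremum of such $\phi$ and invoking monotone convergence), the argument must pass through the convergence of the cotangent modules $L^p(T^*\mathrm{X}_n)$ along the $\mathrm{pmGH}$ convergence — so that $W^{1,p}$-weak convergence of $f_n$ upgrades to weak convergence of the differentials $\mathrm{d}f_n$ in the limit module — after which $(\ref{alrigeq8})$ becomes the lower semicontinuity of the $\phi$-weighted pointwise norm under that weak convergence; this is the $p$-version of the $p=2$ statement in \cite{GMS}, in the framework of \cite{AH17}. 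Granting (iii), part (iv) is quick: $W^{1,p}$-strong convergence gives $\int|\nabla f_n|^p\,\mathrm{d}\mathfrak{m}_n\to\int|\nabla f_\infty|^p\,\mathrm{d}\mathfrak{m}_\infty$ (the upper bound by definition, the lower by Proposition $\ref{alrigprop8}$(a) via part (i)); extracting by Proposition $\ref{2.18}$(iv) an $L^p$-weak limit $h$ of $\{|\nabla f_n|\}$, part (iii) gives $h\ge|\nabla f_\infty|$ while Proposition $\ref{alrigprop5}$ gives $\|h\|_{L^p}\le\||\nabla f_\infty|\|_{L^p}$, so $h=|\nabla f_\infty|$ $\mathfrak{m}_\infty$-a.e.; $L^p$-weak convergence plus convergence of norms is $L^p$-strong convergence, and since the limit is forced, the whole sequence converges.

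For part (v) I would write $\mu_{g_n}(t)=\int_{\mathrm{Y}}\varphi\,\chi_{(t,\infty)}(|g_n|)\,\mathrm{d}\mathfrak{m}_n$ with $\varphi\in\mathrm{C}_{\mathrm{bs}}(\mathrm{Y})$ equal to $1$ on a ball that eventually contains all $B_{R_n}(x_n)$, sandwich $\chi_{(t,\infty)}$ between continuous functions $\psi^-_\varepsilon\le\chi_{(t,\infty)}\le\psi^+_\varepsilon$, apply Proposition $\ref{alrigprop3}$(3) with $\zeta(y,r)=\varphi(y)\psi^\pm_\varepsilon(|r|)$ (whose growth is controlled by $\varphi\in\mathrm{C}_{\mathrm{bs}}$), and let $\varepsilon\downarrow0$; the two bounds close up precisely at those $t$ with $\mathfrak{m}_\infty(\{|g_\infty|=t\})=0$, i.e. off a countable set. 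Finally, for part (vi): $W^{1,p}$-weak convergence of $g_n$ puts us under the hypotheses of part (i), so $g_n\to g_\infty$ $L^p$-strongly with $g_\infty\in\hat{W}_0^{1,p}(B_{R_\infty}(x_\infty),\mathfrak{m}_\infty)$, hence $|g_n|\to|g_\infty|$ $L^p$-strongly (Proposition $\ref{2.18}$(i)); part (v) gives $\mu_{g_n}(t)\to\mu_{g_\infty}(t)$ off a countable set, whence the generalized inverses converge, $g_n^\sharp\to g_\infty^\sharp$ off a countable set, and so $g_n^\star\to g_\infty^\star$ $\mathfrak{m}_N$-a.e. on $[0,\infty)$; since moreover $\|g_n^\star\|_{L^p(\mathfrak{m}_N)}=\|g_n\|_{L^p(\mathfrak{m}_n)}\to\|g_\infty\|_{L^p(\mathfrak{m}_\infty)}=\|g_\infty^\star\|_{L^p(\mathfrak{m}_N)}$ by equimeasurability (Proposition $\ref{propre}$(b)) and the $L^p$-strong convergence, the Brezis--Lieb lemma upgrades the $\mathfrak{m}_N$-a.e. convergence to convergence in $L^p([0,\infty),\mathfrak{m}_N)$, which is the claim of (vi).
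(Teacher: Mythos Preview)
Your proof of part (i) is essentially the paper's own argument (Theorem \ref{compact} for the $L^p$-strong convergence, then Proposition \ref{alrigprop8}(a) for \eqref{alrigeq9}), only with the subsequence-to-full-sequence step and the verification of $f_\infty\in\hat W_0^{1,p}$ spelled out rather than left implicit.

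For parts (ii)--(vi) the paper does not give a proof at all: it simply refers to \cite{MV21} and \cite{Wu24}. Your sketches are therefore strictly more than the paper provides, and they are in line with what those references contain. In particular your diagnosis of (iii) is accurate: the pointwise inequality $h\ge|\nabla f_\infty|$ for any $L^p$-weak limit $h$ of $\{|\nabla f_n|\}$ does not follow from $\Gamma$-convergence of the energies and requires the localized/module-level lower semicontinuity developed in \cite{AH17} (generalizing the $p=2$ case of \cite{GMS}); your reduction to nonnegative $\phi\in\mathrm{C}_{\mathrm{bs}}(\mathrm{Y})$ is the right first step. Your arguments for (iv), (v), (vi) are the standard ones and are correct; for (ii) the cut-off/diagonal scheme is also the usual route, though the claim that the Leibniz cross term is asymptotically negligible needs a little more care than you indicate (one controls $\int_{\{\psi_k<1\}}|\tilde h_{k,n}|^p\,\mathrm{d}\mathfrak{m}_n$ and $\int_{\{\psi_k<1\}}|\nabla\tilde h_{k,n}|^p\,\mathrm{d}\mathfrak{m}_n$ via Lemma \ref{alriglem1} and a localized $\Gamma$-liminf, respectively, using that $h_k$ and $|\nabla h_k|$ vanish there).
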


\begin{proof}
	For part (i), since $n\mapsto f_n\in W_0^{1,p}(B_{R_n}(x_n),\mathfrak{m}_n)$ $L^p$-weakly converges to $f_\infty\in L^p(\mathrm{Y}, \mathfrak{m}_\infty)$ and $\sup_n \|f_n\|_{W^{1,p}(\mathrm{Y},\mathfrak{m}_n)}<\infty$, by Theorem $\ref{compact}$ we have that $\{f_n\}_n$ $L^p$-strongly converges to $f_\infty\in \hat{W}^{1,p}(B_{R_\infty}(x_\infty),\mathfrak{m}_\infty)$. Then $(\ref{alrigeq9})$ follows from Proposition $\ref{alrigprop8}$.
	
	For parts (ii), (iii), (iv), (v) and (vi), we refer to \cite{MV21} and \cite{Wu24}. 
\end{proof}

We can now prove the main result almost rigidity result of this note.
\begin{theorem}[Almost rigidity theorem]\label{alrig}
	For every $\epsilon>0$, $p>1$, $N>1$, $0<a_1<a_2<\infty$, $b,d \in(0,\infty)$, $0<c_l \leq c_u<\infty$, there exists $\delta=\delta(\epsilon, p, N, a_1,a_2,b, c_l / c_u,d)>0$ such that the following statement holds.
	Assume that
	\begin{itemize}
		\item $(\mathrm{X}, \mathrm{d}, \mathfrak{m},\bar{x})$ is a $\operatorname{RCD}(0, N)$ space with $\mathrm{AVR}_{\mathfrak{m}}\geq b$ and $\Omega=B_R(x) \subset \mathrm{X}$ is an open ball with $\mathfrak{m}(\Omega):=a\in [a_1,a_2]$ and $\mathrm{d}(\bar{x},x)\leq d;$
		\item $f \in W_0^{1, q}(\Omega, \mathfrak{m})$ with $c_l \leq\|f\|_{L^q(\Omega, \mathfrak{m})} \leq\|f\|_{W^{1, q}(\Omega, \mathfrak{m})} \leq c_u;$
		\item $u \in W_0^{1, p}(\Omega, \mathfrak{m})$ is a solution of $-\mathscr{L}_p u=f;$
		\item $v \in W_0^{1, p}([0, r_a), \mathfrak{m}_{N})$ is a solution of $-\mathscr{L}_p v=f^{\star}$, where $r_a=H_N^{-1}(a)$.
	\end{itemize}
	If $\|\mathrm{AVR}_{\mathfrak{m}}^{q/N} u^{\star}-v\|_{L^p([0, r_a), \mathfrak{m}_{N})}<\delta$, then there exists a Euclidean metric measure cone $(\mathrm{Z}, \mathrm{d}_{\mathrm{Z}}, \mathfrak{m}_{\mathrm{Z}},\bar{z})$ such that
	$$
	\mathrm{d}_{\mathrm{pmGH}}((\mathrm{X}, \mathrm{d}, \mathfrak{m},\bar{x}),(\mathrm{Z}, \mathrm{d}_{\mathrm{Z}}, \mathfrak{m}_{\mathrm{Z}},\bar{z}))<\epsilon .
	$$
\end{theorem}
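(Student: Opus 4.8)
I would run a blow-up/compactness argument by contradiction. Suppose the statement fails for some $\epsilon,p,N,a_1,a_2,b,c_l/c_u,d$. Then there are $\mathrm{RCD}(0,N)$ spaces $(\mathrm{X}_n,\mathrm{d}_n,\mathfrak{m}_n,\bar x_n)$ with $\mathrm{AVR}_{\mathfrak{m}_n}\ge b$, balls $\Omega_n=B_{R_n}(x_n)$ with $\mathfrak{m}_n(\Omega_n)=a_n\in[a_1,a_2]$ and $\mathrm{d}(\bar x_n,x_n)\le d$, data $f_n\in W_0^{1,q}(\Omega_n,\mathfrak{m}_n)$ with $c_l\le\|f_n\|_{L^q}\le\|f_n\|_{W^{1,q}}\le c_u$, the solution $u_n$ of $-\mathscr{L}_p u_n=f_n$ and the solution $v_n$ of $-\mathscr{L}_p v_n=f_n^{\star}$ on $[0,r_{a_n})$, satisfying $\|\mathrm{AVR}_{\mathfrak{m}_n}^{q/N}u_n^{\star}-v_n\|_{L^p([0,r_{a_n}),\mathfrak{m}_N)}\to0$, yet $\mathrm{d}_{\mathrm{pmGH}}((\mathrm{X}_n,\mathrm{d}_n,\mathfrak{m}_n,\bar x_n),(\mathrm{Z},\mathrm{d}_{\mathrm{Z}},\mathfrak{m}_{\mathrm{Z}},\bar z))\ge\epsilon$ for every Euclidean metric measure cone. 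First I would record the a priori control on the radii: Bishop--Gromov and $\mathrm{AVR}_{\mathfrak{m}_n}\ge b$ give $a_n\ge\omega_N b\,R_n^N$, so $R_n$ is bounded above; and the Poincar\'e/Faber--Krahn inequality on $W_0^{1,q}(B_{R_n}(x_n))$ (with uniform constant on $\mathrm{RCD}(0,N)$) gives $c_l\le\|f_n\|_{L^q}\le C R_n\|\nabla f_n\|_{L^q}\le C R_n c_u$, so $R_n$ is bounded below. Hence the masses of a fixed large ball around $\bar x_n$ lie in a fixed compact subset of $(0,\infty)$, and after the canonical normalization (whose factors $c_n$ then also lie in a fixed compact subset of $(0,\infty)$, so that $\|\mathrm{AVR}_{\mathfrak{m}_n}^{q/N}u_n^{\star}-v_n\|_{L^p}$ is affected only by a bounded factor) we may assume $\mathscr{X}_n\in\mathscr{M}_{\mathrm{D}_{0,N}(\cdot)}$. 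By Proposition~\ref{alrigprop7} we pass to a subsequence with $\mathscr{X}_n\to\mathscr{X}_\infty=(\mathrm{X}_\infty,\mathrm{d}_\infty,\mathfrak{m}_\infty,\bar x_\infty)$ an $\mathrm{RCD}(0,N)$ space, realized in a common proper $(\mathrm{Y},\mathrm{d})$, with $\mathfrak{m}_n\to\mathfrak{m}_\infty$ weakly, $x_n\to x_\infty$, $R_n\to R_\infty>0$ (after a harmless perturbation with $\mathfrak{m}_\infty(\partial B_{R_\infty}(x_\infty))=0$), $\mathfrak{m}_n(\Omega_n)\to a_\infty:=\mathfrak{m}_\infty(B_{R_\infty}(x_\infty))\in(0,\infty)$, and $\mathrm{AVR}_{\mathfrak{m}_n}\to A_\infty>0$; Assumption~\ref{alrigassu1} is then in force (with the universal functions for $\mathrm{RCD}(0,N)$).

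Next I would pass to the limit in the two Poisson problems. Testing $-\mathscr{L}_p u_n=f_n$ with $u_n$ and using the Poincar\'e inequality (with $R_n$ bounded) gives $\sup_n\|u_n\|_{W^{1,p}(\mathrm{Y},\mathfrak{m}_n)}<\infty$, while $\sup_n\|f_n\|_{W^{1,q}(\mathrm{Y},\mathfrak{m}_n)}<\infty$ by hypothesis. By the Compactness Theorem~\ref{compact} and Lemma~\ref{alriglem2}, up to a further subsequence $f_n\to f_\infty$ $L^q$-strongly and $u_n\to u_\infty$ $L^p$-strongly, with $f_\infty\in\hat{W}_0^{1,q}(B_{R_\infty}(x_\infty),\mathfrak{m}_\infty)$, $u_\infty\in\hat{W}_0^{1,p}(B_{R_\infty}(x_\infty),\mathfrak{m}_\infty)$, and $\|f_\infty\|_{L^q}\ge c_l>0$, so $f_\infty\ne0$. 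Combining the $\Gamma$-convergence of $p$-Cheeger energies (Proposition~\ref{alrigprop8}), the recovery sequences of Proposition~\ref{alrigprop9}(ii), and a Minty-type monotonicity argument, one verifies that $u_\infty$ is the solution of $-\mathscr{L}_p u_\infty=f_\infty$ on $B_{R_\infty}(x_\infty)$ (and that $u_n\to u_\infty$ is in fact $W^{1,p}$-strong). Then $u_n^{\star}\to u_\infty^{\star}$ in $L^p$ and $f_n^{\star}\to f_\infty^{\star}$ in $L^q$ over $([0,\infty),\mathfrak{m}_N)$ by Proposition~\ref{alrigprop9}(vi), and since $r_{a_n}\to r_{a_\infty}$ the explicit representation of Proposition~\ref{propex} yields $v_n\to v_\infty$ in $L^p([0,\infty),\mathfrak{m}_N)$, where $v_\infty\in W_0^{1,p}([0,r_{a_\infty}),\mathfrak{m}_N)$ is the continuous solution of $-\mathscr{L}_p v_\infty=f_\infty^{\star}$.

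Now I would extract the rigidity on the limit. Letting $n\to\infty$ in $\|\mathrm{AVR}_{\mathfrak{m}_n}^{q/N}u_n^{\star}-v_n\|_{L^p}\to0$ gives $A_\infty^{q/N}u_\infty^{\star}=v_\infty$ $\mathfrak{m}_N$-a.e.\ on $[0,r_{a_\infty})$. On the other hand, Bishop--Gromov monotonicity together with $\mathfrak{m}_n\to\mathfrak{m}_\infty$ weakly gives $\mathrm{AVR}_{\mathfrak{m}_\infty}\ge\limsup_n\mathrm{AVR}_{\mathfrak{m}_n}=A_\infty\ge b$, so $\mathrm{X}_\infty$ has Euclidean volume growth and the Talenti-type comparison theorem applies on it, yielding
\[
\mathrm{AVR}_{\mathfrak{m}_\infty}^{q/N}u_\infty^{\star}\ \le\ v_\infty\ =\ A_\infty^{q/N}u_\infty^{\star}\qquad \mathfrak{m}_N\text{-a.e. on }[0,r_{a_\infty}).
\]
Since $f_\infty\ne0$, the function $u_\infty^{\star}$ is non-negative and not $\mathfrak{m}_N$-a.e.\ zero, so this forces $\mathrm{AVR}_{\mathfrak{m}_\infty}\le A_\infty$; together with the reverse inequality, $A_\infty=\mathrm{AVR}_{\mathfrak{m}_\infty}$, hence $\mathrm{AVR}_{\mathfrak{m}_\infty}^{q/N}u_\infty^{\star}=v_\infty$ -- precisely the equality case of the Talenti-type comparison on $\mathrm{X}_\infty$. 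By the rigidity theorem for Talenti, $(\mathrm{X}_\infty,\mathrm{d}_\infty,\mathfrak{m}_\infty)$ is a Euclidean metric measure cone.

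Finally, taking $(\mathrm{Z},\mathrm{d}_{\mathrm{Z}},\mathfrak{m}_{\mathrm{Z}},\bar z)=(\mathrm{X}_\infty,\mathrm{d}_\infty,\mathfrak{m}_\infty,\bar x_\infty)$, the convergence $\mathscr{X}_n\to\mathscr{X}_\infty$ gives $\mathrm{d}_{\mathrm{pmGH}}((\mathrm{X}_n,\mathrm{d}_n,\mathfrak{m}_n,\bar x_n),(\mathrm{Z},\mathrm{d}_{\mathrm{Z}},\mathfrak{m}_{\mathrm{Z}},\bar z))<\epsilon$ for all large $n$, contradicting the choice of the sequence. I expect the main obstacle to be the second step -- showing that the $L^p$-limit $u_\infty$ of the solutions genuinely solves the Poisson problem on the limit ball with matching $p$-energy -- which rests on Theorem~\ref{compact}, the $\Gamma$-convergence of $p$-Cheeger energies and a nonlinear monotonicity argument, and is where the passage from $p=2$ (treated in \cite{GMS}) to general $p>1$ is most delicate; cf.\ \cite{Wu24}. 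A secondary technical point is the normalization bookkeeping in the first step, together with the identification $\hat{W}_0^{1,p}=W_0^{1,p}$ for balls in $\mathrm{RCD}(0,N)$ spaces needed to invoke the Talenti and rigidity results on $\mathrm{X}_\infty$.
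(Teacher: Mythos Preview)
Your contradiction/compactness strategy and Steps~1--3 are essentially the paper's, with minor variations (e.g.\ you bound $R_n$ from below via Poincar\'e on $f_n$, while the paper uses Bishop--Gromov: $R_n^N\ge \mathfrak m_n(\Omega_n)/\mathfrak m_n(B_1(x_n))\ge a_1/\sup_n\mathfrak m_n(B_{1+d}(\bar x_n))$; and you bound $\|u_n\|_{W^{1,p}}$ by testing with $u_n$, while the paper just quotes Talenti). The substantive divergence is in the rigidity step.

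You plan to show that $u_\infty$ \emph{solves} the Poisson problem on $B_{R_\infty}(x_\infty)$ and then invoke the Talenti rigidity theorem on $\mathrm X_\infty$. As you yourself flag, this requires $u_\infty\in W_0^{1,p}(B_{R_\infty}(x_\infty))$ rather than merely $\hat W_0^{1,p}$, i.e.\ the identification $\hat W_0^{1,p}=W_0^{1,p}$ for the limit ball; the paper's Remark after Theorem~\ref{conPoi} shows this can genuinely fail, and Theorem~\ref{conPoi}(iii) moreover needs $p\ge2$ for the Minty-type passage to the limit. So your route, as written, carries an unresolved hypothesis (and a possible restriction on $p$).

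The paper sidesteps this entirely: it never claims $u_\infty$ is a solution on the limit space. Instead it takes, for each $n$, the pre-limit inequality coming from the weak formulation (the integrated form of Lemma~\ref{Talem1}, cf.\ \cite[Lemma~3.2]{Wu24}), passes to the limit using Propositions~\ref{alrigprop2}, \ref{2.18}(i), \ref{alrigprop9}(iii),(v) and Lemma~\ref{Talem0}, and obtains directly
\[
-\frac{\mathrm d}{\mathrm dt}\int_{\{|u_\infty|>t\}}|\nabla u_\infty|\,\mathrm d\mathfrak m_\infty\le(-\mu_{u_\infty}'(t))^{1/q}\Big(\int_{\{|u_\infty|>t\}}|f_\infty|\,\mathrm d\mathfrak m_\infty\Big)^{1/p}.
\]
Combined with $A=\mathrm{AVR}_{\mathfrak m_\infty}$ (which you obtain correctly) and the explicit formula for $v_\infty$, this forces equality in the isoperimetric inequality for the superlevel sets $\{|u_\infty|>t\}$, and the paper then applies the \emph{isoperimetric} rigidity (Theorem~\ref{isorig}) rather than Talenti rigidity. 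This buys you the conclusion for all $p>1$ and without ever touching the $\hat W_0^{1,p}=W_0^{1,p}$ question.
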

\begin{proof}
	We argue by contradiction.
	
	Step 1: Assume that there exist ${\epsilon},p, {N}, {a}_1,{a}_2,{b},{c},d$ such that for every $n \in \mathbb{N}_{+}$, we can find a $\operatorname{RCD}(0, {N})$ space  $(\mathrm{X}_n, \mathrm{d}_n, \mathfrak{m}_n,\bar{x}_n)$, a ball $\Omega_n=B_{R_n}(x_n) \subset \mathrm{X}_n$ with $\mathfrak{m}_n(\Omega_n):=a_n\in [{a}_1,{a}_2]$ and $\mathrm{d}(\bar{x}_n,x_n)\leq d$, and functions $f_n \in W_0^{1, q}(\Omega_n, \mathfrak{m}_n)$, $u_n \in W_0^{1, p}(\Omega_n, \mathfrak{m}_n)$, $v_n \in W_0^{1, p}([0, r_{{a}_n})$, $\mathfrak{m}_{{N}})$ such that 
	\begin{align*}
		& -\mathscr{L}_p u_n=f_n \text { weakly, }-\mathscr{L}_p v_n=f_n^{\star} \text { weakly, }\quad  \mathrm{AVR}_{\mathfrak{m}_n}\geq b,\\
		& {c} \leq\|f_n\|_{L^q(\Omega_n, \mathfrak{m}_n)} \leq\|f_n\|_{W^{1, q}(\Omega_n, \mathfrak{m}_n)} \leq 1, \quad\|\mathrm{AVR}_{\mathfrak{m}_n}^{q/N}u_n^{\star}-v_n\|_{L^p([0, r_{a_n}), \mathfrak{m}_{{N}})}<\frac{1}{n},
	\end{align*}
	and moreover
	\begin{align*}
		\inf \big\{\mathrm{d}_{\mathrm{pmGH}}((\mathrm{X}_n, \mathrm{d}_n, \mathfrak{m}_n,\bar{x}_n),(\mathrm{Z}, \mathrm{d}_{\mathrm{Z}}, \mathfrak{m}_{\mathrm{Z}},\bar{z}))&:(\mathrm{Z}, \mathrm{d}_{\mathrm{Z}}, \mathfrak{m}_{\mathrm{Z}},\bar{z}) \\
		&\text { is a Euclidean metric measure cone}\big\} \geq {\epsilon} .
	\end{align*}
	We claim that $\{R_n\}_n$ is bounded from above and bounded away from $0$. Indeed, we have that
	\begin{align*}
		b\omega_N R_n^N&\leq \mathrm{AVR}_{\mathfrak{m}_n}\omega_N R_n^N\leq \mathfrak{m}_n(B_{R_n}(x_n))\leq a_2,\\
		R_n^N&\geq \frac{\mathfrak{m}_n(B_{R_n}(x_n))}{\mathfrak{m}_n(B_{1}(x_n))}\geq \frac{a_1}{\sup_n\mathfrak{m}_n(B_{1+d}(\bar{x}_n))}>0.
	\end{align*}
	This implies the validity of the above claim.

	Now, up to not relabeled subsequences, we can assume that $\{(\mathrm{X}_n, \mathrm{d}_n, \mathfrak{m}_n,\bar{x}_n)\}$ converges to a $\operatorname{RCD}(0,{N})$ space $(\mathrm{X}_\infty, \mathrm{d}_\infty, \mathfrak{m}_\infty,\bar{x}_\infty)$ (by Proposition $\ref{alrigprop7}$), Assumption $\ref{alrigassu}$ is satisfied, and $\mathrm{AVR}_{\mathfrak{m}_n}\rightarrow A\in [b,\mathrm{AVR}_{\mathfrak{m}_\infty}]$ (because $\limsup_{n\rightarrow \infty}\mathrm{AVR}_{\mathfrak{m}_n}\leq \mathrm{AVR}_{\mathfrak{m}_\infty}<\infty$). Hence, we have that $a_\infty:=\mathfrak{m}_\infty (B_{R_\infty}(x_\infty))\in [{a}_1,{a}_2]$.
	
	Step 2: We claim that, up to not relabeled subsequences, $\{f_n\}$ $L^q$-strongly converges to a non-zero function $f_\infty \in L^q(B_{R_\infty}(x_\infty), \mathfrak{m}_\infty)$ and $\{f_n^{\star}\}$ converges to $f_\infty^{\star}$ in $L^q([0, \infty), \mathfrak{m}_{{N}})$, $\{u_n\}$ $L^p$-strongly converges to a function $u_\infty \in \hat{W}_0^{1, p}(B_{R_\infty}(x), \mathfrak{m})$ and $\{u_n^{\star}\}$ converges to $u^{\star}$ in $L^p([0, \infty, \mathfrak{m}_{{N}})$, and $\{v_n\}$ converges to a function $v_\infty \in W_0^{1, p}([0, r_{a_n}), \mathfrak{m}_{ {N}})$ in $L^p([0,\infty), \mathfrak{m}_{{N}})$. Indeed, first we note that the following identity was proved in $(\ref{ex-so-2})$:
	\begin{equation}\label{alrigeq12}
		v_n(\rho)=\int_{H_{N}(\rho)}^{H_{ N}(r_{a_n})} \frac{1}{\mathcal{I}_{ {N}}(\sigma)}\bigg(\frac{1}{\mathcal{I}_{{N}}(\sigma)} \int_0^\sigma f_n^\star \circ H_{ {N}}^{-1}(t) \mathrm{d} t\bigg)^{\frac{1}{p-1}} \mathrm{d} \sigma, \quad \text{ for any } \rho \in[0, r_{a_n}].
	\end{equation}
	From the fact that $\sup_n\|f_n\|_{W^{1, q}} \leq 1$ by assumption, it follows that $\sup _n\|f_n^{\star}\|_{L^q} \leq 1$ by Proposition $\ref{propre}$(b). The estimates $(\ref{2.21})$, $(\ref{2.22})$ and $(\ref{2.23})$ imply that $\{v_n\}$ is also bounded in $W^{1, p}([0, r_{{a}_n}), \mathfrak{m}_{{N}})$. Using the Talenti-type comparison theorem and Proposition $\ref{propre}$(b) again, we deduce that $\sup_n\|u_n\|_{W^{1, p}}<\infty$. Thus, by Proposition $\ref{alrigprop9}$(i), (and up to not relabeled subsequences), $\{u_n\}$ $L^p$-strongly converges to $u_\infty \in \hat{W}_0^{1, p}(B_{R_\infty}(x_\infty), \mathfrak{m}_\infty)$, $\{f_n\}$ $L^q$-strongly converges to $f_\infty \in \hat{W}_0^{1, q}(B_{R_\infty}(x_\infty), \mathfrak{m}_\infty)$ and $\{v_n\}$ $L^p$-strongly converges to $v_\infty \in W_0^{1, p}([0, r_{a_\infty}), \mathfrak{m}_{{N}})$. In particular, $\{v_n\}$ converges to $v_\infty$ in $L^p([0, \infty), \mathfrak{m}_{{N}})$ by uniform convexity of $L^p$ spaces. Moreover, by Proposition $\ref{alrigprop9}$(vi), $\{u_n^{\star}\}$ converges to $u_\infty^{\star}$ in $L^p([0, \infty), \mathfrak{m}_{{N}})$ and $\{f_n^{\star}\}$ converges to $f_\infty^{\star}$ in $L^q([0, \infty), \mathfrak{m}_{{N}})$.
	
	Step 3: We claim that $v_\infty$ is a solution to $-\mathscr{L}_p v=f_\infty^{\star}$. The facts that $a_n\rightarrow a_\infty$, $\{v_n\}$ converges to $v_\infty$ in $L^p([0, \infty), \mathfrak{m}_{{N}})$, $\{f_n^{\star}\}$ converges to $f^{\star}$ in $L^q([0, \infty), \mathfrak{m}_{{N}})$ and $(\ref{alrigeq12})$ imply that $v_\infty$ can be represented as
	\begin{equation}\label{alrigeq11}
		v_\infty(\rho)=\int_{H_{{N}}(\rho)}^{H_{{N}}(r_{a_\infty})} \frac{1}{\mathcal{I}_{ {N}}(\sigma)}\bigg(\frac{1}{\mathcal{I}_{{N}}(\sigma)} \int_0^\sigma f_\infty^{\star} \circ H_{{N}}^{-1}(t) \mathrm{d} t\bigg)^{\frac{1}{p-1}} \mathrm{d} \sigma, \quad \text{ for any } \rho \in[0, r_{a_\infty}].
	\end{equation}
	We obtain the claim by Proposition $\ref{propex}$.
	
	Step 4. Notice that
	\begin{equation}\label{alrigeq10}
		\mathrm{d}_{\mathrm{pmGH}}((\mathrm{X}_\infty, \mathrm{d}_\infty, \mathfrak{m}_\infty,\bar{x}_\infty),(\mathrm{Z}, \mathrm{d}_{\mathrm{Z}}, \mathfrak{m}_{\mathrm{Z}},\bar{z})) \geq {\epsilon}
	\end{equation}
	for any Euclidean metric measure cone $(\mathrm{Z}, \mathrm{d}_{\mathrm{Z}}, \mathfrak{m}_{\mathrm{Z}},\bar{z})$. However, by $\mathrm{AVR}_{\mathfrak{m}_n}\rightarrow A$, the $L^p$-strong convergence of $u_n^{\star}$ to $u_\infty^{\star}$ and the $L^p$-strong convergence of $v_n$ to $v_\infty$, one has
	$$
	\|A^{q/{N}}u_\infty^{\star}-v_\infty\|_{L^p}=\lim _{n \rightarrow \infty}\|\mathrm{AVR}_{\mathfrak{m}_n}^{q/{N}}u_n^{\star}-v_n\|_{L^p}=0
	$$
	which implies that $A^{q/{N}}u_\infty^{\star}=v_\infty$ and thus $A^{q/{N}}u_\infty^{\sharp}=v_\infty^\sharp$. This implies that $\mu_{u_\infty}(A^{-q/{N}}t)=\nu(t)$ for any $t>0$, where $\nu$ is the distribution function of $v_\infty$. Moreover, since $f_\infty$ is the $L^q$-strong limit of the sequence $\{f_n\}$, it has $L^q$-norm bounded from below by ${c}$ and thus $v_\infty \neq 0$. By the Talenti-type comparison theorem and $A\leq \mathrm{AVR}_{\mathfrak{m}_\infty}$, we can deduce that
	$$
	\mathrm{AVR}_{\mathfrak{m}_\infty}^{q/{N}}u_\infty^{\star}\geq A^{q/{N}}u_\infty^{\star}=v_\infty \geq \mathrm{AVR}_{\mathfrak{m}_\infty}^{q/{N}}u_\infty^{\star}.
	$$
	This implies that $A=\mathrm{AVR}_{\mathfrak{m}_\infty}$.

	The fact that each $u_n$ is a solution to $(\ref{eq1})$ implies that for $t, h>0$, it holds
	\begin{align*}
		\frac{1}{h} \int_{\{t<|u_n| < t+h\}}|\nabla u_n| \mathrm{d}\mathfrak{m}_n &\leq  \bigg(-\frac{\mu_{u_n}(t+h)-\mu_{u_n}(t)}{h}\bigg)^{1 / q} \frac{1}{h^{\frac{1}{p}}}\bigg(\int(u_n-t)^+ f_n \mathrm{d}\mathfrak{m}_n \\
		& \quad-\int(-u_n-t)^+ f_n \mathrm{d}\mathfrak{m}_n-\int(u_n-t-h)^+ f_n \mathrm{d}\mathfrak{m}_n \\
		&\quad+\int(-u_n-t-h)^+ f_n \mathrm{d}\mathfrak{m}_n\bigg),
	\end{align*}
	see \cite[Lemma 3.2]{Wu24} for the details. By letting $n \rightarrow \infty$ and then letting $h \rightarrow 0$, we obtain that
	$$
	-\frac{\mathrm{d}}{\mathrm{d} t} \int_{\{|u_\infty|>t\}}|\nabla u_\infty| \mathrm{d}\mathfrak{m}_\infty \leq(-\mu_{u_\infty}^{\prime}(t))^{1 / q}\bigg(\int_{\{|u_\infty|>t\}}|f_\infty| \mathrm{d}\mathfrak{m}_\infty\bigg)^{1 / p}, \quad  \mathcal{L}^1 \text {-a.e. } t>0,
	$$
	where we used Lemma $\ref{Talem0}$ and Propositions $\ref{alrigprop2}$, $\ref{2.18}$(i), $\ref{alrigprop9}$(iii), $\ref{alrigprop9}$(v). Combining this, Theorem $\ref{thmiso}$, coarea formula $(\ref{coeq4})$, Chain rule of minimal $p$-relaxed slope, Proposition $\ref{propre}$(c) and $(\ref{alrigeq11})$, we obtain that
	\begin{align*}
		1 &\leq \frac{-\mu_{u_\infty}^{\prime}(t)}{\mathrm{AVR}_{\mathfrak{m}_\infty}^{\frac{q}{{N}}}\mathcal{I}_{{N}}(\mu_{u_\infty}(t))^{\frac{p}{p-1}}}\bigg(\int_0^{\mu_{u_\infty}(t)} f_\infty^{\sharp}(s) \mathrm{d} s\bigg)^{\frac{1}{p-1}}\\
		&=\nu^{\prime}(\mathrm{AVR}_{\mathfrak{m}_\infty}^{\frac{q}{{N}}}t) (v_\infty^\sharp)^{\prime}(\nu(\mathrm{AVR}_{\mathfrak{m}_\infty}^{\frac{q}{{N}}}t))=1,
	\end{align*}
	for a.e. $t \in(0, \operatorname{ess-sup}|u_\infty|)$. Hence, for a.e. $t \in(0, \operatorname{ess-sup}|u_\infty|)$, the superlevel set $\{|u_\infty|>t\}$ satisfies $\mathrm{AVR}_{\mathfrak{m}_\infty}^{1/{N}}\mathcal{I}_{{N}}(\mathfrak{m}_\infty(\{|u_\infty|>t\}))=\operatorname{Per}(\{|u_\infty|>t\})$. By the rigidity in the isoperimetric inequality, this implies that $(\mathrm{X}_\infty, \mathrm{d}_\infty, \mathfrak{m}_\infty,\bar{x}_\infty)$ is a Euclidean metric measure cone, contradicting $(\ref{alrigeq10})$.
\end{proof}

\subsection{The continuity of the Poisson problem}\label{alrig4}
The proof of the following result is very similar to the proof in \cite[Theorem 3.19]{Wu24}, and it omitted.
\begin{theorem}\label{conPoi}
	Under the assumption $\ref{alrigassu}$, let $a_n:=\mathfrak{m}_n(B_{R_n}(x_n))$ and let $f_n \in W_0^{1, q}(B_{R_n}(x_n), \mathfrak{m}_n)$ with $\sup _n\|f_n\|_{W^{1, q}}<\infty$. Assume that
	\begin{equation}\label{alrigeq13}
		W_0^{1, p}( B_{R_\infty}(x_\infty), \mathfrak{m}_\infty)=\hat{W}_0^{1, p}( B_{R_\infty}(x_\infty), \mathfrak{m}_\infty) ,
	\end{equation}
	and for each $n$, $u_n \in W_0^{1, p}(B_{R_n}(x_n), \mathfrak{m}_n)$ is a solution to
	$$
	\begin{cases}-\mathscr{L}_p u=f, & \text { in } B_{R_n}(x_n), \\ u=0, & \text { on } \partial B_{R_n}(x_n),\end{cases}
	$$
	and $v_n \in W_0^{1, p}([0, r_{a_n}), \mathfrak{m}_{N})$ is the solution to
	$$
	\left\{\begin{array}{l}
		-\mathscr{L}_p v=f_n^{\star} \text { in }[0, r_{a_n}) \\
		v(r_{a_n})=0.
	\end{array}\right.
	$$
	Then, up to subsequences, we have that \\
	$(\mathrm{i})$ $\{f_n\}$ $L^q$-strongly converges to a function $f_\infty \in L^q(B_{R_\infty}(x_\infty), \mathfrak{m}_\infty)$ and $\{f_n^{\star}\}$ converges to $f_\infty^{\star}$ in $L^q([0, r_{a_\infty}), \mathfrak{m}_{N})$, where $a_\infty:=\mathfrak{m}_\infty(B_{R_\infty}(x_\infty))$.\\
	$(\mathrm{ii})$ $\{v_n\}$ $W^{1, p}$-strongly converges to a solution $v_\infty$ to
	$$
	\left\{\begin{array}{l}
		-\mathscr{L}_p v=f_\infty^{\star} \text { in }[0, r_{a_\infty}) \\
		v(r_{a_\infty})=0
	\end{array}\right.
	$$
	If $p \geq 2$, in addition, then, up to a subsequence,\\
	$(\mathrm{iii})$ $\{u_n\}$ $W^{1, p}$-strongly converges to a solution $u_\infty$ to
	$$
	\begin{cases}-\mathscr{L}_p u=f & \text { in } B_{R_\infty}(x_\infty) \\ u=0 & \text { on } \partial B_{R_\infty}(x_\infty).\end{cases}
	$$
\end{theorem}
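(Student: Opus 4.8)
The plan is to treat the three assertions in turn, reducing everything to the compactness and $\Gamma$-convergence machinery of this section. For (i) I would first use the hypothesis $\sup_n\|f_n\|_{W^{1,q}(\mathrm{Y},\mathfrak{m}_n)}<\infty$ together with the Compactness Theorem $\ref{compact}$ (applied with exponent $q$) to extract a subsequence along which $\{f_n\}$ $L^q$-strongly converges to some $f_\infty\in W^{1,q}(\mathrm{Y},\mathfrak{m}_\infty)$; since each $f_n$ vanishes $\mathfrak{m}_n$-a.e. outside $B_{R_n}(x_n)$ and $R_n\to R_\infty$, testing against continuous functions supported away from $\bar B_{R_\infty}(x_\infty)$ (as in Lemma $\ref{alriglem1}$) shows $f_\infty=0$ $\mathfrak{m}_\infty$-a.e. outside $B_{R_\infty}(x_\infty)$, so $f_\infty\in L^q(B_{R_\infty}(x_\infty),\mathfrak{m}_\infty)$. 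The weak convergence $\mathfrak{m}_n\to\mathfrak{m}_\infty$ (spheres being $\mathfrak{m}_\infty$-null in $\mathrm{RCD}(0,N)$ spaces) yields $a_n\to a_\infty$, hence $r_{a_n}\to r_{a_\infty}$. Finally, since $\{f_n\}$ $W^{1,q}$-weakly converges to $f_\infty$, Proposition $\ref{alrigprop9}$(vi) (with $p$ replaced by $q$) gives, along a further subsequence, $f_n^\star\to f_\infty^\star$ in $L^q([0,\infty),\mathfrak{m}_N)$, and in particular in $L^q([0,r_{a_\infty}),\mathfrak{m}_N)$. This proves (i).

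\textbf{Part (ii).} Next I would invoke the explicit representation $(\ref{ex-so-2})$ of Proposition $\ref{propex}$, which expresses $v_n$ as an explicit functional of $f_n^\star$ on $[0,r_{a_n})$; since the reference measure $\mathfrak{m}_N$ here does not vary, this is an ordinary convergence problem. Using $f_n^\star\to f_\infty^\star$ in $L^q$, $r_{a_n}\to r_{a_\infty}$, the H\"older bound $(\ref{Holder})$ and the uniform estimates $(\ref{2.21})$--$(\ref{2.23})$, one passes to the limit in $(\ref{ex-so-2})$ to obtain $v_n\to v_\infty$ in $L^p([0,\infty),\mathfrak{m}_N)$, where $v_\infty$ is given by $(\ref{ex-so-2})$ with datum $f_\infty^\star$ on $[0,r_{a_\infty})$; by Proposition $\ref{propex}$ this $v_\infty$ is precisely the solution of $-\mathscr{L}_p v=f_\infty^\star$ on $[0,r_{a_\infty})$. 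For the $W^{1,p}$-strong convergence I would differentiate $(\ref{ex-so-1})$: with $F_n(\xi):=\int_0^\xi f_n^\sharp$ one has $v_n'=-(F_n\circ H_N/h_N)^{1/(p-1)}$, the change of variables $s=H_N(x)$ turns $f_n^\star\to f_\infty^\star$ in $L^q(\mathfrak{m}_N)$ into $f_n^\sharp\to f_\infty^\sharp$ in $L^q([0,a_\infty],\mathcal L^1)$, hence $F_n\to F_\infty$ locally uniformly and $v_n'\to v_\infty'$ pointwise on $(0,r_{a_\infty})$; combined with the uniform $L^p$-bound $(\ref{2.23})$ this gives $\|v_n'\|_{L^p}\to\|v_\infty'\|_{L^p}$ by dominated convergence, i.e. $\{v_n\}$ $W^{1,p}$-strongly converges to $v_\infty$.

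\textbf{Part (iii), the case $p\geq 2$.} Here I would first establish the a priori bound $\sup_n\|u_n\|_{W^{1,p}(\mathrm{Y},\mathfrak{m}_n)}<\infty$: testing $-\mathscr{L}_p u_n=f_n$ against $u_n$ gives $\int|\nabla u_n|^p\,\mathrm{d}\mathfrak{m}_n=\int f_n u_n\,\mathrm{d}\mathfrak{m}_n\leq\|f_n\|_{L^q}\|u_n\|_{L^p}$, while the weak local $(1,p)$-Poincar\'e inequality on $B_{R_n}(x_n)$ (with constants uniform in $n$, since the $\mathscr{X}_n$ are $\mathrm{RCD}(0,N)$ and $R_n\to R_\infty$) bounds $\|u_n\|_{L^p}\lesssim\|\,|\nabla u_n|\,\|_{L^p}$, whence $\|\,|\nabla u_n|\,\|_{L^p}^{\,p-1}\lesssim\|f_n\|_{L^q}$. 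By the Compactness Theorem $\ref{compact}$ (equivalently Proposition $\ref{alrigprop9}$(i)), along a subsequence $\{u_n\}$ $L^p$-strongly converges to some $u_\infty\in\hat W_0^{1,p}(B_{R_\infty}(x_\infty),\mathfrak{m}_\infty)$, which by $(\ref{alrigeq13})$ belongs to $W_0^{1,p}(B_{R_\infty}(x_\infty),\mathfrak{m}_\infty)$ and is thus an admissible competitor. To identify $u_\infty$ and upgrade the convergence I would use the variational characterization: $u_n$ is the unique minimiser over $W_0^{1,p}(B_{R_n}(x_n),\mathfrak{m}_n)$ of $J_n(w):=\tfrac1p\int|\nabla w|^p\,\mathrm{d}\mathfrak{m}_n-\int f_n w\,\mathrm{d}\mathfrak{m}_n$; the $\Gamma$-convergence of $p$-Cheeger energies (Proposition $\ref{alrigprop8}$), the recovery sequences for the Dirichlet class (Proposition $\ref{alrigprop9}$(ii)), the $L^q$-strong convergence $f_n\to f_\infty$ together with Proposition $\ref{alrigprop2}$, and the equicoercivity coming from the uniform Poincar\'e inequality give that $J_n$ $\Gamma$-converges to $J_\infty$ on the homogeneous Dirichlet classes, so that $u_n\to u_\infty$ with $J_n(u_n)\to J_\infty(u_\infty)$ and $u_\infty$ minimises $J_\infty$, i.e. solves $-\mathscr{L}_p u=f_\infty$ on $B_{R_\infty}(x_\infty)$. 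Since $\int f_n u_n\,\mathrm{d}\mathfrak{m}_n\to\int f_\infty u_\infty\,\mathrm{d}\mathfrak{m}_\infty$, the convergence $J_n(u_n)\to J_\infty(u_\infty)$ forces $\int|\nabla u_n|^p\,\mathrm{d}\mathfrak{m}_n\to\int|\nabla u_\infty|^p\,\mathrm{d}\mathfrak{m}_\infty$, which with Proposition $\ref{alrigprop8}$(a) is exactly the $W^{1,p}$-strong convergence of $\{u_n\}$. Equivalently, once this strong convergence is available one passes directly to the limit in $\int\langle\nabla u_n,\nabla\psi_n\rangle|\nabla u_n|^{p-2}\,\mathrm{d}\mathfrak{m}_n=\int f_n\psi_n\,\mathrm{d}\mathfrak{m}_n$ along recovery sequences $\psi_n\to\psi_\infty$, using Proposition $\ref{2.18}$(v) to control $|\nabla u_n|^{p-2}$; it is here, where the exponent $p-2$ must be non-negative, that the restriction $p\geq 2$ enters.

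\textbf{Main obstacle.} The crux is part (iii): converting the weak $W^{1,p}$-compactness of $\{u_n\}$ into strong convergence and passing to the limit in the nonlinear flux $|\nabla u_n|^{p-2}\nabla u_n$ across the varying measures $\mathfrak{m}_n$. This is what forces the combined use of $\Gamma$-convergence and convergence of the minimal energies (or, in the direct approach, of the monotonicity of the $p$-Laplacian, whose modulus is favourable precisely when $p\geq 2$). A secondary but essential difficulty is the stability of the zero-boundary constraint under pmGH-convergence: one must know that the $L^p$-limit of functions in $\hat W_0^{1,p}(B_{R_n}(x_n))$ is admissible on the limit ball, and this is exactly where the identification $(\ref{alrigeq13})$ of $W_0^{1,p}$ with $\hat W_0^{1,p}$ on $B_{R_\infty}(x_\infty)$ is indispensable, guaranteeing that $u_\infty$ is a legitimate competitor for $J_\infty$.
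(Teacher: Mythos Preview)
Your proposal is sound and, since the paper itself omits the proof entirely (deferring to \cite[Theorem 3.19]{Wu24}), there is no detailed argument to compare against; your strategy in fact mirrors the ingredients used in Steps~2--3 of the proof of Theorem~\ref{alrig} (compactness via Theorem~\ref{compact}, the explicit formula of Proposition~\ref{propex}, and Proposition~\ref{alrigprop9}), so your parts~(i)--(ii) are exactly in line with the paper's toolkit.

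One remark on part~(iii): your $\Gamma$-convergence argument for the functionals $J_n(w)=\tfrac1p\int|\nabla w|^p\,\mathrm{d}\mathfrak{m}_n-\int f_n w\,\mathrm{d}\mathfrak{m}_n$ is correct and, as you set it up, does not actually require $p\geq 2$: the $\Gamma$-liminf comes from Proposition~\ref{alrigprop8}(a) together with Proposition~\ref{alrigprop2}, the recovery sequences from Proposition~\ref{alrigprop9}(ii), and equicoercivity from your a~priori bound; convergence of minimal values then forces $\int|\nabla u_n|^p\to\int|\nabla u_\infty|^p$ for every $p>1$. The restriction $p\geq 2$ in the statement reflects the direct/monotonicity route taken in \cite{Wu24}, where one tests the equation against $u_n-\tilde u_n$ (with $\tilde u_n$ a recovery sequence for $u_\infty$) and uses the algebraic inequality $\langle |a|^{p-2}a-|b|^{p-2}b,\,a-b\rangle\geq c_p|a-b|^p$, valid only for $p\geq 2$. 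So your variational approach is in fact slightly more general than what the reference needs; your closing sentence locates the role of $p\geq 2$ correctly for that alternative route, but you should be aware that in your own $\Gamma$-convergence argument the restriction is not used.
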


\begin{remark}
	We remark that the condition $(\ref{alrigeq13})$ is necessary. We consider a trivial $\mathrm{pmGH}$ convergent sequence and a convergent sequence of points:
	$$
	([0,\infty), \mathrm{d}_{eu}, \mathfrak{m}_{N},0) \xrightarrow{\mathrm{pmGH}}([0,\infty), \mathrm{d}_{eu}, \mathfrak{m}_{N},0) , \text { and } x_n:=1-\epsilon_n \rightarrow x:=1
	$$
	as $\epsilon_n \downarrow 0$. For simplicity, we set $s_n=2-\epsilon_n$. Let $0 \neq f \in \operatorname{Lip}_c((0, 1))$ be a nonnegative function. Let $R_n={R_\infty}=1$ such that $B_{R_n}(x_n)=[0, s_n)$ and $B_{R_\infty}(x)=(0, 2)$, and let $f_n=f$. Proposition $\ref{propex}$ says that the unique solution of the Poisson problem
	$$
	\left\{\begin{array}{l}
		\mathscr{L}_pv=f_n, \text { in }[0, s_n), \\
		v(s_n)=0,
	\end{array}\right.
	$$
	can be represented as
	$$
	v_n(\rho)=\int_{H_{N}(\rho)}^{H_{N}(s_n)} \frac{1}{\mathcal{I}_{N}(\sigma)}\left(\frac{1}{\mathcal{I}_{N}(\sigma)} \int_0^\sigma f \circ H_{N}^{-1}(t) \mathrm{d} t\right)^{\frac{1}{p-1}} \mathrm{d} \sigma, \quad \text { for any } \rho \in[0, s_n].
	$$
	The $W^{1, p}$ strong limit function of $\{v_n\}$
	$$
	v_\infty(\rho)=\int_{H_{N}(\rho)}^{H_{N}(2)} \frac{1}{\mathcal{I}_{ N}(\sigma)}\left(\frac{1}{\mathcal{I}_{ N}(\sigma)} \int_0^\sigma f \circ H_{ N}^{-1}(t) \mathrm{d} t\right)^{\frac{1}{p-1}} \mathrm{d} \sigma ,\quad \text { for any } \rho \in[0, 2],
	$$
	is continuous on $[0, 2]$ and $v(0) \neq 0$, so it is not in the space $W_0^{1, p}((0, 2), \mathfrak{m}_{N})$ if $p>N$.
\end{remark}

\begin{ack}
The author would like to express her gratitude to Professor Jiayu Li for his encouragement and to Professor Huichun Zhang for his discussions. 
\end{ack}
\vspace{.2in}

{\small}
\end{document}